\numberwithin{equation}{section}
\DeclareMathOperator{\Hom}{Hom}
\DeclareMathOperator{\Ext}{Ext}
\DeclareMathOperator{\Ker}{Ker}
\DeclareMathOperator{\Img}{Im}
\DeclareMathOperator{\Coker}{Coker}
\DeclareMathOperator{\cone}{cone}
\DeclareMathOperator{\Mcolim}{Mcolim}
\DeclareMathOperator{\hocolim}{hocolim}
\DeclareMathOperator{\Mor}{Mor}
\DeclareMathOperator{\Iso}{Iso}
\DeclareMathOperator{\Ob}{Ob}
\DeclareMathOperator{\stpMor}{\underline{Mor}}
\DeclareMathOperator{\stiMor}{\overline{Mor}}
\DeclareMathOperator{\fp}{fp}
\DeclareMathOperator{\pres}{\!-pres}
\DeclareMathOperator{\add}{add}
\DeclareMathOperator{\Add}{Add}
\DeclareMathOperator{\Gen}{Gen}
\DeclareMathOperator{\Prod}{Prod}
\DeclareMathOperator{\Cont}{Cont}
\DeclareMathOperator{\thick}{thick}
\DeclareMathOperator{\Susp}{Susp}
\DeclareMathOperator{\Loc}{Loc}
\DeclareMathOperator{\HFun}{HFun}
\DeclareMathOperator{\res}{res}
\newcommand{\modf}{{\rm mod}\text{-}}
\newcommand{\Mod}{{\rm Mod}\text{-}}
\newcommand{\Proj}[1]{{\rm Proj}(#1)}
\newcommand{\Inj}[1]{{\rm Inj}(#1)}
\newcommand{\PInj}[1]{{\rm PInj}(#1)}
\newcommand{\ai}[1]{\tau_{\mathbf{t}}^{\leq{#1}}}
\newcommand{\coa}[1]{\tau_{\mathbf{t}}^{\geq{#1}}}
\renewcommand{\H}{{H_\mathbf{t}^0}}
\newcommand{\y}{\mathbf{y}}
\newcommand{\pure}{\mathrm{pure}}
\newcommand{\Ab}{\mathrm{Ab}}
\newcommand{\Set}{\mathrm{Set}}
\newcommand{\mcA}{\mathcal{A}}
\newcommand{\mcB}{\mathcal{B}}
\newcommand{\mcC}{\mathcal{C}}
\newcommand{\mcD}{\mathcal{D}}
\newcommand{\mcE}{\mathcal{E}}
\newcommand{\mcF}{\mathcal{F}}
\newcommand{\mcG}{\mathcal{G}}
\newcommand{\mcH}{\mathcal{H}}
\newcommand{\mcI}{\mathcal{I}}
\newcommand{\mcL}{\mathcal{L}}
\newcommand{\mcN}{\mathcal{N}}
\newcommand{\mcP}{\mathcal{P}}
\newcommand{\mcQ}{\mathcal{Q}}
\newcommand{\mcS}{\mathcal{S}}
\newcommand{\mcT}{\mathcal{T}}
\newcommand{\mcU}{\mathcal{U}}
\newcommand{\mcV}{\mathcal{V}}
\newcommand{\mcW}{\mathcal{W}}
\newcommand{\mcX}{\mathcal{X}}
\newcommand{\mcY}{\mathcal{Y}}
\newcommand{\la}{\longrightarrow}
\newcommand{\laplus}{\stackrel{+}{\la}}
\newcommand{\op}{\mathrm{op}}
\newcommand{\dd}{\colon}
\newtheorem{thm}{Theorem}[section]
\newtheorem{lem}[thm]{Lemma}
\newtheorem{prop}[thm]{Proposition}
\newtheorem{cor}[thm]{Corollary}
\theoremstyle{definition}
\newtheorem{opr}[thm]{Definition}
\theoremstyle{remark}
\newtheorem{rem}[thm]{Remark}
\newtheorem{ex}[thm]{Example}
\newtheorem{exs}[thm]{Examples}
\begin{document}

\title{$t$-structures with Grothendieck hearts via~functor categories}

\author{Manuel Saor\'{\i}n}
\address{%
Departamento de Matem\'aticas\\
Universidad de Murcia\\
30100 Espinardo, Murcia\\
SPAIN
}

\author{Jan {\v S}{\v t}ov\'\i{\v c}ek}
\address{%
Charles University in Prague\\
Faculty of Mathematics and Physics\\
Department of Algebra\\
Sokolovsk\'a 83\\
186 75 Praha\\
CZECH REPUBLIC
}

\thanks{The first-named author was supported by the Grant PID2020-113206GB-I00, funded by MCIN/AEI/10.13039/501100011033, and the project 22004/PI/22, funded by the Fundaci\'on 'S\'eneca' of Murcia, both  with a part of FEDER funds.
The second-named author was supported by the grant GA~\v{C}R 20-13778S from the Czech Science Foundation.}
\subjclass[2010]{18E15, 18E30 (Primary) 18E35, 18G60 (Secondary)}
\keywords{$t$-structure, $t$-generating subcategory, Grothendieck category, homological functor, functor category, purity, pure-injective object.}
\date{\today}

\begin{abstract}
We study when the heart of a $t$-structure in a triangulated category $\mcD$ with coproducts is AB5 or a Grothendieck category. If $\mcD$ satisfies Brown representability, a $t$-structure has an AB5 heart with an injective cogenerator and coproduct-preserving associated homological functor if, and only if, the coaisle has a pure-injective $t$-cogenerating object.
If $\mcD$ is standard well generated, such a heart is automatically a Grothendieck category.
For compactly generated $t$-structures (in any ambient triangulated category with coproducts), we prove that the heart is a locally finitely presented Grothendieck category.

We use functor categories and the proofs rely on two main ingredients. Firstly, we express the heart of any $t$-structure in any triangulated category as a Serre quotient of the category of finitely presented additive functors for suitable choices of subcategories of the aisle or the co-aisle that we, respectively, call $t$-generating or $t$-cogenerating subcategories.
Secondly, we study coproduct-preserving homological functors from $\mcD$ to complete AB5 abelian categories with injective cogenerators and classify them, up to a so-called computational equivalence, in terms of pure-injective objects in~$\mcD$.
This allows us to show that any standard well generated triangulated category $\mcD$ possesses a universal such coproduct-preserving homological functor, to develop a purity theory and to prove that pure-injective objects always cogenerate $t$-structures in such triangulated categories.
\end{abstract}

\maketitle

\setcounter{tocdepth}{2} 
\tableofcontents


\section{Introduction}

The main motivation of this paper is the study of $t$-structures in triangulated categories with coproducts whose hearts are AB5 abelian or  Grothendieck categories. Along the way, we initiate a theory of purity (which is a concept from the model theory of modules over a ring) for not necessarily compactly generated triangulated categories. In this context, purity is very closely related to the study of covariant coproduct-preserving homological functors and representability theorems for them and, at the end of the day, we apply these results to the (co)homological functors induced by $t$-structures. Our results are mostly independent of any particular model or enhancement for the triangulated categories.

\smallskip

The problem of identifying the $t$-structures whose heart is a Grothendieck category has deserved a lot of attention since it first arose for the Happel-Reiten-Smal\o{} $t$-structure associated to a torsion pair in a Grothendieck or module category \cite{CGM,CMT}. For the general question,  several strategies have been used to tackle the problem, including ad hoc arguments \cite{PS-hearts,B-tstr}, functor categories \cite{Bo16,Bo} and suitable enhancements of the ambient triangulated category, such as stable $\infty$-categories \cite{Lur-HA,Lur-SAG} or derivators \cite{SSV,Lak}.

When the ambient triangulated category is compactly generated, the well-de\-vel\-oped theory of purity in this type of categories, initiated in~\cite{K}, has been also used \cite{St-cotilt,AMV,Lak}.
One of the most common strategies here consisted in expressing the heart of a well-behaved $t$-structure (e.g.\ compactly generated or smashing) as Gabriel quotient of the category $\Mod\mcD^c$ of additive functors $(\mcD^c)^\op\la\Ab$, where $\mcD^c$ is the subcategory of compact objects.
A key limitation of this approach so far, which we aim to overcome here, is that it is in contrast to enhancement-based arguments fully dependent on the existence of compact objects---an assumption which may easily fail even for derived categories of sheaves~\cite{N-manifolds}.
Albeit a higher-cardinal generalization of the purity theory has been developed in connection with Verdier quotients and semiorthogonal decompositions of triangulated categories~\cite{N,K-triang}, it is not suitable for us (with the exception of Proposition~\ref{prop.t-structure from pi}) for the following reasons:
\begin{enumerate}
\item the higher-cardinal version of purity seems not to be well-suited for studying exactness of all direct limits and
\item various arguments about localizations of triangulated categories do not seem to directly generalize to $t$-structures.
\end{enumerate}

Although we do follow the trend of using functor categories in this paper, we do so in a different (and initially much more general) way.
We start working in an arbitrary triangulated category $\mcD$ with a $t$-structure $\mathbf{t}=(\mcU,\mcV)$ and
we replace the no longer suitable or even well-defined category $\Mod\mcD^c$ by the category $\modf\mcX$ (or $\modf\mcX^\op$) of finitely presented functors $\mcX^\op\la\Ab$ (or $\mcX\la\Ab$), for a suitable subcategory $\mcX$ of $\mcD$ that is linked to $\mathbf{t}$. Normally $\mcX$ will be the aisle or the co-aisle of~$\mathbf{t}$ or a suitable subcategory of them.
If $\mcD$ is a triangulated category with products, we can very abstractly define pure-injective objects in $\mcD$, choose $\mcX$ to be a class of pure-injective objects, and use this approach together with a recent criterion for the AB5 condition given by Positselski and the second-named author~\cite{PoSt} in terms of pure-injectivity. This turns out to be a very efficient strategy to study the AB5 and Grothendieck property of the heart of $\mathbf{t}$. The advantage is that one gets rid of any model enhancing the ambient triangulated category, thus obtaining completely general results.

\smallskip

Let us now describe  the contents of the paper, in the course of which the main results will be explained. In Section~\ref{sec:prelim} we introduce most of the concepts and terminology to be used in the paper. Already there  we  take some care of the results which are crucial for the paper. In particular,  we show how to reconstruct an abelian category with enough projectives from its subcategory of projectives, we revisit the notions of localization and Serre quotient functors and we recall  criteria for the property of being locally finitely presented to be inherited via Gabriel localization of a Grothendieck category.  

In Section~\ref{sec.t-structures and localization} we show how the heart of a $t$-structure appears as Serre quotient  of the category $\modf\mcU$ of finitely presented functors $\mcU^\op\la\Ab$, where $\mcU$ is the aisle of the $t$-structure,  and give some ideas on how to get rid of degeneracies of $t$-structures.  In Section~\ref{sec:t-generating} we go one step further and show that if $\mcP$ is a suitable precovering subcategory of $\mcU$, then the Serre quotient functor $\modf\mcU\la\mcH$, where $\mcH$ is the heart of the $t$-structure, factors as $\modf\mcU\stackrel{\res}{\la}\modf\mcP\stackrel{F}{\la}\mcH$, where $F$ is again a Serre quotient functor. This gives the following first main result of the paper (see Theorem \ref{thm.t-generating left Kan extensions} for an extended version), that together with its dual give one of our most powerful tools to study the AB5 condition of the heart of a $t$-structure in a triangulated category with coproducts, although the result is valid for all $t$-structures in any triangulated category:
 
\begin{thm} \label{thm.t-generating left Kan extensions-intro}
Let $\mcD$ be a triangulated category and $\mathbf{t}=(\mcU,\mcV)$ be a $t$-structure with heart $\mcH$ and the associated cohomological functor $\H\dd\mcD\la\mcH$. Let further $\mcP\subseteq\mcU$ be a precovering subcategory and denote by  $\y_\mcP$ the generalized Yoneda functor
\begin{align*}
\y_\mcP\dd \mcU &\la \modf\mcP, \\
U &\rightsquigarrow \Hom_\mcU(?,U)_{|\mcP}.
\end{align*}
The following assertions are equivalent:
\begin{enumerate}
\item The functor $\H\dd\mcU\la\mcH$ factors as a composition $\mcU\stackrel{\y_\mcP}{\la} \modf\mcP\stackrel{F}{\la}\mcH$, for some right exact functor $F$.
\item The subcategory $\mcP\subseteq\mcD$ is $t$-generating, i.e.\ for each $U\in\mcU$ there is a triangle  $U'\la P\stackrel{f}{\la} U\laplus$, where  $P\in\mcP$ and $U'\in\mcU$.
\end{enumerate}
In such a case, $F$ is a Serre quotient functor  and $G:=(\y_\mcP)_{| \mcH}\dd\mcH\la \modf\mcP$ is its fully faithful right adjoint. 
\end{thm}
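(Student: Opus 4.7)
The plan is to realize $F$ through the universal property of $\modf\mcP$ as the free right-exact cocompletion of $\mcP$, recalled in Section~\ref{sec:prelim}: the additive functor $\H_{|\mcP}\dd\mcP\la\mcH$ extends uniquely to a right exact functor $F\dd\modf\mcP\la\mcH$ sending each representable $\Hom_\mcU(-,P)_{|\mcP}$ to $\H(P)$. Under this set-up, condition (1) becomes the concrete statement that $F\circ\y_\mcP(U) \cong \H(U)$ naturally in $U\in\mcU$.

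For $(2)\Rightarrow(1)$, given $U\in\mcU$ I iterate the $t$-generating hypothesis: produce first $U'\la P_0\stackrel{f}{\la} U\laplus$ with $P_0\in\mcP$, $U'\in\mcU$ and then $U''\la P_1\la U'\laplus$ with $P_1\in\mcP$, $U''\in\mcU$, and compose to get a morphism $d\dd P_1\la P_0$. The heart of the argument is to check that
\[
\Hom_\mcU(-,P_1)_{|\mcP} \stackrel{d_*}{\la} \Hom_\mcU(-,P_0)_{|\mcP} \stackrel{f_*}{\la} \y_\mcP(U) \la 0
\]
is a presentation of $\y_\mcP(U)$ in $\modf\mcP$: surjectivity at $\y_\mcP(U)$ amounts to $f$ being a precover of $U$ in $\mcP$, and exactness in the middle amounts to $P_1\la U'$ being a precover of $U'$ in $\mcP$, both deducible from the precovering hypothesis on $\mcP$ together with the triangle data. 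Applying the right exact $F$ identifies $F\y_\mcP(U)$ with the cokernel of $\H(d)\dd\H(P_1)\la\H(P_0)$ in $\mcH$, and the two cohomology long exact sequences obtained by applying $\H$ to the triangles above display this very cokernel as $\H(U)$; the crucial input is $\H(U'[1]) = 0 = \H(U''[1])$, valid because $U'[1]$ and $U''[1]$ lie in the shifted aisle $\mcU[1]\subseteq\mcU$ and so have vanishing zeroth $\mathbf{t}$-cohomology.

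For $(1)\Rightarrow(2)$, start with $U\in\mcU$ and any precover $p\dd P\la U$ with $P\in\mcP$, completed to a triangle $U'\la P\stackrel{p}{\la} U\laplus$; I must show $U'\in\mcU$. Characterising the aisle by the vanishing of positive $\mathbf{t}$-cohomology, this reduces to $\H(U'[i]) = 0$ for every $i\geq 1$. The long exact sequences of $\H$ applied to shifts of the triangle force this vanishing for $i\geq 2$ essentially automatically, since $\H(P[j]) = 0 = \H(U[j])$ for all $j\geq 1$ (shifts of aisle objects). For $i=1$, the long exact sequence reduces the vanishing to surjectivity of $\H(p)\dd\H(P)\la\H(U)$, and this is precisely where hypothesis (1) enters: the precover property makes $\y_\mcP(p)$ an epimorphism in $\modf\mcP$, and applying the right exact $F$ yields the required epimorphism $\H(P)\twoheadrightarrow\H(U)$.

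For the last assertions, I would verify the $(F,G)$-adjunction on the projective generators $\Hom_\mcU(-,P)_{|\mcP}$ of $\modf\mcP$, where it reduces to the standard identification $\Hom_\mcD(P,H) \cong \Hom_\mcH(\H(P),H)$ for $P\in\mcU$, $H\in\mcH$, itself a consequence of $t$-structure orthogonality applied to the truncation triangle of $P$. Full faithfulness of $G$ then reduces to the counit $FG(H)\la H$ being an isomorphism for every $H\in\mcH$, which is the special case of $(2)\Rightarrow(1)$ taking $U=H\in\mcU$, since $\H(H)=H$. The main obstacle I anticipate is upgrading $F$ from right exact to genuinely \emph{exact}, so that it qualifies as a Serre quotient functor with $\mcH\cong\modf\mcP/\ker F$; rather than a diagram chase inside $\modf\mcP$, I expect to extract the missing left exactness abstractly from the adjunction together with the reconstruction of an abelian category with enough projectives from its subcategory of projectives, as recalled in Section~\ref{sec:prelim}.
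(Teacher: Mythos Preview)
Your approach to the equivalence $(1)\Leftrightarrow(2)$ and the adjunction is more hands-on than the paper's: the paper routes everything through the intermediate category $\widehat{\mcU}$, using that $\y_\mcP$ factors as $\mcU\stackrel{\y}{\la}\widehat{\mcU}\stackrel{\res}{\la}\widehat{\mcP}$ and that the Serre quotient $\widetilde{\H}\dd\widehat{\mcU}\la\mcH$ from Proposition~\ref{prop.left Kan extension for the aisle} is already available. Condition~(1) then becomes the kernel containment $\Ker(\res)\subseteq\Ker(\widetilde{\H})$, and both directions are read off from that. Your direct presentation argument is a legitimate alternative and arguably more transparent for $(2)\Rightarrow(1)$.

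There is a minor imprecision in $(1)\Rightarrow(2)$: the sentence ``characterising the aisle by the vanishing of positive $\mathbf{t}$-cohomology'' is only valid for left non-degenerate $t$-structures. What actually saves you is that $U'[1]$, being the cone of $p\dd P\la U$, already lies in $\mcU$; hence $U'\in\mcU$ if and only if $\tau_\mathbf{t}^{>0}(U')=0$ if and only if $\H(U'[1])=0$ (this is exactly Lemma~\ref{lem:t-generating tria}). Your $i\geq 2$ discussion is then superfluous, and the $i=1$ step is the whole argument.

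The genuine gap is exactness of $F$. Having a fully faithful right adjoint does \emph{not} force a right exact functor to be exact: $(-)\otimes_{\mathbb{Z}}\mathbb{Z}/p\dd\Mod\mathbb{Z}\la\Mod\mathbb{Z}/p$ is right exact with fully faithful right adjoint (restriction of scalars) but is not left exact. So the adjunction alone will not deliver what you need. The paper's fix is structural: since both $\res$ and $\widetilde{\H}\cong F\circ\res$ are localizations, $F$ is a localization by Lemma~\ref{lem.composition and cancellation of localizations}(2), and exactness of $F$ is then obtained by lifting short exact sequences from $\widehat{\mcP}$ back to $\widehat{\mcU}$ via the fully faithful left adjoint of $\res$ and using the known exactness of $\widetilde{\H}$. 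Alternatively, your own presentation machinery does the job once combined with Lemma~\ref{lem.rightexact-exact}: given $M=\Coker\y_\mcP(g)$ with $g\dd P_1\la P_0$ in $\mcP$, complete $g$ to a triangle, truncate the cocone into $\mcU$, and take a $\mcP$-precover $P_2$ of the result (Lemma~\ref{lem:t-generating precovers} ensures the cocone stays in $\mcU$); then $\y_\mcP(P_2)\la\y_\mcP(P_1)\la\y_\mcP(P_0)\la M\la 0$ is a projective presentation whose image under $F$ is exact by the same long-exact-sequence argument you already used, and Lemma~\ref{lem.rightexact-exact} concludes. You should replace the vague ``extract abstractly from the adjunction'' by one of these two arguments.
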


In Section~\ref{sec.pure injectives}
we introduce the key notion of pure-injective object in an arbitrary additive category with products, which extends the corresponding existing notion in locally finitely presented additive categories and in compactly generated triangulated categories. We then revisit a recent result by Positselski and the second-named author from \cite{PoSt}, stating that an AB3* abelian category $\mcA$ with an injective cogenerator $E$ is AB5 if and only if $E$ is pure-injective. We further show that $\mcA$ is a Grothendieck category precisely when $\Prod E=\Inj\mcA$ has a generator, i.e.\ if and only if there is $E'\in\Inj\mcA$ such that  $\Hom_{\Inj\mcA}(E',?)\dd\Inj\mcA\la\Ab$ is a faithful functor.

In Section~\ref{sec:representability} we prove  the following  theorem for coproduct-preserving homological functors whose targets are AB3* abelian categories with an injective cogenerator.
The reader is referred to Definition \ref{def.computationally equivalent} for the precise definition of computationally equivalent coproduct-preserving homological functors whose domain is a given triangulated category with coproducts $\mcD$. A fortiori, when $\mcD$ satisfies Brown representability theorem,   two such functors are computationally equivalent exactly when the morphisms in $\mcD$ that are killed by  one of them are also killed by the other (see Corollary \ref{cor.clean-definition-comp.equivalent}).

\begin{thm} \label{thm.classify homological functors-intro}
Let $\mcD$ be a triangulated category which has arbitrary (set-indexed) coproducts and satisfies Brown representability theorem. Then there is a bijective correspondence between
\begin{enumerate}
\item Computational equivalence classes of coproduct-preserving homological functors $H\dd\mcD\la\mcA$, where $\mcA$ is an AB3* abelian category with an injective cogenerator.
\item Product-equivalence classes of objects in $\mcD$.
\end{enumerate}
The bijection restricts to another one, where in \emph{(1)} we only consider those homological functors with AB5 target and in \emph{(2)} we only consider product-equivalence classes of pure-injective objects.

Moreover, each computational equivalence class in \emph{(1)} has unique initial object $H\dd\mcD\la\mcA$. If $Q\in\mcD$ represents the corresponding product equivalence class as in~\emph{(2)}, then one can take $\mcA=\big(\modf\Prod(Q)^\op\big)^\op$ and for any $D\in\mcD$,
\[ H(D) = \Hom_\mcD(D,?)_{|\Prod(Q)}\dd \Prod(Q) \la \Ab. \]
\end{thm}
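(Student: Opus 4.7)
\emph{Plan.} The whole argument is organized around Brown representability, applied in two dual directions. Given a coproduct-preserving homological $H\dd \mcD \to \mcA$ with injective cogenerator $E$, I would extract an object $Q \in \mcD$ by Brown-representing the functor $\Hom_\mcA(H(-), E)$. Conversely, given $Q \in \mcD$ I would exhibit the explicit functor $H_Q(D) = \Hom_\mcD(D, -)_{|\Prod(Q)}$ landing in $\mcA_Q := (\modf \Prod(Q)^\op)^\op$ as the initial candidate in its class.

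\emph{The two assignments.} For the direction (1)$\to$(2), note that $\Hom_\mcA(H(-), E)$ is cohomological (since $E$ is injective and $H$ is homological) and turns coproducts into products (since $H$ preserves coproducts and $\Hom(-, E)$ inverts them); Brown representability then provides $Q \in \mcD$ with $\Hom_\mcA(H(-), E) \cong \Hom_\mcD(-, Q)$. Applying the same argument to arbitrary products $E^I$ identifies $\Prod(E) \subseteq \mcA$ with $\Prod(Q) \subseteq \mcD$, so the product-equivalence class $[Q]$ depends only on the computational equivalence class of $H$. For the direction (2)$\to$(1), products in $\Prod(Q)$ provide weak kernels making $\modf \Prod(Q)^\op$ abelian with the representables as projective generators, so $\mcA_Q$ is AB3* with an injective cogenerator (the representable coming from $Q$ itself), and $H_Q$ is coproduct-preserving and homological because the values $\Hom_\mcD(-, Q')$ are.

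\emph{Bijection, initiality, AB5.} A Yoneda computation $\Hom_{\mcA_Q}(H_Q(D), E_Q) \cong \Hom_\mcD(D, Q)$ shows that applying (1)$\to$(2) to $H_Q$ returns $[Q]$. In the other direction, $H$ and $H_{Q(H)}$ are computationally equivalent: both kill the same morphisms in $\mcD$ because $f$ is killed by $H$ iff $\Hom_\mcA(H(f),E)=0$ iff $\Hom_\mcD(f,Q')=0$ for all $Q' \in \Prod(Q)$, which by Corollary~\ref{cor.clean-definition-comp.equivalent} is precisely computational equivalence under Brown representability. For initiality, any $H$ in the class of $Q$ restricts along the identification $\Prod(E) \simeq \Prod(Q)^\op$ to a functor out of $\Prod(Q)^\op$; by the universal property of $\modf(-)$ as the free right-exact completion of an additive category with weak kernels, this extends uniquely to a right exact functor $\modf \Prod(Q)^\op \to \mcA^\op$, whose opposite is a product-preserving left exact $F\dd \mcA_Q \to \mcA$ with $H = F \circ H_Q$. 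The restricted bijection finally follows from the Positselski--\v{S}\v{t}ov\'{\i}\v{c}ek criterion recalled in Section~\ref{sec.pure injectives}: $\mcA$ is AB5 iff $E$ is pure-injective, which via the Brown isomorphism is equivalent to pure-injectivity of $Q$ in $\mcD$ in the abstract sense of the same section.

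\emph{Main obstacle.} The technically most delicate step is verifying that $H_Q(D)$ really lies in $\modf \Prod(Q)^\op$ for every $D$, i.e.\ that $\Hom_\mcD(D, -)_{|\Prod(Q)}$ admits a presentation by representables; I would construct such a presentation from a triangle $D_1 \to D_0 \to D \laplus$ in which $D_0$ and $D_1$ are chosen by Brown representability so that the induced maps into $\Prod(Q)$ give a weak-cokernel presentation. The second moderately delicate point is the $\op$-bookkeeping in the universal-property argument that produces $F$; coproduct-preservation of $F$ is then automatic from its exactness together with the fact that both $H$ and $F \circ H_Q$ agree on the generating subcategory $\Prod(Q)$ of $\mcA_Q$.
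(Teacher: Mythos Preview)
Your proposal has the right skeleton but contains two genuine gaps and one misplaced worry.

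First, a circularity: you invoke Corollary~\ref{cor.clean-definition-comp.equivalent} to conclude that $H$ and $H_{Q(H)}$ are computationally equivalent from the fact that they kill the same morphisms. But that corollary is \emph{derived from} the present theorem---its proof uses the existence and description of initial objects in the connected components of $\HFun(\mcD)$, which is precisely what you are trying to establish. You must instead exhibit directly a faithful exact left adjoint $F\dd\mcA_Q\to\mcA$ with $H\cong F\circ H_Q$.

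Second, your construction of $F$ is incomplete for this purpose. The universal property of $\modf(-)$ (Lemma~\ref{lem.coherent functors are abelian}) gives a right exact functor $\modf\Prod(Q)^\op\to\mcA^\op$, whose opposite is merely \emph{left} exact and product-preserving. Definition~\ref{def.computationally equivalent} requires $F$ to be exact and a left adjoint; neither follows from your argument, and coproduct-preservation alone does not force a right adjoint to exist since $\mcA_Q$ need not have a generator. The paper proceeds differently: it defines $F$ as precomposition with the product-preserving functor $G\dd\Inj{\mcA}\to\Prod_\mcD(Q)$ obtained from Brown representability, constructs the right adjoint $G'\dd\mcA\to\mcA_Q$ explicitly via the equivalences $\mcA\simeq\stiMor(\Inj{\mcA})$ and $\mcA_Q\simeq\stiMor(\Prod_\mcD(Q))$ of Proposition~\ref{prop.reconstructing}, and then reads off exactness and faithfulness of $F$ from Lemma~\ref{lem.faithful exact left adjoints} (because $G'$ sends the injective cogenerator of $\mcA$ to that of $\mcA_Q$).

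Finally, your ``main obstacle'' is a red herring. Once you identify $\mcA_Q$ with $\Cont(\Prod(Q),\Ab)^\op$ via Lemma~\ref{lem.Cont(P)}, the functor $H_Q(D)=\Hom_\mcD(D,-)_{|\Prod(Q)}$ lands in $\mcA_Q$ simply because representable functors preserve products; no auxiliary triangle is needed.
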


The main significance of the latter theorem is that it allows us to initiate a theory of purity for non-compactly generated triangulated categories.
As a consequence, it turns out that pure-injective objects in practice always cogenerate $t$-structures. See Proposition~\ref{prop.t-structure from pi} which substantially generalizes a result of similar nature in~\cite{LV}.

So far, two different approaches to purity appeared in the literature in the absence of finitely presented or compact objects:
\begin{enumerate}
\item via abstractly defined pure-injective objects (as discussed above) in~\cite{CS,PoSt} and
\item via colimit-preserving functors with AB5 target categories in~\cite[\S6]{BaPo}.
\end{enumerate}
Theorem~\ref{thm.classify homological functors-intro} says that if we replace, in the context of triangulated categories, the functors in~(2) by the class of coproduct-preserving homological functors to complete AB5 abelian categories with injective cogenerators, the two approaches become equivalent.

In Section~\ref{sect.universal copr-pres} we further develop the purity theory for standard well generated triangulated categories and show that any such category~$\mcD$ has an associated Grothendieck category $\mcA_\pure(\mcD)$ and a coproduct-preserving homological functor $h_\pure\dd\mcD\la\mcA_\pure(\mcD)$, uniquely determined up to equivalence, that are universal. This means that if $h\dd\mcD\la\mcA$ is any other coproduct-preserving homological functor with AB5 target, then there is a coproduct-preserving exact functor $F\dd\mcA_\pure(\mcD)\la\mcA$, unique up to natural isomorphism, such that $F\circ h_\pure =h$.
Then we can simply define pure triangles and identify pure-injective objects in terms of this universal functor $h_\pure$.

Section~\ref{sec.t-str with Grothendieck heart} is the one specifically dedicated to the study of $t$-structures with an AB5 or  Grothendieck heart. The first general result of the section is the following (see Theorem \ref{thm.AB5 heart} for a more detailed version). 

\begin{thm} \label{thm.AB5 heart-intro}
Let $\mcD$ be a triangulated category with  coproducts that satisfies Brown representability theorem, and let $\mathbf{t}=(\mcU,\mcV)$ be a $t$-structure with heart $\mcH$. The following assertions are equivalent

\begin{enumerate}
\item There exists a pure-injective object $Q\in\mcV$ such that  $\Hom_\mcD(?,Q)$ vanishes on $\mcV[-1]$  and  $\Hom_\mcD(M,Q)\neq 0$, for all $0\neq M\in\mcH$.
\item There is a pure-injective object $\hat{Q}\in\mathcal{V}$ such that, for each $V\in\mcV$, there is a triangle  $V\longrightarrow\hat{Q}_V\longrightarrow V'\stackrel{+}{\longrightarrow}$, where $\hat{Q}_V\in\Prod(Q)$ and $V'\in\mathcal{V}$.
\item  $\mcH$ is an AB5 abelian category with an injective cogenerator and the cohomological functor $\H\dd\mcD\la\mcH$ preserves coproducts. 
\end{enumerate}

When $\mcD$ is standard well generated, they are also equivalent to:

\begin{enumerate}
\item[(4)] $\mcH$ is a Grothendieck category and the cohomological functor $\H\dd\mcD\la\mcH$ preserves coproducts. 
\end{enumerate}
\end{thm}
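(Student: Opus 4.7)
My plan is to prove the cycle $(3)\Rightarrow(1)\Rightarrow(2)\Rightarrow(3)$ and then deduce $(3)\Leftrightarrow(4)$ in the standard well-generated case. For $(3)\Rightarrow(1)$, assume $\mcH$ is AB5 with injective cogenerator $E$ and $\H$ preserves coproducts. Then $\Hom_\mcH(\H(?),E)\dd\mcD^\op\la\Ab$ is a cohomological functor turning coproducts in $\mcD$ into products, so Brown representability yields an object $Q\in\mcD$ representing it. The vanishings $\H(\mcU[1])=0=\H(\mcV[-1])$ force $Q\in\mcV$ and $\Hom_\mcD(\mcV[-1],Q)=0$, while non-vanishing on nonzero objects of $\mcH$ is the cogenerator property of $E$. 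Pure-injectivity of $Q$ follows from Theorem~\ref{thm.classify homological functors-intro}: since $\H$ is a coproduct-preserving homological functor to an AB5 category with injective cogenerator, the object representing its computational-equivalence class---namely $Q$---is pure-injective.

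\textbf{$(1)\Rightarrow(2)$.}
Given $Q$ as in (1), I would first observe that $E:=\H(Q)$ is an \emph{injective} cogenerator of $\mcH$: applying $\Hom_\mcD(?,Q)$ to a short exact sequence $0\la A\la B\la C\la 0$ in $\mcH$ (realized as a triangle $A\la B\la C\la A[1]$ in $\mcD$) yields a long exact sequence in which the flanking terms $\Hom_\mcD(A[1],Q)$ and $\Hom_\mcD(C[-1],Q)$ both vanish---the first because $A[1]\in\mcU[1]$ and $Q\in\mcV$, the second by hypothesis---producing a short exact sequence of $\Hom$-groups. For each $V\in\mcV$ I then set $\hat{Q}_V:=Q^{\Hom_\mcD(V,Q)}\in\Prod(Q)\subseteq\mcV$ and form the triangle $V\la\hat{Q}_V\la V'\laplus$ induced by the canonical evaluation. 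To verify $V'\in\mcV$, it suffices to show $\Hom_\mcD(U[1],V')=0$ for $U\in\mcU$; applying $\Hom_\mcD(U[1],?)$ to the triangle reduces this to injectivity of the evaluation $\Hom_\mcD(U,V)\la\Hom_\mcD(U,Q)^{\Hom_\mcD(V,Q)}$, which after translating via the standard $t$-structure isomorphisms (identifying $\Hom_\mcD$ with $\Hom_\mcH$ after applying $\H$) becomes immediate from $E$ being an injective cogenerator of $\mcH$.

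\textbf{$(2)\Rightarrow(3)$, $(3)\Rightarrow(4)$, and the main obstacle.}
For $(2)\Rightarrow(3)$, I apply the dual of Theorem~\ref{thm.t-generating left Kan extensions-intro} to $\mcQ=\Prod(Q)$: this factors $\H$ through the functor category $\mcA:=(\modf\Prod(Q)^\op)^\op$ as a Serre localization in which $\mcH$ embeds as a fully faithful right adjoint. By Theorem~\ref{thm.classify homological functors-intro}, pure-injectivity of $Q$ upgrades $\mcA$ to an AB5 category with an injective cogenerator and makes the induced functor $\mcD\la\mcA$ coproduct-preserving; standard Gabriel localization theory then transfers AB5 and the injective cogenerator to $\mcH$, with $\H$ preserving coproducts by exactness of the quotient. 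For $(3)\Rightarrow(4)$ in the standard well-generated case, my plan is to invoke the universal coproduct-preserving homological functor $h_\pure\dd\mcD\la\mcA_\pure(\mcD)$ from Section~\ref{sect.universal copr-pres}, factor $\H=F\circ h_\pure$ for an exact coproduct-preserving $F$, and push a generator $G$ of the Grothendieck category $\mcA_\pure(\mcD)$ through $F$ to obtain a generator $F(G)$ of $\mcH$ (using that $F$ is essentially surjective since $\H$ is, and that exact coproduct-preserving functors carry quotients of coproducts to quotients of coproducts). I expect the main obstacle to be this last step: constructing the factorization carefully so that $F$ provably inherits both exactness and coproduct-preservation from the universal property of $h_\pure$---this is precisely where the standard well-generated hypothesis earns its keep via Section~\ref{sect.universal copr-pres}.
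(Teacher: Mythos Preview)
Your overall strategy matches the paper's: the same cycle of implications, Brown representability to produce $Q$ in $(3)\Rightarrow(1)$, the canonical map $V\to Q^{\Hom_\mcD(V,Q)}$ for $(1)\Rightarrow(2)$, the dual $t$-cogenerating machinery for $(2)\Rightarrow(3)$, and the universal functor $h_\pure$ for $(3)\Rightarrow(4)$. Most of the steps are fine, but there is one genuine gap.

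\smallskip

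\textbf{The gap in $(2)\Rightarrow(3)$.} When you dualize Theorem~\ref{thm.t-generating left Kan extensions-intro}, you do \emph{not} get $\mcH$ embedded as a fully faithful \emph{right} adjoint. In the original theorem the Serre quotient $F\dd\widehat{\mcP}\to\mcH$ is a \emph{left} adjoint with fully faithful right adjoint $G$; passing to opposite categories reverses the handedness, so the dual gives a Serre quotient $F'\dd\widecheck{\mcQ}\to\mcH$ with a fully faithful \emph{left} adjoint $G'\dd\mcH\to\widecheck{\mcQ}$ (this is exactly how the paper states it in the proof of Theorem~\ref{thm.AB5 heart}). With only a left adjoint section, the phrase ``standard Gabriel localization theory then transfers AB5 and the injective cogenerator'' does not apply: Gabriel localization means a Serre quotient with a \emph{right} adjoint section functor. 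The missing ingredient is Proposition~\ref{prop.left adjoint-implies-right adjoint}: if the source $\widecheck{\mcQ}$ is complete AB5 with an injective cogenerator (which you correctly obtain from pure-injectivity of $\hat Q$), then an exact functor with a fully faithful left adjoint automatically also has a fully faithful right adjoint. Only after invoking this do you have a genuine Gabriel localization, and then the transfer of AB5, the injective cogenerator, and coproduct-preservation of $\H$ follow. This proposition is short but not automatic, and it is precisely where the argument turns.

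\smallskip

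\textbf{Minor remarks on the other implications.} In $(3)\Rightarrow(1)$ your appeal to Theorem~\ref{thm.classify homological functors-intro} for pure-injectivity of $Q$ is valid; the paper instead uses the more direct observation (Proposition~\ref{prop.near-partialsilting-object}(3)) that $\H$ induces an equivalence $\Prod_\mcD(Q)\simeq\Prod_\mcH(\H(Q))$, whence $Q$ is pure-injective iff the injective cogenerator $\H(Q)\cong E$ is, which holds by the AB5 criterion of Proposition~\ref{prop.Positsetski-Stovicek}. In $(1)\Rightarrow(2)$ your computation is correct in spirit but slightly misstated: to get $V'\in\mcV$ you need $\Hom_\mcD(U,V'[-1])=0$ for $U\in\mcU$, which via the triangle reduces to injectivity of $\Hom_\mcD(U,V)\to\Hom_\mcD(U,\hat Q_V)$; the paper phrases this equivalently as $\H(u)$ being a monomorphism via Lemma~\ref{lem:t-generating tria}. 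For $(3)\Rightarrow(4)$, your argument that an exact coproduct-preserving essentially surjective $F$ takes a generator to a generator is correct and suffices; what you flag as the ``main obstacle'' (exactness and coproduct-preservation of $F$) is in fact immediate from the universal property of $h_\pure$ in Proposition~\ref{prop.universal AB5 homology}, so no obstacle remains there.
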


Condition~(2) of Theorem~\ref{thm.AB5 heart-intro} is very closely related to widely studied finiteness conditions on the co-aisle of $\mathbf{t}$. When $\mcD$ is compactly generated, condition~(2) is satisfied for instance in the following situations (see Theorem~\ref{thm.Groth.heart-for-definable-coaisle}):
\begin{itemize}
\item if the co-aisle $\mcV$ is definable (this in particular holds if $\mathbf{t}=(\mcU,\mcV)$ is compactly generated as a $t$-structure) or
\item if $\mcD$ has a suitable enhancement (as explained in Remarks~\ref{rem:definable coaisles} and~\ref{def.t-cogenerated coaisle}) and $\mcV$ is closed under taking directed homotopy colimits.
\end{itemize}
In this way, we generalize various results which appeared in the literature before---for $t$-structures in presentable stable $\infty$-categories (\cite[Remark 1.3.5.23]{Lur-HA}, \cite[Remark C.1.4.6]{Lur-SAG}), for homotopically smashing $t$-structures in nice enough stable derivators (\cite{SSV}, \cite[Theorem 4.6]{Lak}) or for compactly generated $t$-structures (\cite[Theorem 0.2]{Bo}).

The special case where $\mathbf{t}$ is a semiorthogonal decomposition was also thoroughly studied in~\cite{K,K-cohq} and, in particular, such decompositions $\mathbf{t}$ were classified in terms of certain (so called exact) ideals of the subcategory of all compact objects in $\mcD$
and it was proved that they give rise to recollements in the sense of~\cite[\S1.4]{BBD}.
In Theorems~\ref{thm.classification definable} and~\ref{thm.definable-have-adjacent-cotstr}, we establish a completely analogous classification of all $t$-structures with definable co-aisle
and show that they possess right adjacent co-$t$-structures.
The latter is an analogy of recollements in the context of $t$-structures (as explained e.g.\ in the introduction of~\cite{StPo}).

\begin{thm} \label{thm.classification definable-intro}
Let $\mcD$ be a compactly generated triangulated subcategory. Then there is a bijective correspondence between
\begin{enumerate}
\item the $t$-structures $\mathbf{t}=(\mcU,\mcV)$ in $\mcD$ with $\mcV$ definable, and
\item suspended two-sided ideals $\mcI\subseteq\mcD^c$, i.e.\ ideals which satisfy $\mcI[1]\subseteq\mcI=\mcI^2$ and are saturated (see Definition~\ref{def.suspended ideal}).
\end{enumerate}
Moreover, any $t$-structure as in~\emph{(1)} admits a right adjacent co-$t$-structure $(\mcV,\mcW)$.
\end{thm}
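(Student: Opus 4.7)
The plan is to adapt Krause's correspondence between smashing subcategories of a compactly generated triangulated category and exact ideals of $\mcD^c$ from~\cite{K,K-cohq}; the relaxation from semiorthogonal decompositions to $t$-structures corresponds precisely to weakening $\mcI[1]=\mcI$ to $\mcI[1]\subseteq\mcI$ and to replacing localizing co-aisles by merely definable ones.

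For the forward map, given $\mathbf{t}=(\mcU,\mcV)$ with $\mcV$ definable, I would associate the ideal
\[
\mcI_\mathbf{t} = \{\,f \in \Mor(\mcD^c) \mid \H(f)=0\,\},
\]
where $\H\dd\mcD\la\mcH$ is the cohomological functor of $\mathbf{t}$. By Theorem~\ref{thm.Groth.heart-for-definable-coaisle-intro}, $\mcH$ is Grothendieck and $\H$ preserves coproducts; moreover the condition $\H(f)=0$ is equivalent to $f$ factoring through an object of $\mcU[1]$. The three required properties are then checked directly: $\mcI_\mathbf{t}[1]\subseteq\mcI_\mathbf{t}$ follows from $\mcU[1]\subseteq\mcU$; idempotency $\mcI_\mathbf{t}^2=\mcI_\mathbf{t}$ uses the $t$-generation by compact objects supplied by the second part of Theorem~\ref{thm.Groth.heart-for-definable-coaisle-intro}; and saturation is a formal consequence of $\mcI_\mathbf{t}$ being the morphism-kernel of the cohomological functor $\H$.

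For the reverse map, given an ideal $\mcI\subseteq\mcD^c$ as in the statement, I would set
\[
\mcV_\mcI = \{\,D \in \mcD : \Hom_\mcD(f, D[n]) = 0 \text{ for all } f \in \mcI \text{ and } n\leq 0\,\},
\]
which is manifestly definable since it is cut out by Hom-vanishing against a set of morphisms between compact objects. The subtle part is exhibiting truncation triangles with respect to the candidate aisle $\mcU_\mcI = {}^\perp(\mcV_\mcI[-1])$. I would construct these via a transfinite Postnikov-type tower built from morphisms in $\mcI$, using idempotency at successor stages and saturation to pass to limit stages; this is the technical core of the argument and the place I expect the most friction, since convergence must be ensured without the full two-sided suspension invariance available in the semiorthogonal setting of~\cite{K,K-cohq}. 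Once the two constructions are shown to be mutually inverse (which reduces to the factorization characterization of $\mcI_\mathbf{t}$ on one side and to recognizing $\mcV_{\mcI_\mathbf{t}}=\mcV$ from the definability of $\mcV$ on the other), the bijective correspondence follows.

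For the existence of the right adjacent co-$t$-structure $(\mcV,\mcW)$, I would exploit that definability of $\mcV$ forces closure under products in $\mcD$. Combined with Neeman's Brown representability for the dual of a compactly generated triangulated category, this allows the construction of a right adjoint $\rho\dd\mcD\la\mcV$ to the inclusion. The associated approximation triangles $\rho X\la X\la W_X\laplus$ then yield a co-$t$-structure with aisle $\mcV$ and co-aisle $\mcW$ obtained from the $W_X$ by the appropriate shift- and extension-closure; the Hom-vanishing $\Hom_\mcD(\mcV,\mcW[-1])=0$ and the co-$t$-structure axioms follow formally from the adjunction together with the intrinsic shift-closure of the $t$-structure co-aisle~$\mcV$.
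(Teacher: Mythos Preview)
Your forward map is wrong: the ideal $\{f\in\Mor(\mcD^c)\mid \H(f)=0\}$ is not the one that corresponds to $\mathbf{t}$, and the equivalence ``$\H(f)=0$ if and only if $f$ factors through an object of $\mcU[1]$'' is false. A counterexample is the standard $t$-structure on $\mcD(k)$ for a field $k$ and $f=1_{k[-1]}$: here $\H(k[-1])=0$ so $\H(f)=0$, yet $f$ cannot factor through $\mcU[1]=\mcD^{\le-1}$ since $\Hom_{\mcD}(\mcD^{\le-1},k[-1])=0$. Consequently your ideal fails $\mcI[1]\subseteq\mcI$ (indeed $f[1]=1_{k[0]}$ has $\H(f[1])\ne0$). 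The correct ideal is $\mcI=\{f\in\Mor(\mcD^c)\mid \Hom_\mcD(f,\mcV)=0\}$, which is the class of compact maps factoring through $\mcU[1]$; with this definition the shift-closure is immediate, saturation is Krause's characterization of cohomological ideals, and idempotency uses truncation together with the description of $\mcU$ via factorizations through $\mcI$ (Corollary~\ref{cor.description of U for definable}). For the reverse direction your Postnikov-tower idea is plausible but unnecessary: the paper shows $\mcV_\mcI$ is definable, hence preenveloping, and closed under extensions (this is where $\mcI=\mcI^2$ enters), whence it is a co-aisle by the dual of~\cite[Proposition 3.11]{SaSt}.

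Your argument for the right adjacent co-$t$-structure has a more basic gap. A right adjoint $\rho\dd\mcD\to\mcV$ to the inclusion would make the decomposition triangles $\rho X\to X\to W_X\stackrel{+}{\to}$ functorial in $X$, but co-$t$-structure approximations are \emph{not} functorial in general, so no such adjoint can be expected. Brown representability for the dual does not help either, since $\mcV$ is not triangulated and the functor you would need to represent is not of the right shape. The paper instead produces a pure-injective object $Q$ with $\mcV={}^{\perp}Q$ (using the TTF triple in $\Mod\mcD^c$ associated to $\mcI$) and then invokes Bondarko's criterion~\cite[Theorem~2.3.3]{Bo}: $({}^{\perp}Q,({}^{\perp}Q[-1])^\perp)$ is a co-$t$-structure provided $Q$ is perfect in $\mcD^\op$, and this perfectness is verified from the closure of the relevant torsion class under products in $\Mod\mcD^c$.
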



Still in Section~\ref{sec.t-str with Grothendieck heart}, in the yet more special case of compactly generated $t$-structures, we go one step further and prove the following result (see Theorem \ref{thm.locally-fp-heart}):
 
\begin{thm} \label{thm.locally-fp-heart-intro}
Let $\mcD$ a triangulated category with coproducts, let $\mathbf{t}=(\mcU,\mcV)$ be a compactly generated $t$-structure in $\mcD$, with heart $\mcH$,  and put $\mcU_0=\mcU\cap\mcD^c$. Then  $\mcH$ is a locally finitely presented Grothendieck category and its subcategory of finitely presented objects is
$\fp(\mcH)=\H(\mcU_0)$. 

When in addition $\mathbf{t}$ restricts to the subcategory $\mcD^c$ of compact objects, the heart $\mcH$ is also locally coherent.
\end{thm}

 In the final Section~\ref{sec.cosilting} we show relations between $t$-structures with Grothendieck heart and various versions of partial cosilting objects that recently appeared in the literature.

\subsection*{Acknowledgment}

We would like to thank Michal Hrbek for helpful discussions and Rosie Laking and Jorge Vit\'{o}ria  for some clarifications.


\section{Preliminaries}
\label{sec:prelim}

Unless otherwise specified, all categories in this paper will be pre-additive and all functors are additive. All subcategories will be full and closed under taking isomorphisms. When we say that such a category, say $\mcA$, has (co)products we will mean that it has arbitrary set-indexed (co)products. When $\mcA$ is additive, for a given subcategory $\mcS$, we shall denote by $\add_\mcA(\mcS)$ and   $\Add_\mcA(\mcS)$ the subcategories consisting of the direct summands, respectively, of finite and arbitrary coproducts of objects in $\mcS$.
Dually $\text{Prod}_\mcA(\mcS)$ will stand for the subcategory of direct summands of products of objects of $\mcS$. The group of morphisms between objects $X$ and $Y$ will be indistinctly denoted by $\mcA(X,Y)$ or $\Hom_\mcA (X,Y)$.  We will denote by $\mcS^\perp$ (resp. $_{}^\perp\mcS$) the subcategory of $\mcA$  consisting of the objects $X$ such that $\Hom_\mcA(S,X)=0$ (resp.  $\Hom_\mcA(X,S)=0$), for all $S\in\mcS$. 

A morphism $f\dd S\la X$ in $\mcA$ is called an \emph{$\mcS$-precover} if $S\in\mcS$ and any morphism $f'\dd S'\la X$ with $S'\in\mcS$ factors through $f$. The subcategory $\mcS$ is called \emph{precovering} is each $X\in\mcA$ admits an $\mcS$-precover $f\dd S\la X$. Dually, one defines an \emph{$\mcS$-preenvelope} $f\dd X\la S$ and \emph{preenveloping} subcategories of $\mcA$.

We refer the reader to \cite{Pop} and \cite{St} for the basic notions concerning abelian categories, in particular for the terminology AB$n$ and AB$n^*$, for $n=3,4,5$, introduced in \cite{Gro}. Recall that an AB5 abelian category with a set of generators (equivalently, a generator) is called a \emph{Grothendieck category}.


\subsection{Abelian categories with enough projective objects}
\label{subsec:abel-enough-proj}

We start by recalling basic and mostly well known facts about how to reconstruct an abelian category from its subcategory of projective objects, provided we have enough of these. All the results formally dualize to abelian categories with enough injective objects as well.
If $\mcA$ is an abelian category, we will denote by $\Proj\mcA$ the full subcategory of projective objects and by $\Inj\mcA$ the full subcategory of injective objects.

For any (not necessarily small) additive category $\mcP$, we denote by $\modf\mcP$ the category of finitely presented functors $\mcP^\op\la\Ab$, which are by definition functors $F$ with a presentation
\[ \Hom_\mcP(-,Q) \la \Hom_\mcP(-,P) \la F \la 0 \]
given by a map $f\dd Q\la P$ in $\mcP$. We will also frequently use the shorthand notation $\widehat{\mcP}:=\modf\mcP$. 
Observe that, thanks to the Yoneda lemma, the collection of natural transformations between any pair of finitely presented functors forms a set.
For the following  well known lemma (see e.g.~\cite[Corollary 1.5]{Freyd} or \cite[\S2]{K-functors-lfp}), we need the notion of \emph{weak kernel} of a morphism $f\dd X\la Y$ in an additive category $\mcA$. It is just a morphism $u\dd K\la X$ such that the associated sequence of functors $\Hom_\mcA(?,K)\stackrel{u_*}{\la}\Hom_\mcA(?,X)\stackrel{f_*}{\la}\Hom_\mcA(?,Y)$ is exact. \emph{Weak cokernels} are defined dually. 

\begin{lem} \label{lem.coherent functors are abelian}
The Yoneda embedding 
\begin{align*}
\y_\mcP\dd \mcP &\la \modf\mcP, \\
P &\rightsquigarrow \Hom_\mcP(?,P), \\
\end{align*}
has the following universal property: Any additive functor $F\dd\mcP\la\mcA$, where $\mcA$ is an abelian category, extends uniquely up to natural isomorphism over $\y_\mcP$ to a right exact functor $\widehat{F}\dd\modf\mcP\la\mcA$, and any natural transformation $\alpha\dd F\la F'$ between such additive functors uniquely extends to a natural transformation $\widehat{\alpha}\dd\widehat{F}\la\widehat{F'}$.

Moreover, the category $\modf\mcP$ is itself abelian if and only if the kernel of any map of finitely presented functors is finitely presented if and only if the category $\mcP$ has weak kernels.
\end{lem}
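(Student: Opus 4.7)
The plan is to treat the two parts separately. First I would construct the extension $\widehat{F}$ by sending a chosen presentation of a finitely presented functor to a cokernel in $\mcA$; then I would analyse when $\modf\mcP$ is itself abelian by tracing kernels back to $\mcP$.

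For the universal property, given $X\in\modf\mcP$ with a chosen presentation $\Hom_\mcP(-,Q)\stackrel{f_*}{\la}\Hom_\mcP(-,P)\la X\la 0$ induced by $f\dd Q\la P$ in $\mcP$, define
\[ \widehat{F}(X):=\Coker\bigl(F(f)\dd F(Q)\la F(P)\bigr). \]
Since representable functors are projective in $\modf\mcP$, any morphism $\alpha\dd X\la X'$ lifts to a chain map between the chosen presentations, and the central routine check is that two such lifts induce the same morphism on cokernels after applying $F$: the difference of two lifts is a null-homotopy whose relevant component factors through $F(f')$ on the target side, hence is killed by the cokernel projection. This makes $\widehat{F}$ well-defined on morphisms, functorial, and independent of the chosen presentation; the same lifting argument extends natural transformations. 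Right exactness is immediate from the construction, because $\widehat{F}$ sends every defining presentation to a cokernel in $\mcA$. Uniqueness is then forced: any right exact extension $\widetilde{F}$ of $F$ must send the presentation of $X$ to a cokernel diagram, hence agrees with $\widehat{F}$ up to canonical natural isomorphism.

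For the second statement, observe that $\modf\mcP$ always has finite biproducts and cokernels---the cokernel of $\alpha\dd X\la X'$ has a presentation built from the presentations of $X$ and $X'$ by pasting along a lift of $\alpha$. So being abelian is equivalent to the existence of a kernel in $\modf\mcP$ for every morphism, and a standard diagram chase reduces this to the existence of finitely presented kernels for morphisms of the form $\y_\mcP(f)\dd\y_\mcP(P)\la\y_\mcP(P')$ induced by an arrow $f\dd P\la P'$ in $\mcP$.

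Finally, I would prove that such kernels are finitely presented if and only if $\mcP$ has weak kernels. If $u\dd K\la P$ is a weak kernel of $f$ and $v\dd K'\la K$ is in turn a weak kernel of $u$, the sequence $\y_\mcP(K')\la\y_\mcP(K)\la\y_\mcP(P)\la\y_\mcP(P')$ is exact in $\Mod\mcP$, and one reads off the presentation $\y_\mcP(K')\la\y_\mcP(K)\la\Ker\y_\mcP(f)\la 0$, showing the kernel is finitely presented. Conversely, given a finitely presented $\Ker\y_\mcP(f)$ and any epimorphism $\y_\mcP(K)\twoheadrightarrow\Ker\y_\mcP(f)$, its composition with $\Ker\y_\mcP(f)\hookrightarrow\y_\mcP(P)$ corresponds by Yoneda to a morphism $u\dd K\la P$ with $fu=0$, and the projectivity of representables yields the required factorisation making $u$ a weak kernel of $f$. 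The main obstacle I anticipate is the initial well-definedness of $\widehat{F}$ on morphisms, due to the non-uniqueness of lifts between chosen presentations; once this is handled, all subsequent steps are essentially formal diagram chases.
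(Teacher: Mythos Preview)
The paper does not give its own proof of this lemma; it merely cites \cite[Corollary~1.5]{Freyd} and \cite[Lemma~2.1]{K}. Your argument is correct and is essentially the standard one found in those references. One small point worth making explicit: the step ``being abelian is equivalent to the existence of kernels'' is not a general fact about additive categories with cokernels, but it holds here because $\modf\mcP$ is a full cokernel-closed subcategory of the abelian category $\Mod\mcP$, and any kernel computed in $\modf\mcP$ automatically agrees with the ambient one (test against representables $\y_\mcP(P)$), so closure under kernels makes $\modf\mcP$ an exact abelian subcategory.
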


\begin{rem}
Note that the previous statement says that the precomposition functor $\y_\mcP^*\dd [\widehat{\mcP},\mcA]_{\textsf{rex}}\stackrel{\simeq}{\la}[\mcP,\mcA]_{\textsf{add}}$ induces an equivalence, which is in this case even surjective on objects, between the category of right exact functors $\widehat{\mcP}\la\mcA$ and the category of additive functors $\mcP\la\mcA$. In particular, the lifting of $F$ to $\widehat{F}$ is, as is well known, unique up to a canonical natural isomorphism.
\end{rem}

Given an additive category $\mcP$, we also denote by $\Mor(\mcP)$ the category of morphisms in $\mcP$ (see \cite[Section I.2]{ARei}) and we denote by $\stpMor(\mcP)$ the quotient of $\Mor(\mcP)$ by the ideal of projectively trivial morphisms, in the terminology of [op.cit]. More in detail, we factor out the two-sided ideal of $\Mor(\mcP)$ of all maps which factor through a split epimorphism in $\mcP$, when viewed as an object of $\Mor(\mcP)$ (what we denote $\stpMor(\mcP)$ is denoted by $\Mod\mcP$ in \cite{ARei}).
The following result is standard and provides two ways to reconstruct an abelian category from the subcategory of projective objects.

\begin{prop} \label{prop.reconstructing}
Let $\mcB$ be an abelian category with enough projective objects and denote by $\mcP$ the full subcategory of projective objects. Then
\[ \stpMor(\mcP) \simeq \mcB \simeq \modf\mcP, \]
where the left hand side equivalence sends $(f\dd Q\la P)\in\stpMor(\mcP)$ to $\Coker(f)$ and the second equivalence sends $B\in\mcB$ to $\Hom_\mcB(?,B)_{|\mcP}$.
\end{prop}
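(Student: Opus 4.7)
\medskip

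The plan is to construct both equivalences explicitly and verify each using the defining property of enough projectives, namely that every object of $\mcB$ admits a two-term projective resolution $Q\stackrel{f}{\la} P\la B\la 0$. I would treat the equivalence $\Phi\dd\mcB\la\modf\mcP$, $B\mapsto\Hom_\mcB(?,B)_{|\mcP}$ first, and then deduce the identification with $\stpMor(\mcP)$ as a corollary.

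First I would check that $\Phi$ lands in $\modf\mcP$. Given any $B\in\mcB$ and a projective presentation $Q\stackrel{f}{\la}P\la B\la 0$, projectivity of objects in $\mcP$ makes the restricted sequence $\Hom_\mcB(?,Q)_{|\mcP}\la\Hom_\mcB(?,P)_{|\mcP}\la\Hom_\mcB(?,B)_{|\mcP}\la 0$ exact, exhibiting $\Phi(B)\in\modf\mcP$. Next, for fully faithfulness, fix such a presentation of $B$ and any $B'\in\mcB$. Any natural transformation $\eta\dd\Phi(B)\la\Phi(B')$ is determined, via Yoneda applied to $\y_\mcP(P)\la\Phi(B)$, by a morphism $\bar{\eta}\dd P\la B'$ whose composition with $f$ vanishes; such a $\bar{\eta}$ factors uniquely through $B=\Coker(f)$, yielding a two-sided inverse to $\Phi$ on $\Hom_\mcB(B,B')\la\Hom(\Phi(B),\Phi(B'))$. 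Essential surjectivity is immediate: every $F\in\modf\mcP$ admits by definition a presentation coming from some $f\dd Q\la P$ in $\mcP$, and since right exact functors out of $\modf\mcP$ preserve such cokernels (Lemma~\ref{lem.coherent functors are abelian}) while $\Phi$ is right exact, $F\cong\Phi(\Coker f)$.

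For the second equivalence, define $\Psi\dd\Mor(\mcP)\la\mcB$ by $\Psi(f)=\Coker(f)$ on objects, and on morphisms by passing to cokernels of a commutative square. I must show that $\Psi$ descends to the stated quotient $\stpMor(\mcP)$ and then induces an equivalence. A morphism of $\Mor(\mcP)$ which factors through a split epimorphism $(Q'\twoheadrightarrow 0)$-type object---i.e.\ through an object $(s\dd A\la B)$ with $s$ split epi---has zero cokernel on the nose, so $\Psi$ kills the ideal and factors as $\bar{\Psi}\dd\stpMor(\mcP)\la\mcB$. Essential surjectivity of $\bar{\Psi}$ is again the existence of projective presentations. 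For fully faithfulness, given a morphism $b\dd\Coker(f)\la\Coker(g)$ with $f\dd Q\la P$ and $g\dd Q'\la P'$, projectivity of $P$ lifts $b$ along $P'\twoheadrightarrow\Coker(g)$ to $\beta\dd P\la P'$, and then projectivity of $Q$ lifts $\beta\circ f$ through $g$ to $\alpha\dd Q\la Q'$; this produces a preimage in $\Mor(\mcP)$. Two such lifts $(\alpha,\beta)$ and $(\alpha',\beta')$ differ by a square $(\alpha-\alpha',\beta-\beta')$ in which $\beta-\beta'$ factors through $g$, and a short diagram chase shows this difference factors through the split epi $\mathrm{id}_{P'}\dd P'\la P'$ viewed in $\Mor(\mcP)$ (via the map $P\la P'$ given by the chosen factorization), hence lies in the projectively trivial ideal.

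The main obstacle I anticipate is the last step, namely pinning down precisely which null-homotopies correspond to the ideal of projectively trivial morphisms in the sense of Auslander--Reiten; one has to be careful about the convention (``factors through a split epi in $\mcP$ regarded as an object of $\Mor(\mcP)$''), and identify this with the familiar homotopy relation on two-term projective complexes. Once this identification is made, combining $\bar{\Psi}$ with a quasi-inverse of $\Phi$ also gives the explicit equivalence $\stpMor(\mcP)\simeq\modf\mcP$ claimed implicitly by the proposition, making the whole triangle of equivalences commute by construction.
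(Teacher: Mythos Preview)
Your approach is considerably more explicit than the paper's, which simply cites \cite[Section~I.2]{ARei} for $\stpMor(\mcP)\simeq\mcB$ and, for $\mcB\simeq\modf\mcP$, observes in one line that the restricted Yoneda functor induces an equivalence between $\mcP$ and the projectives of $\modf\mcP$ (after which both equivalences follow from one another). Your direct verification of $\mcB\simeq\modf\mcP$ is correct and is a reasonable alternative to the paper's bootstrap argument.

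For $\stpMor(\mcP)\simeq\mcB$, however, the faithfulness step contains an actual error, not merely the anticipated subtlety. Factoring the difference $(\alpha-\alpha',\beta-\beta')$ through the object $(1_{P'}\dd P'\la P')$ would force the second leg to supply a morphism $\gamma'\dd P'\la Q'$ with $g\gamma'=1_{P'}$; that is, $g$ would have to be a split epimorphism, which is not assumed. The remedy is to factor through $Q'$ rather than $P'$. Writing $\beta-\beta'=gh$ with $h\dd P\la Q'$, one has $g(\alpha-\alpha'-hf)=(\beta-\beta')f-ghf=0$, and then $(\alpha-\alpha',\beta-\beta')$ factors through the split epimorphism $\bigl((1_{Q'}\;\;0)\dd Q'\oplus Q\la Q'\bigr)$ via
\[
\left(\begin{pmatrix}hf\\ 1_Q\end{pmatrix},\,h\right)\dd(f)\la\bigl((1_{Q'}\;\;0)\bigr)
\qquad\text{and}\qquad
\bigl((1_{Q'}\;\;\alpha-\alpha'-hf),\,g\bigr)\dd\bigl((1_{Q'}\;\;0)\bigr)\la(g).
\]
This makes precise the identification you flag at the end: the projectively trivial morphisms $(\alpha,\beta)\dd(f)\la(g)$ are exactly those for which $\beta$ factors through $g$, i.e.\ those inducing the zero map on cokernels.
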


\begin{proof}
The first equivalence was proved in \cite[Section I.2]{ARei}, while the second one essentially follows from \cite[Proposition 2.3]{K-functors-lfp} as the assignment $B\rightsquigarrow\Hom_\mcB(?,B)_{|\mcP}$ restricts by the Yoneda lemma to an equivalence between the projective objects in $\modf\mcP$ and $\mcB$, respectively.
\end{proof}

We will also need a perhaps less well known version of this result involving AB3 categories $\mcB$ with a projective generator. This has been worked out in~\cite[\S6]{PoSt-TCC} in the language of monads, but we will use a more direct formulation which will be convenient for us. It in fact instantiates $\mcB$ as the category of models of an algebraic theory in the sense of~\cite{Wraith}.

For this purpose, suppose that $\mcA$ is an additive category with arbitrary (set-indexed) products with the property that $\mcA=\Prod_\mcA(A)$ for some $A\in\mcA$. We denote by $\Cont(\mcA,\Ab)$ the category of all product-preserving additive functors $\mcA\la\Ab$. Note that again, there is only a set of natural transformations between any pair of functors in $\Cont(\mcA,\Ab)$, as any transformation is determined by its value on $A\in\mcA$.

\begin{lem} \label{lem.Cont(P)}
Let $\mcP$ be an additive category with coproducts and $P\in\mcP$ such that $\mcP=\Add_\mcP(P)$. Then $\mcP$ has weak kernels, and $\modf\mcP = \Cont(\mcP^\op,\Ab)$. In particular, $\Cont(\mcP^\op,\Ab)$ is an abelian category with coproducts and $\Hom_\mcP(-,P)$ is a projective generator.
\end{lem}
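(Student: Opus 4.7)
The plan is to handle three assertions: (a) $\mcP$ has weak kernels (so Lemma~\ref{lem.coherent functors are abelian} gives that $\modf\mcP$ is abelian), (b) $\modf\mcP=\Cont(\mcP^\op,\Ab)$, and (c) inside this category, $\Hom_\mcP(-,P)$ is a projective generator and set-indexed coproducts exist. I expect (b) to be the substantive step; (a) and (c) follow from it together with routine bookkeeping.

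For (a), given $f\dd X\la Y$ in $\mcP$, I would set $I:=\{h\dd P\la X\mid fh=0\}$ and take $g\dd P^{(I)}\la X$ with $h$-th component equal to $h$, so that $fg=0$. For an arbitrary $\phi\dd Q\la X$ with $f\phi=0$, I use $\mcP=\Add_\mcP(P)$ to write $Q$ as a retract $Q\xrightarrow{s}P^{(J)}\xrightarrow{p}Q$ with $ps=\mathrm{id}_Q$; the composite $\phi p\dd P^{(J)}\la X$ decomposes into $J$ maps $P\la X$, each killed by $f$, hence factors through $g$, and composing with $s$ exhibits $\phi$ itself as factoring through $g$. For (b), the inclusion $\subseteq$ is easy: representables lie in $\Cont$ by the universal property of coproducts in $\mcP$, and since $\Ab$ is AB4$^*$ the pointwise cokernel of a morphism in $\Cont$ still preserves products. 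For the converse, given $F\in\Cont$, the equality $F(P^{(S)})=\prod_S F(P)$ lets me form the canonical element $\xi=(x)_{x\in F(P)}\in F(P^{(F(P))})$, which by Yoneda yields $\eta\dd\Hom_\mcP(-,P^{(F(P))})\la F$. The key verification is that $\eta$ is pointwise surjective: checking $\eta_{P^{(J)}}$ amounts to realising an arbitrary $(y_j)\in\prod_J F(P)$ as the image of the map $P^{(J)}\la P^{(F(P))}$ whose $j$-th component is the inclusion into the $y_j$-th summand, and the case of a general $Q\in\Add_\mcP(P)$ follows by precomposing with a section as in~(a). Since pointwise kernels preserve products, $\Ker\eta$ is again in $\Cont$ and admits the same construction, producing the second step of a finite presentation of $F$ by a morphism in $\mcP$.

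For (c), kernels and cokernels in $\Cont(\mcP^\op,\Ab)$ are computed pointwise---kernels commute with products because limits do, and cokernels commute with products in $\Ab$ by AB4$^*$---so short exact sequences are pointwise exact. Hence evaluation at $P$ is exact, and since Yoneda gives $\Hom(\Hom_\mcP(-,P),F)=F(P)$ for $F\in\Cont$, the representable $\Hom_\mcP(-,P)$ is projective. The same Yoneda identity, read in the other direction, gives $\Hom_\mcP(-,\coprod_jP_j)=\coprod_j\Hom_\mcP(-,P_j)$ in $\Cont$; combined with the existence of cokernels, this produces all set-indexed coproducts in $\Cont$ from presentations. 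Finally $\Hom_\mcP(-,P)$ is a generator because the $\eta$ constructed in~(b) realises every $F$ as a quotient of $\Hom_\mcP(-,P)^{(F(P))}=\Hom_\mcP(-,P^{(F(P))})$. The main obstacle I anticipate is in~(b): one must keep careful track of the simultaneous roles of $F(P)$ as an indexing set, a target group, and a source for the canonical element~$\xi$.
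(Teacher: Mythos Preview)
Your proposal is correct and follows essentially the same approach as the paper: the weak kernel construction via $P^{(I)}$ is identical, and the heart of the argument---producing the canonical element $\xi\in F(P^{(F(P))})$ via the identification $F(P^{(X)})\cong F(P)^X$ and using Yoneda to obtain an epimorphism from a representable, then iterating on the kernel---matches the paper's proof exactly. The only cosmetic difference is in deducing the existence of coproducts: the paper observes that $\modf\mcP\simeq\stpMor(\mcP)$ inherits coproducts from $\Mor(\mcP)$, whereas you argue directly via Yoneda that $\Hom_\mcP(-,\coprod_j P_j)$ is the coproduct of the $\Hom_\mcP(-,P_j)$ in $\Cont(\mcP^\op,\Ab)$; both are straightforward.
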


\begin{proof}
Suppose that $f\dd P_1\la P_0$ is a morphism in $\mcP$. If we consider the set $Z$ of all morphism $g\dd P\la P_1$ such that $fg=0$, then the canonical morphism $P^{(Z)} \la P_1$ is easily seen to be a weak kernel of $f$. Hence $\modf\mcP$ is abelian. Moreover, since $\mcP$ has coproducts, so have them both $\Mor(\mcP)$ and $\stpMor(\mcP)\simeq\modf\mcP$.
	
It remains to establish the equality $\modf\mcP=\Cont(\mcP^\op,\Ab)$. Clearly, any finitely presented functor $\mcP^\op\la\Ab$ preserves products as all representable functors do and products are exact in $\Ab$.
	
For the converse, choose $F\in\Cont(\mcP^\op,\Ab)$ and denote by $X$ the underlying set of $F(P)$. Then $F(P^{(X)})\cong F(P)^X = X^X$ by the assumption and, hence, we can consider the canonical element $c\in F(P^{(X)})$ whose $x$-th component under the latter identification equals $x$. By the Yoneda lemma, $c$ determines a morphism $\phi\dd\Hom_\mcP(?,P^{(X)})\la F$. Observe that $\phi_P\dd\Hom_\mcP(P,P^{(X)})\la F(P)$ is surjective as the $x$-th coproduct inclusion $P\rightarrowtail P^{(X)}$ maps to $x$ for each $x\in X$. Since both $\Hom_\mcP(?,P^{(X)})$ and $F$ commute with products, $\phi(Q)$ is in fact surjective for any $Q\in\mcP$ and, hence, $F$ is a quotient of $\Hom_\mcP(?,P^{(X)})$. Iterating the argument one more time with $K = \Ker(\phi) \in \Cont(\mcP^\op,\Ab)$, we obtain the required presentation for $F$.
\end{proof}

By combining Proposition~\ref{prop.reconstructing} and Lemma~\ref{lem.Cont(P)}, we obtain:

\begin{cor} \label{cor.Cont(P)}
Let $\mcB$ be an AB3 category with a projective generator $P$, and denote $\mcP=\Add(P)$ the full subcategory of projective objects. Then $\mcB\simeq\Cont(\mcP^\op,\Ab)$ via the restricted Yoneda functor $B\rightsquigarrow\Hom_\mcB(?,B)_{|\mcP}$.
\end{cor}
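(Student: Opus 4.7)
The plan is simply to glue Proposition~\ref{prop.reconstructing} and Lemma~\ref{lem.Cont(P)}. To apply the former we must verify that $\mcB$ has enough projectives and that its subcategory of projectives coincides with $\mcP=\Add(P)$; to apply the latter we must check that $\mcP$ has set-indexed coproducts and is of the form $\Add_\mcP(P)$.

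First I would show that $\mcB$ has enough projectives, with $\Proj(\mcB)=\Add(P)$. Since $P$ is a generator of the AB3 category $\mcB$, for each $B\in\mcB$ the canonical morphism $P^{(\Hom_\mcB(P,B))}\la B$ is an epimorphism, so every object is a quotient of an object of $\Add(P)$. The inclusion $\Add(P)\subseteq\Proj(\mcB)$ follows because $\Hom_\mcB\!\bigl(\bigoplus_i P,-\bigr)=\prod_i\Hom_\mcB(P,-)$ and products are exact in $\Ab$, so coproducts of projectives are projective, and summands of projectives are projective. The reverse inclusion is the standard splitting argument: if $Q$ is projective and $P^{(X)}\twoheadrightarrow Q$ is an epimorphism as above, then it splits and exhibits $Q$ as a direct summand of $P^{(X)}$, so $Q\in\Add(P)$.

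Next I would observe that $\mcP=\Add(P)$ is closed under set-indexed coproducts formed in $\mcB$: if each $P_i$ is a summand of $P^{(X_i)}$, then $\bigoplus_i P_i$ is a summand of $P^{(\sqcup_i X_i)}$. Hence $\mcP$ has coproducts and trivially $\mcP=\Add_\mcP(P)$, so Lemma~\ref{lem.Cont(P)} applies and yields an equality
\[ \modf\mcP \;=\; \Cont(\mcP^\op,\Ab). \]
On the other hand, Proposition~\ref{prop.reconstructing}, applied to $\mcB$ with its subcategory of projectives $\Proj(\mcB)=\mcP$, supplies an equivalence
\[ \mcB \;\xrightarrow{\;\simeq\;}\; \modf\mcP, \qquad B \longmapsto \Hom_\mcB(?,B)_{|\mcP}. \]
Composing these two identifications gives the desired equivalence $\mcB\simeq\Cont(\mcP^\op,\Ab)$, implemented by the restricted Yoneda functor.

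There is no real obstacle here; the only points meriting a line of justification are the identity $\Proj(\mcB)=\Add(P)$ (which uses both the AB3 hypothesis and the fact that $P$ is a \emph{projective} generator) and the closure of $\Add(P)$ under coproducts, both of which are elementary.
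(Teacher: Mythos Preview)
Your proof is correct and follows exactly the approach the paper takes: it simply combines Proposition~\ref{prop.reconstructing} with Lemma~\ref{lem.Cont(P)}, after verifying that $\Proj(\mcB)=\Add(P)$ and that $\mcP$ has coproducts. You have merely filled in the routine verifications that the paper leaves implicit.
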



\subsection{Localization of categories}
\label{subsec:localization-func}

Next we recall basic facts about a key concept in this paper---localization of categories.
A functor $F\dd\mcC\la\mcC'$ is a \emph{localization functor} at a class of morphism $\mcS$ of $\mcC$ if for any category $\mcE$, the precompostion functor
\[ F^*\dd [\mcC',\mcE] \la [\mcC,\mcE] \]
between the categories of functors is fully faithful and the essential image consists of those functors $G\dd\mcC\la\mcE$ which send all morphisms in $\mcS$ to isomorphisms in $\mcE$.

\begin{rem}\label{rem.set-theory-and-localization}
Of course, having written that, we need to explain how to interpret this statement in the context of the usual set-theoretic foundation of mathematics. We have three possibilities:

\begin{enumerate}
\item Assume that all our categories are small. In that situation, no problems arise as for any category $\mcC$ and any set of morphisms $\mcS$, the corresponding localization functor between small categories always exists and is essentially unique by \cite[\S1.1]{GZ}.

\item If the categories in question are not small---a situation which we encounter in this paper---we can assume that we can enlarge the universe and apply the results in the larger universe, whence making our categories efficiently small and reducing to case~(1). The conclusions are then valid in the original universe as well, up to one aspect where one has to be cautious: Localizations of locally small categories still exist by~\cite{GZ} and do not enlarge the class of objects, but a localization of a locally small category may possess pairs of objects which admit a proper class of morphisms among them. As long as we can prove in some way that this problem does not arise for the categories which we work with (one usually uses Lemma~\ref{lem.detect localization} below), we can apply the results of this section even for categories which are not small. This is our preferred variant since it provides a good trade off between clarity and rigor.

\item Many arguments which may seem dubious from the set-theoretic point of view at a first glance can be actually salvaged with some effort because they are completely constructive. We will not follow this path, however, because this additional effort often comes at the cost of clarity of exposition.
\end{enumerate}
\end{rem}

The following lemma provides a practical method of detecting localization functors, see~\cite[1.3 Proposition]{GZ}.

\begin{lem} \label{lem.detect localization}
Suppose that $F\dd\mcC\la\mcC'$ is a functor which admits a left or right adjoint $G\dd\mcC'\la\mcC$. Then $F$ is a localization functor (at the class of all morphisms $f$ such that $F(f)$ is invertible) if and only if $G$ is fully faithful. 
\end{lem}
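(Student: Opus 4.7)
The lemma is a classical result of Gabriel--Zisman, and the plan is to prove the right-adjoint case $F\dashv G$; the left-adjoint case $G\dashv F$ follows by passing to opposite categories, since both the fully faithful condition on $G$ and the universal property of $F$ as a localization at the class of morphisms it inverts are self-dual. Write $\eta\dd 1_{\mcC}\to GF$ and $\epsilon\dd FG\to 1_{\mcC'}$ for the unit and counit. I will use the standard equivalence that $G$ is fully faithful if and only if $\epsilon$ is a natural isomorphism, and match each direction against the universal property of $F$.

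For $(\Leftarrow)$, assume $\epsilon$ is iso. The triangle identity $\epsilon F\cdot F\eta = 1_F$ then forces each $F\eta_X$ to be invertible, so $\eta_X\in\mcS$ for all $X$. To verify that $F$ has the universal property, given $H\dd \mcC\to\mcE$ inverting $\mcS$ I set $H':=HG$; the transformation $H\eta\dd H\Rightarrow H'F$ is then a natural isomorphism, and conversely any composite $H'F$ inverts $\mcS$. For fully faithfulness of $F^*$, I exhibit the explicit inverse
\[
\tau\longmapsto\widetilde{\tau},\qquad \widetilde{\tau}_Y := H_2'(\epsilon_Y)\circ\tau_{GY}\circ H_1'(\epsilon_Y)^{-1},
\]
and verify $F^*\widetilde{\tau}=\tau$ and $\widetilde{F^*\sigma}=\sigma$ by short diagram chases using $\epsilon_{FX}^{-1}=F\eta_X$ and naturality.

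For $(\Rightarrow)$, assume $F$ is a localization at $\mcS$. Using fully faithfulness of $F^*\dd[\mcC',\mcC']\to[\mcC,\mcC']$, I lift $F\eta\dd F\Rightarrow FGF$ to the unique $\sigma\dd 1_{\mcC'}\Rightarrow FG$ with $\sigma F=F\eta$; this $\sigma$ is the candidate inverse of $\epsilon$. One identity is immediate: $(\epsilon\sigma)F=\epsilon F\cdot F\eta=1_F$ by the first triangle identity, so faithfulness of $F^*$ gives $\epsilon\sigma=\mathrm{id}_{1_{\mcC'}}$. The other identity $\sigma\epsilon=\mathrm{id}_{FG}$ is what I expect to be the principal obstacle, because checking it via $F^*$ reduces to showing $F\eta\cdot\epsilon F=1_{FGF}$, which is not visibly implied by the triangle identities. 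The way around is to avoid $F^*$ entirely and invoke naturality of $\sigma$ at the morphism $\epsilon_Y\dd FGY\to Y$ combined with the second triangle identity $G\epsilon\cdot\eta G=1_G$:
\[
\sigma_Y\cdot\epsilon_Y \;=\; FG(\epsilon_Y)\cdot\sigma_{FGY} \;=\; FG\epsilon_Y\cdot F\eta_{GY} \;=\; F\bigl(G\epsilon_Y\cdot\eta_{GY}\bigr) \;=\; F(1_{GY}) \;=\; 1_{FGY},
\]
where the second equality is $\sigma F=F\eta$ at $X=GY$. Hence $\epsilon$ is a natural isomorphism and $G$ is fully faithful, completing the proof.
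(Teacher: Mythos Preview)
Your proof is correct. The paper itself does not give a proof of this lemma but simply cites \cite[1.3 Proposition]{GZ}; your argument is a careful rendition of precisely that Gabriel--Zisman result, so there is nothing substantive to compare.
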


In general, it is not obvious whether a composition of two localization functors is a localization functor again. For functors with adjoints (on any side), the situation is, however, easy.
We provide the lemma with a (completely elementary) proof.

\begin{lem} \label{lem.composition and cancellation of localizations}
	Let $F\dd\mcC\la\mcC'$ and $G\dd\mcC'\la\mcC''$ be functors. Then the following hold:
	\begin{enumerate}
		\item If $F$ and $G$ are localization functors and $F$ has a left or right adjoint, then $G\circ F\dd \mcC\la\mcC''$ is a localization functor,
		\item If $F$ and $G\circ F$ are localization functors, so is $G\dd\mcC'\la\mcC''$.
	\end{enumerate} 
\end{lem}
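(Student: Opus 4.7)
The plan is to exploit the identity $(G\circ F)^*=F^*\circ G^*$ of precomposition functors $[\mcC'',\mcE]\la[\mcC,\mcE]$, together with Lemma~\ref{lem.detect localization}, which tells us that whenever a localization functor has an adjoint on either side, that adjoint is fully faithful, so the associated counit (or unit) is an isomorphism.

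For part~(1), full faithfulness of $(G\circ F)^*$ is immediate from the factorisation, since $F^*$ and $G^*$ are both fully faithful. The nontrivial step is to identify the essential image as the class of functors $H\dd\mcC\la\mcE$ inverting every morphism $f$ with $G(F(f))$ invertible. One inclusion is formal. For the other, any such $H$ in particular inverts the morphisms inverted by $F$, so by the localization property of $F$ we obtain $H\cong H'\circ F$ for some $H'\dd\mcC'\la\mcE$. I would then verify that $H'$ inverts every $g\dd X\la Y$ in $\mcC'$ with $G(g)$ invertible: assuming $F$ has a right adjoint $F_R$ (the case of a left adjoint is dual), Lemma~\ref{lem.detect localization} makes $F_R$ fully faithful, so the counit $\epsilon\dd F\circ F_R\la\mathrm{id}_{\mcC'}$ is an isomorphism. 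Naturality of $\epsilon$ identifies $F(F_R(g))$ with $g$ up to isomorphism, hence $G(F(F_R(g)))$ is invertible, so by hypothesis $H(F_R(g))=H'(F(F_R(g)))$ is invertible, and naturality of $\epsilon$ again forces $H'(g)$ to be invertible. The localization property of $G$ now gives $H'\cong H''\circ G$, and therefore $H\cong H''\circ(G\circ F)$, as desired.

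For part~(2), full faithfulness of $G^*$ is a short diagram chase: given $H_1,H_2\dd\mcC''\la\mcE$ and a natural transformation $\alpha\dd H_1\circ G\la H_2\circ G$, apply full faithfulness of $(G\circ F)^*$ to $\alpha\cdot F$ to produce a unique $\beta\dd H_1\la H_2$ with $\beta GF=\alpha F$; faithfulness of $F^*$ then forces $\beta G=\alpha$, delivering both fullness and faithfulness of $G^*$ at once. For the essential image, any $H\dd\mcC'\la\mcE$ inverting the morphisms inverted by $G$ makes $H\circ F$ invert the morphisms inverted by $G\circ F$, so $H\circ F\cong K\circ G\circ F$ for some $K\dd\mcC''\la\mcE$; this is the equality $F^*(H)\cong F^*(G^*(K))$, and since fully faithful functors reflect isomorphism of objects, we conclude $H\cong G^*(K)$.

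The principal obstacle is the transfer step in part~(1), namely upgrading the factorisation $H\cong H'\circ F$ to a further factorisation $H'\cong H''\circ G$. This is precisely where the adjoint hypothesis on $F$ is indispensable: without a fully faithful adjoint of $F$ producing a preimage $F_R(g)$ (or $F_L(g)$) of each test morphism $g$ of $\mcC'$, there is no mechanism to convert the $(G\circ F)$-inversion property of $H$ into the $G$-inversion property needed for $H'$, and part~(1) can indeed fail for general composable pairs of localizations.
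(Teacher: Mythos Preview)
Your proof is correct and follows essentially the same approach as the paper. In part~(1) you use the fully faithful adjoint of $F$ to lift morphisms of $\mcC'$ to $\mcC$ (the paper cites \cite[1.3 Proposition]{GZ} for the fact that every morphism of $\mcC'$ is isomorphic to one in the image of $F$), and in part~(2) you exploit the factorisation $(G\circ F)^*=F^*\circ G^*$ together with full faithfulness of $F^*$; the only cosmetic difference is that the paper identifies the localizing class for $G\circ F$ as $\mcS\cup\iota(\mcS')$ rather than the saturated class $\{f:GF(f)\text{ invertible}\}$, and states the full faithfulness of $G^*$ in~(2) as the general cancellation fact without writing out the chase.
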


\begin{proof}
	In both statements, $F$ is assumed to be a localization functor and we pick a class $\mcS$ of morphisms of $\mcC$ such that $F$ is a localization at $\mcS$. 
	
	(1) We denote by $\iota\dd \mcC'\la\mcC$ the (left or right) fully faithful adjoint to $F$.
	Suppose that $G$ is a localization at a class $\mcS'$ of morphisms of $\mcC'$. It is clear that the functor $(GF)^*=F^*G^*\dd [\mcC'',\mcE]\la[\mcC,\mcE]$ is fully faithful for each category $\mcE$. Since each morphism $f\in\Mor(\mcC')$ is isomorphic to $F(\iota(f))$ by \cite[Proposition 1.3]{GZ}, one directly identifies the essential image of $(GF)^*$. It consists precisely of those functors which send the morphisms in $\mcS\cup\iota(\mcS')$ to isomorphisms.
	
	(2) Suppose that $GF$ is a localization at $\mcS''\subseteq\Mor(\mcC)$ and, without loss of generality, $\mcS''\supseteq\mcS$. Since both $(GF)^*$ and $F^*$ are fully faithful for any category $\mcE$, so must be the functor $G^*\dd[\mcC'',\mcE]\la[\mcC',\mcE]$. One again checks in a straightforward manner that the essential image of $G^*$ consists of the functors which send $F(\mcS'')$ to isomorphisms.
\end{proof}

If $\mcA$ and $\mcB$ are abelian categories and $F\dd\mcA\la\mcB$ is an exact localization functor, it is called a \emph{Serre quotient functor}. In this case, a morphism $F(f)$ is an isomorphism if and only if $F(\Ker f) = 0 = F(\Coker f)$. The full subcategory
\[ \Ker F=\{ X\in\mathcal{A} \mid F(X) = 0 \} \]
is closed under subobjects, factor-objects and extensions. A subcategory of an abelian category with these properties is called a \emph{Serre subcategory}. Serre quotient functors originating in $\mcA$ are (up to equivalence) precisely classified by Serre subcategories of $\mcA$.

Inspired by the results in~\cite[Chapitre III]{G} and the Gabriel-Popescu theorem (e.g.~\cite[\S X.4]{St}), we call a Serre quotient functor $F\dd\mcA\la\mcB$ with a (fully faithful) right adjoint functor $\iota\dd\mcB\la\mcA$ a \emph{Gabriel localization functor}. The right adjoint $\iota$ is then called a \emph{section functor}. When in addition $\mcA$ is AB3 (i.e. has set-indexed coproducts), then $F$ preserves coproducts and $\Ker F$ is closed under subobjects, factor-objects, extensions and arbitrary coproducts (see \S\ref{subsec.Gabriel-Popescu} for a more detailed discussion of this situation).

We conclude the subsection with a technical but rather useful statement which says that under certain conditions, an exact functor with a fully faithful \emph{left} adjoint is a Gabriel localization functor.

\begin{prop} \label{prop.left adjoint-implies-right adjoint}
Let $F\dd\mcA\longrightarrow\mcB$ be an exact functor between abelian categories, where $\mcA$ is complete AB5 and has an injective cogenerator. If $F$ has a fully faithful left adjoint, then it also has a fully faithful right adjoint. 
In this case, $F$ is a Gabriel localization functor and $\mcB$ is also AB5 with an injective cogenerator. Moreover, if $\mcA$ is a Grothendieck category, so is $\mcB$.
\end{prop}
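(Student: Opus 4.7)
My plan is to establish, in sequence, that $\mcS := \Ker F$ is a Serre subcategory closed under arbitrary coproducts, that $F$ therefore preserves coproducts, and that this lets me construct the right adjoint by contravariant representability; fully faithfulness of the adjoint, together with AB5 and the existence of an injective cogenerator in $\mcB$, will then follow by standard transfer arguments.

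First I would note that by Lemma~\ref{lem.detect localization} the fully faithful left adjoint $L$ already makes $F$ a localization functor, and exactness upgrades this to a Serre quotient, so $\mcS$ is Serre in $\mcA$. Being a right adjoint to $L$, $F$ preserves products, whence $\mcS$ is closed under products. The crux---and what I expect to be the main obstacle---is to show that in any complete AB5 abelian category the canonical morphism $\bigoplus_i X_i \to \prod_i X_i$ is a monomorphism: its kernel meets each finite subcoproduct trivially, and since the full coproduct is the filtered colimit of its finite subcoproducts, AB5 will force the kernel to vanish. Combined with closure of $\mcS$ under subobjects (the Serre property), this upgrades product-closure to coproduct-closure. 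Nothing in the usual Serre-subcategory toolkit delivers this for free; the argument genuinely needs both products and AB5.

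Once coproduct-closure is in hand, I would observe that the counit $\epsilon_X \dd LF(X) \to X$ has kernel and cokernel in $\mcS$: $F\epsilon_X$ is the inverse of $\eta_{F(X)}$ by the triangle identity, hence an isomorphism, and $F$ is exact. Taking coproducts (exact in $\mcA$) and then applying the exact $F$, which by the previous step kills coproducts of objects in $\mcS$, will yield $F(\bigoplus X_i) \cong FL(\bigoplus F(X_i)) \cong \bigoplus F(X_i)$, so $F$ preserves all colimits. Then for each $B \in \mcB$ the contravariant functor $\Hom_\mcB(F(-), B) \dd \mcA^{\mathrm{op}} \to \Ab$ sends colimits to limits. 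Since $\mcA$ is complete and has a cogenerator---and is therefore well-powered, the subobjects of any $A$ being parameterised by subsets of $\Hom_\mcA(A, E)$ via the quotients $A \twoheadrightarrow A/M \hookrightarrow E^J$---contravariant representability produces $R(B) \in \mcA$ representing that functor. Naturality in $B$ assembles this into a right adjoint $R$ to $F$, and Lemma~\ref{lem.detect localization} once more forces $R$ to be fully faithful. Thus $F$ is a Gabriel localization functor.

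The remaining claims I would handle by standard transfer arguments through this Gabriel localization. For AB5 on $\mcB$: lift a filtered system of short exact sequences through $R$ to a four-term exact sequence in $\mcA$ whose rightmost term lies in $\mcS$, take filtered colimits (exact by AB5 in $\mcA$, the rightmost term remaining in $\mcS$ since $\mcS$ is now closed under all colimits), and apply the exact coproduct-preserving $F$ to recover exactness in $\mcB$. An injective cogenerator of $\mcB$ is obtained as $F$ applied to an $\mcS$-closed injective extension of the given cogenerator $E$ of $\mcA$. Finally, if $\mcA$ has a generator $G$, then every object of $\mcA$ is a quotient of some $G^{(I)}$; because $F$ is exact, essentially surjective, and preserves coproducts, every $B \in \mcB$ is a quotient of $F(G)^{(I)}$, so $F(G)$ generates $\mcB$, and AB5 plus a generator gives a Grothendieck category.
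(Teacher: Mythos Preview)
Your argument tracks the paper closely through the key step---showing that $\mcS=\Ker F$ is closed under coproducts by using AB5 to make the canonical map $\coprod_i X_i\to\prod_i X_i$ a monomorphism---and your deduction that $F$ then preserves coproducts is correct. The gap is in the next step, where you invoke ``contravariant representability'' from the hypotheses that $\mcA$ is complete, well-powered, and has a cogenerator.

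Those are the hypotheses of the Special Adjoint Functor Theorem for producing \emph{left} adjoints to limit-preserving functors. To represent the contravariant functor $A\mapsto\Hom_\mcB(F(A),B)$, i.e.\ to produce a \emph{right} adjoint to the colimit-preserving $F$, one needs the dual conditions on $\mcA$: cocomplete, co-well-powered, and possessing a \emph{generating} set. You have the first two (in an abelian category well-powered and co-well-powered coincide), but not the third: a complete AB5 abelian category with an injective cogenerator need not have a generator---otherwise it would automatically be Grothendieck and the ``moreover'' clause of the proposition would be vacuous (compare Proposition~\ref{prop.Positsetski-Stovicek}). So SAFT does not apply here.

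The paper closes this gap by appealing instead to Gabriel's classical construction. Once $\mcT=\Ker F$ is localizing (Serre and coproduct-closed), AB5 guarantees that every object has a largest $\mcT$-subobject (the directed union of all its $\mcT$-subobjects), and since $\mcA$ has injective envelopes \cite[Proposition~V.2.5]{St}, \cite[Corollaire~III.3.1]{G} furnishes the fully faithful right adjoint directly. This route exploits the injective cogenerator through injective envelopes rather than through any representability theorem, and it is exactly what your hypotheses support. Once the right adjoint is in hand, your remaining transfer arguments for AB5, the injective cogenerator, and the Grothendieck property in $\mcB$ are fine (the paper handles them by further citations to Gabriel).
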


\begin{proof}
By exactness and Lemma \ref{lem.detect localization} we know that $F$ is  a Serre quotient functor and $\mcT=\Ker(F)$ is the corresponding Serre subcategory. If $F$ has a left adjoint, it preserves products, and consequently $\mcT$ is closed under products in $\mcA$. However, the exactness of direct limits implies that the canonical map $\coprod_{i\in I}A_i\longrightarrow\prod_{i\in I}A_i$ is a monomorphism, for each family of objects $(A_i)_{i\in I}$ in $\mcA$,
as it is a direct limit of the split monomorphisms $\coprod_{i\in J}A_i=\prod_{i\in J}A_i\longrightarrow\prod_{i\in I}A_i$, where $J$ ranges over finite subsets of $I$ (cf.\ \cite[Exercise 1, p.~133]{St}).
Therefore, $\mcT$ is closed under taking coproducts and, hence, also under arbitrary colimits. It follows that each object $A\in\mcA$ has a unique maximal subobject in $\mcT$, which is simply the direct union of all subobjects of $A$ which belong to $\mcT$. The fact that $F$ has a fully faithful right adjoint then follows from~\cite[Corollaire III.3.1]{G}, since any object of $\mcA$ has an injective envelope by~\cite[Proposition V.2.5]{St}. Consequently, $F$ preserves all limits and colimits and, as it is also essentially surjective, it takes (co)generators to (co)generators as well (cf.\ \cite[Lemme III.2.4]{G}). Finally, $\mcB$ has an injective cogenerator by \cite[Corollaire III.3.2]{G}.
\end{proof}

\subsection{A generalized Gabriel-Popescu Theorem}
\label{subsec.Gabriel-Popescu}

When $\mcG$ a Grothendieck category, an object $X$ is called \emph{finitely presented} if the functor $\Hom_\mcG(X,?)\dd\mcG\la\Ab$ preserves direct limits. We denote by  $\fp(\mcG)$ the subcategory of finitely presented objects.
We say that $\mcG$ is \emph{locally finitely presented} when it has a set $\mcS$ of finitely presented  generators. This is equivalent to saying that  $\fp(\mcG)$ is skeletally small and each object of $\mcG$ is a direct limit of objects in $\fp(\mcG)$ (see \cite{CB} and \cite{Pr}). Indeed $\fp(\mcG)$ then consists precisely of those objects $X\in\mcG$ which admit an exact sequence $\coprod_{i=1}^mS_i\la\coprod_{j=1}^nS'_j\la X\la 0$, for some finite families $(S_i)$ and $(S'_j)$ of objects of $\mcS$. We say that $\mcG$ is \emph{locally coherent} when it is locally finitely presented and $\fp(\mcG)$ is an abelian exact subcategory or, equivalently, closed under taking kernels in $\mcG$.

Suppose that $\mcG$ is a Grothendieck category in the rest of this subsection.  A \emph{torsion pair} in $\mcG$ is a pair $\tau =(\mcT,\mcF)$ of subcategories such that $\mcF=\mcT^\perp$ and $\mcT={^\perp\mcF}$. In such case $\mcT$ is called the \emph{torsion class} and $\mcF$ the \emph{torsionfree class}.
 Such a torsion pair (or the torsion class $\mcT$) is called \emph{hereditary} when $\mcT$ is closed under taking subobjects in $\mcG$. The pair $\tau$ is called a \emph{torsion pair of finite type} when $\mcF$ is closed under taking direct limits in $\mcG$. 
 
When $\mcG$ is a Grothendieck category and $\mcT$ is a hereditary torsion class,  the localization  $\mcG/\mcT :=\mcG[\Sigma_\mcT^{-1}]$ with respect to the class $\Sigma_\mcT$ of morphisms $s\dd X\la X'$ in $\mcG$ such that $\Ker(s),\Coker(s)\in\mcT$ has Hom sets. We call $\mcG/\mcT$ the \emph{quotient category} of $\mcG$ by $\mcT$ and the corresponding localization functor $q\dd \mcG\la\mcG/\mcT$ is a Gabriel localization functor in the sense of~\S\ref{subsec:localization-func}.
It is well known (see~\cite{G,St}) that $\mcG/\mcT$ is again a Grothendieck category and that $\Ker(q)=\mcT$. If $\iota\dd\mcG/\mcT\la\mcG$ is the (fully faithful) right adjoint to $q$, then we call $\mathcal{Y}:=\text{Im}(\iota)$ the associated \emph{Giraud subcategory}. It consists of the objects $Y\in\mcG$ such that $\Hom_\mcG(T,Y)=0=\Ext_\mcG^1(T,Y)$, for all $T\in\mcT$.
 
A prototypical example of Grothendieck category is the one given as follows. Take any (skeletally) small pre-additive category $\mcA$. A \emph{(right)  $\mcA$-module} is any additive functor $M\dd\mcA^\op\la\Ab$ . The category with the $\mcA$-modules as objects and the natural transformations between them as morphisms, will be denoted by $\Mod\mcA$. Any category equivalent to $\Mod\mcA$, for some small pre-additive category $\mcA$,  will be called a \emph{module category}. The \emph{Yoneda functor} $\y\dd\mcA\la\Mod\mcA$ takes $a\rightsquigarrow\y(a)=\mcA(?,a)$ and is fully faithful. It is well known that $\Mod\mcA$ is a Grothendieck category, where $\text{Im}(\y)=\{\y(a) \mid a\in\Ob(\mcA)\}$ is a set of finitely generated projective (whence finitely presented) generators (see, e.g., \cite[Theorem 3.1]{Mit} and \cite[Theorem 3.4.2]{Pop}). 
We will put $\modf\mcA:=\fp(\Mod\mcA)$ to denote the subcategory of finitely presented $\mcA$-modules. It consists of the $\mcA$-modules $M$ that admit an exact sequence $\coprod_{i=1}^m\y(a_i)\la\coprod_{j=1}^n\y(b_j)\la M\la 0$, for some finite families $(a_i)$ and $(b_j)$ of objects of $\mcA$, so the terminology is consistent with~\S\ref{subsec:abel-enough-proj}. 
 
The following generalized version of Gabriel-Popescu theorem (see, e.g., \cite{Mit2} or \cite[Theorems 1.1 and 1.2]{Low}) tells us that all Grothendieck categories appear as localizations of module categories:
 
\begin{prop}[Gabriel-Popescu Theorem] \label{prop.Gabriel-Popescu}
Let $\mcG$ be any category. The following assertions are equivalent:
 
\begin{enumerate}
\item $\mcG$ is a Grothendieck category.
\item There is a small pre-additive category $\mcA$ and a hereditary torsion class $\mcT$ in $\Mod\mcA$ such that $\mcG$ is equivalent to $(\Mod\mcA)/ \mcT$. 
\item $\mcG$ is abelian and there is a fully faithful functor $\iota\dd\mcG\la\Mod\mcA$, for some small pre-additive category $\mcA$, such that $\iota$ has an exact left adjoint. 
\end{enumerate}
 
In the situation of assertion~(3) the exact left adjoint $q$ induces an equivalence of categories $(\Mod\mcA)/\mcT\stackrel{\cong}{\la}\mcG$, where $\mcT=\Ker(q)$. 
\end{prop}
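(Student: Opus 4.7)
The plan is to prove $(1) \Rightarrow (3) \Rightarrow (2) \Rightarrow (1)$, with the first implication carrying the technical weight of the classical Gabriel-Popescu theorem and the remaining two following formally from the localization machinery of \S\ref{subsec:localization-func}.

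For $(1) \Rightarrow (3)$, given a Grothendieck category $\mcG$ with a set of generators $\mcS$, I would let $\mcA\subseteq\mcG$ be the smallest full subcategory containing $\mcS$ and closed under finite direct sums; this is a small pre-additive category. Define $\iota\dd\mcG\la\Mod\mcA$ by $\iota(X)=\Hom_\mcG(?,X)_{|\mcA}$. That $\iota$ is fully faithful follows from the generation property: faithfulness is immediate, and fullness is obtained by expressing any $X\in\mcG$ canonically as a colimit of the diagram indexed by the comma category $\mcA/X$ (which exists by AB3 and which computes $X$ because $\mcA$ generates) and using this colimit description to lift a natural transformation $\iota(X)\la\iota(Y)$ to a map $X\la Y$. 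The putative left adjoint $q\dd\Mod\mcA\la\mcG$ is then constructed as the essentially unique colimit-preserving extension of the inclusion $\mcA\hookrightarrow\mcG$ along the Yoneda embedding $\mcA\hookrightarrow\Mod\mcA$: representables $\y(a)$ go to $a$, finitely presented modules to the corresponding cokernels in $\mcG$, and arbitrary modules to filtered colimits of their canonical diagrams of representables. The adjunction $q\dashv\iota$ is then immediate from the universal property of the Yoneda embedding together with the construction.

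The technical heart of the implication is the verification that $q$ is \emph{exact}. Since $q$ is a left adjoint it is automatically right exact, so only preservation of monomorphisms needs work, and this is precisely the step where the AB5 hypothesis on $\mcG$ is used in an essential way. The standard argument writes a monomorphism $\phi\dd M\hookrightarrow N$ in $\Mod\mcA$ as a filtered colimit of inclusions between finitely generated submodules, uses that filtered colimits are exact in both $\Mod\mcA$ and $\mcG$ to reduce to the case where both modules are finitely generated, and then detects elements of $\Ker(q(\phi))$ via morphisms from objects of $\mcA$, using AB5 of $\mcG$ to rule out a nonzero such kernel. This is the main obstacle of the whole proof.

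For $(3) \Rightarrow (2)$, Lemma~\ref{lem.detect localization} together with the exactness of $q$ identifies $q$ as a Serre quotient functor; set $\mcT=\Ker(q)$. Because $q$ is a left adjoint it preserves coproducts, so $\mcT$ is closed under coproducts in addition to being a Serre subcategory, and is therefore a hereditary torsion class in $\Mod\mcA$. The canonical factorization $q=\bar q\circ p$ through the Gabriel quotient $p\dd\Mod\mcA\la(\Mod\mcA)/\mcT$ yields an exact essentially surjective functor $\bar q\dd(\Mod\mcA)/\mcT\la\mcG$, and using the triangle identities for $q\dashv\iota$ together with the fact that the kernel and cokernel of the unit $M\la\iota q M$ are killed by $q$ (hence lie in $\mcT$), one verifies that $\bar q$ and $p\iota$ are quasi-inverse. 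This proves both~(2) and the final assertion of the proposition. Finally, $(2) \Rightarrow (1)$ follows from the classical fact that any Gabriel quotient of the Grothendieck category $\Mod\mcA$ by a hereditary torsion class is again Grothendieck: AB5 is inherited because filtered colimits in the quotient are computed by applying the coproduct-preserving exact quotient functor to filtered colimits in $\Mod\mcA$, and the images of the canonical generators $\y(a)$ descend to a set of generators.
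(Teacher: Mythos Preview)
The paper does not give its own proof of this proposition; it is stated as a known result with references to Mitchell and Lowen, so there is no in-paper argument to compare against directly. Your cycle $(1)\Rightarrow(3)\Rightarrow(2)\Rightarrow(1)$ is the standard route and the implications $(3)\Rightarrow(2)$ and $(2)\Rightarrow(1)$ are handled correctly, making good use of Lemma~\ref{lem.detect localization} and the material in~\S\ref{subsec:localization-func} and~\S\ref{subsec.Gabriel-Popescu}.

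There is one genuine gap in your sketch of $(1)\Rightarrow(3)$, in the argument for fullness of $\iota$. You write that $X$ is the canonical colimit of the forgetful functor on $\mcA/X$ ``because $\mcA$ generates''. But the statement that this canonical colimit recovers $X$ is precisely the statement that $\mcA$ is \emph{dense} in $\mcG$, and density of $\mcA$ is equivalent to the restricted Yoneda functor $\iota$ being fully faithful. So as written the argument is circular: generation only gives that the canonical map from the colimit to $X$ is an epimorphism, not an isomorphism. The usual fix is to reorder the steps: first construct the adjunction $q\dashv\iota$, then verify that the counit $q\iota(X)\to X$ is an isomorphism. Surjectivity of the counit follows from generation, while injectivity is typically deduced \emph{after} (or simultaneously with) establishing exactness of $q$, which is where the AB5 hypothesis enters. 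Once the counit is an isomorphism, full faithfulness of $\iota$ is formal. Apart from this reordering, your identification of the exactness of $q$ as the technical heart, and of AB5 as the essential ingredient there, is exactly right.
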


For our purposes in this paper, it will be useful to have sufficient conditions for  $(\Mod\mcA)/ \mcT$  to be locally finitely presented. The following result gives such conditions, even in a more general situation.

\begin{prop} \label{prop.locally-fp-quotient categories}
Let $\mcH$ be a locally finitely presented Grothendieck category and fix any set $\mcS$ of finitely presented generators. Let $\tau =(\mcT,\mcF)$ be a hereditary torsion pair in $\mcH$,  $q\dd\mcH\la\mcH/\mcT$ be the corresponding Gabriel localization functor and let $\mcG$ be the associated Giraud subcategory of $\mcH$. The following assertions are equivalent:

\begin{enumerate}
\item $\mcG$ is closed under taking direct limits in $\mcH$.
\item The section functor $\iota\dd\mcH/\mcT\la\mcH$ preserves direct limits.
\item The functor $q$ preserves finitely presented objects.
\item $q(\mcS)$ consists of finitely presented objects in $\mcH/\mcT$
\end{enumerate}

When these equivalent conditions hold, the torsion pair $\tau$ is of finite type and the category $\mcH/ \mcT$ is locally finitely presented, with  $\fp(\mcH/ \mcT)=\add(q(\fp(\mcH)))$. 
\end{prop}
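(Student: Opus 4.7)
The plan is to establish $(1) \Leftrightarrow (2)$ as a formal consequence of the adjunction $q \dashv \iota$, run $(2) \Rightarrow (3) \Rightarrow (4)$ by direct Hom computations, and close the cycle with $(4) \Rightarrow (3) \Rightarrow (2)$ using standard features of locally finitely presented Grothendieck categories; the supplementary assertions about finite type of $\tau$ and the description of $\fp(\mcH/\mcT)$ will then follow quickly.

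For $(1) \Leftrightarrow (2)$, I will use that direct limits in $\mcH/\mcT$ are computed as $\varinjlim Y_i = q(\varinjlim_\mcH \iota Y_i)$, so the canonical comparison $\phi\dd\varinjlim \iota Y_i \la \iota\varinjlim Y_i$ coincides with the unit $\eta_{\varinjlim \iota Y_i}$ of the adjunction. Since $\eta_Z$ is an isomorphism exactly when $Z \in \mcG$ and each $\iota Y_i$ already lies in $\mcG$, the functor $\iota$ preserves direct limits iff $\mcG$ is closed under them in $\mcH$. For $(2) \Rightarrow (3)$, given $X \in \fp(\mcH)$, the chain
\[
\Hom_{\mcH/\mcT}(qX, \varinjlim Y_i) \cong \Hom_\mcH(X, \varinjlim \iota Y_i) \cong \varinjlim \Hom_{\mcH/\mcT}(qX, Y_i),
\]
obtained by combining the adjunction, $(2)$, and the finite presentability of $X$, shows that $qX$ is finitely presented. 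Since $\mcS \subseteq \fp(\mcH)$, the step $(3) \Rightarrow (4)$ is immediate, while for $(4) \Rightarrow (3)$ I will use that every $X \in \fp(\mcH)$ admits a finite presentation $\coprod S_i \la \coprod S'_j \la X \la 0$ with $S_i, S'_j \in \mcS$, and applying the right exact $q$ together with the closure of finitely presented objects under finite colimits delivers $qX \in \fp(\mcH/\mcT)$.

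The main obstacle is $(3) \Rightarrow (2)$: reversing the Hom chain only shows that $\Hom_\mcH(X, \phi)$ is an isomorphism for every $X \in \fp(\mcH)$, not that $\phi$ itself is one. I will upgrade this using the standard fact (see~\cite{CB,Pr}) that on any locally finitely presented Grothendieck category, the restricted Yoneda functor $A \mapsto \Hom_\mcH(-, A)_{|\fp(\mcH)}$ into $[\fp(\mcH)^\op, \Ab]$ is fully faithful, so in particular reflects isomorphisms; this closes the cycle.

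For the concluding statements, $\tau$ will be of finite type because for a direct system $(F_i)$ in $\mcF$ each unit $\eta_{F_i}$ is a monomorphism (its kernel is the torsion subobject $t(F_i) = 0$ since $F_i \in \mcF$), and by $(2)$ the direct limit agrees with $\eta_{\varinjlim F_i}$; since filtered colimits of monomorphisms in the Grothendieck category $\mcH$ are monomorphisms, $\varinjlim F_i \in \mcF$. The set $q(\mcS)$ is a set of finitely presented generators of $\mcH/\mcT$---finitely presented by $(4)$, and generating because $q$ is essentially surjective and colimit preserving---so $\mcH/\mcT$ is locally finitely presented. Finally, for the equality $\fp(\mcH/\mcT) = \add(q(\fp(\mcH)))$, the inclusion $\supseteq$ is $(3)$ together with closure of $\fp$ under direct summands, and for $\subseteq$ I will write any $Y \in \fp(\mcH/\mcT)$ as $Y = \varinjlim q X_j$ with $X_j \in \fp(\mcH)$ (taking $\iota Y = \varinjlim X_j$), whence $\mathrm{id}_Y$ factors through some $qX_j$ and exhibits $Y$ as a direct summand thereof.
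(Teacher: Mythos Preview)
Your proof is correct and follows essentially the same route as the paper's. The only noteworthy difference is in closing the cycle: where you invoke full faithfulness of the restricted Yoneda functor $A \mapsto \Hom_\mcH(-,A)_{|\fp(\mcH)}$ to deduce that $\phi$ is an isomorphism, the paper argues more elementarily that $\Hom_\mcH(X,\phi)$ bijective for all $X$ in the \emph{generating} set $\mcS$ already forces $\phi$ to be an isomorphism (mono is clear; for epi, any map from a generator into the target factors through the source, so the cokernel receives no nonzero maps from generators). Both arguments work; the paper's avoids appealing to the locally finitely presented structure at that step.
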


\begin{proof}
Without loss of generality, we assume that $q\dd\mcH\la\mcG$ is a functor with $\mcG$ as codomain whose right adjoint $\iota\dd\mcG\la\mcH$ is the inclusion functor.

$(1)\Longleftrightarrow (2)$ This is clear.

$(3)\Longleftrightarrow (4)$ This follows immediately since the objects in $\fp(\mcH)$ are just cokernels of morphisms in $\add(\mcS)$.

$(2)\Longleftrightarrow (4)$ This is an instance of a general fact that a left adjoint originating in a locally finitely presented category preserves finite presentation if and only if the corresponding right adjoint preserves direct limits.

Indeed, consider $X\in\mcS$ and a direct system $(G_i)_{i\in I}$ in $\mcG$. Assertion (4) precisely says that the canonical morphism
\[ \varinjlim\Hom_{\mcG}(q(X),G_i) \la \Hom_{\mcG}(q(X),\varinjlim G_i) \]
is an isomorphism for every choice of $X$ and $(G_i)_{i\in I}$. Here, the direct limit on the right hand side is computed in $\mcG$. Taking the adjoint form, we obtain morphisms
\[ \varinjlim\Hom_{\mcH}(X,\iota(G_i)) \la \Hom_\mcH(X,\iota(\varinjlim G_i)). \]
Since $X$ is finitely presented in $\mcH$, the latter morphism is further bijective if and only if the canonical map
\begin{equation} \label{eq.locally-fp-quotient-2}
\Hom_{\mcH}(X,\varinjlim\iota(G_i)) \la \Hom_{\mcH}(X,\iota(\varinjlim G_i)).
\end{equation}
is an isomorphism. Now, since $X$ runs over a generating set, the morphisms~\eqref{eq.locally-fp-quotient-2} are bijective, for all $X\in\mcS$ and  all direct systems $(G_i)_{i\in I}$ in $\mcG$ if, and only if
\[ \varinjlim\iota(G_i) \la \iota(\varinjlim G_i) \]
is bijective for every $(G_i)_{i\in I}$, which is precisely assertion~(2).

Suppose now that the equivalent assertions (1)--(4) hold. Since each direct system $(F_i)_{i\in I}$ in $\mcF$ gives a direct system of short exact sequences
\[ (0\la F_i\stackrel{\eta_{F_i}}{\la}(\iota\circ q)(F_i)\la T_i\la 0)_{i\in I}, \]
it  follows that $\varinjlim F_i$ is a subobject of $\varinjlim (\iota\circ q)(F_i)$, and this one is an object in $\mcG$ by assertion~(1). Therefore $\varinjlim F_i\in\mcF$, so that $\tau$ is a torsion pair of finite type.

On the other hand $q(\mcS)$ is a set of finitely presented generators of $\mcH/\mcT$, thus showing that this latter category is locally finitely presented. Moreover if $Y\in\fp(\mcH/\mcT)$ and we express $\iota (Y)$ as a direct limit $\iota (Y)=\varinjlim X_\lambda$, for some direct system $(X_\lambda )_{\lambda\in\Lambda}$ in $\fp(\mcH)$, we get that $Y\cong (q\circ\iota )(Y)\cong\varinjlim q(X_\lambda)$. Since $Y$ is finitely presented, it is isomorphic to a direct summand of $q(X_\lambda)$, for some $\lambda\in\Lambda$. This gives the inclusion $\fp(\mcH/\mcT)\subseteq\add(q(\fp(\mcH)))$, the reverse inclusion being clear by assertion~(3). 
\end{proof}

At the end of Section~\ref{sec:representability}, we will also use a higher-cardinal analogue of finite presentability. Given a regular cardinal $\kappa$, we say that $X\in\mcG$ is \emph{$<\kappa$-presented} if $\Hom_\mcG(X,?)\dd\mcG\la\Ab$ preserves \emph{$\kappa$-direct limits}, i.e.\ colimits indexed by partially ordered sets whose each collection of $<\kappa$ elements has an upper bound.
It is a well-known consequence of the Gabriel-Popescu Theorem that every $X\in\mcG$ is $<\kappa$-presented for some regular cardinal $\kappa$ and that $\mcG$ is locally $<\kappa$-presented for some regular cardinal $\kappa$. The latter means that $\mcG$ has a set $\mcS$ of $<\kappa$-presented generators and, as in the finite case, the condition is equivalent to saying that the full subcategory $\kappa\pres(\mcG)$ of $<\kappa$-presented objects is skeletally small and each object of $\mcG$ is a $\kappa$-direct limit of objects in $\kappa\pres(\mcG)$.

If $\mcG$ is locally $<\kappa$-presented and $\lambda$ is any regular cardinal, then the class of $<\lambda$-presented objects is always closed under cokernels by~\cite[Proposition 1.16]{AR}. On the other hand, the full subcategory $\lambda\pres(\mcG)$ of $<\lambda$-presented objects is also closed under kernels and extensions and so it is an exact abelian subcategory of $\mcG$ for arbitrarily large cardinals $\lambda$. Concretely, this is true if $\kappa$ is sharply smaller than $\lambda$ in the sense of \cite[Definition 2.12]{AR} and if a skeleton of $\kappa\pres(\mcG)$ has $<\lambda$ morphisms (there are arbitrarily large such cardinals by~\cite[Example 2.13(6)]{AR}).
To see this, we remind the reader that the condition of being sharply smaller means that given any $\kappa$-directed poset $I$, each subset $J\subseteq I$ of cardinality $<\lambda$ is contained in a $\kappa$-directed subset $\hat{J}$ of cardinality $<\lambda$.
In this situation, an object $X$ of $\mcG$ is $<\lambda$-presented if, and only if, it is a direct summand of a direct limit $\varinjlim_I C_i$, where the $C_i$ are $<\kappa$-presented and $I$ is $\kappa$-directed set with $\lvert I\rvert<\lambda$ (see \cite[Remark 2.15]{AR}). 
Now, by the proof of~\cite[Theorem 1.46]{AR},
the generalized Yoneda functor $\y\dd\mcG\la[\kappa\pres(\mcG),\Set]$, $X\longmapsto\Hom(?,X)|_{\kappa\pres(\mcG)}$ is fully faithful and the essential image is closed under $\kappa$-direct limits in the target functor category.
Thus, thanks to~\cite[Example 1.31]{AR} and  the description of $<\lambda$-presented objects given above, an object $X\in\mcG$ is $<\lambda$-presented if, and only if, $\y X$ is $<\lambda$-presented in $[\kappa\pres(\mcG),\Set]$
if, and only if, the sum of the cardinalities of $\Hom_\mcG(C,X)$, where $C$ runs over the objects of a skeleton of $\kappa\pres(\mcG)$, is $<\lambda$.
The closure of $\lambda\pres(\mcG)$ under extensions and kernels in $\mcG$ then follows immediately.



\subsection{Triangulated categories---general notions}
\label{subsec:triangulated}

We refer the reader to \cite{N} for the precise definition of
\emph{triangulated category} and the basic facts about them (many of these, albeit with different terminology, can be found also in~\cite{HPS}). Here, we will denote the suspension functor by $?[1]\dd\mcD\la\mcD$. We will then put $?[0]=1_\mcD$ and $?[k]$
will denote the $k$-th power of $?[1]$, for each integer $k$.
(Distinguished) triangles in $\mcD$ will be denoted
$X\stackrel{u}{\la} Y\stackrel{v}{\la} Z\stackrel{w}{\la}X[1]$ or by $X\stackrel{u}{\la} Y\stackrel{v}{\la} Z\laplus$. It is well known that any morphism in the triangle determines the other vertex up to non-unique isomorphism. We will call $Z$ the \emph{cone} of $u$, written $\cone(u)$, and $X$ the \emph{cocone} of $v$, written $\text{cocone}(v)$. 

  A \emph{triangulated functor} between
triangulated categories is one that preserves triangles. The definition is in fact a little subtle in that the datum of a triangulated functor consists not only of a functor $F\dd\mcD\la\mcD'$, but also of a natural equivalence $F(?[1])\cong F(?)[1]$. The latter is, however, usually obvious from the context.

All through the rest of Section~\ref{sec:prelim},  $\mcD$ will be a triangulated category. When $I\subseteq\mathbb{Z}$ is a subset and $\mcS\subseteq\mcD$ is a subcategory, we will denote by $\mcS^{\perp_I}$ (resp. $_{}^{\perp_I}\mcS$) the subcategory of $\mcD$ consisting of the objects $Y$ such that $\Hom_\mcD(S,Y[k])=0$ (resp. $\Hom_\mcD(Y,S[k])=0$), for all $S\in\mcS$ and all integers $k\in I$. In this vein we have subcategories  $\mcS^{\perp_{>n}}$,  $\mcS^{\perp_{\geq n}}$, $\mcS^{\perp_\mathbb{Z}}$  and their symmetric counterparts.

Unlike the
terminology used for abelian categories, a class (resp. set) 
$\mcS\subseteq\Ob(\mcD)$ is called a \emph{class (resp. set) of
generators of $\mcD$} when $\mcS^{\perp_\mathbb{Z}}=0$.  In case $\mcD$  has
coproducts, an object $X\in\mcD$ is called \emph{compact} when the functor $\Hom_\mcD(X,?)\dd\mcD\la\Ab$ preserves coproducts. We denote by $\mcD^c$ the subcategory of compact objects. 
We will say that $\mcD$ is \emph{compactly 
generated} when it has a set of compact generators, in which case the subcategory $\mcD^c$ is  skeletally small. 

Recall that if $\mcD$ and $\mcA$ are a triangulated
and an abelian category, respectively, then an additive  functor
$H\dd\mcD\la\mcA$ is a \emph{cohomological
functor} when, given any triangle $X\la Y\la
Z\laplus$, one gets an induced long exact
sequence in $\mcA$:
\[
\cdots \la H^{n-1}(Z)\la H^n(X)\la
H^n(Y)\la H^n(Z)\la
H^{n+1}(X)\la \cdots,
\]
where $H^n:=H\circ (?[n])$, for each $n\in\mathbb{Z}$.
Such functors are also often called \emph{homological functors} and in that case one requires that triangles yield long exact sequences
\[
\cdots \la H_{n+1}(Z)\la H_n(X)\la
H_n(Y)\la H_n(Z)\la
H_{n-1}(X)\la \cdots,
\]
where $H_n:=H\circ(?[-n])$. We will use both  variants, depending on what will appear more natural or customary in the given context. Obviously, one has the identification $H_{-n} = H^n$.

Each representable functor $\Hom_\mcD(?,X)\dd\mcD^\op\la\Ab$
is cohomological. We will say that $\mcD$ \emph{satisfies Brown representability theorem} when $\mcD$ has coproducts and each cohomological functor $H\dd\mcD^\op\la\Ab$ that preserves products (i.e. that,  as a contravariant functor $\mcD\la\Ab$,  takes coproducts to products) is representable. 
Each compactly generated triangulated category satisfies  Brown representability theorem  (\cite[Theorem 8.3.3]{N}).

Given a triangulated category $\mcD$, a subcategory $\mathcal{E}$ will be called a \emph{suspended subcategory} when it is closed under taking extensions and $\mathcal{E}[1]\subseteq\mathcal{E}$, and \emph{cosuspended} when it is closed under taking extensions and $\mathcal{E}[-1]\subseteq\mathcal{E}$. If, in addition, we have $\mathcal{E}=\mathcal{E}[1]$, we will say that $\mathcal{E}$ is a \emph{triangulated subcategory}. A triangulated subcategory closed under taking direct summands is called a \emph{thick subcategory}. When the ambient triangulated category $\mcD$ has coproducts, a triangulated subcategory closed under taking arbitrary coproducts is called a \emph{localizing subcategory}. Note that such a subcategory is always thick  (see \cite[Lemma 1.4.9]{HPS} or the proof of \cite[Proposition 1.6.8]{N}, which also shows that idempotents split in any triangulated category with coproducts). 
In such case, given any class $\mcS$ of objects of $\mcD$, we will denote by  $\Loc_\mcD(\mcS)$  the smallest localizing subcategory containing $\mcS$. 

Recall that when $\mcE$ is a triangulated subcategory of the triangulated category $\mcD$, the localization of $\mcD$ with respect to the class of morphism $s$ in $\mcD$ with $\cone(s)\in\mcE$ (see~\S\ref{subsec:localization-func}) is called the \emph{Verdier quotient $\mcD/\mcE$} and the associated localization functor $q\dd\mcD\la\mcD/ \mcE$ is the \emph{Verdier quotient functor}. 
The category $\mcD/\mcE$ has a natural triangulated structure and $q$ is naturally a triangulated functor.


\subsection{\texorpdfstring{$t$-structures}{t-structures} in triangulated categories} \label{ssec.t-structures}

A \emph{$t$-structure} in $\mcD$ (see \cite[Section 1]{BBD}) is a pair $\mathbf{t} =(\mcU,\mcV)$ of full subcategories which satisfy the following properties:

\begin{enumerate}
\item[(i)] $\Hom_\mcD(U,V[-1])=0$, for all
$U\in\mcU$ and $V\in\mcV$;
\item[(ii)] $\mcU[1]\subseteq\mcU$ (or $\mcV[-1]\subseteq\mcV$);
\item[(iii)] For each $X\in\Ob(\mcD)$, there is a triangle $U\la X\la
W\laplus$ in $\mcD$, where
$U\in\mcU$ and $W\in\mcV[-1]$.
\end{enumerate}
It is easy to see, using basic properties of triangulated categories, that the objects $U$ and $W$ in the above
triangle are uniquely determined by $X$, up to a unique isomorphism, and thus
define functors
$\tau_{\mathbf{t}}^{\leq 0}\dd\mcD\la\mcU$ and
$\tau_{\mathbf{t}}^{>0}\dd\mcD\la\mcV[-1]$
which are right and left adjoints to the respective inclusion
functors. We call them the \emph{left} and \emph{right truncation functors} with respect to the given $t$-structure.
It immediately follows that $\mcV=\mcU^\perp
[1]$ and $\mcU={^\perp (\mcV[-1])}={^\perp
(\mcU^\perp)}$, that $\mcU$ is a suspended subcategory and $\mcV$ is cosuspended,
and that $\mcU,\mcV$ are both closed under summands in $\mcD$. We will call $\mcU$
and $\mcV$ the \emph{aisle} and the \emph{co-aisle} of
the $t$-structure. Note that, for each $n\in\mathbb{Z}$, the pair $(\mcU[n],\mcV[n])$ is also a $t$-structure, and the corresponding left and right truncation functors are denoted by $\tau_\mathbf{t}^{\leq -n}$ and $\tau_{\mathbf{t}}^{>-n}=:\tau_{\mathbf{t}}^{\geq -n+1}$.   If $\mcD'$ is a triangulated subcategory of $\mcD$, we will say that the $t$-structure $\mathbf{t}$ \emph{restricts to $\mcD'$} when $\mathbf{t}'=(\mcU\cap\mcD',\mcV\cap\mcD')$ is a $t$-structure in $\mcD'$. This is equivalent to say that $\tau_\mathbf{t}^{\leq 0}X$ (or $\tau_\mathbf{t}^{>0}X$) is in $\mcD'$, for all $X\in\mcD'$.

The full subcategory
$\mcH=\mcU\cap\mcV$ is called the \emph{heart} of the $t$-structure and it is an
abelian category, where the short exact sequences `are' the
triangles in $\mcD$ with the first three terms in $\mcH$.
Moreover, with the obvious abuse of notation,  the assignments
$X\rightsquigarrow (\tau_{\mathbf{t}}^{\leq 0}\circ\tau_{\mathbf{t}}^{\geq 0})(X)$ and $X\rightsquigarrow (\tau_{\mathbf{t}}^{\geq 0}\circ\tau_{\mathbf{t}}^{\leq 0})(X)$ define   naturally isomorphic
functors $\mcD\la\mcH$ which are
cohomological (see \cite{BBD}). We fix all through the paper a  functor $\H\dd\mcD\la\mcH$ naturally isomorphic to those two functors. The $t$-structure $\mathbf{t} =(\mcU,\mcV)$ will be called \emph{left (resp. right) non-degenerate} when $\bigcap_{k\in\mathbb{Z}}\mcU[k]=0$ (resp. $\bigcap_{k\in\mathbb{Z}}\mcV [k]=0$). It will be called \emph{non-degenerate} when it is left and right non-degenerate. A $t$-structure $\mathbf{t}=(\mcU,\mcV)$ such that $\mcU[1]=\mcU$, or equivalently $\mcV=\mcV[-1]$, will be called a \emph{semiorthogonal decomposition}. 

Suppose now that $\mcD$ has coproducts. If the co-aisle $\mcV$ is closed under taking coproducts, which is equivalent to say that the truncation functor $\tau_{\mathbf{t}}^{\leq 0}\dd\mcD\la\mcU$ preserves coproducts, then $\mathbf{t}$ is called a \emph{smashing $t$-structure}.
If $\mcS\subseteq\mcU$ is any class of objects, we shall say that the $t$-structure $\mathbf{t}$ is \emph{generated by $\mcS$} or that \emph{$\mcS$ is a class of generators} of $\mathbf{t}$ when $\mcV =\mcS^{\perp_{< 0}}$. We shall say that $\mathbf{t}$ is \emph{compactly generated} when it is generated by a set (i.e.\ \emph{not} a proper class) of compact objects. Note that such a $t$-structure is always smashing.

We now shortly discuss  the question of when a suspended subcategory is an aisle. Fix a suspended subcategory $\mcS$ of $\mcD$. By~\cite[\S1]{KV-aisles}, $\mcS$ is the aisle of a $t$-structure in $\mcD$ if, and only if, the inclusion functor $\mcS\la\mcD$ has a right adjoint. If $\mcD$ has coproducts and satisfies the Brown representability theorem (e.g.\ if $\mcD$ is well generated in the sense of~\S\ref{subsec:well-gen} below) and $\mcS$ is closed under coproducts, Neeman~\cite{N-t-str} has recently provided the following sufficient condition for the existence of the adjunction.
For any $X,Y\in\mcD$, we consider the slice category $X/\mcS/Y$ whose objects are pairs of composable morphisms $(X\overset{f}\to S\overset{g}\to Y)$ with $S\in\mcS$ and a morphism from $(X\overset{f}\to S\overset{g}\to Y)$ to $(X\overset{f'}\to S'\overset{g'}\to Y)$ is given by $h\colon S\la S'$ such that $f'=hf$ and $g=g'h$. If we denote by $H_\mcS(X,Y)$ the class of connected components of $X/\mcS/Y$ (which are the smallest subclasses of objects pairwise connected by zigzags of morphisms; a similar construction also appeared in~\cite[\S2.1]{BP10} in a different context), then it is proved in\cite[Proposition 1.15 and Discussion 1.16]{N-t-str} that $\mcS\la\mcD$ has a right adjoint if, and only if, $H_\mcS(X,Y)$ is a set (and not a proper class) for each pair $X,Y\in\mcD$. In particular, we can easily derive the following criterion which we later use in the proof of Proposition~\ref{prop.t-structure from pi}:

\begin{prop}\label{prop:aisle-by-Neeman}
Let $\mcD$ be a triangulated category with coproducts satisfying Brown representability theorem and let $\mcS\subseteq\mcD$ be a suspended subcategory closed under coproducts. 
Suppose that for each $X\in\mcD$, there is set $\mcS_X\subseteq\mcS$ such that each morphism $X\la S$, with $S\in\mcS$, factors through an object of $\mcS_X$. Then $\mcS$ is an aisle in $\mcD$.
\end{prop}

\begin{proof}
The cardinality of each $H_\mcS(X,Y)$ is clearly bounded by the
sum of the cardinalities of $\Hom_\mcD(X,S)\times\Hom_\mcD(S,Y)$, where
$S$ runs over $\mcS_X$.
\end{proof}


\subsection{Standard well generated triangulated categories}
\label{subsec:well-gen}

Next we recall some known generalizations of compactly generated triangulated categories and define a new convenient one. Let $\mcD$ be triangulated with coproducts. A \emph{perfect class of objects} in $\mcD$ is a class $\mcS$ such that, for any family $(f_i\colon X_i\la Y_i)_{i\in I}$ of morphisms,
\[ \Hom_\mcD(S,\coprod_{i\in I} f_i)\colon\Hom_\mcD(S,\coprod_{i\in I}X_i)\la\Hom_\mcD(S,\coprod_{i\in I}X_i) \]
is an epimorphism for all $S\in\mcS$ whenever the morphisms
\[ \Hom_\mcD(S,f_i)\colon\Hom_\mcD(S,X_i)\la\Hom_\mcD(S,X_i) \]
are such, for all $i\in I$ and $S\in\mcS$. An object X is \emph{perfect} when $\{X\}$ is a perfect set of objects.
We say that $\mcD$ is \emph{perfectly generated by $\mcS$} when $\mcS$ is a perfect set of generators.
On the other hand, given a regular cardinal $\kappa$, we say that an object $X$ is \emph{$\kappa$-small} if any morphism in $\mcD$ of the form $X\la\coprod_{i\in I}Y_i$ factors through a subcoproduct $\coprod_{i\in J}Y_i$ for some subset $J\subseteq I$ of cardinality $<\kappa$.

For any triangulated category $\mcD$ with coproducts, there exists a largest perfect class of $\kappa$-small objects which can be obtained as the union of all such classes.
We denote the full subcategory of $\mcD$ given by this class of objects by $\mcD^\kappa$ and call the objects contained in it \emph{$\kappa$-compact objects}. Observe that $\mcD^{\aleph_0}=\mcD^c$, and also that for any family of objects $(X_i)_{i\in I}$ in $\mcD^\kappa$ such that $I$ is of cardinality $<\kappa$, we also have $\coprod_{i\in I}X_i\in\mcD^\kappa$. Thus, our definition agrees with that in~\cite{K-wellgen} thanks to~\cite[Lemma 4]{K-wellgen}. This leads to the following important definition:

\begin{opr}[\cite{N}] \label{def.well generated}
A triangulated category $\mcD$ with coproducts is \emph{$\kappa$-well generated}, where $\kappa$ is a regular cardinal, when it is perfectly generated by a set of $\kappa$-small objects.
The category $\mcD$ is called  \emph{well generated} when it is $\kappa$-well generated, for some regular cardinal $\kappa$.
\end{opr}

In a well generated triangulated category $\mcD$, the subcategory $\mcD^\kappa$ is essentially small for each $\kappa$ and $\mcD=\bigcup_\kappa\mcD^\kappa$, where $\kappa$ runs through the class of regular cardinals (see~\cite[Lemma~5 and Corollary]{K-wellgen}).
 Furthermore, each well generated triangulated category satisfies Brown representability theorem  (\cite[Theorem 8.3.3]{N}).

Several results in this paper will be, however, stated for a hypothetically narrower class of triangulated categories:

\begin{opr} \label{def.std well generated}
A triangulated category $\mcD$ is called \emph{standard well generated} if it is equivalent to the Verdier quotient $\mcC/\Loc_\mcC(\mcS)$, where $\mcC$ is compactly generated triangulated and $\mcS\subseteq\Ob(\mcC)$ is a set of objects.
\end{opr}

As the terminology suggests, all standard well generated triangulated categories are well generated (see \cite[Theorem 1.14 and Remark 1.16]{N}), and no example of a well generated triangulated category which is not standard well generated is currently known.
This class of triangulated categories should be seen as a suitable triangulated analogue of locally presentable categories \cite{AR} in ordinary category theory on one hand and of locally presentable stable $\infty$-categories~\cite{Lur-HA} in higher category theory on the other hand.

Note that every compactly generated category is standard well generated, as is the unbounded derived category of any Grothendieck categories (cf.~\cite{AJS}).
Much more generally, any well generated algebraic~\cite[\S7.5]{K-Chicago} or topological~\cite{Schwede} triangulated category $\mcD$ is automatically standard well generated thanks to the main results of~\cite{Porta,Heider}.


\subsection{Purity and Milnor colimits in triangulated categories} \label{ssect.purity}
\label{subsec:Mcolim}

When $\mcD$ is a triangulated category with coproducts,  we will use the term \emph{Milnor colimit} of a sequence of morphisms $X_0\stackrel{x_1}{\la}X_1\stackrel{x_2}{\la}\cdots\stackrel{x_n}{\la}X_n\stackrel{x_{n+1}}{\la}\cdots$ for what in \cite{N} is called homotopy colimit. It will be denoted $\Mcolim(X_n)$, without reference to the $x_n$, and it is defined as the third term in the triangle
\begin{equation} \label{eq.Mcolim}
\coprod_{n\ge 0}X_n \overset{1-x}\la \coprod_{n\ge 0}X_n \la \Mcolim(X_n) \laplus.
\end{equation}
The components $f_i\dd X_i\la\Mcolim(X_n)$ of the second map in the triangle define a cocone in $\mcD$, 
\begin{equation} \label{eq.Mcolim cocone}
\vcenter{
\xymatrix{ X_0 \ar[r]^-{x_1} \ar@/_/@<-.5ex>[rrrrd]_-{f_0} & X_1 \ar[r]^{x_2} \ar@/_/[rrrd]|-\hole|-{f_1} & X_2 \ar@/_/@<.5ex>[rrd]|-\hole|-{f_2} \ar[r]^{x_3} & \dots \\
&&&& \Mcolim(X_n), }
}
\end{equation}
which is a weak colimit of the sequence by~\cite[Proposition 2.2.4]{HPS} (i.e.\/ for any other cocone $(g_i\dd X_i\la Y)_{i\ge0}$ there is a not necessarily unique morphism $g\dd\Mcolim(X_n)\la Y$ such that $g_i=gf_i$ for each $i\ge 0$).

In Section \ref{sect.universal copr-pres} we will outline a more general purity theory, valid on all standard well generated triangulated categories. But, for the moment, we remind the reader of the classical theory initiated in \cite{K}. A \emph{pure triangle} in a compactly generated triangulated category $\mcD$ is a triangle $X\stackrel{u}{\la}Y\stackrel{v}{\la}Z\stackrel{w}{\la}X[1]$ that satisfies any of the following equivalent conditions 

\begin{enumerate}
\item $u_*:=\Hom_\mcD(C,u)\dd\Hom_\mcD(C,X)\la\Hom_\mcD(C,Y)$ is a monomorphism, for all $C\in\mcD^c$, where $\mcD^c$ is the subcategory of compact objects;
\item $v_*:=\Hom_\mcD(C,v)\dd\Hom_\mcD(C,Y)\la\Hom_\mcD(C,Z)$ is an epimorphism, for all $C\in\mcD^c$;
\item $w_*:=\Hom_\mcD(C,w)\dd\Hom_\mcD(C,Z)\la\Hom_\mcD(C,X[1])$ is the zero map,  for all $C\in\mcD^c$.
\end{enumerate}

Any morphism $u$ (resp. $v$) appearing in such a triangle is called a \emph{pure monomorphism} (resp. \emph{pure epimorphism}).  A \emph{pure-injective object} of $\mcD$ is an object $Y$ such that the functor $\Hom_\mcD(?,Y)\dd\mcD\la\Ab$ takes pure monomorphisms to epimorphisms or, equivalently, pure epimorphisms to monomorphisms. 

A typical example of pure triangles appears when $X_0\stackrel{x_1}{\la}X_1\stackrel{x_2}{\la}\cdots\stackrel{x_n}{\la}X_n\stackrel{x_{n+1}}{\la}\cdots$ is a sequence of morphisms in $\mcD$.
Then the triangle~\eqref{eq.Mcolim} which defines $\Mcolim(X_n)$ is pure. A useful immediate consequence of the fact is that if $C\in\mcD^c$, then $\Hom_\mcD\big(C,\Mcolim(X_n)\big)\cong \varinjlim\Hom_\mcD(C,X_n)$.


\section{\texorpdfstring{$t$-structures}{t-structures} and localization of categories}
\label{sec.t-structures and localization}

In this section we establish basic general facts about the interaction of $t$-struct\-ures, Serre quotients of abelian categories and Verdier quotients of triangulated categories.
We in particular discuss methods how to turn degenerate $t$-structures to non-degenerate ones.
For the entire section, we denote by $\mcD$ a triangulated category with a $t$-structure $\mathbf{t}=(\mcU,\mcV)$, whose heart we denote by $\mcH$.
We start with an easy observation.

\begin{lem} \label{lem.H^0 is a localization}
The homological functor $\H\dd\mcD\la\mcH$ associated with the $t$-structure $\mathbf{t}$ is a localization functor.
\end{lem}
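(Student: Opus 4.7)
The plan is to realize $\H$ as a composition of two functors, each of which is a localization functor by the adjoint criterion of Lemma~\ref{lem.detect localization}, and then invoke Lemma~\ref{lem.composition and cancellation of localizations}(1) to conclude that the composition is also a localization functor.

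First, I would observe that the truncation $\tau_\mathbf{t}^{\leq 0}\dd\mcD\la\mcU$ is a localization functor. Indeed, it has the inclusion $\mcU\hookrightarrow\mcD$ as fully faithful left adjoint: this follows by applying $\Hom_\mcD(U,?)$ to the defining triangle $\tau_\mathbf{t}^{\leq 0}X\la X\la \tau_\mathbf{t}^{>0}X\laplus$ and using that $\Hom_\mcD(U,W)=0$ for all $U\in\mcU$ and $W\in\mcV[-1]$, together with the fact that $\tau_\mathbf{t}^{>0}X$ and its shift $\tau_\mathbf{t}^{>0}X[-1]$ both lie in $\mcV[-1]$. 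Hence Lemma~\ref{lem.detect localization} applies.

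Next, I would show that $\tau_\mathbf{t}^{\geq 0}$ restricts to a functor $\mcU\la\mcH$: for $U\in\mcU$ the triangle
\[ \tau_\mathbf{t}^{\leq -1}U\la U\la \tau_\mathbf{t}^{\geq 0}U\laplus \]
has its first two vertices in $\mcU$ (since $\tau_\mathbf{t}^{\leq -1}U\in\mcU[1]\subseteq\mcU$), and $\mcU$ is closed under extensions, hence under cones of morphisms between its objects, so $\tau_\mathbf{t}^{\geq 0}U\in\mcU\cap\mcV=\mcH$. The restricted functor $\tau_\mathbf{t}^{\geq 0}|_\mcU\dd\mcU\la\mcH$ then has the inclusion $\mcH\hookrightarrow\mcU$ as fully faithful right adjoint (by the analogous computation), so another application of Lemma~\ref{lem.detect localization} yields that $\tau_\mathbf{t}^{\geq 0}|_\mcU$ is a localization functor.

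Finally, since $\H$ is by definition naturally isomorphic to the composition
\[ \mcD \xrightarrow{\tau_\mathbf{t}^{\leq 0}} \mcU \xrightarrow{\tau_\mathbf{t}^{\geq 0}|_\mcU} \mcH, \]
and the first factor $\tau_\mathbf{t}^{\leq 0}$ admits a (left) adjoint, Lemma~\ref{lem.composition and cancellation of localizations}(1) immediately gives that the composition is a localization functor. The argument is therefore essentially formal bookkeeping; no serious obstacle is anticipated, as both ingredients (the adjoint criterion and the composition lemma) are stated in precisely the form we need.
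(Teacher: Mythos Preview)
Your proposal is correct and follows essentially the same approach as the paper: both factor $\H$ as $\tau_\mathbf{t}^{\geq 0}|_\mcU \circ \tau_\mathbf{t}^{\leq 0}$, observe that each factor is a localization by Lemma~\ref{lem.detect localization} (via the fully faithful left and right adjoints given by the inclusions), and then apply Lemma~\ref{lem.composition and cancellation of localizations}(1). You simply spell out in more detail why the adjunctions hold and why $\tau_\mathbf{t}^{\geq 0}$ restricts to a functor $\mcU\la\mcH$.
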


\begin{proof}
The functor $\H$ is obtained as the composition
\[ \mcD \stackrel{\ai0}\la \mcU \stackrel{{\coa0}_{|\mcU}}\la \mcH, \]
where the first functor has a fully faithful left adjoint $\mcU\subseteq\mcD$ and the second functor a fully faithful right adjoint, so both are localization functors. Thanks to Lemma~\ref{lem.composition and cancellation of localizations}, $\H$ is a localization functor as well.
\end{proof}

Recall that if $\mcP$ is an additive category, we use the notation $\widehat{\mcP}:=\modf\mcP$. As all of $\mcH$, $\mcU$ and $\mcD$ have weak kernels, the corresponding categories $\widehat{\mcH}$, $\widehat{\mcU}$ and $\widehat{\mcD}$ are abelian by Lemma~\ref{lem.coherent functors are abelian}
(in the case of $\mcU$, we construct a weak kernel of $f\dd U\la U'$ by completing it to triangle $Z\stackrel{u}{\la}U\stackrel{f}{\la}U'\laplus$ in $\mcD$ and composing $u$ with the truncation map $\tau_\mathbf{t}^{\leq 0}Z\la Z$).
Moreover, it is also well known that the Yoneda functor
\[ \y_\mcD\dd \mcD \la \widehat{\mcD} \]
is a universal homological functor in the sense that any other homological functor $H\dd \mcD\la\mcA$ with $\mcA$ abelian uniquely lifts to an exact functor $\widehat{H}\dd\widehat{\mcD}\la\mcA$, \cite[Lemma 2.1]{K}.
We may apply this in particular to the homological functor $\H\dd\mcD\la\mcH$ to obtain a commutative diagram
\begin{equation} \label{eq.H tilde}
\vcenter{
\xymatrix{
\mcD \ar[rr]^-{\y_\mcD} \ar[drr]_-{\H} &&
\widehat{\mcD} \ar[d]^-{\widehat{\H}} \\
&& \mcH.
}
}
\end{equation}

We will focus on the exact functor $\widehat{\H}$ now. For the context, we record the following straightforward observation which will be illuminating also later.

\begin{lem} \label{lem.factorization of exact functors}
Every exact functor $F\dd\mcA\la\mcB$ of abelian categories factors as $F=J\circ Q$, where $Q\dd\mcA\la\mcB'$ is a Serre quotient functor and $J\dd\mcB'\la\mcB$ is an exact faithful functor. This factorization is essentially unique in the sense that any other such factorization $F=J'\circ Q'$ induces a commutative diagram
\[
\xymatrix@R=1em{
& \mcB' \ar[dr]^-{J} \ar[dd]_-{\simeq} \\
\mcA \ar[ur]^-{Q} \ar[dr]_-{Q'} && \mcB, \\
& \mcB'' \ar[ur]^-{J'} \\
}
\]
where the vertical arrow is an equivalence.
\end{lem}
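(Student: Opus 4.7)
The plan is to take $\mcB'$ to be the Serre quotient $\mcA/\Ker(F)$, where $\Ker(F)=\{A\in\mcA\mid F(A)=0\}$. First I would verify that $\Ker(F)$ is a Serre subcategory of $\mcA$: because $F$ is exact, $F$ takes any sub-, quotient- or extension-object of objects in $\Ker(F)$ to the corresponding construction performed on $0$, hence to $0$. Let $Q\dd\mcA\la\mcB':=\mcA/\Ker(F)$ be the associated Serre quotient functor. As $F$ is exact and kills $\Ker(F)$, the universal property of $Q$ (recalled in \S\ref{subsec:localization-func}) produces a unique exact functor $J\dd\mcB'\la\mcB$ with $F=J\circ Q$.

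Next I would show that $J$ is faithful. Suppose $f\dd X\la Y$ is a morphism in $\mcB'$ with $J(f)=0$. By exactness of $J$, $J(\Img f)=\Img J(f)=0$. Since $Q$ is essentially surjective, write $\Img f\cong Q(I)$ for some $I\in\mcA$; then $F(I)\cong J(Q(I))=0$, so $I\in\Ker(F)$ and $Q(I)=0$. Hence $\Img f=0$ and $f=0$, as required.

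For the essential uniqueness, suppose $F=J'\circ Q'$ is another factorization with $Q'\dd\mcA\la\mcB''$ a Serre quotient functor and $J'$ exact and faithful. I would argue that $\Ker(Q')=\Ker(F)$: the inclusion $\Ker(Q')\subseteq\Ker(F)$ is immediate from $F=J'\circ Q'$, while if $A\in\Ker(F)$, then $J'(1_{Q'(A)})=1_{J'(Q'(A))}=1_{F(A)}=0$, so faithfulness of $J'$ forces $1_{Q'(A)}=0$ and hence $Q'(A)=0$. Since the Serre subcategories associated with $Q$ and $Q'$ coincide, applying the universal property of Serre quotients in both directions supplies mutually inverse exact equivalences $\mcB'\simeq\mcB''$ that make the required diagram commute. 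The only aspect requiring minor care is invoking the version of the universal property that upgrades an exact functor killing the Serre subcategory to an \emph{exact} functor out of the quotient, but this is standard in the Gabriel construction, so I do not anticipate a serious obstacle.
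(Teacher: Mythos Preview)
Your proposal is correct and follows essentially the same approach as the paper: both construct $\mcB'=\mcA/\Ker(F)$, verify faithfulness of the induced functor $J$, and derive uniqueness from the equality $\Ker(Q')=\Ker(F)$ together with the universal property of Serre quotients. The only cosmetic difference is in the faithfulness step: the paper represents a morphism in $\mcB'$ by a morphism in $\mcA$ and argues via $\Img(f')\in\Ker(F)$, whereas you instead use exactness of $J$ to compute $J(\Img f)=0$ and lift $\Img f$ along the essentially surjective $Q$; both arguments amount to the same idea.
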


\begin{proof}
Regarding the existence, we simply put $\mcB'=\mcA/\Ker(F)$ and denote by $Q$ the localization functor and by $J\dd\mcB'\la\mcB$ the induced exact functor. Any morphism $f\dd X\la Y$ in $\mcB'$ is represented by a morphism $f'\dd X'\la Y'$ in $\mcA$ such that $X'\subset X$ is a subobject with $X/X'\in\Ker(F)$ and $Y'$ is a factor of $Y$ modulo a subobject in $\Ker(F)$. If $J(f)$ vanishes, so does clearly $JQ(f')=F(f')$. Since $F$ is exact, this implies that $\Img(f')\in\Ker(F)$ and that $Q(f')=0$. Since $f$ and $Q(f')$ are isomorphic in $\mcB'$, we infer that $f=0$ and $J$ is faithful.
	
Finally, observe that if $F=J\circ Q$ is any factorization with $Q$ a Serre quotient and $J$ faithful, we must have $\Ker(Q)=\Ker(F)$. The uniqueness of the factorization then follows from the universal property of the Serre quotient.
\end{proof}

The point with $\widehat{\H}$ is that the second part in the factorization from Lemma~\ref{lem.factorization of exact functors} is trivial---$\widehat{\H}$ itself is a localization functor.

\begin{prop} \label{prop.abelianized homology is Serre quotient}
The exact functor $\widehat{\H}\dd\widehat{\mcD}\la\mcH$ is a Serre quotient functor.
\end{prop}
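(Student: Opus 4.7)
Since $\widehat{\H}$ is exact by its defining universal property (it is the unique exact extension of the cohomological functor $\H$ along the universal homological functor $\y_\mcD$), the substance of the statement is that $\widehat{\H}$ is a localization functor. The plan is to apply Lemma~\ref{lem.factorization of exact functors} to factor $\widehat{\H} = \bar{\H}\circ Q$, where $Q\dd\widehat{\mcD}\la\widehat{\mcD}/\Ker(\widehat{\H})$ is a Serre quotient and $\bar{\H}\dd\widehat{\mcD}/\Ker(\widehat{\H})\la\mcH$ is exact and faithful, and then to show that $\bar{\H}$ is an equivalence.

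The restriction $s := \y_\mcD\circ(\mcH\hookrightarrow\mcD)\dd\mcH\la\widehat{\mcD}$ is fully faithful (the Yoneda functor is, and $\mcH\subseteq\mcD$ is full), and satisfies $\widehat{\H}\circ s(H) = \H(H) = H$ because $\H$ restricts to the identity on~$\mcH$. Thus $\bar{s} := Q\circ s$ satisfies $\bar{\H}\circ\bar{s} = \mathrm{id}_\mcH$, and in particular $\bar{\H}$ is essentially surjective. It then remains to verify that $\bar{s}$ is essentially surjective as well, equivalently that $[F]\cong[s(\widehat{\H}(F))]$ in $\widehat{\mcD}/\Ker(\widehat{\H})$ for every $F\in\widehat{\mcD}$.

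The crucial input is that the truncation triangles produce canonical morphisms $\y_\mcD(\ai0 D)\la\y_\mcD(D)$ and $\y_\mcD(\ai0 D)\la\y_\mcD(\H(D))$ in $\widehat{\mcD}$ whose images under $\widehat{\H}$ are the canonical isomorphisms $\H(\ai0 D)\cong\H(D)\cong\H(\H(D))$; this is a standard long-exact-sequence computation using that $\H$ vanishes on $\mcU[k]$ for $k\ge 1$ and on $\coa1 D[-k]$ for $k\ge 0$ (the latter because $\coa1 D\in\mcV[-1]=\mcU^\perp$, so $\tau_{\mathbf{t}}^{\leq 0}$ annihilates all its nonpositive shifts). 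Consequently $[\y_\mcD(D)]\cong[\y_\mcD(\H(D))] = [s(\H(D))]$ in $\widehat{\mcD}/\Ker(\widehat{\H})$. Given a presentation $\y_\mcD(D_1)\la\y_\mcD(D_0)\la F\la 0$, naturality of these truncations in $D_0$ and $D_1$ identifies $[F]$ with $[\widetilde F]$, where $\widetilde F=\Coker(\y_\mcD(\H(D_1))\la\y_\mcD(\H(D_0)))$; and the comparison $\widetilde F\la\y_\mcD(\widehat{\H}(F))$ arising from the cokernel formation inside $\mcH\subseteq\mcD$ maps to the identity under $\widehat{\H}$, so it too is invertible in the Serre quotient, yielding the desired isomorphism $[F]\cong[s(\widehat{\H}(F))]$.

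The main obstacle I anticipate is the naturality bookkeeping in the last step: one has to check that the commutative squares in $\widehat{\mcD}$ assembled from the truncation units and the presentation maps genuinely descend to the claimed isomorphisms of cokernels in $\widehat{\mcD}/\Ker(\widehat{\H})$. This should follow formally from the functoriality of $\ai0$ and $\H$ together with the exactness of $Q$, which permits levelwise computation of cokernels, but it is the step that requires the most care.
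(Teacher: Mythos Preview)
Your argument is correct, but it takes a genuinely different route from the paper's proof. The paper factors $\widehat{\H}$ as a composition of three functors
\[
\widehat{\mcD}\la\widehat{\mcU}\la\widehat{\mcH}\stackrel{C}{\la}\mcH,
\]
where the first two are obtained by applying Lemma~\ref{lem.coherent functors are abelian} to the truncation functors $\ai0\dd\mcD\la\mcU$ and $\coa0_{|\mcU}\dd\mcU\la\mcH$, and $C$ is the left adjoint to the Yoneda embedding $\y_\mcH\dd\mcH\la\widehat{\mcH}$. Each of these three functors has a fully faithful adjoint (induced by the inclusions $\mcH\subseteq\mcU\subseteq\mcD$ in the first two cases), so each is a localization functor by Lemma~\ref{lem.detect localization}, and Lemma~\ref{lem.composition and cancellation of localizations} then gives that the composite is a localization. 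You instead take the canonical factorization $\widehat{\H}=\bar{\H}\circ Q$ from Lemma~\ref{lem.factorization of exact functors} and verify directly that the faithful exact functor $\bar{\H}$ is an equivalence, by producing the section $\bar{s}=Q\circ\y_\mcD|_{\mcH}$ and checking its essential surjectivity via truncation. Both approaches ultimately rest on the same fact---that truncation morphisms become isomorphisms after applying $\H$---but the paper's packaging is more structural: it avoids the cokernel bookkeeping you flag as the delicate step, and the intermediate category $\widehat{\mcU}$ it introduces is exactly what is needed for the subsequent Proposition~\ref{prop.left Kan extension for the aisle}. Your approach, on the other hand, is more self-contained for this proposition alone and does not require tracking adjoints through the lifting of Lemma~\ref{lem.coherent functors are abelian}. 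One small remark: you do not actually need the isomorphisms $[F]\cong\bar{s}(\bar{\H}([F]))$ to be natural in $F$; once $\bar{\H}$ is faithful, $\bar{\H}\circ\bar{s}\cong\mathrm{id}_{\mcH}$, and $\bar{s}$ is essentially surjective, a short argument shows $\bar{s}$ is fully faithful and hence an equivalence, so the naturality concern you raise at the end is not essential.
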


\begin{proof}
We factorize $\widehat{\H}\dd\widehat{\mcD}\la\mcH$ into a composition of three localization functors with fully faithful adjoints as follows:	
\begin{equation} \label{eq.decomposition of widehat H}
\widehat{\mcD} \la \widehat{\mcU} \la \widehat{\mcH} \stackrel{C}\la \mcH.
\end{equation}
The fact that the composition is a localization functor follows by Lemma~\ref{lem.composition and cancellation of localizations}, and since $\widehat{\H}$ is exact, it is a Serre quotient functor.

Let us explain what functors we compose. The first two are obtained from $\ai0\dd\mcD\la\mcU$ and ${\coa0}_{|_\mcU}\dd\mcU\la\mcH$, respectively, using Lemma~\ref{lem.coherent functors are abelian}. The corresponding inclusions $\mcH\subseteq\mcU\subseteq\mcD$ lift to fully faithful functors, which we will by abuse of notation consider as inclusions $\widehat{\mcH}\subseteq\widehat{\mcU}\subseteq\widehat{\mcD}$. As we can also lift natural transformations and, in particular, the adjunction units and counits, the inclusions will be the correspoding adjoints of the first two functors in~\eqref{eq.decomposition of widehat H}.

Finally, the functor $C\dd\widehat{\mcH}\la\mcH$ is left adjoint to the fully faithful Yoneda embedding $\y_\mcH\dd\mcH\la\widehat{\mcH}$. Given any $f\dd X\la Y$ in $\mcH$, $C$ sends the cokernel of
\[ \y(f)\dd \y(X) \la \y(Y) \]
in $\widehat{\mcH}$ to $\Coker f\in\mcH$ (see also~\cite[\S3]{A-coherent},
$C$ is known to be exact and $C=\widehat{1_\mcH}$ in the notation of Lemma~\ref{lem.coherent functors are abelian}).
\end{proof}

The latter proposition has a drawback, however---$\widehat{\H}$ need not be an adjoint functor and thus is out of the scope of Lemma~\ref{lem.detect localization}. This can be often remedied if we focus our attention only on the aisle or the co-aisle. 

\begin{prop} \label{prop.left Kan extension for the aisle}
The unique extension $\widetilde{\H}\dd\widehat{\mcU}\la\mcH$ of $\H_{|\mcU}\dd\mcU\la\mcH$, in the sense of Lemma~\ref{lem.coherent functors are abelian}, is a Serre quotient functor left adjoint to the restriction ${\y_\mcU}_{|\mcH}\dd\mcH\la\widehat{\mcU}$ of the Yoneda functor $\y_\mcU$. 
In particular, the following square commutes up to natural equivalence for both the left and the right adjoints:
\[
\xymatrix@R=3em{
\mcU \ar[d]_-{\y_\mcU} \ar@/^/[rr]^-{\H_{|\mcU}} \ar@{}[rr]|\perp && \mcH \ar@{=}[d] \ar@/^/[ll]^-{\operatorname{inc}} \\
\widehat{\mcU} \ar@/^/[rr]^-{\widetilde{\H}} \ar@{}[rr]|\perp && \mcH. \ar@/^/[ll]^-{{\y_\mcU}_{|\mcH}}
}
\]
\end{prop}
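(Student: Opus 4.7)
The strategy is to establish three claims: (a) $\y_\mcU|_\mcH\dd \mcH \la \widehat{\mcU}$ is fully faithful; (b) $\widetilde{\H}$ is left adjoint to $\y_\mcU|_\mcH$; and (c) $\widetilde{\H}$ is exact. Combined with Lemma~\ref{lem.detect localization}, items~(a) and~(b) make $\widetilde{\H}$ a localization functor, and (c) promotes it to a Serre quotient. The commutative square in the statement is then exactly the adjunction expressed diagrammatically.

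Claim~(a) is immediate: $\y_\mcU|_\mcH$ factors as the full inclusion $\mcH \hookrightarrow \mcU$ followed by the fully faithful Yoneda functor $\y_\mcU$. For~(b), the key observation is that $\coa{0}|_\mcU\dd \mcU \la \mcH$ is left adjoint to the inclusion $\mcH \hookrightarrow \mcU$, obtained by restricting the standard adjunction $\coa{0} \dashv (\mcV \hookrightarrow \mcD)$ to the aisle. Consequently, for $U \in \mcU$ and $M \in \mcH$ we have natural isomorphisms
\[
\Hom_\mcH(\widetilde{\H}(\y_\mcU(U)), M) = \Hom_\mcH(\H(U), M) \cong \Hom_\mcU(U, M) \cong \Hom_{\widehat{\mcU}}(\y_\mcU(U), \y_\mcU(M)),
\]
which is the adjunction on representable objects. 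To extend to general $F \in \widehat{\mcU}$, I would pick any presentation $\y_\mcU(U_1) \la \y_\mcU(U_0) \la F \la 0$, apply the right-exact functor $\widetilde{\H}$ together with the left-exact contravariant functors $\Hom_\mcH(-, M)$ and $\Hom_{\widehat{\mcU}}(-, \y_\mcU(M))$; naturality on representables and the five lemma then yield the required bijection $\Hom_\mcH(\widetilde{\H}(F), M) \cong \Hom_{\widehat{\mcU}}(F, \y_\mcU(M))$.

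The delicate step is~(c). While $\widetilde{\H}$ is automatically right exact by Lemma~\ref{lem.coherent functors are abelian}, left exactness takes more work. I would deduce it from the factorization $\widehat{\H} \cong \widetilde{\H} \circ L$ implicit in the decomposition~\eqref{eq.decomposition of widehat H}, where $L\dd \widehat{\mcD} \la \widehat{\mcU}$ denotes the right-exact extension of $\y_\mcU \circ \ai{0}$. Using the adjunction $(\mcU\hookrightarrow\mcD) \dashv \ai{0}$, one checks that $L$ is naturally isomorphic to the restriction-of-domain functor $F \mapsto F|_{\mcU^\op}$; since kernels and cokernels in both $\modf\mcD$ and $\modf\mcU$ are computed pointwise, this restriction is exact. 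Moreover, $L$ is essentially surjective and its right adjoint is the fully faithful inclusion $\widehat{\mcU} \hookrightarrow \widehat{\mcD}$ extending $\mcU \hookrightarrow \mcD$, so $L$ is itself a Serre quotient functor. Since $\widehat{\H} = \widetilde{\H} \circ L$ is a Serre quotient by Proposition~\ref{prop.abelianized homology is Serre quotient}, the bijective correspondence between Serre subcategories of $\widehat{\mcU}$ and those of $\widehat{\mcD}$ containing $\Ker L$ forces $\Ker \widetilde{\H}$ to be a Serre subcategory, whence $\widetilde{\H}$ is exact. The main obstacle is precisely this last step---identifying $L$ with the restriction functor and then bootstrapping exactness from that of~$\widehat{\H}$.
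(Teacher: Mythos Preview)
Your overall strategy is sound and parts (a) and (b) are fine, though the paper obtains the adjunction more slickly by composing the two adjunctions
\[
\xymatrix{
\widehat{\mcU} \ar@/^/[r]^-{\widehat{(\H)_{|\mcU}}} \ar@{}[r]|\perp &
\widehat{\mcH} \ar@/^/[r]^-{C} \ar@{}[r]|\perp \ar@/^/[l]^-{\widehat{\operatorname{inc}}} &
\mcH \ar@/^/[l]^-{\y_\mcH}
}
\]
from the proof of Proposition~\ref{prop.abelianized homology is Serre quotient}, rather than extending from representables by hand.

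For part (c), your approach is genuinely different from the paper's, but the final step has a gap. You correctly identify $L$ with the restriction functor (a small slip: by Lemma~\ref{lem.restricted Yoneda fundamental} the fully faithful inclusion $\widehat{\mcU}\hookrightarrow\widehat{\mcD}$ is the \emph{left} adjoint of $L$, not the right one, though this does not affect $L$ being a Serre quotient). The problem is the last sentence: knowing that $\Ker\widetilde{\H}$ is a Serre subcategory does \emph{not} imply that the right exact functor $\widetilde{\H}$ is exact. A right exact functor can have Serre kernel and still fail to preserve monomorphisms. What you should say instead is that, under the identification $\widehat{\mcU}\simeq\widehat{\mcD}/\Ker L$, the functor $\widetilde{\H}$ is precisely the functor induced on the Serre quotient by the exact functor $\widehat{\H}$; such an induced functor is automatically exact because every short exact sequence in $\widehat{\mcD}/\Ker L$ is isomorphic to the image of a short exact sequence in $\widehat{\mcD}$. (This is exactly the argument the paper later uses in the proof of Theorem~\ref{thm.t-generating left Kan extensions}.)

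By contrast, the paper proves exactness directly and more elementarily: for $M=\Coker\y_\mcU(g)\in\widehat{\mcU}$ with $g\dd U\to U''$, it completes $g$ to a triangle $X\to U\to U''\laplus$, precomposes with the truncation $\ai{0}X\to X$ to obtain a three-term projective presentation of $M$, and observes that $\widetilde{\H}$ sends this to an exact sequence because $\H$ is homological and $\H(\ai{0}X\to X)$ is an isomorphism. An auxiliary lemma (Lemma~\ref{lem.rightexact-exact}) then says that a right exact functor out of an abelian category with enough projectives is exact as soon as every object admits one such three-term presentation on which the functor is exact. Your route avoids this triangulated computation at the cost of invoking the heavier Proposition~\ref{prop.abelianized homology is Serre quotient} and the lifting of exact sequences through Serre quotients; once the gap is patched, either argument works.
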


\begin{proof}
Consider the adjunctions
\[
\xymatrix{ 
\widehat{\mcU} \ar@/^/[r]^-{\widehat{\H}} \ar@{}[r]|\perp &
\widehat{\mcH} \ar@/^/[r]^-{C} \ar@{}[r]|\perp \ar@/^/[l]^-{\widehat{\operatorname{inc}}} &
\mcH. \ar@/^/[l]^-{\y_\mcH}
}
\]
studied in the proof of Proposition~\ref{prop.abelianized homology is Serre quotient}. The right adjoints are both fully faithful and clearly compose to ${\y_\mcU}_{|\mcH}$. Just by unraveling the definitions, one also checks that $C\circ\widehat{\H}\circ\y_\mcU\cong\H$. Since $C\circ\widehat{\H}$ is also right exact, it follows that coincides with the essentially unique functor $\widetilde{\H}$ given by Lemma~\ref{lem.coherent functors are abelian} and it is a localization functor by Lemma~\ref{lem.detect localization}.

It remains to prove that $\widetilde{\H}$ is exact. Suppose that $M\in\widehat{\mcU}$ and $g\dd U\la U''$ is a map in $\mcU$ such that $\Hom_\mcU(?,U) \la \Hom_\mcU(?,U'') \la M \la 0$ is exact. We may complete $g$ to a triangle
\[ X \stackrel{f}\la U \stackrel{g}\la U'' \laplus \]
and consider the truncation morphism $\varepsilon\dd\ai{0}X\la X$. Then
\[ \Hom_\mcU(?,\ai{0}X) \stackrel{f_*\varepsilon_*}\la \Hom_\mcU(?,U) \stackrel{g_*}\la \Hom_\mcU(?,U'') \la M \la 0 \]
is a projective presentation of $M$ in $\widehat{\mcU}$ and if we apply $\widetilde{\H}$, we obtain the sequence
\[ \H(\ai{0}X) \stackrel{\H(f\varepsilon)}\la \H(U) \stackrel{\H(g)}\la \H(U'') \la \widetilde{\H}(M) \la 0 \]
in $\mcH$, which is exact since $\H$ is homological and $\H(\varepsilon)$ is an isomorphism. The exactness of $\widetilde{\H}$ then follows by the next lemma.
\end{proof}

\begin{lem} \label{lem.rightexact-exact}
Let $F\dd\mcA\la\mcB$ be a right exact functor between abelian categories and suppose that $\mcA$ has enough projective objects. The following assertions are equivalent:
	
\begin{enumerate}
\item $F$ is exact.
\item For each exact sequence $P'\stackrel{f}{\la}P\stackrel{g}{\la}P''$ in $\mcA$ whose all terms are projective, the sequence  $F(P')\stackrel{F(f)}{\la}F(P)\stackrel{F(g)}{\la}F(P'')$ is exact in $\mathcal{B}$. 
\item Each object $A\in\mcA$ admits a projective presentation $P'\stackrel{f}{\la}P\stackrel{g}{\la}P''\stackrel{\pi}{\la}A\rightarrow 0$ such that the sequence  $F(P')\stackrel{F(f)}{\la}F(P)\stackrel{F(g)}{\la}F(P'')$ is exact in $\mcB$. 
\end{enumerate}
\end{lem}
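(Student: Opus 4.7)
The plan is to prove the three conditions equivalent via the cycle (1)$\Rightarrow$(2)$\Rightarrow$(3)$\Rightarrow$(1). The implication (1)$\Rightarrow$(2) is immediate: an exact functor carries any three-term exact sequence to a three-term exact sequence, independently of the terms being projective. For (2)$\Rightarrow$(3), I would use that $\mcA$ has enough projectives to build, for any $A\in\mcA$, a two-step projective presentation: first choose an epimorphism $\pi\dd P''\la A$ with $P''$ projective, then an epimorphism $P\la\Ker(\pi)$ from a projective, yielding $g\dd P\la P''$ with $\Img g=\Ker\pi$; one more application produces $f\dd P'\la P$ with $\Img f=\Ker g$. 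The sequence $P'\stackrel{f}{\la}P\stackrel{g}{\la}P''$ is then exact at $P$ with all terms projective, so (2) guarantees that applying $F$ preserves the exactness, which is exactly (3).

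The bulk of the work lies in (3)$\Rightarrow$(1). The first step is to upgrade (3) from a statement about one chosen presentation to a statement about every projective resolution: since any two projective resolutions of a fixed object are chain homotopy equivalent and $F$ is additive, the resulting complexes remain chain homotopy equivalent after applying $F$, so their homology in each degree agrees. Thus (3) is equivalent to the vanishing of $H_1$ of $F$ applied to every projective resolution of every object of $\mcA$.

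To conclude, I would pick an arbitrary short exact sequence $0\la A'\stackrel{\alpha}{\la} A\stackrel{\beta}{\la} A''\la 0$ in $\mcA$ and invoke the horseshoe lemma to construct projective resolutions $P'_\bullet\la A'$, $P_\bullet\la A$, $P''_\bullet\la A''$ sitting in a short exact sequence of complexes $0\la P'_\bullet\la P_\bullet\la P''_\bullet\la 0$ with $P_n=P'_n\oplus P''_n$. Each degree-wise short exact sequence splits because $P''_n$ is projective, and additivity of $F$ transports the splitting, giving a short exact sequence of complexes in $\mcB$. The associated long exact sequence in homology, combined with the identifications $H_0(F(Q_\bullet))\cong F(X)$ for any projective resolution $Q_\bullet\la X$ (by right exactness of $F$), ends in
\[ \cdots \la H_1(F(P''_\bullet)) \la F(A') \stackrel{F(\alpha)}{\la} F(A) \stackrel{F(\beta)}{\la} F(A'') \la 0. \]
The upgraded form of (3) gives $H_1(F(P''_\bullet))=0$, so $F(\alpha)$ is a monomorphism and we obtain the desired short exact sequence in $\mcB$.

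The main point requiring care, and essentially the only non-trivial step, is the transition between the two forms of (3) together with the invocation of the horseshoe lemma; both are standard in abelian categories with enough projectives, but it is worth phrasing the argument so that it does not formally rely on the machinery of left derived functors, even though morally it is the statement $L_1 F=0$.
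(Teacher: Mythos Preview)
Your proof is correct and follows essentially the same approach as the paper, which simply observes that conditions (2) and (3) are each equivalent to the vanishing of the first left derived functor $\mathbb{L}_1F$, and that this in turn is equivalent to exactness of $F$. You have carefully unpacked this argument via homotopy invariance of projective resolutions and the horseshoe lemma, exactly as you note in your final paragraph.
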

\begin{proof}
Condition (2) (resp.\ (3)) holds if, and only if, the  first left derived functor $\mathbb{L}_1F$ vanishes, which is tantamount to say that $F$ is exact. 
\end{proof}

Finally, we discuss another natural question, which is important later. We can ask to which extent the homological functor $\H\dd\mcD\la\mcH$ determines the $t$-structure $\mathbf{t}=(\mcU,\mcV)$. In general, there may be several $t$-structures with the same homological functor (e.g. any semiorthogonal decomposition of $\mcD$ has the same and trivial homological functor). However, the $t$-structure is clearly determined by $\H$ if it is non-degenerate as then

\begin{align*}
\mcU &= \{ U \in \mcD \mid H_{\mathbf{t}}^i(U) = 0 \textrm{ for all } i>0 \}, \\
\mcV &= \{ V \in \mcD \mid H_{\mathbf{t}}^i(V) = 0 \textrm{ for all } i<0 \}. \\
\end{align*}

Indeed, clearly $\mcU\subseteq\{ U \in \mcD \mid H_{\mathbf{t}}^i(U) = 0 \textrm{ for all } i>0 \}$ and, on the other hand, if $H_{\mathbf{t}}^i(U) = 0$ for all $i>0$, then $\tau_\mathbf{t}^{>0}U\in\bigcap_{k\in\mathbb{Z}}\mcV[k]=0$ by~\cite[Lemma 3.3]{NSZ}, so $U\cong\tau_\mathbf{t}^{\le0}U\in\mcU$.
The other equality is dual.

Here we will show how to reduce a $t$-structure to a non-degenerate one. We call the full subcategory
\[ \mcN_\mathbf{t} = \{ X\in\mcD \mid H^i_{\mathbf{t}}(X) = 0 \textrm{ for all } i\in\mathbb{Z} \} \]
the \emph{degeneracy class} of $\mathbf{t}$. Clearly $\mcN_\mathbf{t}$ is a thick subcategory of $\mcD$. Moreover, the homological functor $\H$ lifts as
\[
\vcenter{
\xymatrix{
\mcD \ar[rr]^-{q} \ar[drr]_-{\H} &&
\mcD/\mcN_{\mathbf{t}} \ar[d]^-{\overline{\H}} \\
&& \mcH.
}
}
\]

\noindent
We will show that actually $\big(q(\mcU),q(\mcV)\big)$ is a (non-degenerate) $t$-structure in $\mcD/\mcN_\mathbf{t}$ and the functor $\overline{\H}$ is the corresponding homological functor.

\begin{lem} \label{lem.so decomposition of the degeneracy}
We have equalities $\mcU\cap\mcN_\mathbf{t}=\bigcap_{n\in\mathbb{Z}}\mcU[n]$, 
$\mcV\cap\mcN_\mathbf{t}=\bigcap_{n\in\mathbb{Z}}\mcV[n]$ and the pair
$(\mcU\cap\mcN_\mathbf{t},\mcV\cap\mcN_\mathbf{t})$ is a semiorthogonal decomposition of $\mcN_\mathbf{t}$.
\end{lem}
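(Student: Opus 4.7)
The plan is to establish~(1) by induction, obtain~(2) by the dual argument, and derive~(3) from~(1) and~(2) together with a direct verification of the semiorthogonal decomposition axioms inside $\mcN_\mathbf{t}$.

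For~(1), I would verify the two inclusions separately. The inclusion $\supseteq$ unwinds from definitions: if $X\in\mcU[n]$ for every $n\in\mathbb{Z}$, then $X\in\mcU=\mcU[0]$, while $X[-n]\in\mcU$ forces $H_\mathbf{t}^l(X)=H_\mathbf{t}^{l+n}(X[-n])=0$ whenever $l>-n$, and letting $n\to\infty$ gives $X\in\mcN_\mathbf{t}$. The inclusion $\subseteq$ is the substantive one: since $\mcU\subseteq\mcU[n]$ for $n\le 0$ (using $\mcU[1]\subseteq\mcU$), I need only show $X\in\mcU[n]$ for every $n\ge 1$ when $X\in\mcU\cap\mcN_\mathbf{t}$. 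The key step is the implication ``$X\in\mcU$ and $\H(X)=0$ imply $X\in\mcU[1]$'': the truncation triangle $\ai{-1}X\la X\la\coa{0}X\laplus$ for the shifted $t$-structure $(\mcU[1],\mcV[1])$ has $\ai{-1}X\in\mcU[1]\subseteq\mcU$ and $\coa{0}X\in\mcV$, and since suspended subcategories are closed under cones of their morphisms, $\coa{0}X\in\mcU\cap\mcV=\mcH$. As $\ai{0}X=X$ gives $\H(X)\cong\coa{0}X$, the vanishing $\H(X)=0$ forces $X\cong\ai{-1}X$, i.e.\ $X[-1]\in\mcU$. Iterating, $X[-1]\in\mcU\cap\mcN_\mathbf{t}$ has $\H(X[-1])=H_\mathbf{t}^{-1}(X)=0$, so $X[-2]\in\mcU$, and inductively $X[-n]\in\mcU$ for every $n\ge 0$. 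Assertion~(2) follows by the symmetric argument applied to the cosuspended subcategory $\mcV$, using that for $X\in\mcV$ the identity $\coa{0}X=X$ gives $\H(X)\cong\ai{0}X$.

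For~(3), shift-invariance $(\mcU\cap\mcN_\mathbf{t})[1]=\mcU\cap\mcN_\mathbf{t}$ and $(\mcV\cap\mcN_\mathbf{t})[-1]=\mcV\cap\mcN_\mathbf{t}$ is immediate from the descriptions $\bigcap_n\mcU[n]$ and $\bigcap_n\mcV[n]$ in~(1) and~(2), while orthogonality $\Hom_\mcD(U,V[-1])=0$ for $U\in\mcU\cap\mcN_\mathbf{t}$ and $V\in\mcV\cap\mcN_\mathbf{t}$ is inherited directly from $\mathbf{t}$. For the approximation axiom, given $X\in\mcN_\mathbf{t}$ I would take the canonical triangle $\ai{0}X\la X\la\tau_\mathbf{t}^{>0}X\laplus$ of $\mathbf{t}$ and verify that both outer terms lie in $\mcN_\mathbf{t}$ via the long exact sequence of $H_\mathbf{t}^*$: since $H_\mathbf{t}^k(\ai{0}X)=0$ for $k>0$ and $H_\mathbf{t}^k(\tau_\mathbf{t}^{>0}X)=0$ for $k\le 0$, the hypothesis $H_\mathbf{t}^*(X)=0$ cleanly transports the remaining vanishing between the two truncations. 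The main obstacle is the key implication in~(1), which relies on the subtle interaction of $\H$ with the shifted truncation and closure of $\mcU$ under cones; once it is in place, the rest of the argument reduces to routine diagram chasing.
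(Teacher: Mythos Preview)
Your proof is correct and follows essentially the same strategy as the paper. The only difference is that the paper defers the equalities $\mcU\cap\mcN_\mathbf{t}=\bigcap_{n}\mcU[n]$ and $\mcV\cap\mcN_\mathbf{t}=\bigcap_{n}\mcV[n]$ to \cite[Lemma 3.3]{NSZ}, whereas you unpack that inductive argument explicitly; for the semiorthogonal decomposition, both you and the paper observe that the truncations of $\mathbf{t}$ preserve $\mcN_\mathbf{t}$ and then use the shift-invariance established in~(1) and~(2).
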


\begin{proof}
The equalities follow from \cite[Lemma 3.3]{NSZ}. For the last statement, note that for any $X\in\mcN_\mathbf{t}$, we have $\ai0(X),\coa1(X)\in\mcN_\mathbf{t}$. Hence, $\mathbf{t}$ restricts to a $t$-structure in $\mcN_\mathbf{t}$. Since both $\mcU\cap\mcN_\mathbf{t}$ and $\mcV\cap\mcN_\mathbf{t}$ are thick subcategories by the first part, the restricted $t$-structure is in fact a semiorthogonal decomposition.
\end{proof}

\begin{rem}
The objects in $\mcU\cap\mcN_\mathbf{t}=\bigcap_{n\in\mathbb{Z}}\mcU[n]$ are called \emph{$\infty$-connective} in \cite[Definition C.1.2.12]{Lur-SAG}.
\end{rem}

Now we can prove an even more general version of the degeneracy reduction result for $t$-structures.

\begin{prop} \label{lem.localization of t-structure}
Let $\mcD$ be a triangulated category with a $t$-structure $\mathbf{t}=(\mcU,\mcV)$ and
\[
\mcN_\mathbf{t}=\{ X\in\mcD \mid H^i_{\mathbf{t}}(X) = 0 \textrm{ for all } i\in\mathbb{Z} \}.
\]
If $\mcN'\subseteq\mcN_\mathbf{t}$ is a triangulated subcategory such that $\mathbf{t}$ restricts to a semiorthogonal decomposition of $\mcN'$ (this in particular applies to $\mcN'$ chosen as one of $\mcN_\mathbf{t}$, $\bigcap_{n\in\mathbb{Z}}\mcU[n]$ or $\bigcap_{n\in\mathbb{Z}}\mcV[n]$) and if we denote by $q\dd\mcD\la\mcD/\mcN'$ the Verdier quotient functor, then
$\overline{\mathbf{t}}=\big(q(\mcU),q(\mcV)\big)$ is a $t$-structure in $\mcD/\mcN'$ whose homological functor is, up to postcomposition with an equivalence, the unique one which fits into the commutative diagram
\[
\vcenter{
\xymatrix{
\mcD \ar[rr]^-{q} \ar[drr]_-{\H} &&
\mcD/\mcN' \ar[d]^-{H^0_{\overline{\mathbf{t}}}} \\
&& \mcH.
}
}
\]
\end{prop}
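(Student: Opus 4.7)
The plan is to verify the three axioms of a $t$-structure for $\overline{\mathbf{t}}=(q(\mcU),q(\mcV))$ in $\mcD/\mcN'$ and then to identify its cohomological functor with the unique factorization $\overline{\H}$ of $\H$ through $q$.

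The key technical input is the following descent lemma: if $f\dd Y\la X$ is a morphism in $\mcD$ with $\cone(f)=N\in\mcN'$, then $\cone(\ai{0}f)$ and $\cone(\coa{1}f)$ both lie in $\mcN'$. To prove it, form the morphism between the truncation triangles of $Y$ and $X$ induced by $f$ and complete it by the $3\times 3$-lemma (iterated octahedron) to a $3\times 3$ diagram whose third column is a triangle $C_1\la N\la C_2\laplus$ with $C_1=\cone(\ai{0}f)$ and $C_2=\cone(\coa{1}f)$. The orthogonality $\Hom_\mcD(\mcU,\mcV[-1])=0$, together with $\ai{0}Y,\ai{0}X\in\mcU$ and $\coa{1}Y,\coa{1}X\in\mcV[-1]$, forces $C_1\in\mcU$ and $C_2\in\mcV[-1]$; by uniqueness of the truncation triangle of $N$ this yields $C_1\cong\ai{0}N$ and $C_2\cong\coa{1}N$, which lie in $\mcN'$ by the hypothesis that $\mathbf{t}$ restricts to $\mcN'$.

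Granting the lemma, axioms (ii) and (iii) for $\overline{\mathbf{t}}$ are immediate: $q(\mcU)[1]\subseteq q(\mcU)$ is clear, and $q$ applied to the truncation triangle $\ai{0}X\la X\la\coa{1}X\laplus$ witnesses axiom (iii) for $q(X)$, and these exhaust the objects of $\mcD/\mcN'$. For axiom (i), let $U\in\mcU$, $W\in\mcV[-1]$, and represent a morphism $\phi\dd q(U)\la q(W)$ by a roof $U\stackrel{s}{\longleftarrow}X\stackrel{g}{\longrightarrow}W$ with $\cone(s)\in\mcN'$. The truncation unit $i\dd\ai{0}X\la X$ satisfies $g\circ i=0$ (since $\ai{0}X\in\mcU$ and $W\in\mcU^\perp=\mcV[-1]$), while $s\circ i=\ai{0}s$ has cone in $\mcN'$ by the descent lemma and hence becomes invertible in $\mcD/\mcN'$. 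Therefore $\phi\circ q(\ai{0}s)=q(g\circ i)=0$, and so $\phi=0$.

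Finally, because $\mcN'\subseteq\mcN_\mathbf{t}$, the cohomological functor $\H$ vanishes on $\mcN'$ and factors uniquely as $\H=\overline{\H}\circ q$ for some $\overline{\H}\dd\mcD/\mcN'\la\mcH$. To match this with the canonical cohomological functor $H^0_{\overline{\mathbf{t}}}\dd\mcD/\mcN'\la\overline{\mcH}:=q(\mcU)\cap q(\mcV)$, check that the restriction $q_{|\mcH}\dd\mcH\la\overline{\mcH}$ is an equivalence of abelian categories: essential surjectivity follows by applying the descended truncations to a representative, and full faithfulness follows from the long exact cohomology sequence of $\H$, using that $\H$ kills cones in $\mcN'$ to force any roof $H\longleftarrow Z\longrightarrow H'$ in $\mcH$ to induce an isomorphism $\H(Z)\stackrel{\sim}{\la}H$. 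Composing $\overline{\H}$ with the inverse equivalence then yields $H^0_{\overline{\mathbf{t}}}$. The main obstacle is the descent lemma; once the cones are identified with $\ai{0}N$ and $\coa{1}N$, all remaining arguments are formal consequences.
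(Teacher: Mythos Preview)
Your overall strategy via the ``descent lemma'' is sound and yields a clean alternative to the paper's argument, but the justification of the lemma contains a gap that you should fix.

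\textbf{The gap.} You assert that the orthogonality $\Hom_\mcD(\mcU,\mcV[-1])=0$, together with $\coa{1}Y,\coa{1}X\in\mcV[-1]$, forces $C_2=\cone(\coa{1}f)\in\mcV[-1]$. This does not follow: rotating the column triangle gives $\coa{1}X\to C_2\to\coa{1}Y[1]$, so $C_2$ is an extension of $\coa{1}Y[1]\in\mcV$ by $\coa{1}X\in\mcV[-1]$, which only yields $C_2\in\mcV$. (By contrast, your argument for $C_1\in\mcU$ is fine, since $\ai{0}Y[1]\in\mcU$.) Without $C_2\in\mcV[-1]$ you cannot invoke uniqueness of the truncation triangle of $N$; a decomposition with left term in $\mcU$ and right term merely in $\mcV$ is not unique. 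The fix is to use the hypothesis $N\in\mcN'\subseteq\mcN_\mathbf{t}$, which you have not yet invoked at this point: the long exact $H^*_\mathbf{t}$-sequence of the column gives $\H(C_2)\cong\Ker H^1_\mathbf{t}(f)$, and $H^1_\mathbf{t}(f)$ is an isomorphism because $H^*_\mathbf{t}(N)=0$; hence $\H(C_2)=0$ and $C_2\in\mcV[-1]$. With this correction the identification $C_1\cong\ai{0}N$, $C_2\cong\coa{1}N$ and the remainder of your proof go through.

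\textbf{Comparison with the paper.} The paper does not isolate a descent lemma. For the orthogonality axiom it represents a morphism $q(U)\to q(V[-1])$ by a fraction $fs^{-1}$ with $\text{cocone}(s)=N\in\mcN'$, truncates $N$ itself via the semiorthogonal decomposition of $\mcN'$ into pieces $U_\infty\in\bigcap\mcU[n]$ and $V_\infty\in\bigcap\mcV[n]$, and applies the octahedral axiom once to rewrite the fraction with a split denominator, whence it vanishes. For faithfulness of $q_{|\mcH}$ the paper argues separately: a morphism $f\dd H_1\to H_2$ with $q(f)=0$ factors through some $N\in\mcN'$, hence through $\ai{0}N\in\bigcap\mcU[n]\subseteq\mcU[1]$, so $f=0$. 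Your approach packages the same octahedral input into the single statement that $\ai{0}$ and $\coa{1}$ preserve $\mcN'$-isomorphisms; once this is in hand, the axioms and the identification of the heart follow more uniformly than in the paper. Either route works; yours is arguably more conceptual, but you must supply the missing step above.
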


\begin{proof}
To prove that $\overline{\mathbf{t}}=\big(q(\mcU),q(\mcV)\big)$ is a $t$-structure, we only need to show that $\Hom_{\mcD/\mcN'}(U,V[-1]) = 0$ for each $U\in\mcU$ and $V\in\mcV$. The closure properties of $q(\mcU)$ and $q(\mcV)$ and the truncation triangles are inherited from $\mathbf{t}$ in $\mcD$.

To this end, suppose that $fs^{-1}\dd U\la V[-1]$ is a fraction representing a morphism in $\mcD/\mcN'$ as in~\cite[\S2.1]{N}, where $s\dd X\la U$ is a map in $\mcD$ whose cocone $N$ belongs to $\mcN'$. Then we truncate $N$ using the semiorthogonal decomposition of $\mcN'$ induced by $\mathbf{t}$ and, by the octahedral axiom, we obtain a commutative diagram in $\mcD$
\[
\xymatrix{
U_\infty \ar@{=}[r] \ar[d] & U_\infty \ar[d] \\
N \ar[r] \ar[d] & X \ar[r]^-s \ar[d]_-{g} & U \ar[r] \ar@{=}[d] & N[1] \ar[d] \\
V_\infty \ar[r] \ar[d] & Y \ar[r]_-{s'} \ar[d] & U \ar[r] & V_\infty[1] \\
U_\infty[1] \ar@{=}[r] & U_\infty[1] \\
}
\]
with triangles in rows and columns, $U_\infty\in\bigcap_{n\in\mathbb{Z}}\mcU[n]$ and
$V_\infty\in\bigcap_{n\in\mathbb{Z}}\mcV[n]$. As $\Hom_\mcD(U_\infty,V[-1])=0$, the morphism $f\dd X\la V[-1]$ factors through $g$ and $fs^{-1}=f'(s')^{-1}$ in $\mcD/\mcN'$ for some morphism $f'\dd Y\la V[-1]$ in $\mcD$. On the other hand, we have $\Hom_\mcD(U,V_\infty)=0$, so $s'$ splits and and if $t\dd U\la Y$ is a section, then $f'(s')^{-1}=f't(s't)^{-1}=f't$. However, the latter is a morphism from $U$ to $V[-1]$ in $\mcD$ and it vanishes since $(\mcU,\mcV)$ is a $t$-structure in $\mcD$.

Let us denote the heart of $\overline{\mathbf{t}}$ by $\overline{\mcH} := q(\mcU)\cap q(\mcV)$. The above argument also shows that $q_{|\mcH}\dd\mcH\la\overline{\mcH}$ is a full functor. If $f\dd H_1\la H_2$ is a morphism in $\mcH$ such that $q(f)=0$, then $f$ factors through some $N\in\mcN'$ and, since $H_1\in\mcU$, also through $\ai0(N)\in\bigcap_{n\in\mathbb{Z}}\mcU[n]$. Since $\Hom_\mcD(\mcU[1],H_2)=0$, it follows that $f$ vanishes already in $\mcH$ and that $q_{|\mcH}$ is faithful. Finally, since the truncation triangles for $\overline{\mathbf{t}}$ coincide with those for $\mathbf{t}$ in $\mcD$, we have $\H(X)\cong X$ in $\mcD/\mcN'$ for each $X\in\overline{\mcH}$. Thus, $q_{|\mcH}\dd\mcH\la\overline{\mcH}$ is essentially surjective as well and the last diagram from the statement commutes.
\end{proof}

\begin{rem} \label{rem.Luries left completion}
A different method of getting rid of the degeneracy of a $t$-structure was developed by Lurie, but he needed to work in the context of stable $\infty$-categories (in particular, he needed a full model for the triangulated category $\mcD$).

If $\mathbf{t}=(\mcU,\mcV)$ is a $t$-structure, he takes instead of $q\dd\mcD\la\mcD/\bigcap\mcU[n]$ the so called left completion $\lambda\dd\mcD\la\mcD'$ of $\mcD$ at $\mcU$. There is an induced $t$-structure $\mathbf{t'}=(\mcU',\mcV')$ in $\mcD'$ and $\lambda$ induces an equivalence $\mcV\simeq\mcV'$. The advantages over the Verdier quotient are that

\begin{enumerate}
\item $\mcD'$ is always locally small provided that $\mcD$ is such (for the Verdier quotient extra assumptions seem necessary, cf.~\cite[Proposition C.3.6.1]{Lur-SAG}),
\item $\mcD'$ can be recovered from the triangulated subcategory $\mcD^+\subseteq\mcD$ of objects which are left bounded with respect to $\mathbf{t}$.
\end{enumerate}

\noindent
Similarly, one can perform a right completion. We refer to~\cite[\S1.2.1]{Lur-HA}.
\end{rem}


\section{Homological functors from \texorpdfstring{$t$-generating}{t-generating} classes}
\label{sec:t-generating}

In the last section we studied the interaction of a $t$-structure $\mathbf{t}=(\mcU,\mcV)$ with the Yoneda functor $\y_\mcU\dd \mcU \la \widehat{\mcU}$ (Proposition\ref{prop.left Kan extension for the aisle}). In the sequel, it will be much more efficient to study homological functors of the form $h_\mcP\dd\mcU\la\widehat{\mcP}$ obtained by composing $\y_\mcU$ with the restriction to a suitable full subcategory $\mcP\subseteq\mcU$. A similar approach played a prominent role in the study of localization theory for triangulated categories~\cite{K,N,K-triang}, but it is in fact also an important technique in representation theory of finite dimensional algebras. Here, we establish basic facts about the interaction of restricted Yoneda functors with $t$-structures.

First of all, however, we note a basic lemma which is of use throughout the rest of the paper. It among others illustrates why precovering classes were called contravariantly finite in~\cite{ASm}.

\begin{lem} \label{lem.restricted Yoneda fundamental}
Let $\mcD$ be an additive category with weak kernels and $\mcP\subseteq\mcD$ a precovering full subcategory. Then
\begin{align*}
\res\dd \widehat{\mcD} &\la \widehat{\mcP}, \\
(F\dd\mcD\to\Ab) &\rightsquigarrow F_{|\mcP}
\end{align*}
is a well-defined Serre quotient functor and it has a fully faithful left adjoint.
\end{lem}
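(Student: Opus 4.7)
The plan is to construct a fully faithful left adjoint $L\dd\widehat{\mcP}\la\widehat{\mcD}$ to $\res$. Lemma~\ref{lem.detect localization} will then force $\res$ to be a localization functor, and since the restriction of functors along the inclusion $\iota\dd\mcP\hookrightarrow\mcD$ is computed pointwise, $\res$ is automatically exact, hence a Serre quotient functor. Before anything else one observes that $\mcP$ inherits weak kernels from $\mcD$: given $f\dd P_1\la P_0$ in $\mcP$, pick a weak kernel $K\la P_1$ of $f$ in $\mcD$ and a $\mcP$-precover $P_2\la K$; the composite $P_2\la P_1$ is a weak kernel of $f$ in $\mcP$. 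Therefore $\widehat{\mcP}$ is abelian by Lemma~\ref{lem.coherent functors are abelian}.

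The first key step is well-definedness of $\res$: one must show that $F_{|\mcP}\in\widehat{\mcP}$ whenever $F\in\widehat{\mcD}$. Given a presentation $\Hom_\mcD(?,Y)\la\Hom_\mcD(?,X)\la F\la 0$, by exactness of restriction it suffices to verify that $\Hom_\mcD(?,X)_{|\mcP}$ is finitely presented for every $X\in\mcD$. To this end I would pick a $\mcP$-precover $p\dd P\la X$, a weak kernel $K\la P$ of $p$ in $\mcD$, and a further $\mcP$-precover $Q\la K$, and then check that
\[ \Hom_\mcP(?,Q)\la\Hom_\mcP(?,P)\la\Hom_\mcD(?,X)_{|\mcP}\la 0 \]
is exact in $\widehat{\mcP}$. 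Exactness at the final spot is direct from $p$ being a $\mcP$-precover, while exactness in the middle uses the weak kernel property of $K\la P$ together with the precovering property of $Q\la K$.

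Next, construct $L$ by applying the universal property of Lemma~\ref{lem.coherent functors are abelian} to the additive composite $\mcP\stackrel{\iota}{\hookrightarrow}\mcD\stackrel{\y_\mcD}{\la}\widehat{\mcD}$. This yields a unique right exact functor $L\dd\widehat{\mcP}\la\widehat{\mcD}$ satisfying $L\circ\y_\mcP\cong\y_\mcD\circ\iota$, so $L$ sends $\Hom_\mcP(?,P)$ to $\Hom_\mcD(?,P)$. The adjunction $L\dashv\res$ holds tautologically on representables $\y_\mcP(P)$ by a double Yoneda calculation, and extends to arbitrary $F\in\widehat{\mcP}$ by presenting $F$ as a cokernel of a map between representables and combining right exactness of $L$ with left exactness of $\Hom_{\widehat{\mcD}}(-,G)$.

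Finally, $L$ is fully faithful precisely when the unit $\eta\dd 1_{\widehat{\mcP}}\la\res\circ L$ is a natural isomorphism. On $F=\y_\mcP(P)$ one has $\res L\y_\mcP(P)=\Hom_\mcD(?,P)_{|\mcP}=\y_\mcP(P)$ and $\eta$ becomes the identity; a five-lemma argument applied to projective presentations, using right exactness of both $\res$ and $L$, then propagates the isomorphism to all of $\widehat{\mcP}$. The main obstacle is the well-definedness step, since that is where the interplay between the precovering hypothesis on $\mcP$ and the existence of weak kernels in $\mcD$ must be used in tandem; once the explicit presentation of $F_{|\mcP}$ is in hand, the remaining pieces reduce to a formal Kan extension and Yoneda computation.
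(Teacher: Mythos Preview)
Your proof is correct and follows essentially the same route as the paper: both first check that $\mcP$ inherits weak kernels and that $\res$ is well defined (with the same precover/weak-kernel argument), construct a fully faithful left adjoint, and conclude via Lemma~\ref{lem.detect localization} and exactness of restriction. The only cosmetic difference is in how the left adjoint is built and seen to be fully faithful: the paper uses the description $\widehat{\mcP}\simeq\stpMor(\mcP)$ from Proposition~\ref{prop.reconstructing} (so that the inclusion $\mcP\subseteq\mcD$ visibly induces a fully faithful functor $\stpMor(\mcP)\to\stpMor(\mcD)$) and then verifies the adjunction by a direct Hom computation, whereas you obtain $L$ from the universal property of $\y_\mcP$ and check fully faithfulness by showing the unit is an isomorphism on representables and propagating via right exactness.
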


\begin{proof}
This is essentially~\cite[Theorem 3.4]{K-functors-lfp}.
First of all, $\mcP$ has weak kernels---if $f\dd P_1\la P_0$ is a map in $\mcP$, we can take a weak kernel $k\dd K\la P_1$ is $\mcD$ and precompose it with a $\mcP$-precover. Hence, both $\widehat{\mcD}$ and $\widehat{\mcP}$ are abelian by Lemma~\ref{lem.coherent functors are abelian}.
%
%
Secondly, if $M\dd\mcD^\op\la\Ab$ is a finitely presented functor, $M_{|\mcP}\dd\mcP^\op\la\Ab$ is as well by~\cite[Lemma 3.2]{K-functors-lfp}.
Finally, thanks to Proposition~\ref{prop.reconstructing} (see also~\cite[Lemma 2.6(1)]{K-functors-lfp}), the inclusion $\mcP\subseteq\mcD$ induces a fully faithful functor
\[ \iota\dd \widehat{\mcP}\simeq\stpMor(\mcP)\la\stpMor(\mcD)\simeq\widehat{\mcD} \]
and that it is left adjoint to $\res$ is shown by the following computation for each map $f\dd P_1\la P_0$ in $\mcP$ and each $M\in\widehat{\mcD}$:
\begin{equation} \label{eq.adjoint to restriction}
\begin{split}
\big[\iota(\Coker \Hom_\mcP(?,f)),M \big]
&\cong \big[\Coker\Hom_\mcD(?,f),M \big] \\
&\cong \Ker\big[\Hom_\mcD(?,f), M\big] \\
&\cong \Ker M(f) \\
&\cong \Ker\big[\Hom_\mcP(?,f), \res(M)\big] \\
&\cong \big[\Coker\Hom_\mcP(?,f), \res(M)\big].
\end{split}
\end{equation}
Here, the square brackets denote Hom-functors in $\widehat{\mcD}$ and $\widehat{\mcP}$. Hence $\res$ is a localization functor by Lemma~\ref{lem.detect localization} and, since it is clearly exact, it is even a Serre quotient functor.
\end{proof}

Next we define the class of full subcategories which satisfy appropriate compatibility condition with aisles or co-aisles of $t$-structures.

\begin{opr} \label{def.t-generating}
Let $\mcU$ be a suspended subcategory of a triangulated category $\mcD$. Then a full subcategory $\mcP\subseteq\mcU$
is called \emph{$t$-generating} in $\mcU$ if it is precovering and each $U\in\mcU$ admits a triangle of the form
\begin{equation} \label{eq:t-generating tria}
U' \la P \stackrel{p}\la U \laplus,
\end{equation}
with $U'\in\mcU$ and $P\in\mcP$.

If $\mcV\subseteq\mcD$ is a cosuspended subcategory, \emph{$t$-cogenerating} subcategories of $\mcV$ are defined dually.
\end{opr}

The terminology is motivated by~\cite[Definition C.2.1.1]{Lur-SAG}, where a notion of generator is defined in the context of prestable $\infty$-categories. By~\cite[Proposition C.1.2.9]{Lur-SAG}, prestable $\infty$-categories with finite limits are precisely enhancements of aisles of $t$-structures in the world of $\infty$-categories, and the reader may use the following lemma (see also~\cite[Proposition 1.2.3(6)]{Bo}) to match our Definition~\ref{def.t-generating} with the one of Lurie.

\begin{lem} \label{lem:t-generating tria}
Let $\mathbf{t}=(\mcU,\mcV)$ be a $t$-structure in a triangulated category $\mcD$ and
\[ U_2 \la U_1 \stackrel{p}\la U_0 \laplus, \]
a triangle in $\mcD$ with $U_0,U_1\in\mcU$. Then $U_2\in\mcU$ if and only if $\H(p)\dd\H(U_1)\la\H(U_0)$ is an epimorphism in the heart of $\mathbf{t}$.
\end{lem}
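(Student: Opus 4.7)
The proof splits into the two implications, handled respectively by a cohomological long exact sequence and by a Hom/adjunction computation. For the forward implication, I apply $\H$ to the given triangle to obtain
\[
\cdots \la H^0(U_2) \la H^0(U_1) \stackrel{\H(p)}{\la} H^0(U_0) \la H^1(U_2) \la H^1(U_1) \la \cdots
\]
in $\mcH$. Since $U_1\in\mcU$ yields $U_1[1]\in\mcU[1]$, on which $\H$ vanishes (the truncation $\ai{0}$ is the identity while $\coa{0}$ returns $0$), we have $H^1(U_1)=0$, hence $H^1(U_2)\cong\Coker\H(p)$. The same vanishing gives $H^1(U_2)=0$ whenever $U_2\in\mcU$, forcing $\H(p)$ to be an epimorphism in $\mcH$.

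For the converse, arguing ``$\H(p)$ epi $\Rightarrow H^1(U_2)=0 \Rightarrow U_2\in\mcU$'' is not available, since without a non-degeneracy assumption there may be objects outside the aisle with all cohomologies zero. Instead, I use the identity $\mcU={}^\perp(\mcV[-1])$ and show $\Hom_\mcD(U_2,W)=0$ for every $W\in\mcV[-1]=\mcU^\perp$. Applying $\Hom_\mcD(-,W)$ to the triangle, the vanishings $\Hom_\mcD(U_0,W)=\Hom_\mcD(U_1,W)=0$ collapse the Hom long exact sequence to
\[
\Hom_\mcD(U_2,W) \;\cong\; \Ker\bigl(p^*\dd\Hom_\mcD(U_0,V)\la\Hom_\mcD(U_1,V)\bigr),\quad V:=W[1]\in\mcV.
\]

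The central step is to translate $p^*$ into a morphism between Homs inside the abelian category $\mcH$. Truncating $V$ as $\ai{0}V\la V\la \coa{1}V\laplus$ with $\coa{1}V\in\mcU^\perp$, and noting that $\mcU[1]\subseteq\mcU$ also places $\coa{1}V[-1]\in\mcU^\perp$, the covariant Hom long exact sequence for $U\in\mcU$ yields $\Hom_\mcD(U,V)\cong\Hom_\mcD(U,\ai{0}V)$. Truncating $U$ analogously via $\tau_\mathbf{t}^{\leq -1}(U)\la U\la\H(U)\laplus$---and using that both $\tau_\mathbf{t}^{\leq -1}(U)$ and its shift lie in $\mcU[1]$, which is left-orthogonal to $\mcV\supseteq\mcH$---identifies the latter further with $\Hom_\mcH(\H(U),\ai{0}V)$. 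Under these natural isomorphisms $p^*$ becomes $\H(p)^*$ in $\mcH$. Since a morphism in an abelian category is an epimorphism precisely when its pullback on Homs is injective against every test object, and since $V\mapsto\ai{0}V$ exhausts $\mcH$ (any $H\in\mcH$ already lies in $\mcV$ with $\ai{0}H=H$), we conclude that $\H(p)$ epi forces $\Hom_\mcD(U_2,W)=0$ for every $W\in\mcV[-1]$, i.e.\ $U_2\in\mcU$. The main obstacle is precisely this converse: one must bypass the cohomological criterion and route the argument through the adjunction between $\H_{|\mcU}$ and the inclusion $\mcH\hookrightarrow\mcU$, as encoded in the two truncation steps above.
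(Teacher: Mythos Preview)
Your proof is correct, but for the converse you work harder than necessary. The paper dispatches both directions at once with a single observation you overlooked: since $U_2[1]$ is the cone of $p\dd U_1\la U_0$ and $\mcU$ is closed under cones, one has $U_2[1]\in\mcU$ automatically. Given this, $U_2\in\mcU$ is equivalent to $\coa{0}(U_2[1])=0$, and because $U_2[1]$ already lies in $\mcU$ this truncation equals $\H(U_2[1])=H^1(U_2)$. Thus the exact sequence
\[
\H(U_1)\stackrel{\H(p)}{\la}\H(U_0)\la\H(U_2[1])\la\H(U_1[1])=0
\]
immediately gives the equivalence $U_2\in\mcU\Longleftrightarrow\H(p)$ epi, with no appeal to non-degeneracy needed. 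Your worry that ``$H^1(U_2)=0\Rightarrow U_2\in\mcU$'' might fail is unfounded here precisely because $U_2[1]\in\mcU$ pins down all higher cohomologies already.

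Your route via $\mcU={}^\perp(\mcV[-1])$ and the adjunction $\Hom_\mcD(U,V)\cong\Hom_\mcH(\H(U),\ai{0}V)$ for $U\in\mcU$, $V\in\mcV$ is perfectly valid and is essentially the content of the adjunction $(\H_{|\mcU},\operatorname{inc})$ used elsewhere in the paper (Proposition~\ref{prop.left Kan extension for the aisle}). It buys a self-contained argument that makes the role of the heart explicit, at the cost of two truncation computations; the paper's argument trades these for the one-line remark that cones of maps in $\mcU$ stay in $\mcU$.
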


\begin{proof}
We always have $U_2[1] \in \mcU$, since $\mcU$ is closed under taking mapping cones, and also the following exact sequence in the heart
\[ \H(U_1) \stackrel{\H(p)}\la \H(U_0) \la \H(U_2[1]) \la \H(U_1[1])=0. \]
Now clearly $U_2\in\mcU$ if and only if $\H(U_2[1])\cong \tau_\mathbf{t}^{\geq 1}(U_2)[1]=0$ if and only if $\H(p)$ is an epimorphism in the heart.
\end{proof}

The latter lemma also has a more direct consequence which relates the two conditions imposed on $\mcP$ in Definition~\ref{def.t-generating} (i.e.\ the existence of precovers and the existence of triangles~\eqref{eq:t-generating tria}).

\begin{lem} \label{lem:t-generating precovers}
Let $\mcP\subseteq\mcU$ be a $t$-generating subcategory and suppose that $U' \la P \stackrel{p}\la U \laplus$ is a triangle in the ambient triangulated category such that $U\in \mcU$ and $p$ is a $\mcP$-precover. Then $U'\in\mcU$ (so the triangle is as in~\eqref{eq:t-generating tria}).
\end{lem}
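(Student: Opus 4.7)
The proof should be a short application of the preceding Lemma~\ref{lem:t-generating tria}, together with the defining properties of a $t$-generating subcategory. My plan is as follows.

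First I would use the $t$-generating hypothesis to produce a comparison triangle. Since $U\in\mcU$ and $\mcP$ is $t$-generating in $\mcU$, there exists a triangle
\[ U'' \la Q \stackrel{q}\la U \laplus \]
with $Q\in\mcP$ and $U''\in\mcU$. By Lemma~\ref{lem:t-generating tria} (applied to this triangle, whose first two nonzero terms $Q,U$ both lie in $\mcU$), the induced map $\H(q)\dd\H(Q)\la\H(U)$ is an epimorphism in the heart $\mcH$.

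Next I would transfer this epimorphism to $p$. Since $p\dd P\la U$ is a $\mcP$-precover and $Q\in\mcP$, the morphism $q\dd Q\la U$ factors as $q=p\circ r$ for some $r\dd Q\la P$. Applying the cohomological functor $\H$ gives $\H(q)=\H(p)\circ\H(r)$; because $\H(q)$ is epic in $\mcH$, so is $\H(p)$.

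Finally, I would apply Lemma~\ref{lem:t-generating tria} in the opposite direction. We have the triangle $U'\la P\stackrel{p}\la U\laplus$ with $P\in\mcP\subseteq\mcU$ and $U\in\mcU$, and we have just shown that $\H(p)$ is an epimorphism. The converse implication of Lemma~\ref{lem:t-generating tria} then yields $U'\in\mcU$, which is exactly what we wanted. There is no serious obstacle here; the only thing to be careful about is to verify the hypotheses of Lemma~\ref{lem:t-generating tria} on both sides (namely that all the non-$U'$ vertices lie in $\mcU$), which is immediate from $\mcP\subseteq\mcU$.
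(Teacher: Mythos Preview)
Your proof is correct and follows essentially the same approach as the paper: produce the comparison triangle from the $t$-generating hypothesis, use Lemma~\ref{lem:t-generating tria} to see its map to $U$ is $\H$-epic, factor through the precover $p$ to conclude $\H(p)$ is epic, and apply Lemma~\ref{lem:t-generating tria} again. The only difference is notation.
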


\begin{proof}
Since $\mcP$ is $t$-generating in $\mcU$, there exists for the chosen $U$ some triangle
\[ U'' \la P' \stackrel{p'}\la U \laplus \]
with $U''\in\mcU$ and $P'\in\mcP$ (but $p'$ may not be a $\mcP$-precover). Since $p$ is a $\mcP$-precover, we have a factorization $p'=p\circ f$ for some $f\dd P'\la P$, and hence also $\H(p')=\H(p)\circ\H(f)$. Now $\H(p')$ is an epimorphism in the heart by Lemma~\ref{lem:t-generating tria} and so must be $\H(p)$ by the factorization. It remains to apply Lemma~\ref{lem:t-generating tria} again.
\end{proof}

The main result of the section is the following extension of Proposition~\ref{prop.left Kan extension for the aisle}. The added degree of freedom---the possibility to choose the class $\mcP$---is very important as we shall see later. It often happens that $\widehat{\mcP}$ for suitable $\mcP$ is a much smaller and a more tractable category than $\widehat{\mcU}$.

\begin{thm} \label{thm.t-generating left Kan extensions}
Let $\mathbf{t}=(\mcU,\mcV)$ be a $t$-structure in the triangulated category $\mcD$, let $\mcP\subseteq\mcU$ be a  precovering additive  subcategory and denote by $\y_\mcP$ the restricted Yoneda functor 
 \begin{align*}
\y_\mcP\dd \mcU &\la \widehat{\mcP}, \\
U &\rightsquigarrow \Hom_\mcU(?,U)_{|\mcP}.
\end{align*} 
The following assertions are equivalent:

\begin{enumerate}
\item The functor $\H\dd\mcU\longrightarrow\mcH$ factors as a composition $\mcU\stackrel{\y_\mcP}{\la}\widehat{\mcP}\stackrel{F}{\la}\mcH$, for some right exact functor $F$.
\item $\mcP$ is a $t$-generating subcategory of $\mcU$
\end{enumerate}

In such case $F$ is a Serre quotient functor and $G={\y_\mcP}_{|\mcH}\dd\mcH\la\widehat{\mcP}$ is its fully faithful right adjoint. In other words, we have the following square which commutes up to natural equivalence for both the left and the right adjoints:
\[
\xymatrix@R=3em{
\mcU \ar[d]_-{\y_\mcP} \ar@/^/[rr]^-{\H} \ar@{}[rr]|\perp && \mcH \ar@{=}[d] \ar@/^/[ll]^-{\operatorname{inc}} \\
\widehat{\mcP} \ar@/^/[rr]^-{F} \ar@{}[rr]|\perp && \mcH. \ar@/^/[ll]^-{G}
}
\]
\end{thm}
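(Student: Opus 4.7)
The plan is to route everything through the abelianization $\widehat{\mcU}$. Recall from Lemma~\ref{lem.restricted Yoneda fundamental} that restriction $\res\dd\widehat{\mcU}\la\widehat{\mcP}$ is a Serre quotient with fully faithful left adjoint $\iota$, and from Proposition~\ref{prop.left Kan extension for the aisle} that $\widetilde{\H}\dd\widehat{\mcU}\la\mcH$ is a Serre quotient with fully faithful right adjoint ${\y_\mcU}_{|\mcH}$. Since $\y_\mcP=\res\circ\y_\mcU$, both directions of the theorem will be reduced to the single inclusion $\Ker(\res)\subseteq\Ker(\widetilde{\H})$, and the desired $F$ will arise from the universal property of the Serre quotient $\res$.

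For $(1)\Rightarrow(2)$, I would pick an arbitrary $U\in\mcU$, a $\mcP$-precover $p\dd P\la U$ (available since $\mcP$ is precovering), and complete it to a triangle $U'\la P\stackrel{p}\la U\laplus$ in $\mcD$. Evaluating at any $Q\in\mcP$, surjectivity of $p_*\dd\Hom_\mcU(Q,P)\la\Hom_\mcU(Q,U)$ shows that $\y_\mcP(p)$ is an epimorphism in $\widehat{\mcP}$. Right exactness of $F$ then gives that $\H(p)=F(\y_\mcP(p))$ is epi in $\mcH$, and Lemma~\ref{lem:t-generating tria} forces $U'\in\mcU$, whence $\mcP$ is $t$-generating.

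For $(2)\Rightarrow(1)$, the core step is to verify $\Ker(\res)\subseteq\Ker(\widetilde{\H})$. Any $M\in\Ker(\res)$ admits a presentation $\Hom_\mcU(?,U_1)\stackrel{f_*}\la\Hom_\mcU(?,U_0)\la M\la 0$ with $f\dd U_1\la U_0$ in $\mcU$, and $\widetilde{\H}(M)=\Coker\H(f)$. The hypothesis $M_{|\mcP}=0$ says exactly that every morphism $P\la U_0$ with $P\in\mcP$ factors through $f$; choosing a $\mcP$-precover $p\dd P\la U_0$ and writing $p=f\circ g$, Lemma~\ref{lem:t-generating precovers} places the cocone of $p$ in $\mcU$, so by Lemma~\ref{lem:t-generating tria}, $\H(p)=\H(f)\H(g)$ is epi, whence $\H(f)$ is epi and $\widetilde{\H}(M)=0$. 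The universal property of the Serre quotient $\res$ then produces a unique additive $F\dd\widehat{\mcP}\la\mcH$ with $F\circ\res=\widetilde{\H}$, and consequently $F\circ\y_\mcP=\widetilde{\H}\circ\y_\mcU=\H_{|\mcU}$; right exactness of $F$ is inherited from that of $\widetilde{\H}$ via the essentially surjective $\res$.

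For the structural claim, I would chain the two adjunctions: the unit of $\iota\dashv\res$ is invertible because $\iota$ is fully faithful, so $F(N)\cong\widetilde{\H}(\iota(N))$ naturally in $N$, and
\[
\Hom_\mcH\big(F(N),H\big)\cong\Hom_{\widehat{\mcU}}\big(\iota(N),\y_\mcU(H)\big)\cong\Hom_{\widehat{\mcP}}\big(N,\res\y_\mcU(H)\big)=\Hom_{\widehat{\mcP}}\big(N,G(H)\big),
\]
giving $F\dashv G$. The counit $FG\to\mathrm{id}_\mcH$ coincides with the invertible counit of $\widetilde{\H}\dashv{\y_\mcU}_{|\mcH}$, so $G$ is fully faithful and $F$ is a localization by Lemma~\ref{lem.detect localization}. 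The anticipated obstacle is the upgrade from right exact to exact (needed for Serre quotient); I would handle it by lifting a short exact sequence $0\la N_1\la N_2\la N_3\la 0$ in $\widehat{\mcP}$ through the right exact $\iota$, noting that the kernel of $\iota(N_1)\la\iota(N_2)$ lies in $\Ker(\res)\subseteq\Ker(\widetilde{\H})$, so applying the exact $\widetilde{\H}$ shows that $F(N_1)=\widetilde{\H}(\iota(N_1))\hookrightarrow\widetilde{\H}(\iota(N_2))=F(N_2)$ remains monic.
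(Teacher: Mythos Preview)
Your proof is correct and follows essentially the same route as the paper: factor $\y_\mcP=\res\circ\y_\mcU$, reduce both directions to the inclusion $\Ker(\res)\subseteq\Ker(\widetilde{\H})$, and obtain $F$ and its exactness by lifting through $\widehat{\mcU}$. Your $(1)\Rightarrow(2)$ is in fact slightly more direct than the paper's (you use right exactness of $F$ on $\y_\mcP(p)$ immediately rather than detouring through $\Ker(\res)$), and your exactness argument for $F$ via $\iota$ and the observation $\Ker(\iota(N_1)\to\iota(N_2))\in\Ker(\res)$ is a clean variant of the paper's lifting of exact sequences.

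One small imprecision: the counit of $F\dashv G$ does not literally \emph{coincide} with the counit of $\widetilde{\H}\dashv{\y_\mcU}_{|\mcH}$. As a composite of adjunctions, it is $\varepsilon_{\widetilde{\H}}\circ\widetilde{\H}(\varepsilon_{\iota,\res})_{\y_\mcU}$; both factors are invertible (the second because $\varepsilon_{\iota,\res}$ has kernel and cokernel in $\Ker(\res)\subseteq\Ker(\widetilde{\H})$), so your conclusion stands. Alternatively, once you know $F$ is a localization via Lemma~\ref{lem.composition and cancellation of localizations}(2) and that $G$ is right adjoint to $F$, Lemma~\ref{lem.detect localization} gives $G$ fully faithful immediately, making the counit analysis unnecessary. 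The paper instead proves full faithfulness of $G$ via a separate explicit computation (Lemma~\ref{lem.almost-fullyfaithful-Yoneda}); your counit route is more conceptual and equally valid.
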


\begin{proof}
Note that $\y_\mcP$ can be factored as the composition $\mcU\stackrel{\y}{\la}\widehat{\mcU}\stackrel{\res}{\la}\widehat{\mcP}$. 
Let $\widetilde{\H}\dd\widehat{\mcU}\la\mcH$ be the Serre quotient functor given by Proposition~\ref{prop.left Kan extension for the aisle}.

If we have a factorization as in assertion~(1), then $\widetilde{\H}$ is naturally isomorphic to $F\circ\res$ by Lemma~\ref{lem.coherent functors are abelian}. Hence, condition~(1) is equivalent to saying that $\widetilde{\H}$ factors through $\res\dd\widehat{\mcU}\la\widehat{\mcP}$, something that happens exactly when $\Ker(\res)\subseteq\Ker(\widetilde{\H})$. Note that in that case the induced functor $F\dd\widehat{\mcP}\la\mcH$ is a Serre quotient functor. Indeed it is a a localization functor by Lemma~\ref{lem.composition and cancellation of localizations}(2) since $\res$ and $\widetilde{\H}$ are such. Moreover, since $\res$ is a Serre quotient functor with a fully faithful left adjoint, any exact sequence $\varepsilon\dd\; 0\la L\la M\la N\la 0$ in $\widehat{\mcP}$ lifts to an exact sequence $\varepsilon'\dd\; 0\la L'\la M'\la N'\la 0$, and the exactness of $F(\varepsilon)$ follows from that of $\widetilde{\H}(\varepsilon')$.

Suppose now that $\mcP$ is $t$-generating and take any morphism $f\dd U_1\la U_0$ in $\mcU$ such that $M:=\Coker(\y(f))\in\Ker(\res)$. Recall that any object of $\widehat{\mcU}$ is of the form $\Coker(\y(f))$ for some $f\dd U_1\la U_0$, and note that $M\in\Ker(\res)$ if and only if $(\y U_1)_{|\mcP}=\Hom_\mcU(?,U_1)_{|\mcP}\la \Hom_\mcU(?,U_0)_{| \mcP}=(\y U_0)_{| \mcP}$ is an epimorphism. Hence, any chosen $\mcP$-precover $p\dd P\la U_0$ factors thorough $f$. Consequently, $\H(p)$ factors through $\H(f)$ and, since $\H(p)$ is an epimorphism in the heart of $\mathbf{t}$ by Lemmas~\ref{lem:t-generating tria} and~\ref{lem:t-generating precovers}, so is $\H(f)$. It follows that $\widetilde{\H}(M)=\Coker(\H(f))=0$. This proves that $\Ker(\res)\subseteq\Ker(\widetilde{\H})$ and, by the above discussion, also assertion~(1).

Suppose, conversely, that~(1) holds, or equivalently $\Ker(\res)\subseteq\Ker(\widetilde{\H})$. Let $p\dd P'\la U$ be any $\mcP$-precover, where $U\in\mcU$. We then have that $N := \Coker(\y(p))\in\Ker(\res)\subseteq\Ker(\widetilde{\H})$. That is, we have $0=\widetilde{\H}(N)=\Coker (\H(p))$, so that $\H(p)$ is an epimorphism in $\mcH$. Then $\mcP$ is $t$-generating by Lemma~\ref{lem:t-generating tria}.

It remains to prove the final assertion.
The adjunction $(F,G)\dd \widehat{\mcP} \rightleftarrows \mcH$ simply arises as a composition of the two adjunctions
\[
\xymatrix{
	\widehat{\mcP} \ar@/^/[r]^-{\iota} \ar@{}[r]|\perp &
	\widehat{\mcU} \ar@/^/[r]^-{\widetilde{\H}} \ar@{}[r]|\perp  \ar@/^/[l]^-{\res} &
	\mcH. \ar@/^/[l]^-{{\y_\mcU}_{|\mcH}}
}
\]
given by Lemma~\ref{lem.restricted Yoneda fundamental} and Proposition~\ref{prop.left Kan extension for the aisle}, respectively.
Finally, the fact that $G$ is fully faithful follows by Lemma~\ref{lem.almost-fullyfaithful-Yoneda} below.
\end{proof}

\begin{lem} \label{lem.almost-fullyfaithful-Yoneda}
Let $\mathbf{t}=(\mcU,\mcV)$ be a $t$-structure in a triangulated category $\mcD$ and let $\mcP\subseteq\mcU$ be a  $t$-generating  subcategory. The map 
\[
\eta_{U,X}\dd\Hom_\mcU(U,X)\la\Hom_{\widehat{\mcP}}(\y_\mcP U,\y_\mcP X),
\]
induced by the functor $\y_\mcP\dd\mcU\la\widehat{\mcP}$, is bijective whenever $U\in\mcU$ and $X\in\mcH$.
\end{lem}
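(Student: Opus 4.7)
The plan is to build an explicit projective presentation of $\y_\mcP U$ in $\widehat{\mcP}$ by iterating the $t$-generating property, and then to read off $\Hom_{\widehat{\mcP}}(\y_\mcP U, \y_\mcP X)$ as a kernel via Yoneda. Concretely, I would first invoke the $t$-generating property together with Lemma~\ref{lem:t-generating precovers} to produce a triangle $U'\stackrel{f}\la P\stackrel{p}\la U\laplus$ in which $U'\in\mcU$, $P\in\mcP$, and $p$ is a $\mcP$-precover. Iterating, choose a $\mcP$-precover $p'\dd P'\la U'$ and complete to a triangle $U''\stackrel{f'}\la P'\stackrel{p'}\la U'\laplus$ with $U''\in\mcU$. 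Setting $q=f\circ p'\dd P'\la P$, a straightforward diagram chase (using that both $p$ and $p'$ are $\mcP$-precovers, so the kernel of $p_*\dd\Hom_\mcU(Q,P)\la\Hom_\mcU(Q,U)$ for $Q\in\mcP$ consists exactly of maps factoring through $q$) shows that
\[ \Hom_\mcP(?,P')\stackrel{q^*}\la\Hom_\mcP(?,P)\stackrel{p^*}\la\y_\mcP(U)\la 0 \]
is exact in $\widehat{\mcP}$, i.e.\ a projective presentation of $\y_\mcP U$.

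Applying $\Hom_{\widehat{\mcP}}(?,\y_\mcP X)$ to this presentation and using the Yoneda identification $\Hom_{\widehat{\mcP}}(\y_\mcP P,\y_\mcP X)=\Hom_\mcU(P,X)$, I would obtain
\[ \Hom_{\widehat{\mcP}}(\y_\mcP U,\y_\mcP X)=\ker\bigl(\Hom_\mcU(P,X)\stackrel{q^*}\la\Hom_\mcU(P',X)\bigr). \]
On the other hand, applying $\Hom_\mcD(?,X)$ to the two triangles above and using that $\Hom_\mcD(\mcU,\mcV[-1])=0$, together with $U'[1],U''[1]\in\mcU$ and $X\in\mcH\subseteq\mcV$, yields both
\[ 0\la\Hom_\mcU(U,X)\stackrel{p^*}\la\Hom_\mcU(P,X)\stackrel{f^*}\la\Hom_\mcU(U',X) \]
and the injectivity of $p'^{\,*}\dd\Hom_\mcU(U',X)\la\Hom_\mcU(P',X)$. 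Since $q^*=p'^{\,*}\circ f^*$, the injectivity of $p'^{\,*}$ forces $\ker q^*=\ker f^*$, which is exactly $\Hom_\mcU(U,X)$ via $p^*$. Tracing through the Yoneda identifications, the resulting bijection $\Hom_\mcU(U,X)\to\Hom_{\widehat{\mcP}}(\y_\mcP U,\y_\mcP X)$ sends $g\mapsto\y_\mcP(g)$, which is exactly $\eta_{U,X}$.

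The only mildly delicate point is the vanishing $\Hom_\mcD(U''[1],X)=0$ used for the injectivity of $p'^{\,*}$; this is where the hypothesis $X\in\mcH$ (rather than merely $X\in\mcU$) is essential, since one needs $X\in\mcV$ to kill $\Hom(\mcU[1],X)$ via the $t$-structure orthogonality. The rest of the argument is bookkeeping with projective presentations and functoriality of Yoneda.
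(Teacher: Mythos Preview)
Your proof is correct and follows essentially the same route as the paper: build a projective presentation $\y_\mcP P'\to\y_\mcP P\to\y_\mcP U\to 0$ in $\widehat{\mcP}$ by iterating the $t$-generating property, then compare the two kernel descriptions of $\Hom_{\widehat{\mcP}}(\y_\mcP U,\y_\mcP X)$ and $\Hom_\mcU(U,X)$. The only cosmetic difference is that the paper obtains the exact sequence $0\to\Hom_\mcU(U,X)\to\Hom_\mcU(P,X)\to\Hom_\mcU(P',X)$ by first applying $\H$ to get an exact sequence in $\mcH$, applying $\Hom_\mcH(?,X)$, and then rewriting via the adjunction $(\H_{|\mcU},\iota)$, whereas you extract it directly from the long exact sequences of the two triangles together with the orthogonality $\Hom_\mcD(\mcU[1],\mcV)=0$; your version is slightly more elementary and makes the role of the hypothesis $X\in\mcV$ (not merely $X\in\mcU$) more transparent.
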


\begin{proof}
Let us fix $X\in\mcH$ all through the proof. By Yoneda's lemma $\eta_{P,X}$ is bijective whenever $P\in\mcP$. Let $U\in\mcU$ be arbitrary and, using that $\mcP$ is $t$-generating,  choose a triangle $U'\stackrel{u}{\la}P_0\stackrel{p}{\la}U\laplus$, where $p$ is a $\mcP$-precover and $U'\in\mcU$. Similarly we choose a $\mcP$-precover $P_1\stackrel{q}{\la}U'$ with cone in $\mcU[1]$. Note that then $\H(p)$ and $\H(q)$ are epimorphisms in $\mcH$ while $\y_\mcP(p)$ and $\y_\mcP(q)$ are epimorphisms in $\widehat{\mcP}=\modf\mcP$. Using that $\H\dd\mcD\la\mcH$ and $\y\dd\mcD\la\widehat{\mcD}$ are cohomological and that the restriction functor $\res\dd\widehat{\mcD}\la\widehat{\mcP}$ is exact, we get exact sequences $\H(P_1)\stackrel{\H(uq)}{\la}\H(P_0)\stackrel{\H(p)}{\la}\H(U)\la 0$ and  $\y_\mcP P_1\stackrel{\y_\mcP (uq)}{\la}\y_\mcP P_0\stackrel{\y_\mcP (p)}{\la}\y_\mcP U\la 0$ in $\mcH$ and $\widehat{\mcP}$, respectively.

Applying the functor $\Hom_\mcH(?,X)$ to the first sequence, we get an exact sequence
\[ 0\la\Hom_\mcH(\H(U),X)\stackrel{p^*}{\la}\Hom_\mcH(\H(P_0),X)\stackrel{(uq)^*}{\la}\Hom_\mcH(\H(P_1),X) \]
in $\Ab$. But the adjunction $(\H,\iota)\dd\mcU\rightleftarrows\mcH$, where $\iota\dd\mcH\la\mcU$ is the inclusion, gives a corresponding exact sequence 
\begin{equation} \label{eq:t-generating-seq1}
0\la\Hom_\mcU (U,X)\stackrel{p^*}{\la}\Hom_\mcU (P_0,X)\stackrel{(uq)^*}{\la}\Hom_\mcU (P_1,X).
\end{equation}
On the other hand, applying the functor $\Hom_{\widehat{\mcP}}(?,\y_\mcP X)$ to the second of the exact sequences in the previous paragraph, we get another exact sequence
\begin{equation} \label{eq:t-generating-seq2}
0\la\Hom_{\widehat{\mcP}}(\y_\mcP U,\y_\mcP X)\stackrel{p^*}{\la}\Hom_{\widehat{\mcP}}(\y_\mcP P_0,\y_\mcP X)\stackrel{(uq)^*}{\la}\Hom_{\widehat{\mcP}}(\y_\mcP P_1,\y_\mcP X)
\end{equation}
in $\Ab$. The two exact sequences~\eqref{eq:t-generating-seq1} and~\eqref{eq:t-generating-seq2} can be clearly inserted as rows of a commutative diagram with $\eta_{U,X}$,  $\eta_{P_0,X}$ and  $\eta_{P_1,X}$ as vertical arrows connecting the two rows. Then  $\eta_{U,X}$ is an isomorphism since so are  $\eta_{P_0,X}$ and $\eta_{P_1,X}$. 
\end{proof}

We conclude the section by extracting a concrete description of the Serre subcategory $\Ker(F)\subseteq\widehat{\mcP}$ from Theorem~\ref{thm.t-generating left Kan extensions}, which will be of use later.

\begin{lem} \label{lem.identification-of-Serre-subcategory}
In the situation of Theorem~\ref{thm.t-generating left Kan extensions}, we have that an object $M\in\widehat{\mcP}$ lies in $\Ker(F)$ if, and only if, there exists a triangle $U'\stackrel{f}{\la}U\stackrel{g}{\la}U''\stackrel{h}{\la}U'[1]$ in $\mcD$, with the first three terms in $\mcU$, such that $M$ is isomorphic to $\Coker\y_\mcP(g)$. 
\end{lem}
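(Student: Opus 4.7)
The plan is to combine two facts established earlier in the section: by Theorem~\ref{thm.t-generating left Kan extensions}, the functor $F$ is right exact with $F\circ\y_\mcP\cong\H_{|\mcU}$, and by Lemma~\ref{lem:t-generating tria}, for a triangle $U'\la U\stackrel{g}{\la}U''\laplus$ in $\mcD$ with $U,U''\in\mcU$, membership $U'\in\mcU$ is equivalent to $\H(g)$ being an epimorphism in $\mcH$. The whole proof is then a translation between the vanishing of $F$ on a cokernel and the surjectivity of an induced map in~$\mcH$.

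For the ``if'' direction, I would start with a triangle $U'\stackrel{f}{\la}U\stackrel{g}{\la}U''\laplus$ whose first three terms lie in $\mcU$. Since $U'[1]\in\mcU[1]$ and $\H$ vanishes on $\mcU[1]$, the long exact cohomology sequence produces $\H(U)\stackrel{\H(g)}{\la}\H(U'')\la 0$, so $\H(g)$ is an epimorphism in $\mcH$. Observing that $\y_\mcP(U),\y_\mcP(U'')\in\widehat{\mcP}$ by Lemma~\ref{lem.restricted Yoneda fundamental}, one applies the right exact functor $F$ to the defining presentation of $\Coker\y_\mcP(g)$ to conclude $F(\Coker\y_\mcP(g))\cong\Coker\H(g)=0$, i.e.\ this cokernel lies in $\Ker(F)$.

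For the converse, pick $M\in\Ker(F)$ together with a presentation $\y_\mcP(P_1)\stackrel{\y_\mcP(\alpha)}{\la}\y_\mcP(P_0)\la M\la 0$ with $\alpha\dd P_1\la P_0$ in $\mcP$, which exists by definition of $\widehat{\mcP}=\modf\mcP$. Applying the right exact $F$ gives $\Coker\H(\alpha)\cong F(M)=0$, so $\H(\alpha)$ is an epimorphism in $\mcH$. Completing $\alpha$ to a triangle $U'\la P_1\stackrel{\alpha}{\la}P_0\laplus$ in $\mcD$, Lemma~\ref{lem:t-generating tria} forces $U'\in\mcU$; setting $(U,U'',g):=(P_1,P_0,\alpha)$ then produces the required triangle with $M\cong\Coker\y_\mcP(g)$.

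I do not anticipate any serious obstacle: both directions are short translations between the categorical language of $\widehat{\mcP}$ and the triangulated language of $\mcU$, mediated by the equivalence $F\circ\y_\mcP\cong\H_{|\mcU}$. The only minor point worth verifying is that in the ``if'' direction $\y_\mcP(U)$ and $\y_\mcP(U'')$ are indeed finitely presented functors on $\mcP$, so that $\Coker\y_\mcP(g)$ is a bona fide object of $\widehat{\mcP}$; this is precisely what Lemma~\ref{lem.restricted Yoneda fundamental} (applied with the ambient category $\mcU$ and its precovering subcategory $\mcP$) guarantees.
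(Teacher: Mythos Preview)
Your proof is correct and follows essentially the same approach as the paper: both directions hinge on the identification $F\circ\y_\mcP\cong\H_{|\mcU}$, the (right) exactness of $F$, and Lemma~\ref{lem:t-generating tria} to pass between surjectivity of $\H(g)$ and membership of the cocone in $\mcU$. The paper's write-up differs only cosmetically (it emphasizes that $F$ is in fact exact, not just right exact, and does not single out Lemma~\ref{lem.restricted Yoneda fundamental} since $\y_\mcP\dd\mcU\la\widehat{\mcP}$ is already taken as well defined in Theorem~\ref{thm.t-generating left Kan extensions}).
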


\begin{proof}
The `if' part:  If the mentioned triangle exists, the exactness of $F$ gives an exact sequence
\[ F(\y_\mcP U)\stackrel{F(\y_\mcP(g))}{\la}F(\y_\mcP U'')\la F(M)\la 0 \]
in $\mcH$. Thanks to the natural isomorphism $F\circ\y_\mcP\cong (\H)_{| \mcU}$, this last sequence is isomorphic 
\[\H(U)\stackrel{\H(g)}{\la}\H(U'')\la F(M)\la 0. \]
Since $\H\dd\mcD\la\mcH$ is cohomological, we also have an exact sequence 
\[ \H(U)\stackrel{\H(g)}{\la}\H(U'')\stackrel{\H(h)}{\la}\H(U'[1])=0. 
\]
It follows that $F(M)=0$.
	
The `only if' part: Suppose now that $F(M)=0$ and choose a morphism $g\dd U\la U''$ in $\mcU$ (even in $\mcP$, if we want) such that $M\cong\Coker\y_\mcP(g)$. It then follows that $F(\y_\mcP(g))$ is an epimorphism since $F$ is exact and $F(M)=0$, and then in turn $\H(g)$ is an epimorphism since $F\circ \y_\mcP\cong (\H)_{| \mcU}$.
If we now complete $g$ to a triangle
\[ U'\stackrel{f}{\la}U\stackrel{g}{\la}U''\la U'[1], \]
it follows from Lemma~\ref{lem:t-generating tria} that $U'\in\mcU$.
\end{proof}


\section{Pure-injective objects and exact direct limits}
\label{sec.pure injectives}

In the previous section we have constructed, for an aisle $\mcU$ in a triangulated category $\mcD$ and a nice enough subcategory $\mcP\subseteq\mcU$, a Serre quotient functor $F\dd\widehat{\mcP}\la\mcH$ onto the heart of the $t$-structure whose aisle is $\mcU$. The construction dualizes easily and we also obtain a similar Serre quotient functor $F'\dd\widecheck{\mcQ}\la\mcH$ for a nice enough full subcategory $\mcQ\subseteq\mcV$ of a co-aisle, where
\[ \widecheck{\mcQ} := (\widehat{\mcQ^\op})^\op = (\modf(\mcQ^\op))^\op. \]

One of our main concerns is when $\mcH$ is AB5 or a Grothendieck category, and we will address the question via first checking whether $\widehat{\mcP}$ or $\widecheck{\mcQ}$ is AB5 or a Grothendieck category. In other words, we wish to obtain practical criteria on $\mcP$ and $\mcQ$ ensuring that $\widehat{\mcP}$ and $\widecheck{\mcQ}$ possess the required exactness properties, respectively.

In the first case, we restrict ourselves to the case of module categories, i.e.\ to the situation where $\mcP$ has coproducts and there exists a set $\mcS\subseteq\mcP$ such that $\mcP=\Add(\mcS)$ and each $S\in\mcS$ is small in $\mcP$ (in the sense that $\Hom_{\mcP}(S,?)\dd\mcP\la\Ab$ preserves coproducts). It is well known that then we have an equivalence
\begin{align*}
\widehat{\mcP} &\stackrel{\simeq}\la \Mod\mcS, \\
(F\dd\mcP^\op\to\Ab) &\rightsquigarrow F_{|\mcS^\op}.
\end{align*}
Although there exist Grothendieck categories with enough projective objects which are not module categories (see~\cite{BHPST}), they seem to be quite difficult to construct and we do not use them here.

Here we focus more on the dual question when $\widecheck{\mcQ}$ is AB5 or a Grothendieck category. A main argument, which we extend and apply here, was given in \cite{PoSt}. The key notion is that of pure-injectivity, which is defined in the spirit of~\cite{CS} and which coincides with the classical one when $\mcA$ is either
\begin{itemize}
\item a locally finitely presented additive category with products (see \cite{CB}; beware that following~\cite{AR}, one would dub such categories finitely accessible with products) or
\item a compactly generated triangulated category (see \cite[Theorem 1.8]{K}).
\end{itemize}

\begin{opr} \label{def.pure-injective}
Let $\mcA$ be any additive category with (set-indexed) products.
	
\begin{enumerate}
\item An object $Y$ of $\mcA$ will be called \emph{pure-injective} if, for each set $I$, there is a morphism $f\dd Y^I\la Y$ such that $f\circ\lambda_i=1_Y$, where $\lambda_i\dd Y\la Y^I$ is the canonical section, for each $i\in I$.
\item A pure-injective object $Y\in\mcA$ is \emph{accessible} if the category $\Prod_
\mcA(Y)$ has a generator (that is, there is $Y'\in\Prod_\mcA(Y)$ such that the functor $\Hom(Y',?)\dd\Prod_\mcA(Y)\la\Ab$ is faithful).
\end{enumerate}
\end{opr}

Let us collect first some easy consequences of the definition.

\begin{lem} \label{lem.closure properties of pi}
Any product of pure-injective objects in $\mcA$ is pure-injective. A summand of a pure-injective object is pure-injective.
\end{lem}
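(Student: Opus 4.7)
The plan is to verify both closure properties directly from Definition \ref{def.pure-injective}, using the universal property of products to build the required retractions.

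For the closure under products, I would start with a family $(Y_j)_{j\in J}$ of pure-injective objects and set $Y=\prod_{j\in J} Y_j$. Given any set $I$, the key observation is the canonical isomorphism $Y^I\cong\prod_{j\in J}Y_j^I$ obtained by swapping the two products, under which the canonical section $\lambda_i\dd Y\la Y^I$ corresponds to the product $\prod_j\lambda_i^{(j)}$ of the canonical sections for each $Y_j$. By pure-injectivity of each $Y_j$, we pick morphisms $f_j\dd Y_j^I\la Y_j$ with $f_j\circ\lambda_i^{(j)}=1_{Y_j}$ for every $i\in I$, and we let $f\dd Y^I\la Y$ be the unique morphism whose $j$-th component (after identification with $\prod_j Y_j^I$) is $f_j$. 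Then
\[ f\circ\lambda_i = \prod_{j\in J}\bigl(f_j\circ\lambda_i^{(j)}\bigr) = \prod_{j\in J}1_{Y_j} = 1_Y, \]
for each $i\in I$, witnessing that $Y$ is pure-injective.

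For the closure under summands, suppose $Y$ is pure-injective and $Z$ is a direct summand of $Y$ via morphisms $\iota\dd Z\la Y$ and $\pi\dd Y\la Z$ with $\pi\iota=1_Z$. Fix a set $I$ and choose $f\dd Y^I\la Y$ such that $f\circ\lambda_i^Y=1_Y$ for every $i\in I$. The functoriality of the $I$-fold product yields $\iota^I\dd Z^I\la Y^I$ and the naturality of the canonical sections gives $\iota^I\circ\lambda_i^Z=\lambda_i^Y\circ\iota$. Define $g:=\pi\circ f\circ\iota^I\dd Z^I\la Z$; then
\[ g\circ\lambda_i^Z = \pi\circ f\circ\lambda_i^Y\circ\iota = \pi\circ\iota = 1_Z, \]
so $Z$ is pure-injective. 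There is no real obstacle here: both arguments rely only on the universal property of products and the compatibility of canonical sections with the direct-sum inclusions, so the proof is essentially a bookkeeping exercise with the definition.
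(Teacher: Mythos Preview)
Your proof is correct and follows essentially the same approach as the paper: for products you take the product of the individual retractions $f_j$ under the identification $Y^I\cong\prod_j Y_j^I$, and for summands you sandwich the given retraction between $\iota^I$ and $\pi$. The paper's argument is identical, only written slightly more tersely.
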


\begin{proof}
Suppose that $(Y_j)_{j\in J}$ is a collection of pure-injective objects, $I$ is a set and $f_j\dd Y_j^I \la Y_j$ is a map as in Definition~\ref{def.pure-injective}. Then $\prod_{j\in J}f_j\dd (\prod_{j\in J} Y_j)^I \la \prod_{j\in J} Y_j$ yields the identity when composed with any canonical section of the product. Hence $\prod_{j\in J} Y_j$ is pure-injective.
	
Similarly, if $Y = Y'\oplus Y''$ is pure-injective, $I$ is a set and we have a map $f\dd Y^I \la Y$ as in the definition, then the composition
\[ (Y')^I \rightarrowtail Y^I \stackrel{f}\la Y \twoheadrightarrow Y' \]
with the section of the splitting of $Y^I$ and the retraction of the splitting of $Y$ gives the desired map for $Y'$.
\end{proof}

\begin{lem} \label{lem.pi and products}
Let $\mcA$ be an additive category with products. Then $Y$ is pure-injective (resp.\ accessible pure-injective) in $\mcA$ if and only if $Y$ is such in $\Prod_\mcA(Y)$.
If $\mcB$ is another additive category with products, $F\dd\mcA\la\mcB$ is a product-preserving functor and $Y$ a pure-injective object of $\mcA$, then $F(Y)$ is pure-injective in $\mcB$.
\end{lem}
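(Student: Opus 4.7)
My plan is to exploit the fact that both parts of the statement reduce to elementary manipulations with the universal property of products, once one notices that $\Prod_\mcA(Y)$ inherits products from $\mcA$ and that the canonical sections $\lambda_i\dd Y\la Y^I$ are characterized purely by a product-of-Kronecker-delta relation with the projections.

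For the first half, I would first observe that the subcategory $\Prod_\mcA(Y)\subseteq\mcA$ is closed under arbitrary products in $\mcA$, because a product of summands of products of copies of $Y$ is again a summand of a product of copies of $Y$ (this already appeared implicitly in Lemma~\ref{lem.closure properties of pi}). Consequently, for every set $I$, the power $Y^I$ computed in $\Prod_\mcA(Y)$ coincides with $Y^I$ computed in $\mcA$, and the canonical sections $\lambda_i\dd Y\la Y^I$ are the same in both categories. Therefore the pure-injectivity condition of Definition~\ref{def.pure-injective}(1)—existence of a retraction $f\dd Y^I\la Y$ of every $\lambda_i$—holds in $\mcA$ if and only if it holds in $\Prod_\mcA(Y)$. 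For accessibility, it suffices to note $\Prod_{\Prod_\mcA(Y)}(Y)=\Prod_\mcA(Y)$, so the existence of a generator of this category is an intrinsic statement independent of the ambient.

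For the second half, I would use the categorical characterization of the canonical section: $\lambda_i\dd Y\la Y^I$ is the unique morphism such that $\pi_j\circ\lambda_i=1_Y$ if $i=j$ and $0$ otherwise, where $\pi_j\dd Y^I\la Y$ is the $j$-th projection. Since $F$ preserves products, $(F(Y^I),(F(\pi_j))_{j\in I})$ is a product of copies of $F(Y)$ in $\mcB$, and applying $F$ to the defining relations shows that $F(\lambda_i)$ is the canonical section into this product. Given any $f\dd Y^I\la Y$ with $f\circ\lambda_i=1_Y$ for all $i$, the morphism $F(f)\dd F(Y)^I\cong F(Y^I)\la F(Y)$ then satisfies $F(f)\circ F(\lambda_i)=1_{F(Y)}$, proving that $F(Y)$ is pure-injective.

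Neither step presents a real obstacle; the only thing one must be careful about is to verify that products in $\Prod_\mcA(Y)$ genuinely agree with those in $\mcA$ (rather than merely existing in the subcategory), which is why I would place that observation at the very beginning. Everything else is a direct unwinding of definitions via the universal property of the product.
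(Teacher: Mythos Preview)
Your proof is correct and follows the same approach as the paper's, only spelled out in more detail: the paper dismisses the first claim as ``obvious from the definition'' and for the second applies $F$ to the retraction $f$ exactly as you do. Your extra care in verifying that products in $\Prod_\mcA(Y)$ agree with those in $\mcA$, and that $F(\lambda_i)$ is genuinely the canonical section in $\mcB$, is not strictly necessary but makes the argument self-contained.
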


\begin{proof}
The first claim is obvious from the definition.
Regarding the second claim, let $Y\in\mcA$ be pure-injective and $I$ be a set. Fix a morphism $f\dd Y^I\la Y$ in $\mcA$ such that $f\circ\lambda_i=1_Y$, where $\lambda_i\dd Q\la Q^I$ is the canonical $i$-th section, for all $i\in I$. Then $F(f)\dd F(Y^I)\la F(Y)$ is a morphism from the product of $I$ copies of $F(Y)$ in $\mcB$ such that $F(f)\circ F(\lambda_i)=1_{F(Y)}$, for all $i\in I$. Therefore $F(Y)$ is pure-injective in $\mcB$. 
\end{proof}

In the context of Lemma~\ref{lem.closure properties of pi}, one is often interested not in individual pure-injective objects $Q\in\mcA$, but rather in the classes of the form $\Prod(Y)$. This leads to the following definition.

\begin{opr} \label{def.product equivalent}
We call two pure-injective objects $Y,Y'\in\mcA$ \emph{product-equivalent} if $\Prod(Y)=\Prod(Y')$ in $\mcA$.
\end{opr}

Note that, in the situation of Definition~\ref{def.pure-injective}, even when in addition $\mcA$ is abelian with coproducts, an injective object of $\mcA$ need not be pure-injective. The reason for this is that the canonical morphism $Y^{(I)}\la Y^I$ need not be a monomorphism, e.g.\ when $\mcA=\Ab^\op$ and $Y=\mathbb{Z}$. In fact the following extension of the dual of \cite[Theorem 3.3]{PoSt} is the main result of the subsection. Note that AB3* abelian categories with an injective cogenerator  are automatically AB3 by the adjoint functor theorem \cite[Proposition 6.4]{Faith}, and hence satisfy AB4.

\begin{prop} \label{prop.Positsetski-Stovicek}
Let $\mcA$ be an AB3* abelian category with an injective cogenerator $E$ (i.e.~$\mcA\simeq\widecheck{\mcQ}$ for $\mcQ=\Prod_\mcA(E)$ by the dual of Proposition~\ref{prop.reconstructing}). Then the following assertions are equivalent:
\begin{enumerate}
\item $\mcA$ is AB5.
\item $\mcA$ has an injective cogenerator which is pure-injective.
\item All injective objects of $\mcA$ are pure-injective. 
\end{enumerate}
Moreover, if the equivalent conditions above hold, then $\mcA$ is a Grothendieck category if and only if some (or any) injective cogenerator of $\mcA$ is accessible pure-injective.
\end{prop}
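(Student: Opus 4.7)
My plan is to treat the equivalences $(1)\Leftrightarrow(2)\Leftrightarrow(3)$ together and then address the Grothendieck characterization separately. The direction $(3)\Rightarrow(2)$ is free since $\mcA$ has an injective cogenerator by hypothesis. For $(2)\Rightarrow(3)$ I would observe that every injective $I\in\mcA$ is a direct summand of $E^{\Hom_\mcA(I,E)}$: the canonical morphism is mono because $E$ is an injective cogenerator, and it splits by injectivity of $I$. Pure-injectivity of $I$ then follows from Lemma~\ref{lem.closure properties of pi}. The equivalence $(1)\Leftrightarrow(2)$ is the dual form of \cite[Theorem~3.3]{PoSt}, which is precisely the result the proposition is advertised as extending; I would simply invoke it here.

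For the Grothendieck characterization, recall that AB3* together with an injective cogenerator forces AB3 by the adjoint functor theorem (as noted just above the statement), so being Grothendieck is equivalent to possessing a generator. For the forward direction, assume $\mcA$ has a generator $G$ and fix any injective cogenerator $E$. Injective envelopes exist in the Grothendieck setting, so $E(G)\in\Inj(\mcA)=\Prod(E)$. Given a nonzero $f\dd I\la J$ in $\Prod(E)$, I would pick $g\dd G\la I$ with $fg\neq 0$ (possible since $G$ generates $\mcA$) and extend $g$ to $\tilde g\dd E(G)\la I$ along the monomorphism $G\hookrightarrow E(G)$ using injectivity of $I$. The identity $(f\tilde g)|_G=fg\neq 0$ then forces $f\tilde g\neq 0$, so $\Hom_\mcA(E(G),-)$ is faithful on $\Prod(E)$ and $E$ is accessible pure-injective.

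The converse is the main obstacle and requires a preparatory well-poweredness step. The presence of the \emph{injective} cogenerator $E$ alone makes $\mcA$ well-powered: for any subobject $Y\hookrightarrow X$ the canonical morphism $X/Y\hookrightarrow E^{\Hom_\mcA(X/Y,E)}$ is mono (any test map from its kernel $K\la E$ extends over $X/Y$ by injectivity of $E$ and then vanishes on $K$, forcing $K=0$ via cogeneration), so the assignment $Y\mapsto\{f\dd X\la E\mid f|_Y=0\}$ embeds the subobjects of $X$ into the power set of the set $\Hom_\mcA(X,E)$, with $Y$ recoverable as the intersection of the kernels. Well-poweredness together with AB5 then yields injective envelopes in $\mcA$ via the standard Zorn argument on chains of essential subobjects of a fixed injective. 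Now assume $E'$ generates $\Prod(E)$ and let $\mcS$ be the (now honestly set-indexed) collection of subobjects of $E'$. For each $0\neq X\in\mcA$ form the envelope $E(X)\in\Prod(E)$; since $E'$ generates $\Prod(E)$ and $1_{E(X)}\neq 0$, there is a nonzero $f\dd E'\la E(X)$, and essentiality of $X\hookrightarrow E(X)$ forces $\Img(f)\cap X\neq 0$. The pullback $T:=f^{-1}(X)$ is a member of $\mcS$ carrying a nonzero morphism $T\la X$; hence $\mcS$ is a set of generators, and $\bigoplus_{S\in\mcS}S$ (existing thanks to AB3) is a single generator, making $\mcA$ Grothendieck. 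The delicate points are establishing well-poweredness purely categorically and verifying that essentiality propagates through the pullback to yield a genuinely nonzero map $T\la X$.
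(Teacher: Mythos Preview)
Your handling of $(1)\Leftrightarrow(2)\Leftrightarrow(3)$ and of the forward Grothendieck implication is correct and matches the paper; your explicit well-poweredness argument is also correct and in fact makes precise a step the paper uses only implicitly.

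The converse has a genuine gap. From ``every nonzero $X$ receives a nonzero morphism from some $T\in\mcS$'' you infer that $\mcS$ (the subobjects of $E'$) generates $\mcA$. But in an abelian category, $\mcS$ generating means that $\Hom\big(\bigoplus_{S\in\mcS}S,-\big)$ is \emph{faithful}: for every nonzero $f\colon X\to Y$ one needs $h\colon T\to X$ with $fh\neq0$. You have established only the strictly weaker condition $\mcS^\perp=0$. Bridging the two would require lifting a nonzero $T\to X/\Ker f$ along $X\twoheadrightarrow X/\Ker f$, and the subobjects $T\subseteq E'$ of an injective are typically not projective. Concretely, when $\Ker f\subseteq X$ is essential one has $E(X)=E(\Ker f)$, and your essentiality trick gives no control over whether the resulting map $T\to X$ escapes $\Ker f$.

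The paper's route for the converse is different and avoids injective envelopes. It first shows that the canonical map $E^{(\Hom_\mcA(E,F))}\to F$ is an epimorphism in $\mcA$ for each injective $F$ (otherwise a nonzero map $F\to F'$ in $\Inj\mcA$ would annihilate every $E\to F$, contradicting the hypothesis that $E$ generates $\Inj\mcA$). Embedding an arbitrary $X$ into an injective $F$ and writing $F$ as the directed union of images $Z_J$ of finite subcoproducts $E^{(J)}$, the AB5 condition gives $X=\bigcup_J(Z_J\cap X)$; hence the \emph{subquotients of finite direct sums of $E$} generate $\mcA$. Your well-poweredness step is precisely what guarantees this generating class is a set.
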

\begin{proof}
As mentioned, the first part is formally dual to \cite[Theorem 3.3]{PoSt}.

Regarding the moreover part, let use denote by $\mcQ\subseteq\mcA$ the class of injective objects and suppose first that $\mcA$ is a Grothendieck category with a generator $G$. Consider $j\dd G\rightarrowtail E$ an embedding of $G$ into an injective object. Then the $j$ induces a surjective natural transformation
\[ j^*\dd \Hom_{\mcQ}(E,?) \longrightarrow \Hom_\mcA(G,?)_{|\mcQ}. \]
Since $\Hom_\mcA(G,?)$ is faithful, so is $\Hom_{\mcQ}(E,?)$ and, hence, $E$ is a generator of $\mcQ$.
	
Suppose conversely that $\mcA$ is complete and AB5 and $E$ is a generator for $\mcQ$. We first observe that the canonical map $f\dd E^{(I)}\la F$, where $I=\Hom_\mcA(E,F)$, is an epimorphism in $\mcA$ for any $F\in\mcQ$. Indeed, if it were not, we  could consider a composition
\[ g\dd F \la \Coker(f) \rightarrowtail F', \]
where the second map is an inclusion into an injective object $F'$. Then $g$ is non-zero, but the composition $g\circ f'$ vanishes for any $f'\in\Hom_\mcA(E,F)$ by the choice of $g$. This contradicts the fact that $E$ is a generator of $\mcQ$.
	
Now we claim that the set $\mcS$ of all subquotients of finite direct sums of copies of $E$ generates $\mcA$. Indeed, given any $X\in\mcA$, we first embed it into an injective object $F$ and then we again consider the canonical map $f\dd E^{(I)}\la F$, where $I=\Hom_\mcA(E,F)$. This map is an epimorphism in $\mcA$ by the previous paragraph. If we denote for any finite subset $J\subseteq I$ by $Z_J$ the image of the composition $E^{(J)} \rightarrowtail E^{(I)} \stackrel{f}\la F$, then clearly $F$ is the direct union of the subobjects $Z_J$. By the AB5 condition, we have equalities
\[ X = F\cap X = \Big(\bigcup_{J\textrm{ finite}} Z_J\Big)\cap X = \bigcup_{J\textrm{ finite}} (Z_J\cap X) \]
in the lattice of subobjects of $F$, and hence we obtain an epimorphism
\[
\coprod_{J\textrm{ finite}} Z_J\cap X \la X
\]
in $\mcA$. This proves the claim and the proposition.
\end{proof}

Finally, we touch the question of accessibility of pure-injective objects. It is a purely technical condition, which is very often satisfied for categories arising in practice. For our purposes, we record the following lemma.

\begin{lem} \label{lem.pi in std well gen}
Let $Q$ be a pure-injective object in a standard well generated triangulated category $\mcD$. Then $Q$ is accessible pure-injective.
\end{lem}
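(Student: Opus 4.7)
My plan is to construct an explicit generator $Q'\in\Prod_\mcD(Q)$ of $\Prod_\mcD(Q)$ as a single large product of copies of $Q$, driven by a morphism-detection property transported from the ambient compactly generated category. To set up, I fix a presentation $\mcD\simeq\mcC/\Loc_\mcC(\mcS)$ as in Definition~\ref{def.std well generated}, with $\mcC$ compactly generated, and use standard Bousfield-localization theory to obtain a fully faithful right adjoint $\iota\dd\mcD\la\mcC$ to the Verdier quotient $q\dd\mcC\la\mcD$. Since $\iota$ preserves products, $\iota Q$ is pure-injective in $\mcC$ by Lemma~\ref{lem.pi and products}, and by the discussion at the start of Section~\ref{sec.pure injectives} this coincides with classical pure-injectivity in the compactly generated setting.

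The heart of the argument is the following morphism-detection property: for every nonzero $\phi\dd M\la Q$ in $\mcD$ there exist a compact object $G\in\mcC^c$ and a morphism $g\dd qG\la M$ in $\mcD$ such that $\phi g\neq 0$. I would establish this by pushing $\phi$ along $\iota$ and invoking the classical characterization of pure-injectivity in compactly generated triangulated categories: for $Y$ pure-injective in $\mcC$, the restricted Yoneda functor $\y\dd\mcC\la\Mod\mcC^c$ induces an isomorphism $\Hom_\mcC(X,Y)\cong\Hom_{\Mod\mcC^c}(\y X,\y Y)$ for every $X\in\mcC$. It follows that $\y(\iota\phi)$ is nonzero in $\Mod\mcC^c$, so some component $\y(\iota\phi)_G$ sends a nonzero element $\tilde g\in\y(\iota M)(G)=\Hom_\mcC(G,\iota M)$ to a nonzero element of $\y(\iota Q)(G)$, and the adjunction $q\dashv\iota$ turns $\tilde g$ into the required morphism $g\dd qG\la M$ with $\phi g\neq 0$.

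With the morphism-detection property in hand, the candidate generator is
\[
Q'\;:=\;\prod_{G\in\mcC^c}Q^{\Hom_\mcD(qG,Q)},
\]
which lies in $\Prod_\mcD(Q)$ because $\mcC^c$ is skeletally small and each $\Hom_\mcD(qG,Q)$ is a set. To verify that $\Hom_\mcD(Q',-)$ is faithful on $\Prod_\mcD(Q)$, I would take any nonzero $f\dd P_1\la P_2$ in $\Prod_\mcD(Q)$; since $P_2$ is a retract of some $Q^J$, some composition $h\dd P_2\la Q$ satisfies $hf\neq 0$, and the morphism-detection property applied to $\phi=hf$ gives $G\in\mcC^c$ and $g\dd qG\la P_1$ with $fg\neq 0$. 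Writing $P_1$ as a retract of $Q^I$ via $s\dd P_1\la Q^I$ and $r\dd Q^I\la P_1$, the composition $sg$ corresponds to a family $(g_i)_{i\in I}\in\Hom_\mcD(qG,Q)^I$ and induces a canonical morphism $\tilde\pi\dd Q^{\Hom_\mcD(qG,Q)}\la Q^I$ of products; setting $\Psi:=r\circ\tilde\pi\circ\pi_G\dd Q'\la P_1$ (where $\pi_G$ is the projection of $Q'$ onto its $G$-factor), a direct computation using a natural diagonal $\delta\dd qG\la Q'$ gives $\Psi\circ\delta=g$, whence $f\Psi\neq 0$. The main obstacle is the morphism-detection property itself: the subsequent construction of $Q'$ is essentially bookkeeping with products, retracts, and the adjunction, but extracting a compact-source witness for a nonzero morphism into the pure-injective $Q$ genuinely requires Krause's classical characterization of pure-injectives, transplanted along the Bousfield localization $q\dashv\iota$.
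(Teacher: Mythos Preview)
Your proof is correct, but it takes a different route from the paper's.

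The paper reduces first to the compactly generated case by observing that $\iota$ induces an equivalence $\Prod_\mcD(Q)\simeq\Prod_\mcC(\iota Q)$. In $\mcC$ it then argues abstractly: by \cite[Theorem~1.8]{K}, $\y(\iota Q)$ is injective in $\Mod\mcC^c$ and $\y$ induces an equivalence $\Prod_\mcC(\iota Q)\simeq\Prod_{\Mod\mcC^c}(\y\iota Q)$; localizing at the hereditary torsion class $\mcT={}^\perp(\y\iota Q)$ further identifies this with $\Inj\mcG$ for the Grothendieck quotient $\mcG=(\Mod\mcC^c)/\mcT$. The existence of a generator of $\Prod_\mcD(Q)$ then follows from Proposition~\ref{prop.Positsetski-Stovicek}, which says that $\Inj\mcG$ has a generator whenever $\mcG$ is Grothendieck.

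Your argument bypasses the localization step and Proposition~\ref{prop.Positsetski-Stovicek} entirely: you use Krause's isomorphism $\Hom_\mcC(X,\iota Q)\cong\Hom_{\Mod\mcC^c}(\y X,\y\iota Q)$ only to extract the morphism-detection property (every nonzero map into $Q$ is witnessed by some $qG$ with $G\in\mcC^c$), and then build the generator $Q'=\prod_{G}Q^{\Hom_\mcD(qG,Q)}$ by hand. This is more elementary and gives an explicit generator, at the cost of a slightly longer bookkeeping argument with products and retractions. The paper's approach, on the other hand, yields the stronger structural statement that $\Prod_\mcD(Q)$ is equivalent to the category of injectives of a Grothendieck category, which is of independent interest (and is used elsewhere, e.g.\ in the purity theory of Section~\ref{sect.universal copr-pres}).

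One cosmetic remark: your ``natural diagonal'' $\delta\dd qG\la Q'$ is not canonically defined on all factors, but you only use its $G$-component (the evaluation map $qG\la Q^{\Hom_\mcD(qG,Q)}$), so it suffices to take the remaining components to be zero; it may help to say so explicitly.
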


\begin{proof}
Assume first that $\mcD$ is compactly generated. By \cite[Theorem 1.8]{K}, we know that $\y Q$ is an injective object of $\Mod\mcD^c$, where $\y\dd\mcD\la\Mod\mcD^c$ is the generalized Yoneda functor that takes $D\rightsquigarrow\y D=\Hom_\mcD(?,D)_{|\mcD^c}$. Note that $\y$ preserves products and induces an equivalence of categories $\Prod_\mcD(Q)\stackrel{\simeq}{\la}\Prod_{\Mod\mcD^c}(\y Q)$. If  now $\mcT$ denotes the hereditary torsion class in $\Mod\mcD^c$ consisting of the $\mcD^c$-modules $T$ such that $\Hom_{\Mod\mcD^c}(T,\y Q)=0$, we have that the quotient functor $q\dd\Mod\mcD^c\la (\Mod\mcD^c)/\mcT=:\mcG$ induces an equivalence $\Prod_{\Mod\mcD^c}(\y Q)\stackrel{\simeq}{\la}\Inj\mcG$. As $\mcG$ is a Grothendieck category and we have proved that $\Prod_\mcD(Q)\simeq\Inj\mcG$, the conclusion follows by Proposition~\ref{prop.Positsetski-Stovicek}.
	
Suppose now that $\mcD=\mcC/\Loc_\mcC(\mcS)$ is general, where $\mcC$ is compactly generated and $\mcS$ is a set of objects of $\mcC$. Then the localization functor $q\dd\mcC\la\mcD$ has a fully faithful right adjoint $\iota\dd\mcD\la\mcC$ by~\cite[Proposition 1.21 and Lemma 9.1.7]{N}. If $Q\in\mcD$ is pure injective, so is $\iota(Q)\in\mcC$ by Lemma~\ref{lem.pi and products}. Moreover, $\iota$ induces an equivalence of categories $\Prod_\mcD(Q) \simeq \Prod_\mcC\big(\iota(Q)\big)$. As the latter category has a generator  by the previous paragraph (see Definition \ref{def.pure-injective}), the same is true for the former category and the lemma follows.
\end{proof}


\section{Representability for coproduct-preserving homological functors}
\label{sec:representability}

In several treatments of compactly or well generated triangulated categories (see~\cite{K,N,K-triang}), coproduct-preserving homological functors played an important role. Here we wish to explain how such functors can be in great generality represented by objects of the triangulated category.

Throughout, we will denote by $\mcD$ a triangulated category with coproducts, and consider homological functors $H\dd\mcD\la\mcA$ to an AB3* abelian category $\mcA$ that has an injective cogenerator. As said before,  such categories are also AB4.

In fact, we will study homological functors as above only up to a certain equivalence. The rationale is that given a homological functor $H\dd\mcD\la\mcA$, one is for a large part only interested in the long exact sequences from triangles and whether terms or maps in these sequences vanish. If we compose $H$ with a faithful and exact functor $F\dd\mcA\la\mcA'$ of abelian categories, these properties do not change and computations with $H$ using only these properties could be equally performed with $F\circ H\dd\mcD\la\mcA'$. If $H$ preserves coproducts,  we typically wish that $F$ preserves coproducts as well.
This leads us to the following definition, where we consider an even more restrictive condition on functors $F\dd\mcA\la\mcA'$ (which is, however, equivalent if $\mcA$ is a Grothendieck category).

\begin{opr} \label{def.computationally equivalent}
Let $\mcD$ be a triangulated category with coproducts and $H\dd\mcD\la\mcA$ and $H'\dd\mcD\la\mcA'$ be coproduct-preserving homological functors, where $\mcA$, $\mcA'$ are AB3* abelian categories with  injective cogenerators.

We say that $H'$ is a \emph{faithfully exact reduction} of $H$ is there exists a faithful exact left adjoint functor $F\dd\mcA\la\mcA'$ such that $H'\cong F\circ H$.

We call $H$ and $H'$ \emph{computationally equivalent} if they are related by a finite zig-zag of faithfully exact reductions. In other words, computational equivalence is the smallest equivalence relation extending the relation 'being a faithfully exact reduction'.
\end{opr}	

The next theorem among others says that for nice enough triangulated categories $\mcD$, computational equivalence classes of coproduct-preserving functors from $\mcD$ to an AB3* abelian category with an injective cogenerator are in bijection with product-equivalence classes of objects in $\mcD$. The theorem in fact gives more precise information---it says that each computation equivalence class of homological functors contains one such functor which is initial (this can be viewed as an analogue of~Lemma~\ref{lem.factorization of exact functors} for homological functors). To state that precisely, we will use a very small piece of 2-category theory.

We define the 2-category $\HFun(\mcD)$ of coproduct-preserving homological functors originating in $\mcD$ as follows. The objects will be all coproduct-preserving homological functors $H\dd\mcD\la\mcA$, where $\mcA$ is an AB3* abelian category with an injective cogenerator. The morphisms between $H\dd\mcD\la\mcA$ and $H'\dd\mcD\la\mcA'$ will be the faithful exact left adjoint functors $F\dd\mcA\la\mcA'$ making the triangle
\[
\xymatrix{
& \mcD \ar[dl]_-{H} \ar[dr]^-{H'} \\
\mcA \ar[rr]_-{F} && \mcA' \\
}
\]
strictly commutative. The collection of natural transformations between the morphisms $F,F'\dd H\la H'$ consists all natural transformation between the underlying functors $\mcA\la\mcA'$ in the usual sense.

In that language, the computational equivalence classes precisely correspond to the connected components of $\HFun(\mcD)$ (i.e. the smallest subclasses of objects which are pairwise connected by zigzags of morphisms). This is because any two naturally isomorphic homological functors  $H_0,H_1\dd\mcD\la\mcA$ which are objects of $\HFun(\mcD)$ are in the same connected component. Indeed, the full subcategory $\Iso (\mcA)$ of $\Mor (\mcA)$ given by the isomorphisms is equivalent to $\mcA$ via equivalences of categories $\pi_0,\pi_1\dd\Iso(\mcA)\la\mcA$ that take any isomorphism $A_0\la A_1$ to $A_0$  and $A_1$, respectively. A quasi-inverse for each $\pi_i$ is the functor $\mcA\la\Iso (\mcA)$ that takes any object to its identity morphism. If  $\alpha\colon H_0\overset{\cong}\la H_1$ is natural isomorphism, then we have an obvious homological functor  $\overline{H}\colon\mcD\la\Iso(\mcA)$ which sends $X\in\mcD$ to $\alpha_X\in\Iso(\mcA)$.  Since we clearly have that $\pi_i\circ\overline{H}=H_i$, for $i=0,1$, we conclude that all of $H_0$ ,$\overline{H}$ and $H_1$ are in the same connected component of $\HFun(\mcD)$.


We will consider each component of $\HFun(\mcD)$ as a full sub-2-category; then $\HFun(\mcD)$ is a disjoint union of these. Finally, we define an initial object in a 2-category $\mcC$ as an object $X\in\mcC$ such that each $Y\in\mcC$ admits a unique morphism from $X$ up to natural equivalence. Such an $X$ is necessarily unique in $\mcC$ up to equivalence (and, in fact, is the initial object of the ordinary category which we obtain from $\mcC$ when we identify naturally equivalent morphisms, so that equivalences in $\mcC$ are turned to isomorphisms).

\begin{thm} \label{thm.classify homological functors}
Let $\mcD$ be a triangulated category which has arbitrary (set-indexed) coproducts and satisfies Brown representability theorem. Then there is a bijective correspondence between
\begin{enumerate}
\item the connected components of $\HFun(\mcD)$ and
\item product-equivalence classes of objects in $\mcD$.
\end{enumerate}
Moreover, each connected component of $\HFun(\mcD)$ has an (up to equivalence) unique initial object $H\dd\mcD\la\mcA$, which is characterized by the fact that it induces an equivalence $H_{|\Prod(Q)}\dd\Prod(Q)\stackrel{\simeq}\la\Inj\mcA$, where $Q$ is an object representing the product-equivalence class as in~(2) corresponding to $H$.
\end{thm}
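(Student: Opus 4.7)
The plan is to establish the bijection by constructing both maps explicitly and identifying the initial object of each component. First, given $Q\in\mcD$, I would set $\mcQ=\Prod_\mcD(Q)$ and $\mcA_Q:=\widecheck{\mcQ}=(\modf\mcQ^\op)^\op$. Applying Lemma~\ref{lem.Cont(P)} to $\mcQ^\op$ (which has coproducts and equals $\Add(Q)$), the category $\modf\mcQ^\op=\Cont(\mcQ,\Ab)$ is AB3 with projective generator $\Hom_\mcD(Q,-)_{|\mcQ}$, so $\mcA_Q$ is AB3* with an injective cogenerator. I would define $H_Q\dd\mcD\la\mcA_Q$ by $H_Q(D)=\Hom_\mcD(D,-)_{|\mcQ}$, which lands in $\Cont(\mcQ,\Ab)$ since $\Hom_\mcD(D,-)$ preserves products. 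It is homological because (co)kernels in $\mcA_Q$ are formed pointwise in $\Cont(\mcQ,\Ab)$ while each $\Hom_\mcD(-,Q')$ is cohomological, and it preserves coproducts via $\Hom_\mcD(\coprod D_i,Q')=\prod\Hom_\mcD(D_i,Q')$. A Yoneda-type argument gives an equivalence $H_Q|_\mcQ\dd\mcQ\stackrel{\simeq}{\la}\Inj{\mcA_Q}$, making $H_Q$ the candidate initial object of its component.

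In the reverse direction, for any $H'\dd\mcD\la\mcA'$ with injective cogenerator $E$, the functor $\Hom_{\mcA'}(H'(-),E)\dd\mcD^\op\la\Ab$ is cohomological and sends coproducts to products, so by Brown representability it is represented by some $Q\in\mcD$. Varying $E$ over $\Inj{\mcA'}=\Prod_{\mcA'}(E)$ yields, by Brown again, a product-preserving functor $\Phi\dd\Inj{\mcA'}\la\mcD$ with image in $\Prod_\mcD(Q)=\mcQ$, so the class $[Q]$ depends only on $H'$. To check constancy on connected components, consider a morphism $F\dd H_1\la H_2$ in $\HFun(\mcD)$ with right adjoint $G$: the object $G(E_2)$ is injective (right adjoint to exact), and is a cogenerator because $F$ is faithful (vanishing of $\Hom_{\mcA_1}(X,G(E_2))\cong\Hom_{\mcA_2}(F(X),E_2)$ forces $F(X)=0$ and hence $X=0$). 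The adjunction identifies the representing objects up to product-equivalence, so the assignment is well defined on components. A direct Yoneda calculation shows $H_Q$ has representing object $Q$, giving surjectivity of the correspondence.

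For injectivity and the initiality statement, given $H'\dd\mcD\la\mcA'$ with associated class $[Q]$, I will construct a faithful exact left adjoint $F\dd\mcA_Q\la\mcA'$ with $F\circ H_Q\cong H'$. Using the identification $\mcA'\simeq\widecheck{\Inj{\mcA'}}$ coming from the dual of Corollary~\ref{cor.Cont(P)}, I would define $F$ via the precomposition functor $\Phi^*\dd\Cont(\mcQ,\Ab)\la\Cont(\Inj{\mcA'},\Ab)$ induced by the product-preserving $\Phi\dd\Inj{\mcA'}\la\mcQ$, and then set $F:=(\Phi^*)^\op\dd\mcA_Q\la\widecheck{\Inj{\mcA'}}\simeq\mcA'$. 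Since (co)kernels in both $\Cont$-categories are pointwise, $\Phi^*$ is exact, so $F$ is exact; since $\Phi^*$ preserves all limits, it has a left adjoint by the adjoint functor theorem, so $F$ is a left adjoint. The identity $F\circ H_Q\cong H'$ is precisely the Brown-representability isomorphism $\Hom_\mcD(-,\Phi(E'))\cong\Hom_{\mcA'}(H'(-),E')$ transported through $\mcA'\simeq\widecheck{\Inj{\mcA'}}$.

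The main obstacle will be the faithfulness of $F$. Unwinding, $M\in\ker F$ corresponds to a map $g\dd Q_0\la Q_1$ in $\mcQ$ with $H'(g)$ monic, and I must show that $g$ is a split monomorphism in $\mcQ$. Since $E$ is an injective cogenerator, $H'(g)$ monic is equivalent to $\Hom_{\mcA'}(H'(g),E)$ being surjective, and by the Brown identification to $\Hom_\mcD(g,Q)\dd\Hom_\mcD(Q_1,Q)\la\Hom_\mcD(Q_0,Q)$ being surjective. The crucial leverage is $Q_0\in\Prod(Q)$: picking $i\dd Q_0\la Q^I$ and $p\dd Q^I\la Q_0$ with $pi=\mathrm{id}_{Q_0}$, each component $i_j\dd Q_0\la Q$ of $i$ factors as $i_j=\psi_jg$ for some $\psi_j$, and assembling $\xi=(\psi_j)_{j\in I}\dd Q_1\la Q^I$ yields $\xi g=i$, whence $p\xi g=pi=\mathrm{id}_{Q_0}$ exhibits $p\xi$ as a retraction of $g$. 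This identifies $H_Q$ as the essentially unique initial object of its component and completes the bijection.
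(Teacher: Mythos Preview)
Your overall architecture matches the paper's: define $H_Q$ via the restricted Yoneda functor into $\mcA_Q=\widecheck{\Prod(Q)}$, use Brown representability to go back, and link $H_Q$ to an arbitrary $H'$ by the precomposition functor $F=(\Phi^*)^\op$. Your verification that $F\circ H_Q\cong H'$ is correct, and your faithfulness argument---reducing $\Ker F=0$ to showing that $\Hom_\mcD(g,Q)$ surjective forces $g$ to split because $Q_0\in\Prod(Q)$---is a pleasant elementary alternative to the paper's route, which instead constructs the right adjoint $G'$ explicitly and reads off faithfulness \emph{and} exactness simultaneously from Lemma~\ref{lem.faithful exact left adjoints}, using that $G'$ sends the injective cogenerator of $\mcA'$ to that of $\mcA_Q$.

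There is, however, a genuine gap where you assert that $F$ is a left adjoint ``by the adjoint functor theorem''. You would need a left adjoint to $\Phi^*\dd\Cont(\mcQ,\Ab)\to\Cont(\Inj{\mcA'},\Ab)$, but the usual hypotheses are not available: $\Cont(\mcQ,\Ab)=\mcA_Q^\op$ is complete and well-powered (since $\mcA_Q$ has a cogenerator), yet you have no small cogenerating set for it---equivalently, no generator for $\mcA_Q$---and none is guaranteed by the standing assumptions. Likewise the dual SAFT applied to $F$ itself would require a generator for $\mcA_Q$. The paper sidesteps this entirely by constructing the right adjoint by hand: using the identifications $\mcA'\simeq\stiMor(\Inj{\mcA'})$ and $\mcA_Q\simeq\stiMor(\Prod_\mcD(Q))$ from (the dual of) Proposition~\ref{prop.reconstructing}, the functor $\Phi\dd\Inj{\mcA'}\to\Prod_\mcD(Q)$ induces $G'\dd\mcA'\to\mcA_Q$ on morphism categories, and the adjunction $(F,G')$ is then a direct Yoneda computation as in~\eqref{eq.adjoint to restriction}. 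You should replace the AFT appeal with this explicit construction. A smaller omission: for $H_Q$ to be \emph{initial} you also need essential uniqueness of $F$, which follows because any exact $F$ with $F\circ H_Q\cong H'$ is determined on $\Inj{\mcA_Q}$ (via the equivalence $H_Q|_{\Prod(Q)}$) and hence everywhere, since $\mcA_Q$ has enough injectives and $F$ is left exact.
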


In order to prove the theorem, we first establish the following characterization of exact and faithful left adjoints.

\begin{lem} \label{lem.faithful exact left adjoints}
Let $(F,G)\dd\mcA\rightleftarrows\mcB$ be an adjoint pair of functors between abelian categories and suppose that $\mcB$ is AB3* with an injective cogenerator $E$. Then
\begin{enumerate}
\item[(i)] $F$ is exact if and only if $G(E)$ is injective in $\mcA$, and
\item[(ii)] $F$ is faithful if and only if $G(E)$ is a cogenerator in $\mcA$.
\end{enumerate}
\end{lem}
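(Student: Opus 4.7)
\smallskip

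The plan is to reduce both statements to the defining properties of the injective cogenerator $E$ in $\mcB$ and then transport them to $\mcA$ via the adjunction isomorphism $\Hom_\mcB(F(-),E)\cong\Hom_\mcA(-,G(E))$.

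The first step is to record two consequences of $E$ being an injective cogenerator of $\mcB$. First, since $E$ is a cogenerator, $\Hom_\mcB(-,E)\dd\mcB^{\op}\la\Ab$ is faithful, so for any morphism $\beta$ in $\mcB$ one has $\beta=0$ if and only if $\Hom_\mcB(\beta,E)=0$. Second, applying $\Hom_\mcB(-,E)$ to the short exact sequence $0\to\ker\beta\to X\stackrel{\beta}\to Y$ and using that $E$ is injective yields an exact sequence $\Hom_\mcB(Y,E)\stackrel{\beta^*}\la\Hom_\mcB(X,E)\la\Hom_\mcB(\ker\beta,E)\to 0$; combined with the cogenerator property this shows that $\beta$ is a monomorphism in $\mcB$ if and only if $\Hom_\mcB(\beta,E)$ is surjective.

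For~(i), I will use that $F$, being a left adjoint, preserves colimits and is hence right exact, so exactness of $F$ is equivalent to preservation of monomorphisms. For a monomorphism $i\dd A'\rightarrowtail A$ in $\mcA$, the morphism $F(i)$ is a monomorphism in $\mcB$ if and only if $\Hom_\mcB(F(i),E)$ is surjective, which by adjunction is the same as $\Hom_\mcA(i,G(E))$ being surjective. Requiring this for \emph{every} monomorphism $i$ in $\mcA$ is precisely the statement that $G(E)$ is injective in $\mcA$, proving~(i).

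For~(ii), $F$ is additive, so $F$ is faithful if and only if $F(f)=0$ implies $f=0$ for every morphism $f$ of $\mcA$. By the cogenerator property applied to $F(f)\in\Mor(\mcB)$, we have $F(f)=0$ if and only if $\Hom_\mcB(F(f),E)=0$, which by the adjunction equals $\Hom_\mcA(f,G(E))$. Thus $F$ is faithful if and only if $\Hom_\mcA(-,G(E))$ is a faithful functor $\mcA^{\op}\la\Ab$, which is exactly the statement that $G(E)$ is a cogenerator in $\mcA$. The only mild point to check here is that faithfulness of $\Hom_\mcA(-,G(E))$ on morphisms coincides with the usual object-wise cogenerator condition; this is immediate, since taking $f=\mathrm{id}_X$ shows one direction, while the other follows by factoring any nonzero $f$ through its image and composing with a mono into $G(E)^I$. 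I do not foresee a real obstacle; the only thing to be careful with is keeping track of the direction of the Hom-functor and verifying that no extra completeness assumption on $\mcA$ is needed beyond being abelian.
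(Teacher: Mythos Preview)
Your proof is correct and follows essentially the same route as the paper's: both parts are reduced to the adjunction isomorphism $\Hom_\mcB(F(-),E)\cong\Hom_\mcA(-,G(E))$ together with the facts that $E$ being an injective cogenerator detects monomorphisms and zero maps in $\mcB$. The only quibble is your final aside: the paper (and most sources) take ``$G(E)$ is a cogenerator'' to \emph{mean} that $\Hom_\mcA(-,G(E))$ is faithful, so no reconciliation with an ``object-wise'' condition is needed; moreover, your sketched argument for that direction (``composing with a mono into $G(E)^I$'') would require products in $\mcA$, which are not assumed.
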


\begin{proof}
(i) Let $f\dd X\rightarrowtail Y$ be a monomorphism in $\mcA$. Then $F(f)$ is a monomorphism if and only if $\Hom_\mcB(F(f),E)$ is surjective if and only if $\Hom_\mcA(f,G(E))$ is surjective. Thus, $G(E)$ is injective in $\mcA$ if and only if $F$ preserves monomorphisms. Since $F$ is right exact, the conclusion follows.

(ii) Let $f\dd X\la Y$ be any morphism in $\mcA$. Then $F(f)$ vanishes if and only if $\Hom_\mcA(f,G(E))$ vanishes. Hence $G(E)$ is a cogenerator if and only if $F$ is faithful.
\end{proof}

\begin{proof}[Proof of Theorem~\ref{thm.classify homological functors}]
Note that $\mcD$ has products, which we obtain by applying Brown representability to products of functors $\prod_{i\in I}\Hom_\mcD(-,D_i)\dd \mcD^\op\la\Ab$.

Let us describe the correspondence between (1) and (2). First fix an object $(H\dd\mcD\la\mcA)$ of $\HFun(\mcD)$. Given an injective object $E\in\mcA$, we choose $G(E)\in\mcD$ representing $\Hom_\mcA(H(?),E)\dd\mcD^\op\la\Ab$. By the Yoneda lemma, we in fact obtain a product-preserving functor $G\dd\Inj\mcA\la\mcD$ and a natural isomorphism
\begin{equation} \label{eq.partial adjoint to H}
\Hom_\mcA\big(H(?),?\big) \cong \Hom_\mcD\big(?,G(?)\big)\dd \mcD^\op \times \Inj\mcA \la \Ab.
\end{equation}

We assign to $H$ the object $G(E)\in\mcD$, where $E\in\mcA$ is an injective cogenerator. In order to see that this is well-defined, first note that any two injective cogenerators are product-equivalent, and so are their images under $G$. Furthermore, if $F\dd\mcA\la\mcA'$ is a faithful exact left adjoint functor and $G'\dd\mcA'\la\mcA$ is the corresponding right adjoint, then
\[ \Hom_{\mcA'}\big(F(H(?)),?\big) \cong \Hom_\mcA\big(H(?),G'(?)\big) \cong \Hom_\mcD\big(?,G(G'(?))\big). \]
If $E'$ is an injective cogenerator of $\mcA'$, then $G'(E)$ is an injective cogenerator of $\mcA$ thanks to Lemma~\ref{lem.faithful exact left adjoints}. This implies that both $H$ and $F\circ H$ are assigned to the product-equivalence class of the object $G(G'(E))\in\mcD$.

Conversely, let us start with the class $\Prod_\mcD(Q)$ obtained from $Q\in\mcD$. Then $\mcA_Q := \Cont(\Prod(Q),\Ab)^\op = \widecheck{\Prod(Q)}$ is an AB3* abelian category and the functor $E_Q := \Hom_{\Prod(Q)}(Q,?)$ is its injective cogenerator by Lemma~\ref{lem.Cont(P)}. We assign the product-equivalence class of $Q$ to the restricted Yoneda functor
\begin{equation} \label{eq.homological functor for Q}
\begin{split}
H_Q\dd \mcD &\la \mcA_Q, \\
D &\rightsquigarrow \Hom_\mcD(D,?)_{|\Prod(Q)}.
\end{split}
\end{equation}
This is obviously a homological functor and it preserves coproducts since
\[ \Hom_\mcD\Big(\coprod_i D_i,?\Big)_{|\Prod(Q)} \cong \prod_i \Hom_\mcD(D_i,?)_{|\Prod(Q)}\dd \Prod_\mcD(Q) \la \Ab \]
is a product in $\Cont(\Prod(Q),\Ab)$ and, thus, a coproduct in $\mcA_Q$.

Now we prove that the assignments provide mutually inverse bijections. If we start with $Q\in\mcD$, we have for any $D\in\mcD$ that
\begin{multline*}
\Hom_{\mcA_Q}\big(H_Q(D),E_Q\big) \cong \\
\Hom_{\Cont(\Prod(Q),\Ab)}\big( \Hom_{\Prod(Q)}(Q,?), \Hom_\mcD(D,?)_{|\Prod(Q)} \big) \cong \\
\Hom_\mcD(D,Q)
\end{multline*}
by the Yoneda lemma. Comparing this with~\eqref{eq.partial adjoint to H}, we see that the corresponding functor $G_Q\dd\Inj{\mcA_Q}\la\mcD$ sends $E_Q$ to $Q$. It follows that the assignment $(1)\la(2)$ recovers $Q$ back from $\mcA_Q$.

Let us conversely start with $(H\dd\mcD\la\mcA)\in\HFun(\mcD)$ and consider the functor $G\dd\Inj\mcA\la\mcD$ defined by~\eqref{eq.partial adjoint to H}, an injective cogenerator $E\in\mcA$ and the object $Q=G(E)\in\mcD$.
To see that $H$ and $H_Q$ as in~\eqref{eq.homological functor for Q} are computationally equivalent, it suffices to prove that there is a faithful exact left adjoint functor $F\dd\mcA_Q\la\mcA$ such that $F\circ H_Q\cong H$. To this end, observe that the precomposition with $G\dd\Inj\mcA\la\Prod_\mcD(Q)$ induces a functor
\[ F\dd \mcA_Q = \Cont\big(\Prod_\mcD(Q),\Ab\big)^\op \la \Cont\big(\Inj\mcA,\Ab\big)^\op \simeq \mcA. \]
The functor $F\circ H_Q:\mcD\la\mcA$ takes $D$ to $\Hom_\mcD(D,G(?))_{|\Inj\mcA}$. Since $\Inj\mcA=\Prod (E)$ we get by~\eqref{eq.partial adjoint to H} that $(F\circ H_Q)(D)\cong\Hom_\mcA(H(D),?)_{| \Inj\mcA}$, which is precisely the object that corresponds to $H(D)$ by the canonical equivalence $\mcA\cong\Cont(\Inj\mcA,\Ab)^\op$. That is, we have an isomorphism $(F\circ H_Q)(D)\cong H(D)$, for all $D\in\mcD$,  which easily leads to a natural isomorphism $F\circ H_Q\cong H$.

There is also a natural functor in the opposite direction. Namely, the dual version of Lemma~\ref{prop.reconstructing} provides and equivalence
\[ \stiMor\big(\Inj\mcA\big) \la \mcA, \quad (f\dd Q^0 \la Q^1) \rightsquigarrow \Ker(f), \]
where $\stiMor(\Inj\mcA)$ is the quotient of $\Mor(\Inj\mcA)$ by the ideal of all maps factoring through a split monomorphism. Similarly, we have $\stiMor(\Prod_\mcD(Q))\simeq\mcA_Q$ given by $f'\rightsquigarrow\Coker\Hom_{\Prod(Q)}(f',?)$ (the cokernel is taken in $\Cont(\Prod_\mcD(Q))=\mcA_Q^\op$). Now $G\dd\Inj\mcA\la\Prod_\mcD(Q)$ induces a functor $\Mor(\Inj\mcA)\la\Mor(\Prod_\mcD(Q))$ and hence also a functor
\[ G'\dd \mcA \simeq \stiMor(\Inj\mcA) \la \stiMor\big(\Prod_\mcD(Q)\big) \simeq \mcA_Q. \]
That $F$ is left adjoint to $G$ follows by a computation analogous to~\eqref{eq.adjoint to restriction} in the proof of Lemma~\ref{lem.restricted Yoneda fundamental}. Finally, $F$ is faithful and exact since $G$ sends by construction the injective cogenerator $E\in\mcA$ to the injective cogenerator $\Hom_{\Prod(Q)}(Q,?)\in\mcA_Q$.

To prove the moreover part, note that given $Q\in\mcD$, $H_Q$ as in~\eqref{eq.homological functor for Q} induces an equivalence $(H_Q)_{|\Prod(Q)}\dd\Prod(Q)\stackrel{\simeq}\la\Inj{\mcA_Q}$ by the Yoneda lemma. Furthermore, we have just proved that any computational equivalent homological functor $H\dd\mcD\la\mcA$ admits a morphism $H_Q\la H$ in $\HFun(\mcD)$.
On the other hand, if $H'\dd\mcD\la\mcA'$ in $\HFun(\mcD)$ induces an equivalence
\[ H'_{|\Prod(Q)}\dd\Prod(Q)\stackrel{\simeq}\la\Inj{\mcA'} \]
and $F\dd \mcA'\la\mcA''$ is an exact functor to an abelian category $\mcA''$, then $F\circ H'$ determines $F$ up to natural isomorphism. Indeed, $F\circ H'$ determines $F_{|\Inj{\mcA'}}$ and, since $\mcA'$ has enough injectives and $F$ is left exact, $F_{|\Inj{\mcA'}}$ determines $F$. It follows that $H'$ is a (necessarily unique up to equivalence) initial object of the connected component of $\HFun(\mcD)$ in which it is contained.
\end{proof}

Once Theorem~\ref{thm.classify homological functors} is at hand, we have a clean criterion to determine when two functors as in its statement are computationally equivalent.

\begin{cor} \label{cor.clean-definition-comp.equivalent}
Let $\mcD$ be as in  Theorem~\ref{thm.classify homological functors} and let $H,H'\in\HFun(\mcD)$.  The following assertions are equivalent:
\begin{enumerate}
\item $H$ and $H'$ are computationally equivalent.
\item A morphism $s\in\Mor(\mcD)$ is in $\Ker H$ if, and only if, it is in $\Ker H'$.
\end{enumerate}
\end{cor}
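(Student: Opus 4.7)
The plan has two directions. The forward direction (1)$\Rightarrow$(2) is routine: a faithful exact functor $F\dd\mcA\la\mcA'$ satisfies $F(m)=0$ iff $m=0$, so if $H'=F\circ H$ then trivially $\Ker H=\Ker H'$; this property passes along any zig-zag of faithfully exact reductions, giving (2).

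For the converse (2)$\Rightarrow$(1), I would reduce to comparing the canonical initial representatives of the connected components of $\HFun(\mcD)$ provided by Theorem~\ref{thm.classify homological functors}. Let $Q,Q'\in\mcD$ represent the product-equivalence classes associated to $H$ and $H'$ respectively; then $H$ is computationally equivalent to $H_Q\dd\mcD\la\mcA_Q$ with $H_Q(D)=\Hom_\mcD(D,?)_{|\Prod(Q)}$, and similarly for $H'$ and $H_{Q'}$. By the easy direction we already have $\Ker H=\Ker H_Q$ and $\Ker H'=\Ker H_{Q'}$, so the hypothesis becomes $\Ker H_Q=\Ker H_{Q'}$. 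A short unraveling---using that the injective cogenerator $E_Q=\Hom_{\Prod(Q)}(Q,?)$ of $\mcA_Q$ detects zero morphisms, together with the identification $\Hom_{\mcA_Q}(H_Q(s),E_Q)\cong\Hom_\mcD(s,Q)$ from the proof of Theorem~\ref{thm.classify homological functors}---shows that $\Ker H_Q=\{s\in\Mor(\mcD)\,|\,\Hom_\mcD(s,Q)=0\}$ and similarly for $Q'$. Thus our hypothesis reads: $\Hom_\mcD(s,Q)=0\Leftrightarrow\Hom_\mcD(s,Q')=0$ for every morphism $s$ of $\mcD$.

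The main obstacle is to pass from this ``syntactic'' condition to the geometric conclusion $\Prod(Q)=\Prod(Q')$. For this I would exploit that $\mcD$ has products (via Brown representability) and form the canonical morphism $\phi\dd Q\la (Q')^{\Hom_\mcD(Q,Q')}$ whose $f$-th component equals $f$ for each $f\dd Q\la Q'$. For any $s\dd D\la Q$ one has $\phi\circ s=0$ iff $fs=0$ for all $f\dd Q\la Q'$, i.e.\ iff $s\in\Ker H_{Q'}$; the assumption then gives $s\in\Ker H_Q$, which applied with $g=1_Q\dd Q\la Q$ forces $s=0$. Completing $\phi$ to a triangle $N\stackrel{t}{\la}Q\stackrel{\phi}{\la}(Q')^{\Hom(Q,Q')}\stackrel{h}{\la}N[1]$ and applying this observation to $s=t$ (which satisfies $\phi t=0$) yields $t=0$; rotating produces a triangle with a zero connecting morphism, which necessarily splits, so $\phi$ is a split monomorphism. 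Hence $Q$ is a direct summand of an object in $\Prod(Q')$, i.e.\ $Q\in\Prod(Q')$, and by symmetry $Q'\in\Prod(Q)$. Therefore $\Prod(Q)=\Prod(Q')$, so $H$ and $H'$ lie in the same connected component of $\HFun(\mcD)$ and are computationally equivalent.
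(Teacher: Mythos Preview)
Your proof is correct and follows essentially the same route as the paper's. For $(2)\Rightarrow(1)$ the paper also reduces to the initial representatives $H_Q$, $H_{Q'}$, forms the canonical map $Q'\la Q^{\Hom_\mcD(Q',Q)}$, completes it to a triangle, and uses the hypothesis to conclude that the cocone morphism vanishes and hence the canonical map splits; your argument is the same with the roles of $Q$ and $Q'$ interchanged and with the intermediate observation ``$\phi\circ s=0\Rightarrow s=0$'' made explicit before applying it to the triangle.
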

\begin{proof}
$(1)\Longrightarrow (2)$ We have factorizations \[
H\dd\mcD\stackrel{H_Q}{\la}\mcA_Q\stackrel{F}{\la}\mcA
\qquad \textrm{and} \qquad  H'\dd\mcD\stackrel{H_Q}{\la}\mcA_Q\stackrel{F'}{\la}\mcA',
\]
where $H_Q$ is the initial object in the connected component of $\HFun (\mcD)$ to which $H$ and $H'$ belong and $F$ and $F'$ are faithful exact functors. Then, for a given $s\in\Mor(\mcD)$, one has that $H(s)=0$ if and only if $H_Q(s)=0$, if and only if $H'(s)=0$.

$(2)\Longrightarrow (1)$ Let $Q$ and $Q'$ be  objects of $\mcD$  representing the initial objects of the connected component of $H$ and $H'$ in $\HFun(\mcD)$.
By the previous paragraph we have that, given an $s\in\Mor(\mcD)$, $H_Q(s)=\Hom_\mcD(s,?)_{|\Prod(Q)}=0$ if, and only if, $H_{Q'}(s)=\Hom_\mcD(s,?)_{|\Prod(Q')}=0$. That is, $\Hom_\mcD(s,Q)=0$ if, and only if, $\Hom_\mcD(s,Q')=0$.
We now consider the canonical map $v\dd Q'\la Q^{\Hom_\mcD(Q',Q)}$ and complete it to a triangle $K\stackrel{u}{\la}Q'\stackrel{v}{\la}Q^{\Hom_\mcD(Q',Q)}\laplus$. We have that $\Hom_\mcD(u,Q)=0$, and hence  $\Hom_\mcD(u,Q')\dd\Hom_\mcD(Q',Q')\la\Hom_\mcD(K,Q')$ is also the zero map. This gives that $u=0$ and so $v$ is a section. This proves that $\Prod(Q')\subseteq\Prod(Q)$ and the reverse inclusion follows by exchanging the roles of $Q$ and $Q'$ in the argument.
\end{proof}

A conceptual explanation of the criterion in Corollary~\ref{cor.clean-definition-comp.equivalent} is given by the following observation, which we will use in the next section.

\begin{cor} \label{cor.initial homological functor as localization}
Let $\mcD$ be as in Theorem~\ref{thm.classify homological functors} and $Q\in\mcD$. If $H_Q\dd\mcD\la\mcA_Q$ is initial in the connected component of $\HFun(\mcD)$ corresponding to the product equivalence class of $Q$, then the induced functor $\widehat{H}_Q\dd\widehat{\mcD}\la\mcA_Q$ (given by Lemma~\ref{lem.coherent functors are abelian}) is a Gabriel localization functor and
\[
\Ker\widehat{H}_Q = \{ \Img\y_\mcD(s) \mid s\in\Ker H_Q \}.
\]

\end{cor}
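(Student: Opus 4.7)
The plan is to construct an explicit right adjoint $G\dd\mcA_Q\la\widehat{\mcD}$ to $\widehat{H}_Q$, show it is fully faithful (so that $\widehat{H}_Q$ becomes a localization by Lemma~\ref{lem.detect localization}), and then use the triangulated structure on $\mcD$ to read off $\Ker\widehat{H}_Q$. Two inputs from the paper are essential. First, by (the proof of) Theorem~\ref{thm.classify homological functors}, $(H_Q)_{|\Prod(Q)}$ is an equivalence $\Prod(Q)\stackrel{\simeq}\la\Inj{\mcA_Q}$, and $\mcA_Q$ is AB3* with enough injectives. Second, $\widehat{H}_Q$ is in fact exact (not merely right exact): the universal property of $\y_\mcD$ as the universal homological functor out of $\mcD$, recalled around diagram~\eqref{eq.H tilde}, provides an exact extension of the homological functor $H_Q$, and this extension must agree with the right-exact one supplied by Lemma~\ref{lem.coherent functors are abelian}.

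For the construction, I would set $G(A)(D):=\Hom_{\mcA_Q}(H_Q(D),A)$ for $A\in\mcA_Q$ and $D\in\mcD$. To see that $G(A)$ lies in $\widehat{\mcD}$, choose an injective copresentation $0\la A\la H_Q(P^0)\stackrel{H_Q(g)}\la H_Q(P^1)$ with $P^0,P^1\in\Prod(Q)$ and $g\dd P^0\la P^1$ the morphism uniquely determined by the equivalence $(H_Q)_{|\Prod(Q)}$. Applying the left-exact functor $\Hom_{\mcA_Q}(H_Q(D),-)$ and identifying $\Hom_{\mcA_Q}(H_Q(D),H_Q(P^i))\cong\Hom_\mcD(D,P^i)$ via Yoneda, one obtains a natural isomorphism $G(A)\cong\Ker\bigl(\y_\mcD(g)\dd\y_\mcD(P^0)\la\y_\mcD(P^1)\bigr)$. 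Since $\mcD$ has weak kernels (produced by completing morphisms to triangles), $\widehat{\mcD}$ is abelian and $G(A)\in\widehat{\mcD}$ by Lemma~\ref{lem.coherent functors are abelian}. The adjunction $\widehat{H}_Q\dashv G$ is then read off any projective presentation $\y_\mcD(D_1)\stackrel{\y_\mcD(f)}\la\y_\mcD(D_0)\la M\la 0$: both $\Hom_{\mcA_Q}(\widehat{H}_Q(M),A)$ and $\Hom_{\widehat{\mcD}}(M,G(A))$ compute as $\Ker\bigl(G(A)(D_0)\la G(A)(D_1)\bigr)$.

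Next I would show that the counit $\varepsilon_A\dd\widehat{H}_Q G(A)\la A$ is an isomorphism for every $A$, thereby forcing $G$ to be fully faithful. Applying the \emph{exact} functor $\widehat{H}_Q$ to the exact sequence $0\la G(A)\la\y_\mcD(P^0)\stackrel{\y_\mcD(g)}\la\y_\mcD(P^1)$ in $\widehat{\mcD}$ and using $\widehat{H}_Q\circ\y_\mcD\cong H_Q$ produces $0\la\widehat{H}_Q G(A)\la H_Q(P^0)\stackrel{H_Q(g)}\la H_Q(P^1)$, whose kernel is $A$ by construction, and one checks that the resulting isomorphism is exactly $\varepsilon_A$. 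Combined with Lemma~\ref{lem.detect localization} and the exactness of $\widehat{H}_Q$, this precisely says that $\widehat{H}_Q$ is a Gabriel localization functor.

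For the kernel description, the inclusion $\supseteq$ is immediate: for any $s\in\Ker H_Q$, exactness of $\widehat{H}_Q$ yields $\widehat{H}_Q(\Img\y_\mcD(s))=\Img H_Q(s)=0$. For the converse, take $M\in\Ker\widehat{H}_Q$, write $M\cong\Coker\y_\mcD(f)$ for some $f\dd D_1\la D_0$ in $\mcD$, and complete $f$ to a triangle $D_1\stackrel{f}\la D_0\stackrel{g}\la D_2\laplus$. Since $\y_\mcD$ is a homological functor, the sequence $\y_\mcD(D_1)\la\y_\mcD(D_0)\la\y_\mcD(D_2)$ is exact in $\widehat{\mcD}$, so $M\cong\Img\y_\mcD(g)$; applying the exact $\widehat{H}_Q$ forces $\Img H_Q(g)=0$, whence $H_Q(g)=0$ and $g\in\Ker H_Q$. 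The main conceptual obstacle is the identification of $G(A)$ as a finitely presented functor on $\mcD$; once that is in place, the exactness of $\widehat{H}_Q$ and standard manipulations with triangles make the remaining steps formal.
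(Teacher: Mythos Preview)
Your argument is correct. The main difference from the paper is in how the Gabriel localization is established. The paper exploits the self-duality of Freyd's abelianization: since projectives and injectives coincide in $\widehat{\mcD}$, one has a canonical identification $\widehat{\mcD}\simeq\widecheck{\mcD}$, under which $\widehat{H}_Q$ becomes the restriction functor $\widecheck{\mcD}\la\widecheck{\Prod_\mcD(Q)}$ along the inclusion $\Prod_\mcD(Q)\subseteq\mcD$; the dual of Lemma~\ref{lem.restricted Yoneda fundamental} then immediately yields a fully faithful right adjoint. You instead construct the right adjoint $G$ by hand as $G(A)=\Hom_{\mcA_Q}(H_Q(?),A)$ and verify directly, via injective copresentations in $\mcA_Q$ and the equivalence $(H_Q)_{|\Prod(Q)}\dd\Prod(Q)\simeq\Inj{\mcA_Q}$, that $G(A)\in\widehat{\mcD}$ and that the counit is an isomorphism. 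Your approach is more elementary and self-contained, avoiding the contravariant identification $\widehat{\mcD}\simeq\widecheck{\mcD}$, while the paper's route is shorter once one is willing to invoke that self-duality and reuse Lemma~\ref{lem.restricted Yoneda fundamental}. The kernel computation is essentially the same in both: every object of $\widehat{\mcD}$ is of the form $\Img\y_\mcD(s)$ (you obtain this by completing a presentation map to a triangle), and exactness of $\widehat{H}_Q$ turns membership in $\Ker\widehat{H}_Q$ into the vanishing of $H_Q(s)$.
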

\begin{proof}
By (the proof of) Theorem~\ref{thm.classify homological functors}, we may identify $H_Q$ with the generalized Yoneda functor
\begin{align*}
\mcD &\la \Cont\big(\Prod_\mcD(Q),\Ab\big)^\op = \widecheck{\Prod_\mcD(Q)} = \mcA_Q, \\
D &\rightsquigarrow \Hom_\mcD(D,?)_{|\Prod_\mcD(Q)}.
\end{align*}
On the other hand, it is rather well known that $\widehat{\mcD}$ is an abelian category with enough injective objects and these coincide with the projective objects. We also know by~\cite[Lemma 2.1]{K} that $\y_\mcD\dd\mcD\la\widehat{\mcD}$ is a universal homological functor and, by the last sentence, the functor
\begin{align*}
\y'_\mcD := (\y_{\mcD^\op})^\op\dd \mcD &\la \widecheck{\mcD} = (\widehat{\mcD^\op})^\op \\
D &\rightsquigarrow \Hom_\mcD(D,?).
\end{align*}
has the same property. By comparing the universal properties, this implies that we can canonically identify $\widehat{\mcD}$ and $\widecheck{\mcD}$ in a way compatible with the Yoneda embeddings.

The functor $\widehat{H}_Q$ then identifies with the opposite of the restriction functor along the inclusion $\Prod_\mcD(Q)\subseteq\mcD$, since the following diagram commutes:
\[
\xymatrix{
\mcD \ar[rr]^-{(\y_{\mcD^\op})^\op} \ar[rrd]_-{H_Q} && \widecheck{\mcD} \ar[d]^-{\res^\op} \\
&& \widecheck{\Prod_\mcD(Q)}.
}
\]
Now we just apply the dual version of Lemma~\ref{lem.restricted Yoneda fundamental} to see that $\res^\op$ is a Gabriel localization functor.

In order to compute $\Ker\widehat{H}_Q = \Ker(\res^\op) = \Ker\Hom_{\widecheck{\mcD}}(-,\y'_\mcD(Q))$, note first that any $M\in\widecheck{\mcD}$ is of the form $\Img\y'_\mcD(s)$ for a map $s\dd X\la Y$ in $\mcD$. This holds since $\widecheck{\mcD}\;(\simeq\widehat{\mcD})$ has enough projective and enough injective objects and both are precisely the representable functors. Since also $\y'_\mcD(Q)$ is injective in $\widecheck{\mcD}$, we have $\Hom_{\widecheck{\mcD}}\big(\Img\y'_\mcD(s),\y'_\mcD(Q)\big) = \Img\Hom_{\widecheck{\mcD}}\big(\y'_\mcD(s),\y'_\mcD(Q)\big)\cong\Img\Hom_\mcD(s,Q)$. Hence $\Img\y'_\mcD(s)\in\Ker\widehat{H}_Q$ if and only if $\Hom_\mcD(s,Q)=0$ if and only if $H_Q(s)=0$.
\end{proof}

Theorem~\ref{thm.classify homological functors} shows that there are too many coproduct-preserving homological functors from $\mcD$, almost as many as objects of $\mcD$. In order to make the theorem practical, we restrict the class of functors of interest to those with an AB5 target. To then end, note that AB5 descends along faithful exact left adjoints.

\begin{lem} \label{lem.AB5 descent}
Let $F\dd\mcA\la\mcB$ be a faithful exact left adjoint functor between AB3* abelian categories. If $\mcB$ is AB5 and with an injective cogenerator, then $\mcA$ has the same properties.
\end{lem}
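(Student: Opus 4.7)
The plan is to reduce the statement to the pure-injectivity characterization of AB5 given in Proposition~\ref{prop.Positsetski-Stovicek}, by transporting the injective cogenerator of $\mcB$ along the right adjoint of $F$.

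Let me denote by $G\dd\mcB\la\mcA$ a right adjoint of $F$ and fix an injective cogenerator $E$ of $\mcB$. The first step is to produce an injective cogenerator of $\mcA$. Since $F$ is both exact and faithful and $\mcB$ is AB3* with injective cogenerator $E$, Lemma~\ref{lem.faithful exact left adjoints} applies directly and yields that $G(E)$ is both injective and a cogenerator in $\mcA$. This already takes care of the injective cogenerator part of the conclusion.

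The second step is to promote AB5 from $\mcB$ to $\mcA$. Since $\mcB$ is AB5 and AB3* with injective cogenerator, the equivalence (1)$\Leftrightarrow$(3) of Proposition~\ref{prop.Positsetski-Stovicek} shows that every injective object of $\mcB$, and in particular $E$, is pure-injective. Now $G$, being a right adjoint, preserves all limits, hence in particular products; by Lemma~\ref{lem.pi and products} this forces $G(E)$ to be pure-injective in $\mcA$. Thus $\mcA$ is an AB3* abelian category that admits a pure-injective injective cogenerator, and a second application of Proposition~\ref{prop.Positsetski-Stovicek}, this time in the direction (2)$\Rightarrow$(1), shows that $\mcA$ is AB5. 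This completes both conclusions.

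There is essentially no obstacle here beyond verifying that the hypotheses of the two cited results are genuinely satisfied: the role of exactness and faithfulness of $F$ is to feed Lemma~\ref{lem.faithful exact left adjoints}, while the role of $G$ being a right adjoint is to feed Lemma~\ref{lem.pi and products}. Neither step requires AB5 on the source side $\mcA$, so the argument proceeds without circularity.
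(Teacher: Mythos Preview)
Your proof is correct and follows essentially the same approach as the paper: both transport the injective cogenerator $E$ of $\mcB$ along the right adjoint $G$, invoke Lemma~\ref{lem.faithful exact left adjoints} to see that $G(E)$ is an injective cogenerator, Lemma~\ref{lem.pi and products} (via product-preservation of $G$) together with Proposition~\ref{prop.Positsetski-Stovicek} to see that $G(E)$ is pure-injective, and then Proposition~\ref{prop.Positsetski-Stovicek} once more to conclude AB5 for $\mcA$.
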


\begin{proof}
Let $E\in\mcB$ be an injective cogenerator, which is pure-injective by Proposition~\ref{prop.Positsetski-Stovicek}. If $G$ is right adjoint to $F$, then $G(E)\in\mcA$ is a pure-injective injective cogenerator by Lemmas~\ref{lem.pi and products} and~\ref{lem.faithful exact left adjoints}. Finally, $\mcA$ is AB5 by Proposition~\ref{prop.Positsetski-Stovicek}.
\end{proof}

Then we obtain the following corollaries of Theorem~\ref{thm.classify homological functors}. The good news is that the class of pure-injective objects is generally considered much more tractable than that of all objects. For instance, any compactly generated triangulated category admits a pure-injective object $Q$ such that $\Prod(Q)$ exhausts all pure-injectives. In the following section we will prove the same for standard well generated triagulated categories. On the other hand, it rarely happens that there is $Q\in\mcD$ such that $\Prod(Q)$ exhausts all objects of $\mcD$.

\begin{cor} \label{cor.classify homological functors}
The bijection from Theorem~\ref{thm.classify homological functors} restricts to a bijection between
\begin{enumerate}
\item the computational equivalence classes of coproduct-preserving homological functors $H\dd\mcD\la\mcA$, where $\mcA$ is complete AB5 abelian with an injective cogenerator,
\item product-equivalence classes of pure-injective objects in $\mcD$.
\end{enumerate}
\end{cor}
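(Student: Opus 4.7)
The plan is to leverage Theorem~\ref{thm.classify homological functors} together with Proposition~\ref{prop.Positsetski-Stovicek}. Each computational equivalence class is identified by the theorem with the product-equivalence class of some $Q\in\mcD$ representing the initial target $\mcA_Q=\widecheck{\Prod(Q)}$, whose injective cogenerator is $E_Q=H_Q(Q)$. So what I need to show is that such a component contains a functor with AB5 target if and only if $Q$ is pure-injective; Proposition~\ref{prop.Positsetski-Stovicek} will then package this into the restricted bijection.

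For the ``only if'' direction, I would start with any member $H\dd\mcD\la\mcA$ of the component whose codomain $\mcA$ is AB5, so that by Proposition~\ref{prop.Positsetski-Stovicek} its injective cogenerator $E\in\mcA$ is pure-injective. Inspecting the construction in the proof of Theorem~\ref{thm.classify homological functors}, the object attached to the component is $Q=G(E)$, where $G\dd\Inj\mcA\la\mcD$ is the product-preserving functor determined by~\eqref{eq.partial adjoint to H}. Pure-injectivity of $E$ in $\mcA$ is equivalent to pure-injectivity in $\Prod_\mcA(E)=\Inj\mcA$ by the first part of Lemma~\ref{lem.pi and products}, and the second part of the same lemma transports pure-injectivity along product-preserving functors, yielding that $Q=G(E)$ is pure-injective in~$\mcD$.

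For the ``if'' direction, I would start with $Q$ pure-injective in $\mcD$ and prove that the initial target $\mcA_Q$ is AB5. By Proposition~\ref{prop.Positsetski-Stovicek} this reduces to pure-injectivity of $E_Q$ in $\mcA_Q$. The main technical point (and the one I expect to need the most care) is that $H_Q\dd\mcD\la\mcA_Q$ is not product-preserving in general, so Lemma~\ref{lem.pi and products} cannot be applied to $H_Q$ itself. My remedy is to pass through the equivalence $(H_Q)_{|\Prod(Q)}\dd\Prod(Q)\stackrel{\simeq}\la\Inj\mcA_Q$ supplied by the moreover part of Theorem~\ref{thm.classify homological functors}. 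In any AB3* abelian category, an arbitrary product of injective objects is injective, because $\Hom(-,\prod_i E_i)\cong\prod_i\Hom(-,E_i)$ inherits exactness from its factors; hence the inclusion $\Inj\mcA_Q\subseteq\mcA_Q$ preserves products. The equivalence therefore identifies $H_Q(Q^I)$ with the product $E_Q^I$ formed in $\mcA_Q$ and sends each canonical section $\sigma_j\dd Q\la Q^I$ to the canonical section $\lambda_j\dd E_Q\la E_Q^I$. A common retraction $f\dd Q^I\la Q$ of the $\sigma_j$, which exists because $Q$ is pure-injective, is then carried to a common retraction $H_Q(f)\dd E_Q^I\la E_Q$ of the $\lambda_j$, proving that $E_Q$ is pure-injective and completing the argument.
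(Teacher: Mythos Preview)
Your proof is correct. Both directions are handled cleanly, and the care you take in the ``if'' direction---checking that $\Inj\mcA_Q$ is closed under products in $\mcA_Q$ so that the equivalence $(H_Q)_{|\Prod(Q)}$ really matches the sections $\sigma_j$ with the sections $\lambda_j$---is exactly the point that needs attention.

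The paper's proof is organized a bit differently. Rather than transporting pure-injectivity directly from $\mcA$ to $\mcD$ along $G\dd\Inj\mcA\la\mcD$, it first invokes Lemma~\ref{lem.AB5 descent}: since the initial object $H_Q\dd\mcD\la\mcA_Q$ admits a faithful exact left adjoint into the AB5 category $\mcA$, the target $\mcA_Q$ itself is AB5. The passage from ``$\mcA_Q$ is AB5'' to ``$Q$ is pure-injective'' (and conversely) is then left implicit, relying on Proposition~\ref{prop.Positsetski-Stovicek} together with the equivalence $\Prod_\mcD(Q)\simeq\Inj\mcA_Q$. Your argument essentially unpacks the proof of Lemma~\ref{lem.AB5 descent} and applies it one step further, landing directly in $\mcD$ instead of in $\mcA_Q$; this makes the ``only if'' direction self-contained and avoids the detour through the initial target. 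The paper's route has the advantage of isolating the descent principle as a reusable lemma, while yours is marginally more direct for this specific corollary. The underlying ingredients (Proposition~\ref{prop.Positsetski-Stovicek} and Lemma~\ref{lem.pi and products}) are the same in both.
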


\begin{proof}
If $(H\dd\mcD\la\mcA)\in\HFun(\mcD)$ is such that $\mcA$ is AB5, so is the initial object of the connected component of $H$ by Lemma~\ref{lem.AB5 descent}.
\end{proof}

\begin{exs}
\begin{enumerate}
\item If $C\in\mcD$ is compact and $H=\Hom_\mcD(C,?)$, then $H$ corresponds to the product-equivalence class of the object $C^*$ representing the functor $\Hom_{\mathbb{Z}}\big(\Hom_\mcD(C,?),\mathbb{R}/\mathbb{Z}\big)\dd\mcD^\op\la\Ab$.
\item If $\mcD$ is compactly generated and $\y\dd\mcD\la\Mod\mcD^c$ is the standard restricted Yoneda functor, then $\y$ corresponds to the product-equivalence class of $\prod_{C\in\mcD^c}C^*$. In fact $\Prod\{C^*\mid C\in\mcD^c\}$ is the class of all pure injective objects of $\mcD$.
\item If $\mcD=\mcS\mcH$ is the stable homotopy category of spectra and $E\in\mcS\mcH$, then we have the homological theory with coefficients in $E$ given by $E_* := \pi_0(E\wedge?)\dd\mcS\mcH\la\Ab$. It corresponds to the pure-injective spectrum $E'$ which represents the functor $\Hom_{\mathbb{Z}}\big(E_*(?),\mathbb{R}/\mathbb{Z}\big)\dd\mcS\mcH^\op\la\Ab$. This construction was considered by Brown and Comenetz~\cite{BC-dual}.
\item If we specifically choose the Eilenberg-Maclane spectrum $E=H\mathbb{Z}$ in~(3), then $E_*$ is the ordinary homology with coefficients in $\mathbb{Z}$. In that case $E'=HG$, where $G=\mathbb{R}/\mathbb{Q}$ (considered as a discrete group). This follows from the universal coefficient theorem for cohomology.
\end{enumerate}
\end{exs}

We conclude the section with a general existence result for $t$-structures (and so also semi-orthogonal decompositions) cogenerated by a pure-injective object,
generalizing a recent result of Laking and Vit\'{o}ria, \cite[Corollary 5.11]{LV}.

\begin{prop} \label{prop.t-structure from pi}
If $\mcD$ is a standard well generated triangulated category and $Q$ a pure-injective object, then $\mcD$ admits a $t$-structure $(\mcU_Q,\mcV_Q):=(_{}^{\perp_{<0}}Q, (_{}^{\perp_{\leq 0}}Q)^\perp)$.
\end{prop}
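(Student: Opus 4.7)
The plan is to verify the three $t$-structure axioms of~\S\ref{ssec.t-structures}. The first two are formal consequences of the definitions: a direct computation shows that ${}^{\perp_{\leq 0}}Q=\mcU_Q[1]$, and hence
\[
\mcV_Q=({}^{\perp_{\leq 0}}Q)^\perp=(\mcU_Q[1])^\perp=\mcU_Q^\perp[1].
\]
This yields axiom~(i), because $V[-1]\in\mcV_Q[-1]=\mcU_Q^\perp$ for each $V\in\mcV_Q$, and it gives axiom~(ii), because $\mcU_Q[1]={}^{\perp_{\leq 0}}Q\subseteq{}^{\perp_{<0}}Q=\mcU_Q$.

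The content lies in producing truncation triangles. Here I would reduce to the algebraic compactly generated case via the hypothesis on $\mcD$: by Porta's theorem~\cite{Porta}, we may write $\mcD\simeq\mcC/\mcL$ with $\mcC$ algebraic compactly generated and $\mcL=\Loc_\mcC(\mcS)$ for a set $\mcS\subseteq\Ob(\mcC)$. The Verdier quotient $q\dd\mcC\la\mcD$ admits a fully faithful right adjoint $\iota\dd\mcD\la\mcC$ (since $\mcL$ is generated by a set, cf.~\cite{N}) whose essential image is $\mcL^\perp$. As $\iota$ is a right adjoint, it preserves products, so $\iota(Q)$ is pure-injective in $\mcC$ by Lemma~\ref{lem.pi and products}. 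Invoking the Laking--Vit\'oria result alluded to in the statement---which produces, in any algebraic compactly generated triangulated category, a $t$-structure cogenerated by any pure-injective object---we obtain, for every $X\in\mcD$, a truncation triangle $\tilde{U}\la\iota(X)\la\tilde{W}\laplus$ in $\mcC$ with $\tilde{U}\in{}^{\perp_{<0}}\iota(Q)$ and $\tilde{W}\in\big({}^{\perp_{<0}}\iota(Q)\big)^\perp$. Applying $q$ and using $q\iota\cong\mathrm{id}_\mcD$, I obtain a candidate triangle $q(\tilde{U})\la X\la q(\tilde{W})\laplus$ in $\mcD$.

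Finally, I would check that $q(\tilde{U})\in\mcU_Q$ and $q(\tilde{W})\in\mcU_Q^\perp=\mcV_Q[-1]$. The first is immediate from the adjunction: $\Hom_\mcD(q(\tilde{U}),Q[k])\cong\Hom_\mcC(\tilde{U},\iota(Q)[k])=0$ for $k<0$. For the second, given $V\in\mcU_Q$, the object $\iota(V)$ lies in $\mcL^\perp$, so the unit triangle $L\la\tilde{W}\la\iota q(\tilde{W})\laplus$ with $L\in\mcL$ gives
\[
\Hom_\mcD(V,q(\tilde{W}))\cong\Hom_\mcC(\iota(V),\iota q(\tilde{W}))\cong\Hom_\mcC(\iota(V),\tilde{W})=0,
\]
the last vanishing because $\iota(V)\in{}^{\perp_{<0}}\iota(Q)$ (again by $\iota$ being fully faithful) while $\tilde{W}\in\big({}^{\perp_{<0}}\iota(Q)\big)^\perp$. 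The main obstacle in the proof is the Laking--Vit\'oria theorem itself in the algebraic compactly generated setting; the descent from $\mcC$ to $\mcD$ is then routine bookkeeping with the adjunction $q\dashv\iota$.
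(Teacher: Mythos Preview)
Your proof is correct and follows essentially the same strategy as the paper: reduce to the algebraic compactly generated case via Porta's theorem, invoke Laking--Vit\'oria there for $\iota(Q)$, and transfer back along the adjunction $q\dashv\iota$. The only difference is organizational: where you verify the $t$-structure axioms by hand (constructing truncation triangles via $q$ applied to truncations of $\iota(X)$, and checking membership using the unit triangle $L\to\tilde{W}\to\iota q(\tilde{W})\laplus$), the paper instead observes that $\Ker(q)=\Loc_\mcC(\mcS)\subseteq{}^{\perp_{<0}}\iota(Q)$ and then appeals to the general Proposition~\ref{lem.localization of t-structure} on pushing $t$-structures through Verdier quotients, finishing by identifying $q({}^{\perp_{<0}}\iota(Q))={}^{\perp_{<0}}Q$. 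Your direct verification is a clean unwinding of what that proposition provides in this special case.
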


\begin{proof}
Recall that $\mcD$ has products and satisfies the Brown representability theorem. If we put $Q':=\prod_{i<0}Q[i]$, then $Q'$ is also pure-injective and we have the usual coproduct-preserving homological functor $H_{Q'}\dd\mcD\la\mcA_{Q'}$ such that $\mcA_{Q'}$ is a Grothendieck category
(recall Proposition~\ref{prop.Positsetski-Stovicek} and Lemma~\ref{lem.pi in std well gen}). Moreover, $\Ker(H_{Q'})={}^{\perp_{<0}}Q$. Note that this subcategory is clearly  suspended and closed under coproducts in $\mcD$. So according to Proposition~\ref{prop:aisle-by-Neeman},
it remains to show that, for any object $X\in\mcD$, there is a set $\mcS_X\subset {}^{\perp_{<0}}Q$ such that any morphism $f\colon X\la U$, with $U\in {}^{\perp_{<0}}Q$, factors through an object of~$\mcS_X$.

Here we rely on results on well generated triangulated categories from~\cite{N,K-triang} and model the argument on the proof of~\cite[Theorem 7.5.1]{K-triang}. First of all, $\mcD$ is $\lambda$-well generated for some regular cardinal $\lambda$ and there exist arbitrarily large regular cardinals $\mu\ge\lambda$ such that
\begin{enumerate}
\item $H_{Q'}(C)$ is $<\mu$-presented in $\mcA_{Q'}$ for each $C\in\mcD^\lambda$ (here $\mcD^\lambda$ stands for the essentially small subcategory of $\lambda$-compact objects as in~\S\ref{subsec:well-gen}) and
\item the class of $<\mu$-presented objects in $\mcA_{Q'}$ forms and exact abelian subcategory (see~\S\ref{subsec.Gabriel-Popescu}).
\end{enumerate}
Since $\mcD^\mu$ coincides by~\cite[Lemma~5]{K-wellgen} with the smallest triangulated subcategory containing $\mcD^\lambda$ and closed under coproducts with $<\mu$ terms, and since $H_{Q'}$ is homological and preserves these coproducts, it follows that $H_{Q'}(C)$ is $<\mu$-presented even for each $C\in\mcD^\mu$.

Now let $f\dd D_0=X\la U$ be a morphism in $\mcD$ with $U\in {}^{\perp_{<0}}Q$ and we fix an uncountable regular cardinal $\mu\ge\lambda$ such that $D_0$ is $\mu$-compact and $\mu$ satisfies conditions $(1)$ and $(2)$ above.
If we choose a skeleton $\widetilde{\mcD}^\mu$ of $\mcD^\mu$ (and we without loss of generality assume that $D_0$ is contained in $\widetilde{\mcD}^\mu$), we shall see that $\mcS_X=\widetilde{\mcD}^\mu\cap {}^{\perp_{<0}}Q$ will satisfy the required condition from Proposition~\ref{prop:aisle-by-Neeman}.

To see this, 
note that $H_{Q'}(U)=\varinjlim_{(g\dd C\to U)} H_{Q'}(C)$ in $\mcA_{Q'}$ by~\cite[Theorem 6.9.1]{K-triang}, where the colimit runs over all morphisms $g\dd C\la U$ with $C\in\widetilde{\mcD}^{\mu}$.
Although this is not a $\mu$-direct limit, it is a so-called $\mu$-filtered colimit by~\cite[Lemma 6.5.1]{K-triang} and $\mu$-filtered colimits are very close to $\mu$-direct limits (see~\cite[Theorem 1.5 and Remark 1.21]{AR} for a precise relation). In particular $\Hom_{\mcA_{Q'}}(H_{Q'}(D_0),?)$ commutes with $\mu$-filtered colimits and, since $H_{Q'}(U)=0$, we have 
\[
\varinjlim_{g\dd C\to U} \Hom_{\mcA_{Q'}}(H_{Q'}(D_0), H_{Q'}(C))\cong
\Hom_{\mcA_{Q'}}(H_{Q'}(D_0), H_{Q'}(U))=
0
\]
and so we can find a factorization $D_0\overset{g_1}\la D_1\overset{f_1}\la U$ of $f$ with $D_1\in\widetilde{\mcD}^\mu$ and such that $H_{Q'}(g_1)=0$. However, we can obtain a similar factorization of $f_1\dd D_1\la U$ for the same reason and repeating this procedure again and again, we construct by induction a cocone 
\[
\vcenter{
\xymatrix{ D_0 \ar[r]^-{g_1} \ar@/_/@<-.5ex>[rrrrd]_-{f} & D_1 \ar[r]^{g_2} \ar@/_/[rrrd]|-\hole|-{f_1} & D_2 \ar@/_/@<.5ex>[rrd]|-\hole|-{f_2} \ar[r]^{g_3} & \dots \\
&&&& U, }
}
\]
with all the $D_i$ in $\widetilde{\mcD}^\mu$ and such that $H_{Q'}(g_i)=0$ for all $i>0$.
Finally, note that $f$ factors through the Milnor colimit $\Mcolim(D_i)$ of the sequence in the upper row (see \S\ref{subsec:Mcolim}) and, since $\mu$ was chosen uncountable, we have $\Mcolim(D_i)\in\widetilde{\mcD}^\mu$ up to isomorphism. However,   putting $x=g$ in  the triangle~\eqref{eq.Mcolim} of the Milnor colimit, we have that $H_{Q'}(g)=0$ and so $1-g\dd\coprod_{i\geq 0}D_i\la\coprod_{i\geq 0}D_i$   is sent to the identity $\text{id}_{H_{Q'}(\coprod_{i\geq 0}D_i)}$ by $H_{Q'}$. Since the triangle is sent by $H_{Q'}$ to a short exact sequence (see Definition \ref{def.pure-triangle in standard well-gen. categs} and Example \ref{ex.Milnor triangle} below), we conclude that  $H_{Q'}(\Mcolim(D_i)) = 0$, which implies that $\Mcolim(D_i)\in\widetilde{\mcD}^\mu\cap{}^{\perp_{<0}}Q$.
\end{proof}

\section{Universal coproduct-preserving homological functors} \label{sect.universal copr-pres}

A starting point for this section is a result by Krause~\cite[Corollary 2.4]{K} saying that, for a compactly generated triangulated category $\mcD$,  the generalized Yoneda functor
\begin{equation} \label{eq.Freyd to Yoneda}
\begin{split}
h_\pure\dd \mcD &\la \Mod\mcD^c \quad \big(\simeq \widehat{\Add_\mcD(\mcD^c)\big)}, \\
X &\rightsquigarrow \Hom_\mcD(?,X)_{|\mcD^c},
\end{split}
\end{equation}
is a universal coproduct-preserving homological functor with an AB5 target in the following sense: any other coproduct-preserving homological functor $H\dd\mcD\la\mcA$, where $\mcA$ is an AB5 abelian category, factors essentially uniquely as $H\cong F\circ h_\pure$, where the functor $F\dd\Mod\mcD^c\la\mcA$ is exact and coproduct-preserving (or equivalently, $F$ is an exact left adjoint).

In fact, the proof of~\cite[Proposition 2.3]{K} shows more: Each natural transformation $\alpha\dd H\la H'$ between coproduct-preserving homological functors $\mcD\la\mcA$ uniquely extends to a natural transformation $\varphi\dd F\la F'$ between the corresponding exact coproduct-preserving functors $\Mod\mcD^c\la\mcA$. Thus, the precomposition with $h_\pure$ induces an equivalence between the corresponding functor categories
\begin{equation}\label{eq.univ-prop-hpure-Krause}
h_\pure^*\dd [\Mod\mcD^c,\mcA]_{\textsf{ex},\amalg} \stackrel{\simeq}\la [\mcD,\mcA]_{\textsf{h},\amalg}.
\end{equation}
We remind the reader of Remark~\ref{rem.set-theory-and-localization} at this point---analogous considerations about the interaction with set theory apply here as well.

This result has been further generalized to homological functors with only exact $\kappa$-directed colimits for some cardinal $\kappa$ (see~\cite{N,K-triang}), but here we pursue another direction.
As we are interested in methods involving purity and pure-injectivity, it appears crucial to insist that the targets of our coproduct-preserving homological functors are AB5. The next proposition says that such a universal functor $h_\pure\dd\mcD\la\mcA_\pure(\mcD)$ exists at least  for any standard (in particular, for any algebraic or topological) well generated triangulated category $\mcD$.

\begin{prop} \label{prop.universal AB5 homology}

Let $\mcD$ be a standard well generated triangulated category. Then there exists a coproduct-preserving homological functor $h_\pure\dd\mcD\la\mcA_\pure(\mcD)$ to a Groth\-en\-dieck category $\mcA_\pure(\mcD)$ with the following universal property:
The precomposition with $h_\pure$ induces and equivalence between the categories of coproduct-preserving homological functors $H\dd\mcD\la\mcA$ and all natural transformations on one hand, and exact coproduct-preserving functors $F\dd\mcA_\pure(\mcD)\la\mcA$ and all natural transformations on the other hand,
\[ h_\pure^*\dd [\mcA_\pure(\mcD),\mcA]_{\textsf{ex},\amalg} \stackrel{\simeq}\la [\mcD,\mcA]_{\textsf{h},\amalg}. \]

Moreover, there is up to isomorphism a unique functor $\operatorname{res}'\dd\widehat{\mcD}\la\mcA_\pure(\mcD)$ which is exact, has a fully faithful right adjoint (so it is a Serre quotient) and makes the following triangle commutative:
\[
\xymatrix{
\mcD \ar[rr]^-{\y_\mcD} \ar[rrd]_-{h_\pure} && \widehat{\mcD} \ar[d]^-{\operatorname{res}'} \\
&& \mcA_\pure(\mcD).
}
\]

\end{prop}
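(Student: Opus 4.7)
The plan is to reduce to the compactly generated case via the given presentation of $\mcD$. Fix a Verdier quotient $q\dd\mcC\la\mcD$ with $\mcC$ compactly generated and $\Ker(q)=\Loc_\mcC(\mcS)$ for a set $\mcS$. Krause's result cited in the paper gives a universal coproduct-preserving homological functor $h^\mcC_\pure\dd\mcC\la\Mod\mcC^c$ to a Grothendieck category for $\mcC$. I would then define $\mcT\subseteq\Mod\mcC^c$ to be the smallest hereditary torsion class containing the set $\{h^\mcC_\pure(X[n])\mid X\in\mcS,\ n\in\mathbb{Z}\}$, set $\mcA_\pure(\mcD):=(\Mod\mcC^c)/\mcT$, which is Grothendieck by standard Gabriel localization theory, and denote the Gabriel quotient by $\pi\dd\Mod\mcC^c\la\mcA_\pure(\mcD)$.

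Next I would verify that $\pi\circ h^\mcC_\pure\dd\mcC\la\mcA_\pure(\mcD)$ factors through $q$. The class $\mcN$ of objects it annihilates is localizing (since both functors preserve coproducts and the composition is homological, so the long exact sequences force closure under cones and shifts), and by construction of $\mcT$ it contains $\mcS$ together with all shifts; hence $\Loc_\mcC(\mcS)\subseteq\mcN$. A standard long exact sequence argument then shows $\pi\circ h^\mcC_\pure$ inverts all morphisms with cone in $\Ker(q)$, and hence factors essentially uniquely as $\pi\circ h^\mcC_\pure\cong h_\pure\circ q$. The factor $h_\pure\dd\mcD\la\mcA_\pure(\mcD)$ inherits coproduct-preservation and the homological property from the fact that $q$ is essentially surjective, preserves coproducts, and maps triangles to triangles.

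For the universal property, given a coproduct-preserving homological $H\dd\mcD\la\mcA$ with $\mcA$ AB5, I would apply Krause's universal property to $H\circ q\dd\mcC\la\mcA$: this gives an essentially unique exact coproduct-preserving $\overline{F}\dd\Mod\mcC^c\la\mcA$ with $\overline{F}\circ h^\mcC_\pure\cong H\circ q$. Since $H\circ q$ annihilates $\mcS$ and its shifts, $\overline{F}$ annihilates the generators of $\mcT$, and being exact and coproduct-preserving, it annihilates the whole hereditary torsion class $\mcT$. Hence $\overline{F}$ factors through $\pi$ as $F\circ\pi$ for an essentially unique exact coproduct-preserving $F\dd\mcA_\pure(\mcD)\la\mcA$ with $F\circ h_\pure\cong H$; essential uniqueness of $F$ follows from the essential surjectivity of $q$ and $\pi$.

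For the final statement, the functor $\operatorname{res}'$ is the unique (up to natural isomorphism) exact functor $\widehat{\mcD}\la\mcA_\pure(\mcD)$ extending $h_\pure$ along the universal homological functor $\y_\mcD\dd\mcD\la\widehat{\mcD}$ from~\cite[Lemma 2.1]{K}. By the universal property just proved, $h_\pure$ is, up to natural equivalence, the initial object of its connected component in $\HFun(\mcD)$; Corollary~\ref{cor.classify homological functors} combined with Lemma~\ref{lem.pi in std well gen} identifies it with $H_Q\dd\mcD\la\mcA_Q$ for some accessible pure-injective $Q\in\mcD$. Corollary~\ref{cor.initial homological functor as localization} then yields that $\operatorname{res}'=\widehat{H_Q}$ is a Gabriel localization, i.e.\ a Serre quotient functor with a fully faithful right adjoint. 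The main obstacle I expect is precisely this last identification step: one has to argue carefully that the universal property established at the level of $\mcD$ produces genuinely the initial object of the relevant connected component of $\HFun(\mcD)$, not merely some intermediate object, so that Corollary~\ref{cor.initial homological functor as localization} can be applied to deduce that $\operatorname{res}'$ itself is a Serre quotient functor with a fully faithful right adjoint.
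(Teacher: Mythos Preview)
Your construction of $h_\pure$ and the verification of the universal property follow the paper's argument essentially verbatim: same presentation $q\dd\mcC\la\mcD$, same hereditary torsion class $\mcT$ generated by $h^\mcC_\pure(\mcS[n])$, same reduction of the universal property to Krause's result for $\mcC$.

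For the ``moreover'' part, however, you take a different route from the paper, and the obstacle you flag is real. The universal property you established says that any coproduct-preserving homological $H$ with AB5 target factors through $h_\pure$ via an exact coproduct-preserving functor $F$; but morphisms in $\HFun(\mcD)$ are \emph{faithful} exact left adjoints, and your $F$ need not be faithful. So initiality of $h_\pure$ in its component does not follow immediately. You can close this gap by a back-and-forth argument: let $H_Q$ be the actual initial object of the component (it exists by Theorem~\ref{thm.classify homological functors}, and its target is AB5 since $Q$ is pure-injective by Corollary~\ref{cor.classify homological functors}). Initiality gives a faithful exact left adjoint $G\dd\mcA_Q\la\mcA_\pure(\mcD)$ with $h_\pure\cong G\circ H_Q$, and your universal property gives an exact coproduct-preserving $F\dd\mcA_\pure(\mcD)\la\mcA_Q$ with $H_Q\cong F\circ h_\pure$. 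Uniqueness in the universal property forces $G\circ F\cong\mathrm{id}$; this makes $F$ faithful, and since $\mcA_\pure(\mcD)$ is Grothendieck, $F$ is a left adjoint by Freyd's theorem. Then $F\circ G$ is an endomorphism of $H_Q$ in $\HFun(\mcD)$, hence the identity by initiality, so $F$ and $G$ are inverse equivalences.

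The paper avoids this detour entirely: it proves directly (Lemma~\ref{lem.pi for universal homological functor}(3)) that $h_\pure$ restricts to an equivalence $\PInj\mcD\simeq\Inj{\mcA_\pure(\mcD)}$, and then invokes the characterization of initial objects in Theorem~\ref{thm.classify homological functors} (initial $\Leftrightarrow$ induces $\Prod(Q)\simeq\Inj\mcA$). This is more concrete and has the side benefit of identifying the pure-injectives of $\mcD$ explicitly, which is used later in the paper. Your approach is more purely categorical and, once the gap is closed, arguably cleaner if one does not need that identification.
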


The proof requires some preparation and will be given later in the section.
This result allows us to extend the definition of pure triangles 
and we will see later that the pure-injective objects in the sense of Definition~\ref{def.pure-injective} become injective with respect to them.

\begin{opr} \label{def.pure-triangle in standard well-gen. categs}
Let $\mcD$ be a standard well generated triangulated category.  A triangle 
\[ X\stackrel{u}{\la}Y\stackrel{v}{\la}Z\stackrel{w}{\la}X[1] \] 
in $\mcD$ is called \emph{pure} if the functor $h_\pure\dd\mcD\la\mcA_\pure(\mcD)$ induces an exact sequence
\[ 0 \la h_\pure(X) \stackrel{h_\pure(u)}\la h_\pure(Y) \stackrel{h_\pure(v)}\la h_\pure(Z) \la 0 \]
(and this happens if and only if any coproduct-preserving homological functor with AB5 target takes the triangle to a short exact sequence).
\end{opr}

If $\mcD$ is compactly generated, we define $h_\pure$ as in~\eqref{eq.Freyd to Yoneda} and put $\mcA_\pure(\mcD) := \Mod\mcD^c$.
In general, we can express $\mcD$ as $\mcD=\mcC/\Loc_\mcC(\mcS)$, where $\mcC$ is compactly generated triangulated and $\mcS\subseteq\Ob(\mcC)$ is a set of objects. Let $\mcT\subseteq\Mod\mcC^c$ be the smallest hereditary torsion class containing $h_\pure\big(\bigcup_{n\in\mathbb{Z}}\mcS[n]\big)$, put $\mcA_\pure(\mcD)=\Mod\mcC^c/\mcT$ and define $h_\mcD=h_\pure\dd\mcD\la\mcA_\pure(\mcD)$ as the unique functor fitting into the following commutative diagram (we will abuse the notation and denote both horizontal arrows by $h_\pure$):
\begin{equation} \label{eq.definition of h_pure}
\vcenter{
\xymatrix{
\mcC \ar[r]^-{h_\pure} \ar[d]_-{q} & \Mod\mcC^c \ar[d]^-{q'} \\
\mcD \ar@{.>}[r]_-{h_\pure} & \mcA_\pure(\mcD).
}
}
\end{equation}

\begin{ex} \label{ex.Milnor triangle}
In any standard well generated triangulated category, the triangle~\eqref{eq.Mcolim} that defines the Milnor colimit is pure. This is because if $\mcD=\mcC /\text{Loc}_\mcC(\mcS)$,  with $\mcC$ and $\mcS$ as above, then the triangle is the image under  $q\dd\mcC\la\mcD$ of the triangle associated to the same sequence,   when viewed as a sequence in $\mcC$ using the fully faithful right adjoint $\iota\dd\mcD\la\mcC$. That the functor $h_\pure\dd\mcD\la \mcA_\pure(\mcD)$ maps the triangle to a short exact sequence is then a consequence of the purity of the triangle in $\mcC$, the commutativity of the last diagram and the exactness of $q'$.
\end{ex}

\begin{lem}\label{lem.pi for universal homological functor}
With the notation above and, without loss of generality, assume that $\mcS=\mcS[n]$ for all $n\in\mathbb{Z}$.  Then the following hold:
\begin{enumerate}
\item $Y\in\mcD$ is pure-injective if and only if $Y\cong q(Y_0)$ for $Y_0\in\mcS^{\perp}\subseteq\mcC$ which is pure-injective. Such $Y_0$ is, moreover, unique up to isomorphism.
\item If $X,Y\in\mcD$ and $Y$ is pure injective, then $h_\pure$ induces an isomorphism $\Hom_\mcD(X,Y)\cong\Hom_{\mcA_\pure(\mcD)}\big(h_\pure(X),h_\pure (Y)\big)$.
\item $h_\pure$ restricts to an equivalence $\PInj\mcD\simeq\Inj{\mcA_\pure(\mcD)}$, where $\PInj\mcD$ stands for the full subcategory of pure-injective objects.
\end{enumerate}
\end{lem}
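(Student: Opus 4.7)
The plan is to bootstrap the result from the compactly generated case (Krause's classical purity theorem, as invoked in the proof of Lemma~\ref{lem.pi in std well gen}) by transporting everything along the adjunction $(q,\iota)$ of the Verdier quotient $q\dd\mcC\la\mcD$. The key geometric picture is the following: the image of $\iota\dd\mcD\la\mcC$ is precisely $\Loc_\mcC(\mcS)^\perp=\mcS^{\perp_\mathbb{Z}}=\mcS^\perp$ (the last equality because we assume $\mcS=\mcS[n]$), and one should think of $\iota$ as a rigid embedding that converts statements about $\mcD$ into statements about $\mcS^\perp\subseteq\mcC$, where we can apply Krause.

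For assertion~(1), set $Y_0:=\iota(Y)$ if $Y$ is pure-injective in $\mcD$: this $Y_0$ lies in $\mcS^\perp$ by adjunction ($\Hom_\mcC(S,\iota(Y))=\Hom_\mcD(q(S),Y)=0$) and is pure-injective in $\mcC$ because $\iota$ preserves products, whence Lemma~\ref{lem.pi and products} applies. Conversely, given pure-injective $Y_0\in\mcS^\perp$, set $Y:=q(Y_0)$; pure-injectivity of $Y$ follows from a formal argument using that $\iota$ preserves products, $q\iota\cong 1_\mcD$, and that for a set $I$ the sections $\lambda_i\dd Y\la Y^I$ in $\mcD$ come by applying $q$ to the analogous sections in $\mcC$ (under the identification $\iota(Y^I)\cong\iota(Y)^I$). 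Uniqueness of $Y_0$ is immediate since $\mcS^\perp=\Img(\iota)$, so $Y_0\cong\iota q(Y_0)\cong\iota(Y)$.

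For~(2), given any $X\in\mcD$ pick a lift $X_0\in\mcC$ with $q(X_0)=X$, and write $Y=q(Y_0)$ with $Y_0=\iota(Y)$ as in~(1). The left-hand side reduces by adjunction to $\Hom_\mcC(X_0,Y_0)$, which by Krause's theorem equals $\Hom_{\Mod\mcC^c}(h_\pure(X_0),h_\pure(Y_0))$. For the right-hand side, using the Gabriel quotient $q'\dd\Mod\mcC^c\la\mcA_\pure(\mcD)$ with fully faithful right adjoint $\iota'$, I will show that $h_\pure(Y_0)$ already lies in the Giraud subcategory associated with $\mcT$. Since $Y_0$ is pure-injective, $h_\pure(Y_0)$ is injective in $\Mod\mcC^c$ by Krause, so $\Ext^1_{\Mod\mcC^c}(T,h_\pure(Y_0))=0$ automatically, and because $\Hom(-,h_\pure(Y_0))$ is then exact it suffices to check $\Hom$-vanishing on the generators $h_\pure(S[n])$ of $\mcT$; but $\Hom_{\Mod\mcC^c}(h_\pure(S[n]),h_\pure(Y_0))\cong\Hom_\mcC(S[n],Y_0)=0$ by Krause and the assumption $Y_0\in\mcS^\perp=\mcS^{\perp_\mathbb{Z}}$. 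Therefore $\iota'q'h_\pure(Y_0)\cong h_\pure(Y_0)$, and the right-hand side collapses to $\Hom_{\Mod\mcC^c}(h_\pure(X_0),h_\pure(Y_0))$, matching the left.

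For~(3), full faithfulness of $h_\pure$ on pure-injectives is a direct consequence of~(2) applied to both $X$ and $Y$ pure-injective. To see that $h_\pure$ lands in injectives, the argument of~(2) shows $h_\pure(Y_0)$ is injective in $\Mod\mcC^c$ and lies in the Giraud subcategory, and such objects are sent by the Gabriel quotient $q'$ to injectives in $\mcA_\pure(\mcD)$ (a standard fact since $q'$ is exact with left-exact fully faithful right adjoint). For essential surjectivity, take $E\in\Inj\mcA_\pure(\mcD)$; then $\iota'(E)\in\Mod\mcC^c$ is injective and Giraud, so by Krause $\iota'(E)\cong h_\pure(Y_0)$ for some pure-injective $Y_0\in\mcC$, and the Giraud condition forces $\Hom_\mcC(S[n],Y_0)=0$, i.e.\ $Y_0\in\mcS^\perp$. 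Part~(1) then produces a pure-injective $Y=q(Y_0)\in\mcD$ with $h_\pure(Y)=q'h_\pure(Y_0)=q'\iota'(E)\cong E$. The main obstacle throughout is the compatibility check between the Verdier quotient $q$ on the triangulated side and the Gabriel quotient $q'$ on the abelian side, concretely the verification that pure-injectives in $\mcS^\perp$ are sent by $h_\pure$ into the Giraud subcategory; once that is established, the equivalence follows mechanically.
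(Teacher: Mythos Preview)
Your proposal is correct and follows essentially the same route as the paper: both arguments transport Krause's classical results along the adjunction $(q,\iota)$ and the Gabriel localization $(q',\iota')$, using that $h_\pure(Y_0)$ is injective and torsion-free (equivalently, Giraud) in $\Mod\mcC^c$ whenever $Y_0\in\mcS^\perp$ is pure-injective. The only cosmetic difference is that the paper phrases part~(1) more succinctly via the equivalence $\iota\dd\mcD\stackrel{\simeq}{\la}\mcS^\perp$ rather than unwinding the summation maps explicitly, and in part~(3) it states the biconditional $Y_0\in\mcS^\perp\Leftrightarrow h_\pure(Y_0)\in\mcT^\perp$ directly, but the content is the same.
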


\begin{proof}
(1) The functor $q$ has a fully faithful right adjoint $\iota\dd\mcD\la\mcC$ which induces an exact equivalence $\mcD\la\mcS^\perp$ (see~\cite[Remark 1.16, Proposition 1.21 and Lemma 9.1.7]{N}). Hence $Y$ is pure-injective in $\mcD$ if and only if $\iota(Y)$ is pure-injective in $\mcS^\perp$ if and only if $\iota(Y)$ is pure-injective in $\mcC$. It remains to note that if we put $Y_0=\iota(Y)$, then $q(Y_0)\cong Y$. The last sentence follows by the fact that $q$ restricts to an equivalence $\mcS^\perp\simeq\mcD$. 
	
(2) Since $q$ is essentially surjective on objects, we can take $X_0,Y_0\in\mcC$ such that $q(X_0)\cong X$ and $q(Y_0)\cong Y$. Moreover, we can take $Y_0=\iota(Y)\in \mcS^\perp$ which is pure-injective in $\mcC$ by the previous part. Then $q$ induces an isomorphism $\Hom_\mcC(X_0,Y_0)\cong\Hom_\mcD(X,Y)$. On the other hand, \cite[Lemma 1.7 and Theorem 1.8]{K} say that $h_\pure$ induces an isomorphism $\Hom_\mcC(X_0,Y_0)\cong\Hom_{\Mod\mcC^c}\big(h_\pure(X_0),h_\pure(Y_0)\big)$. Finally, since $h_\pure(Y_0)$ is injective in $\Mod\mcC^c$ by~\cite[Theorem 1.8]{K} and it belongs to $\mcT^\perp$, the functor $q'$ induces
\begin{align*}
\Hom_{\Mod\mcC^c}\big(h_\pure(X_0),h_\pure(Y_0)\big) &\cong
\Hom_{\mcA_\pure(\mcD)}\big(q'\circ h_\pure(X_0),q'\circ h_\pure(Y_0)\big) \\ &\cong
\Hom_{\mcA_\pure(\mcD)}\big(h_\pure(X),h_\pure(Y)\big)
\end{align*}

(3) Note that for any pure-injective object  $Y_0\in\mcC$, we have $Y_0\in\mcS^\perp$ if and only if $h_\pure(Y_0)\in h_\pure(\mcS)^\perp=\mcT^\perp$. If we combine this with part~(1) and a classical fact that $E\in\mcA_\pure(\mcD)$ is injective if and only if $E\cong q'(E_0)$ for an up to isomorphism unique injective module $E_0\in\mcT^\perp$, we deduce that $X\in\mcD$ is pure-injective if and only if $h_\pure(X)$ is injective in $\mcA_\pure(\mcD)$.
\end{proof}

\begin{proof}[Proof of Proposition~\ref{prop.universal AB5 homology}]
Let us keep the notation of~\eqref{eq.definition of h_pure} and suppose that we are given an AB5 abelian category $\mcA$. For simplicity and without loss of generality, assume that $\mcS =\mcS [n]$, for all $n\in\mathbb{Z}$.
Then the equivalence
\[
h_\pure^*\dd [\Mod\mcC^c,\mcA]_{\textsf{ex},\amalg} \stackrel{\simeq}\la [\mcC,\mcA]_{\textsf{h},\amalg}
\]
as in~\eqref{eq.univ-prop-hpure-Krause} restricts to an equivalence between the full subcategories of
\begin{enumerate}
\item[(i)] exact coproduct-preserving functors $\Mod\mcC^c\la\mcA$ vanishing on $h_\pure(\mcS)$ and
\item[(ii)] coproduct-preserving homological functors $\mcC\la\mcA$ vanishing on $\mcS$.
\end{enumerate}
Now $q'$ from~\eqref{eq.definition of h_pure} is a coproduct-preserving Serre quotient functor, so $(q')^*$ is fully faithful and, in fact, restricts to a fully faithtul functor
\[
(q')^*\dd [\mcA_\pure(\mcD),\mcA]_{\textsf{ex},\amalg} \la [\Mod\mcC^c,\mcA]_{\textsf{ex},\amalg}
\]
whose essential image is precisely the class of functors described in (i) above. Similarly, $q$ from~\eqref{eq.definition of h_pure} is also a localization functor (recall~\cite[\S2.1]{N}) and $q^*$ restricts to a fully faithful functor
\[
q^*\dd [\mcD,\mcA]_{\textsf{h},\amalg} \la [\mcC,\mcA]_{\textsf{h},\amalg}
\]
whose essential image is precisely the class of functors as in (ii) above. This implies that $h_\pure^*\dd [\mcA_\pure(\mcD),\mcA]_{\textsf{ex},\amalg} \la [\mcD,\mcA]_{\textsf{h},\amalg}$ is an equivalence, as desired.


To prove the moreover part, note that $h_\pure\dd \mcD\la\mcA_\pure(\mcD)$ restricts to an equivalence $\PInj\mcD\simeq\Inj{\mcA_\pure(\mcD)}$ by Lemma~\ref{lem.pi for universal homological functor}.
Since $\mcA_\pure(\mcD)$ has an injective cogenerator, there exists $Q\in\PInj\mcD$ such that $\PInj\mcD=\Prod_\mcD(Q)$. It follows from Theorem~\ref{thm.classify homological functors} that $h_\pure$ is an initial object in the connected component of $\HFun(\mcD)$ corresponding to the product equivalence class of $Q$ and the conclusion follows from Corollary~\ref{cor.initial homological functor as localization}.
%
%
\end{proof}

\begin{rem}
If $\mcD$ is compactly generated, then it has enough pure-projective objects (we call $X\in\mcD$ pure-projective if $\Hom_\mcD(X,?)$ sends pure triangles to short exact sequences) and one can dualize the final part of the last proof to show that the functor $\res'\dd\widehat{\mcD}\la\mcA_\pure(\mcD)=\Mod\mcD^c$ in Proposition~\ref{prop.universal AB5 homology} also has a fully faithful left adjoint (see~\cite[Proposition 6.7.1]{K-triang}).
\end{rem}

An immediate consequence of the arguments is the following observation. We shall call $u$ as below the \emph{pure-injective envelope} of~$D$ in $\mcD$.

\begin{cor} \label{cor.pure-injective envelopes in standard well-gen. triangcats}
Let $\mcD$ be a standard well generated triangulated category. For each object $D\in\mcD$ there is a pure monomorphism $u\dd D\la Q_D$, uniquely determined up to isomorphism,  such that $Q_D$ is pure-injective and $h_\pure (u)\dd h_\pure (D)\rightarrowtail h_\pure(Q_D)$ is an injective envelope in $\mcA_\pure (\mcD)$.
\end{cor}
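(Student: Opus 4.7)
The plan is to construct $u$ by lifting an injective envelope from $\mcA_\pure(\mcD)$ and then reading off its properties from Lemma~\ref{lem.pi for universal homological functor} and Proposition~\ref{prop.universal AB5 homology}. Since $\mcA_\pure(\mcD)$ is a Grothendieck category, the object $h_\pure(D)$ admits an injective envelope $j\dd h_\pure(D) \rightarrowtail E$. By Lemma~\ref{lem.pi for universal homological functor}(3), the restriction of $h_\pure$ to pure-injective objects is an equivalence onto $\Inj{\mcA_\pure(\mcD)}$, so I can choose a pure-injective $Q_D\in\mcD$ together with an isomorphism $h_\pure(Q_D)\cong E$. Using Lemma~\ref{lem.pi for universal homological functor}(2), the natural bijection $\Hom_\mcD(D,Q_D)\cong\Hom_{\mcA_\pure(\mcD)}\bigl(h_\pure(D),h_\pure(Q_D)\bigr)$ then lets me lift $j$ to a morphism $u\dd D\la Q_D$ with $h_\pure(u)=j$.

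Next I would check that $u$ is a pure monomorphism. Completing $u$ to a triangle
\[ Z[-1] \la D \stackrel{u}{\la} Q_D \la Z \laplus \]
and applying the homological functor $h_\pure$, I obtain a long exact sequence in $\mcA_\pure(\mcD)$ in which the map $h_\pure(u)=j$ is a monomorphism. Consequently the connecting morphism $h_\pure(Z[-1])\la h_\pure(D)$ vanishes and the long exact sequence breaks into a short exact sequence
\[ 0 \la h_\pure(D) \stackrel{h_\pure(u)}{\la} h_\pure(Q_D) \la h_\pure(Z) \la 0. \]
By Definition~\ref{def.pure-triangle in standard well-gen. categs} the triangle is pure, so $u$ is a pure monomorphism satisfying the required properties.

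For uniqueness, suppose $u'\dd D\la Q'_D$ is another pure monomorphism with $Q'_D$ pure-injective and $h_\pure(u')$ an injective envelope of $h_\pure(D)$. By the uniqueness of injective envelopes in the Grothendieck category $\mcA_\pure(\mcD)$, there is an isomorphism $\phi\dd h_\pure(Q_D)\stackrel{\cong}{\la}h_\pure(Q'_D)$ with $\phi\circ h_\pure(u)=h_\pure(u')$. Lemma~\ref{lem.pi for universal homological functor}(2) lifts $\phi$ to a unique morphism $\tilde\phi\dd Q_D\la Q'_D$ with $h_\pure(\tilde\phi)=\phi$, and the equivalence $\PInj\mcD\simeq\Inj{\mcA_\pure(\mcD)}$ from Lemma~\ref{lem.pi for universal homological functor}(3) forces $\tilde\phi$ to be an isomorphism. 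Finally, the identity $\tilde\phi\circ u=u'$ holds because both sides have image $\phi\circ h_\pure(u)=h_\pure(u')$ under the injective map $\Hom_\mcD(D,Q'_D)\la\Hom_{\mcA_\pure(\mcD)}\bigl(h_\pure(D),h_\pure(Q'_D)\bigr)$ of Lemma~\ref{lem.pi for universal homological functor}(2).

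The only step requiring genuine input from this paper is verifying purity of the triangle containing $u$; everything else is a routine transfer of standard facts about injective envelopes in Grothendieck categories across the partial equivalence provided by Lemma~\ref{lem.pi for universal homological functor}.
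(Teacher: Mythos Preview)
Your approach is exactly the natural one and is what the paper has in mind; the paper does not write out a proof, calling the corollary an ``immediate consequence of the arguments'' preceding it. The construction of $u$ by lifting an injective envelope through the equivalence of Lemma~\ref{lem.pi for universal homological functor}(3), and the uniqueness argument via Lemma~\ref{lem.pi for universal homological functor}(2), are both correct and carefully done.

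One step deserves an extra word of justification. From $h_\pure(u)$ being a monomorphism you deduce that the connecting morphism $h_\pure(Z[-1])\la h_\pure(D)$ vanishes; this gives left exactness of the displayed sequence. But purity in the sense of Definition~\ref{def.pure-triangle in standard well-gen. categs} also requires $h_\pure(Q_D)\la h_\pure(Z)$ to be an epimorphism, i.e.\ that the \emph{next} connecting morphism $h_\pure(w)\dd h_\pure(Z)\la h_\pure(D[1])$ vanishes, and this does not follow from the previous vanishing for an arbitrary homological functor. The fix is that the suspension on $\mcD$ induces an exact autoequivalence $\overline\Sigma$ of $\mcA_\pure(\mcD)$ with $h_\pure\circ[1]\cong\overline\Sigma\circ h_\pure$: in the compactly generated case this is restriction along the shift on $\mcD^c$, and in general it descends through the Gabriel quotient in~\eqref{eq.definition of h_pure} because the torsion class $\mcT$ is generated by a shift-stable set. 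Hence $h_\pure(u[1])\cong\overline\Sigma(h_\pure(u))$ is again a monomorphism, so $h_\pure(w)=0$. This is exactly the mechanism behind the equivalence of conditions~(1)--(3) for pure triangles in~\S\ref{ssect.purity}.
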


\begin{rem} \label{rem.zero-pinj-envelope}
A note of caution is apropos concerning last corollary. 
When $\mcD$ is standard well generated, one immediately gets from assertions~(2) and~(3) of Lemma~\ref{lem.pi for universal homological functor} that an object $D$ is in the kernel of $h_\pure\dd\mcD\la\mcA_\pure (\mcD)$ if and only if $\Hom_\mcD(D,Q)=0$, for all  $Q\in\text{PInj}(\mcD)$. In such case the pure-injective envelope of $D$ is just the morphism $D\la 0$. This pathology will be  possible only  when $\text{PInj}(\mcD)$ does not cogenerate $\mcD$.  It  then excludes the cases when $\mcD$ is compactly generated or when $\mcD=\mcD(\mcG)$ is the derived category of a Grothendieck category. By contrast, if $R$ is any (associative unital) ring and $\mcS\subseteq\mcD(R)$ is a set of objects such that $\mcS^{\perp_\mathbb{Z}}$ is not of the form ${^\perp\mcQ}$, for a class $\mcQ$ of pure-injective objects, then the standard well generated triangulated category $\mcD(R)/\Loc_{\mcD(R)}(\mcS)$ shows the pathology above.
Although it seems very likely that such examples do exist, we unfortunately do not know any actual instance.
\end{rem}

If we combine Theorem~\ref{thm.classify homological functors} with Proposition~\ref{prop.universal AB5 homology}, we obtain the following structure result for coproduct-preserving homological functors to AB5 abelian categories.

\begin{cor}\label{cor.factorization for homological AB5 functors}
Let $H\dd\mcD\la\mcA$ be a coproduct-preserving homological functor with $\mcD$ standard well generated and $\mcA$ satisfying AB5. Then $H$ essentially uniquely factorizes as
\[
\mcD \stackrel{h_\pure}\la \mcA_\pure(\mcD) \stackrel{\tilde{q}}\la \mcA_{\mathrm{init}} \stackrel{F}\la \mcA,
\]
where $\tilde{q}$ is a Gabriel localization functor  and $F$ is a faithful exact left adjoint functor.
\end{cor}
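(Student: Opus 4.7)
The plan is to combine Proposition~\ref{prop.universal AB5 homology} with standard Gabriel localization theory for Grothendieck categories. First I would invoke Proposition~\ref{prop.universal AB5 homology} to obtain an essentially unique exact coproduct-preserving functor $G\dd\mcA_\pure(\mcD)\la\mcA$ with $H\cong G\circ h_\pure$. Being exact and coproduct-preserving between abelian categories, $G$ preserves all small colimits, so $\Ker G\subseteq\mcA_\pure(\mcD)$ is a Serre subcategory closed under coproducts---a localizing Serre subcategory (equivalently, a hereditary torsion class) in the Grothendieck category $\mcA_\pure(\mcD)$. Setting $\mcA_{\mathrm{init}}:=\mcA_\pure(\mcD)/\Ker G$ and letting $\tilde{q}\dd\mcA_\pure(\mcD)\la\mcA_{\mathrm{init}}$ be the corresponding quotient functor, the theory of Gabriel localization ensures that $\tilde{q}$ is a Gabriel localization functor and that $\mcA_{\mathrm{init}}$ is again Grothendieck; moreover, Lemma~\ref{lem.factorization of exact functors} provides an essentially unique factorization $G\cong F\circ\tilde{q}$ in which $F\dd\mcA_{\mathrm{init}}\la\mcA$ is exact and faithful.

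The hard part will be to show that $F$ is a left adjoint. Since $\tilde{q}$ has a fully faithful right adjoint $\sigma\dd\mcA_{\mathrm{init}}\la\mcA_\pure(\mcD)$ with $\tilde{q}\circ\sigma\cong\mathrm{Id}_{\mcA_{\mathrm{init}}}$, and since $\tilde{q}$ itself is colimit-preserving as a left adjoint, any small diagram $(X_i)$ in $\mcA_{\mathrm{init}}$ satisfies $\operatorname{colim} X_i\cong\tilde{q}\bigl(\operatorname{colim}\sigma(X_i)\bigr)$. Applying $F$ and exploiting $F\circ\tilde{q}=G$, I would deduce
\[
F(\operatorname{colim} X_i) \cong G\bigl(\operatorname{colim}\sigma(X_i)\bigr) \cong \operatorname{colim} G\sigma(X_i) \cong \operatorname{colim} F(X_i),
\]
so $F$ preserves all small colimits. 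Because $\mcA_{\mathrm{init}}$ is Grothendieck (hence cocomplete, locally small, well-copowered, and equipped with a generator) while $\mcA$ is locally small, the dual special adjoint functor theorem then supplies a right adjoint to $F$, confirming that $F$ is a faithful exact left adjoint.

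Essential uniqueness of the factorization should follow directly: if $H\cong F'\circ\tilde{q}'\circ h_\pure$ is another decomposition of the claimed form, the uniqueness in Proposition~\ref{prop.universal AB5 homology} forces $F'\circ\tilde{q}'\cong G\cong F\circ\tilde{q}$, and since $F'$ is faithful, $\tilde{q}'$ is the Gabriel localization of $\mcA_\pure(\mcD)$ at $\Ker G$; the essential uniqueness of the (Serre quotient, faithful exact) factorization in Lemma~\ref{lem.factorization of exact functors} then delivers an equivalence $\mcA_{\mathrm{init}}'\simeq\mcA_{\mathrm{init}}$ intertwining the two factorizations up to natural isomorphism.
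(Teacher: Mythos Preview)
Your proposal is correct and follows essentially the same approach as the paper: invoke Proposition~\ref{prop.universal AB5 homology}, factor the resulting functor $G$ via Lemma~\ref{lem.factorization of exact functors}, observe that $\Ker G$ is a localizing Serre subcategory so $\tilde{q}$ is a Gabriel localization, and then apply the adjoint functor theorem to the colimit-preserving functor $F$ out of the Grothendieck category $\mcA_{\mathrm{init}}$. The only cosmetic difference is that the paper deduces that $F$ preserves coproducts directly from $G=F\circ\tilde{q}$ and the essential surjectivity of $\tilde{q}$, whereas you route the same computation through the section functor $\sigma$; both arguments are valid and amount to the same thing.
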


\begin{proof}
We first use the universal property of $h_\pure\dd\mcD\la\mcA_\pure(\mcD)$ and then factorize the resulting functor $\mcA_\pure(\mcD) \la \mcA$ according to Lemma~\ref{lem.factorization of exact functors}. This ensures the uniqueness.
Since $\tilde{q}$ is an exact coproduct-preserving localization functor, it is a Serre quotient functor and the corresponding Serre subcategory is closed under coproducts. Since further $\mcA_\pure(\mcD)$ is a Grothendieck category, $\tilde{q}$ is actually a Gabriel localization functor, and hence $\mcA_{\mathrm{init}}$ is also a Grothendieck category.

It is clear that the exact faithful functor $F$ preserves coproducts, and hence all colimits. This together with the Grothendieck condition of $\mcA_{\mathrm{init}}$ imply that $F$ has a right adjoint functor, due to Freyd's Adjoint Theorem~\cite[Corollary 5.52]{Faith}. 
\end{proof}

The situation of primary interest in this paper is the one where $\H\dd\mcD\la\mcH$ is a coproduct-preserving homological functor which is associated with a $t$-structure $\mathbf{t}=(\mcU,\mcV)$ whose heart is AB5. In analogy with Propotision~\ref{prop.abelianized homology is Serre quotient}, we prove that the last step in its factorization according to Corollary~\ref{cor.factorization for homological AB5 functors} is trivial. As a consequence, we obtain a counterpart of~\cite[Theorem C]{SSV} for abstract triangulated categories without using their models.

\begin{cor} \label{cor.AB5 implies Grothendieck in well-gen. triangcats}
Let $\mcD$ be a standard well generated triangulated category with the universal coproduct-preserving homological functor $h_\pure\dd\mcD\la\mcA_\pure(\mcD)$ and let $\mathbf{t}=(\mcU,\mcV)$ be a $t$-structure such that the heart $\mcH$ is AB5 and $\H\dd\mcD\la\mcH$ preserves coproducts.
	
Then $\mcH$ is a Grothendieck category and the induced exact coproduct-preserving functor $\tilde{q}\dd\mcA_\pure(\mcD)\la\mcH$ satisfying $\H=\tilde{q}\circ h_\pure$ is a Gabriel localization functor.
\end{cor}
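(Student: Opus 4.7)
The existence of $\tilde{q}\dd\mcA_\pure(\mcD)\la\mcH$ is already supplied by the universal property of $h_\pure$ in Proposition~\ref{prop.universal AB5 homology}, applied to the coproduct-preserving homological functor $\H$ with AB5 target $\mcH$. Thus it suffices to prove that $\tilde{q}$ is a Gabriel localization functor, because once we know this, the Serre subcategory $\Ker\tilde{q}\subseteq\mcA_\pure(\mcD)$ is automatically closed under coproducts (since $\tilde q$ preserves them), hence is a hereditary torsion class in the Grothendieck category $\mcA_\pure(\mcD)$, and $\mcH\simeq\mcA_\pure(\mcD)/\Ker\tilde{q}$ is a Grothendieck category by the standard theory recalled in~\S\ref{subsec.Gabriel-Popescu}.

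The key idea for establishing that $\tilde q$ is a localization functor is to lift everything to the universal abelian extension $\widehat{\mcD}$ and cancel. On the one hand, Proposition~\ref{prop.abelianized homology is Serre quotient} produces an exact Serre quotient functor $\widehat{\H}\dd\widehat{\mcD}\la\mcH$ satisfying $\widehat{\H}\circ\y_\mcD\cong\H$. On the other hand, the last part of Proposition~\ref{prop.universal AB5 homology} provides a Gabriel localization $\operatorname{res}'\dd\widehat{\mcD}\la\mcA_\pure(\mcD)$ satisfying $\operatorname{res}'\circ\y_\mcD\cong h_\pure$. Consequently, the composite $\tilde q\circ\operatorname{res}'$ is an exact (hence right exact) functor $\widehat{\mcD}\la\mcH$ whose restriction along $\y_\mcD$ is $\tilde q\circ h_\pure\cong\H$. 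The uniqueness in Lemma~\ref{lem.coherent functors are abelian} of right exact extensions then forces a natural isomorphism
\[
\widehat{\H}\;\cong\;\tilde q\circ\operatorname{res}'.
\]

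Once this identification is in place, the remainder is purely formal. Both $\widehat{\H}$ and $\operatorname{res}'$ are localization functors, so Lemma~\ref{lem.composition and cancellation of localizations}(2) applied to the composite $\widehat{\H}\cong\tilde q\circ\operatorname{res}'$ yields at once that $\tilde q$ is a localization functor; combined with exactness this makes $\tilde q$ a Serre quotient. Coproduct-preservation of $\tilde q$ then turns $\Ker\tilde q$ into a hereditary torsion class in $\mcA_\pure(\mcD)$, and $\tilde q$ into a Gabriel localization, completing the proof along with the Grothendieck conclusion for $\mcH$. I do not anticipate any real obstacle: the only mildly non-formal ingredient is the natural isomorphism $\widehat{\H}\cong\tilde q\circ\operatorname{res}'$, which is just a double application of the universal property of $\y_\mcD$.
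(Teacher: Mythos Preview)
Your proposal is correct and follows essentially the same route as the paper's own proof: factor $\H$ through $\widehat{\mcD}$, identify $\widehat{\H}\cong\tilde q\circ\operatorname{res}'$ via the universal property of $\y_\mcD$, and then invoke Lemma~\ref{lem.composition and cancellation of localizations}(2) to conclude that $\tilde q$ is a localization functor, hence a Gabriel localization by exactness and coproduct-preservation. The paper's argument is organized in the same way (with $\tilde q$ called $F$ there), so there is nothing to add.
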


\begin{proof}
Using Proposition~\ref{prop.universal AB5 homology} and the universal property of $\y_\mcD\dd\mcD\la\widehat{\mcD}$, we obtain a factorization of $\H$ of the form
\[  \mcD\stackrel{\y_\mcD}\la\widehat{\mcD}\stackrel{\res'}\la\mcA_\pure(\mcD)\stackrel{\tilde{q}}\la\mcH, 
\]
Now $\res'$ is a Serre quotient functor by Proposition~\ref{prop.universal AB5 homology} and $\tilde{q}\circ\res'$ is a Serre quotient by Proposition~\ref{prop.abelianized homology is Serre quotient}. Hence $\tilde{q}$ is a localization functor by Lemma~\ref{lem.composition and cancellation of localizations}. Since $\tilde{q}$ is also exact and coproduct-preserving by the universal property of $h_\pure=\res'\circ\y_\mcD$, it is a Gabriel localization functor and $\mcH$ is a Grothendieck category by the discussion in~\S\ref{subsec.Gabriel-Popescu}.
\end{proof}


\section{\texorpdfstring{$t$-structures}{t-structures} with Grothendieck hearts}
\label{sec.t-str with Grothendieck heart}

\subsection{The AB5 condition for hearts of \texorpdfstring{$t$-structures}{t-structures} via injective cogenerators}

In this subsection we study $t$-structures whose homological functors are coproduct-preserving and whose hearts are AB3* with an injective cogenerator. In particular, we analyze the objects which represent these homological functors in view of Theorem~\ref{thm.classify homological functors}.

We first characterize the situation where the cohomological functor associated with a $t$-structure preserves coproducts.

\begin{lem} \label{lem.H preserves coproducts}
Let $\mcD$ be a triangulated category with coproducts  and $\mathbf{t}=(\mcU,\mcV)$ be a $t$-structure in $\mcD$, with heart $\mcH$. The following assertions are equivalent:

\begin{enumerate}
\item The cohomological functor $\H\dd\mcD\la\mcH$ preserves coproducts.
\item  For each family $(V_i)_{i\in I}$ of objects in $\mcV$, one has that $\tau_\mathbf{t}^{\leq 0}(\coprod_{i\in I}V_i[-1])\in\mcU[1]$.
\item For each family $(V_i)_{i\in I}$ of objects in $\mcV$, one has that $\tau_\mathbf{t}^{\leq 0}(\coprod_{i\in I}V_i[-1])\in\bigcap_{n\in\mathbb{Z}}\mcU[n]$.
\end{enumerate}
\end{lem}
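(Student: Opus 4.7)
The plan is to establish the cycle $(1)\Rightarrow(2)\Rightarrow(3)\Rightarrow(1)$.

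For $(1)\Rightarrow(2)$, I first observe that any $V\in\mcV$ satisfies $V[-1]\in\mcV[-1]=\mcU^\perp$, so the right adjointness of $\ai0$ to $\mcU\hookrightarrow\mcD$ gives $\ai0(V[-1])=0$ and hence $\H(V[-1])=0$. Assuming $(1)$, I conclude $\H(\coprod_i V_i[-1])=\coprod_i\H(V_i[-1])=0$; since $\H(Y)=\coa0 Y$ for $Y\in\mcU$ and $\Ker(\coa0)\cap\mcU=\mcU[1]$, this forces $\ai0(\coprod V_i[-1])\in\mcU[1]$.

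For $(2)\Rightarrow(3)$ my strategy is a shifting trick. Fix $(V_i)_{i\in I}$ in $\mcV$ and set $W:=\coprod V_i[-1]$. Because $\mcV[-1]\subseteq\mcV$, each shifted family $(V_i[-k])_{i\in I}$ with $k\geq 0$ still lies in $\mcV$, and applying $(2)$ to it yields $\ai0(W[-k])=\ai{k}(W)[-k]\in\mcU[1]$, i.e.\ $\ai{k}W\in\mcU[k+1]$. Combined with the fact that $\ai{k}W\la W$ is an isomorphism on $H_\mathbf{t}^n$ for $n\leq k$, this gives $H_\mathbf{t}^n(W)=0$ for every $n$ with $|n|\leq k$, so letting $k\to\infty$ shows $W\in\mcN_\mathbf{t}$. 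Lemma~\ref{lem.so decomposition of the degeneracy} then delivers $\ai0 W\in\mcU\cap\mcN_\mathbf{t}=\bigcap_{n\in\mathbb{Z}}\mcU[n]$.

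For $(3)\Rightarrow(1)$, fix a family $(X_i)$ in $\mcD$. Since $\coa0$ is a left adjoint it preserves coproducts, so the canonical morphism from the coproduct of $(\H(X_i))$ in $\mcH$ to $\H(\coprod X_i)$ equals $\coa0(f)$, where $f\dd\coprod\ai0 X_i\la\ai0(\coprod X_i)$ is the comparison map. Applying the octahedral axiom to the factorisation $\coprod\ai0 X_i\stackrel{f}\la\ai0(\coprod X_i)\la\coprod X_i$ produces a triangle
\[\cone(f)\la\coprod\tau_\mathbf{t}^{>0}X_i\la\tau_\mathbf{t}^{>0}(\coprod X_i)\laplus.\]
Setting $V_i:=\tau_\mathbf{t}^{>0}X_i[1]\in\mcV$ and observing that $\coprod X_i\la\coprod V_i[-1]$ has fibre $\coprod\ai0 X_i\in\mcU$ (so becomes an isomorphism under $\tau_\mathbf{t}^{>0}$), this is the truncation triangle for $\coprod V_i[-1]$, whence $\cone(f)\cong\ai0(\coprod V_i[-1])$. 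By $(3)$ this cone lies in $\bigcap_n\mcU[n]=\mcU\cap\mcN_\mathbf{t}$, so $H_\mathbf{t}^n(\cone(f))=0$ for every $n\in\mathbb{Z}$, and the long exact sequence obtained by applying $\H$ to the defining triangle of $\cone(f)$ forces every $H_\mathbf{t}^n(f)$, in particular $\coa0(f)=H_\mathbf{t}^0(f)$, to be an isomorphism. The delicate points are this octahedral identification and the shifting trick in $(2)\Rightarrow(3)$; the remainder is routine adjunction bookkeeping.
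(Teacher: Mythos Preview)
Your overall strategy is sound, but there is a genuine slip in the shifting step of $(2)\Rightarrow(3)$. In the paper's cohomological conventions one has $\ai0(W[-k])=\tau_\mathbf{t}^{\leq -k}(W)[-k]$, not $\ai{k}(W)[-k]$. Consequently, from $\ai0(W[-k])\in\mcU[1]$ you only obtain $\tau_\mathbf{t}^{\leq -k}(W)\in\mcU[k+1]$, which yields just $H_\mathbf{t}^{-k}(W)=0$ (not the stronger vanishing for all $|n|\leq k$ that you claim). Fortunately this weaker conclusion still suffices: it gives $H_\mathbf{t}^n(W)=0$ for all $n\leq 0$, while $\ai0(W)\in\mcU$ already forces $H_\mathbf{t}^n(\ai0 W)=0$ for $n>0$; together these place $\ai0(W)$ in $\mcU\cap\mcN_\mathbf{t}=\bigcap_n\mcU[n]$, and the rest of your argument goes through unchanged. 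In $(3)\Rightarrow(1)$ you should also remark that $\cone(f)\in\mcU$ (it is an extension of $\ai0(\coprod X_i)\in\mcU$ by $\coprod\ai0 X_i[1]\in\mcU[1]\subseteq\mcU$), since this is what identifies the octahedral triangle with the truncation triangle.

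By way of comparison, the paper proves the cycle $(1)\Rightarrow(3)\Rightarrow(2)\Rightarrow(1)$. Its $(1)\Rightarrow(3)$ is essentially your $(1)\Rightarrow(2)$ combined with the corrected version of your shifting trick (phrased as a direct computation of $H_\mathbf{t}^j(\coprod V_i[-1])$ for $j\le 0$, followed by Lemma~\ref{lem.so decomposition of the degeneracy}). The main difference is in the final implication: rather than your octahedral identification of $\cone(f)$, the paper argues more directly with the coproduct of truncation triangles $\coprod\tau^{>0}D_i[-1]\to\coprod\ai0 D_i\to\coprod D_i\to\coprod\tau^{>0}D_i$, noting that hypothesis~(2) kills $\H$ on both outer terms and then assembling a commutative square whose three other arrows are already known isomorphisms. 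Both routes rely on the same underlying fact that $\H_{|\mcU}$ preserves coproducts; the paper's version avoids the octahedral axiom at the cost of a small diagram chase.
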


\begin{rem} \label{rem.H preserves coproducts}
Note that if $\mathbf{t}$ is left non-degenerate, condition~(3) above precisely means that $\mathbf{t}$ is a smashing $t$-structure.
\end{rem}

\begin{proof}[Proof of Lemma~\ref{lem.H preserves coproducts}]
$(1)\Longrightarrow (3)$ If $(V_i)_{i\in I}$ is a family of objects of $\mcV$, then $V_i[-1+j]\in\mcV[-1]$, for all $j\leq 0$. We then have $H_\mathbf{t}^j(\coprod_{i\in I}V_i[-1])\cong\coprod_{i\in I}H_\mathbf{t}^j(V_i[-1])=\coprod_{i\in I}\H(V_i[-1+j])=0$, for all $j\leq 0$. Now apply \cite[Lemma 3.3]{NSZ} to complete the proof of the implication. 

$(3)\Longrightarrow (2)$ is clear.

$(2)\Longrightarrow (1)$ Let $(D_i)_{i\in I}$ be a family of objects in $\mcD$. For each $i\in I$, we have an induced triangle $\tau_\mathbf{t}^{>0}D_i[-1]\la\tau_\mathbf{t}^{\leq 0}D_i\la D_i\la\tau_\mathbf{t}^{>0}D_i$, where $\tau_\mathbf{t}^{>0}D_i$ and $\tau_\mathbf{t}^{>0}D_i[-1]$ are in $\mcV[-1]$. By applying the cohomological functor $\H$, we get an isomorphism $\H(\tau_\mathbf{t}^{\leq 0}D_i)\cong\H(D_i)$, for all $i\in I$, and hence an isomorphism $\coprod_{i\in I}\H(\tau_\mathbf{t}^{\leq 0}D_i)\stackrel{\cong}{\la}\coprod_{i\in I}\H(D_i)$. However, the restriction $\H_{|\mcU}\dd\mcU\la\mcH$ preserves coproducts (see \cite[Lemma 3.1 and Proposition 3.2]{PS1}), and so the canonical morphism $\coprod_{i\in I}\H(\tau_\mathbf{t}^{\leq 0}D_i)\la\H(\coprod_{i\in I}\tau_\mathbf{t}^{\leq 0}D_i)$ is an isomorphism. 

On the other hand, coproducts of triangles are triangles (see the dual of \cite[Proposition 1.2.1]{N}), so that we have another triangle
\[\coprod_{i\in I}\tau_\mathbf{t}^{>0}D_i[-1]\la\coprod_{i\in I}\tau_\mathbf{t}^{\leq 0}D_i\la\coprod_{i\in I}D_i\la\coprod_{i}\tau_\mathbf{t}^{>0}D_i\]
in $\mcD$ and, by hypothesis, we know that $\H\cong\tau_\mathbf{t}^{\geq 0}\circ\tau_\mathbf{t}^{\leq 0}$ vanishes on $\coprod_{i\in I}\tau_\mathbf{t}^{>0}D_i[-1]$ and $\coprod_{i}\tau_\mathbf{t}^{>0}D_i$. We then get the following commutative diagram, where the vertical arrows are the canonical morphisms:

\[
\xymatrix{
  \coprod_{i\in I} H^0_\mathbf{t}(\tau_\mathbf{t}^{\leq 0} D_i) \ar[r]^-{\cong} \ar[d]_{\cong} &
  \coprod_{i\in I} H^0_\mathbf{t}(D_i) \ar[d]
  \\
  H^0_\mathbf{t}(\coprod_{i\in I} \tau_\mathbf{t}^{\leq 0} D_i) \ar[r]^-{\cong} &
  H^0_\mathbf{t}(\coprod_{i\in I} D_i).
}
\]

By the above comments, the two horizontal and the left vertical arrows of this diagram are isomorphisms. Then also the right vertical arrow is an isomorphism. 
\end{proof}

The main subject of our study will be the $\Ext$-injective objects in the co-aisle $\mcV$ of a $t$-structure. We will call the collection of all such objects the \emph{right co-heart} of the $t$-structure (the terminology is explained in~\S\ref{subsec:co-t-structures} below; some authors call these objects simply injective in their contexts, see~\cite[Appendix C.5.7]{Lur-SAG} or \cite{Shaul}). Dually, the \emph{left co-heart} is the class of all $\Ext$-projective objects in the aisle.
In the case of right nondegenerate $t$-structures, parts~(1) and~(2) of the following proposition can be also found in \cite[Proposition C.5.7.3]{Lur-SAG}.

\begin{prop} \label{prop.near-partialsilting-object}
Let $\mcD$ be a triangulated category with products, let $\mathbf{t}=(\mcU,\mcV)$ be a $t$-structure in $\mcD$, with heart $\mcH=\mcU\cap\mcV$,  and let $Q\in\mcV$ be an object such that $\Hom_\mcD(?,Q)$ vanishes on $\mcV[-1]$. The following assertions hold:
\begin{enumerate}
\item $\H(Q)$ is an injective object of $\mcH$ and the assignment $f\rightsquigarrow\H(f)$ gives a natural isomorphism $\Hom_\mcD(?,Q)\stackrel{\cong}{\la}\Hom_\mcH(\H(?),\H(Q))$ of functors $\mcD^\op\la\Ab$. 
\item $\H(Q)$ is a cogenerator of $\mcH$ if and only if $\Hom_\mcD(M,Q)\neq 0$, for all  $0\neq M\in\mcH$.
\item $Q$ is pure-injective (resp.\ accessible pure-injective) in $\mcD$ if and only if $\H(Q)$ is such in $\mcH$. 
\end{enumerate}
\end{prop}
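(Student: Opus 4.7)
\medskip

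My plan is to extract all three parts from one basic computation: a natural isomorphism
\[
\eta_D\dd\Hom_\mcD(D,Q)\stackrel{\cong}{\la}\Hom_\mcH(\H(D),\H(Q)),\quad f\rightsquigarrow\H(f),
\]
of functors $\mcD^{\op}\la\Ab$. To establish this, I first note that because $Q\in\mcV$, the truncation $\tau_\mathbf{t}^{\leq 0}Q$ lies in $\mcU\cap\mcV=\mcH$ (use that $\mcV$ is cosuspended and closed under cocones of triangles with two terms in $\mcV$), so $\H(Q)=\tau_\mathbf{t}^{\leq 0}Q$. I would then chain three short calculations, each obtained by applying $\Hom_\mcD(-,Q)$ or $\Hom_\mcD(\H(D),-)$ to a truncation triangle and using the $t$-structure vanishings $\Hom_\mcD(\mcU[k],\mcV)=0$ for $k\geq 1$ together with the hypothesis $\Hom_\mcD(\mcV[-1],Q)=0$:
\[
\Hom_\mcD(D,Q)\cong\Hom_\mcD(\ai0 D,Q)\cong\Hom_\mcD(\H(D),Q)\cong\Hom_\mcD(\H(D),\H(Q)).
\]
In each case the needed error terms lie in $\mcU[\geq 1]$ or $\mcV[\leq -1]$, so both boundary Hom-groups vanish and the middle map is an isomorphism. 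Naturality and the identification of the composite with $f\mapsto\H(f)$ follow by tracing identities and using naturality of truncation. Injectivity of $\H(Q)$ in $\mcH$ then drops out: a short exact sequence $0\to A\to B\to C\to 0$ in $\mcH$ comes from a triangle in $\mcD$, and $\Hom_\mcD(-,Q)$ turns $\Hom_\mcH(B,\H(Q))\to\Hom_\mcH(A,\H(Q))$ into $\Hom_\mcD(B,Q)\to\Hom_\mcD(A,Q)$, which is surjective because the next term $\Hom_\mcD(C[-1],Q)$ vanishes (as $C[-1]\in\mcV[-1]$). This proves (1).

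For (2), one direction is immediate from (1). For the converse I would use the standard fact that for an injective object $E$ in an abelian category, $E$ is a cogenerator iff every nonzero object admits a nonzero morphism to $E$: given a nonzero $f\dd X\to Y$, factor through $\Img f$, pick a nonzero $g_0\dd\Img f\to E$, and extend to $Y$ using injectivity; the composite $X\to Y\to E$ is nonzero because it factors the surjection $X\twoheadrightarrow\Img f$ followed by $g_0$. Applying this to $E=\H(Q)$, which is injective by (1), together with the isomorphism $\Hom_\mcH(M,\H(Q))\cong\Hom_\mcD(M,Q)$ for $M\in\mcH$, yields the equivalence.

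For (3), the key preliminary step is that $\mcV$ is closed under products in $\mcD$ (its defining orthogonality commutes with products) and $\H_{|\mcV}\dd\mcV\la\mcH$ is right adjoint to the inclusion $\mcH\hookrightarrow\mcV$, hence preserves products. Consequently the canonical map yields an isomorphism $\H(Q^I)\cong\H(Q)^I$ compatible with canonical sections $\H(\lambda_i)\leftrightarrow\lambda_i^\mcH$. Combining with the isomorphism $\Hom_\mcD(Q^I,Q)\cong\Hom_\mcH(\H(Q^I),\H(Q))$ of (1) (applied to $D=Q^I$, which is justified because the hypothesis $\Hom_\mcD(-,Q^I)|_{\mcV[-1]}=0$ follows from that for $Q$), I would transfer a pure-injective witness $f\dd Q^I\la Q$ to $\H(f)\dd\H(Q)^I\la\H(Q)$, and conversely lift a witness in $\mcH$ to $\mcD$ via the inverse of the bijection $\eta_{Q^I}$, checking that $f\circ\lambda_i$ and $\id_Q$ agree by applying $\H$ and using that $\eta_Q$ sends $\id_Q$ to $\id_{\H(Q)}$. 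For accessibility, I would upgrade this to an equivalence $\Prod_\mcD(Q)\stackrel{\simeq}{\la}\Prod_\mcH(\H(Q))$: fully faithfulness follows because the isomorphism in~(1) holds with $Q$ replaced by any $Q'\in\Prod_\mcD(Q)$ (again $\Hom_\mcD(-,Q')$ vanishes on $\mcV[-1]$ as a summand of a product of copies of $\Hom_\mcD(-,Q)$), and essential surjectivity follows by splitting idempotents in $\Prod_\mcD(Q)$ (possible since $\mcD$ has products and hence split idempotents, and $\Prod_\mcD(Q)$ is closed under summands). The main subtlety throughout is keeping track of where products are computed, and ensuring that the natural iso of~(1) composes correctly so that it preserves and reflects the identities needed for the pure-injectivity diagrams.
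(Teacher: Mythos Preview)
Your proof is correct and follows essentially the same approach as the paper's: both arguments reduce~(1) to a chain of isomorphisms obtained by applying $\Hom_\mcD(-,Q)$ and $\Hom_\mcD(\H(D),-)$ to truncation triangles and using the vanishing hypotheses, then derive~(2) and~(3) from~(1) together with the fact that $\H_{|\mcV}$ preserves products. The only notable difference is organizational: the paper first establishes~(1) for $D\in\mcH$ by quoting the co-heart machinery of~\cite{NSZ} and then extends to general $D$ via $\tau_\mathbf{t}^{\geq 0}D$, whereas you do the whole thing in one pass via $\tau_\mathbf{t}^{\leq 0}D$, which is slightly more self-contained; similarly, for~(3) the paper packages the transfer via the equivalence $\Prod_\mcD(Q)\simeq\Prod_\mcH(\H(Q))$ and Lemma~\ref{lem.pi and products}, while you transfer the pure-injectivity witnesses by hand (your parenthetical about $Q^I$ satisfying the hypothesis is unnecessary at that point, since you are applying~(1) with $D=Q^I$ and the original $Q$, but it is needed later for full faithfulness on $\Prod_\mcD(Q)$, so no harm done).
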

\begin{proof}
Note that, by \cite[Proposition 3.2]{PS1}, the category $\mcH$ has products, so that assertion~(3) makes sense.  Furthermore, the proof of that proposition shows that the restriction $\H_{|\mcV}\dd\mcV\la\mcH$ preserves products.

(1) The pair $\mathbf{t}^\op=(\mcV^\op,\mcU^\op)$ is a $t$-structure in $\mcD^\op$ with heart $\mcH^\op$.  Its left co-heart  $\mathcal{C}^*$ (see \cite[Section 3]{NSZ})  consists of the objects $V\in\mcV^\op$ such that $\Hom_{\mcD^\op}(V,V'[-1])=0$, for all $V'\in\mcV^\op$, since the shift functor of $\mcD^\op$ is $?[-1]$. By \cite[Lemma 3.2(1)]{NSZ},  we know that $H_t^0\dd\mathcal{C}^*\la\mcH^\op$ is a fully faithful functor whose essential image consists of projective objects in $\mcH^\op$. Furthermore, the proof of  \cite[Lemma 3.2(1)]{NSZ} also gives an isomorphism $\Hom_{\mcD^\op}(C,M)\cong\Hom_{\mcH^\op}(\H(C),M)$, functorial on both variables, for all $C\in\mathcal{C}^*$ and $M\in\mcH^\op$. It is actually given by the assignment $f\rightsquigarrow\H(f)$ with the obvious identification $M=\H(M)$. Particularizing to $C=Q$, we get that $\H(Q)$ is injective in $\mcH$ and the mentioned assignment gives an isomorphism, functorial on both variables, $\Hom_\mcD(M,Q)\stackrel{\cong}{\la}\Hom_\mcH(M,\H(Q))$.

Let now $D\in\mcD$ arbitrary. We have a functorial isomorphism $\Hom_\mcD(D,Q)\cong\Hom_\mcD(\tau_\mathbf{t}^{\geq 0}D,Q)$ since $Q\in\mcV$. On the other hand, we have $\H(D)\cong\tau_\mathbf{t}^{\leq 0}\tau_\mathbf{t}^{\geq 0}D$, so that we have a triangle $W[-1]\la\H(D)\la\tau_\mathbf{t}^{\geq 0}D\la W$, where $W=\tau_\mathbf{t}^{>0}(\tau_\mathbf{t}^{\geq 0}D)\in\mcV[-1]$. We then get a functorial isomorphism $\Hom_\mcD(\tau_\mathbf{t}^{\geq 0}D,Q)\stackrel{\cong}{\la}\Hom_\mcD(\H(D),Q)$ since $\Hom_\mcD(?,Q)$ vanishes on $W[-1]$ and $W$, because these two objects are in $\mcV[-1]$. There are then isomorphisms, natural on $D$:
\[
\xymatrix{
  \Hom_\mcD(D,Q) &
  \Hom_\mcD(\tau_\mathbf{t}^{\geq 0}D,Q) \ar[l]_-{\cong} \ar[d]^-{\cong}
  \\ &
  \Hom_\mcD(\H(D),Q) \ar[r]^-{\cong} &
  \Hom_\mcH(\H(D),\H(Q)).
}
\]

It is routine, and left to the reader, to check that the isomorphism from the first to the fourth abelian group in the list is given by the assignment $f\rightsquigarrow\H(f)$. 

(2) From assertion~(1) we get that, for $M\in\mcH$, one has  $\Hom_\mcH(M,\H(Q))\neq 0$ if and only if $\Hom_\mcD(M,Q)\neq 0$. Assertion~(2) then  immediately follows. 

(3) By assertion~(1) and using the fact that $\H_{|\mcV}\dd\mcV\la\mcH$ preserves products, there is an equivalence of categories $\H\dd\Prod_\mcD(Q)\stackrel{\simeq}\la\Prod_\mcH(\H(Q))$. The conclusion follows immediately from the first claim in Lemma~\ref{lem.pi and products} applied to both $Q\in\mcD$ and $\H(Q)\in\mcH$.
\end{proof}

Now we can combine the above observations with the results of~\S\ref{sec:representability}, which we apply to the cohomological functor associated with our $t$-structure.

\begin{thm} \label{thm.AB5 heart}
Let $\mcD$ be a triangulated category with products and coproducts, and let $\mathbf{t}=(\mcU,\mcV)$ be a $t$-structure with heart $\mcH$. Consider the following assertions:

\begin{enumerate}
\item There exists an object $Q\in\mcV$ that satisfies the following conditions:

\begin{enumerate}
\item $\Hom_\mcD(?,Q)$ vanishes on $\mcV[-1]$;
\item $\Hom_\mcD(M,Q)\neq 0$, for all $0\neq M\in\mcH$;
\item $Q$ is pure-injective in $\mcD$.
\end{enumerate}
\item There is a pure-injective object $\hat{Q}\in\mathcal{V}$ such that, for each $V\in\mcV$, there is a triangle  $V\longrightarrow\hat{Q}_V\longrightarrow V'\stackrel{+}{\longrightarrow}$,  with $V'\in\mathcal{V}$ and $\hat{Q}_V\in\Prod(\hat{Q})$.
\item $\mcH$ is an AB5 abelian category with an injective cogenerator and, for each family $(V_i)_{i\in I}$ of objects in $\mcV$, one has that $\tau_\mathbf{t}^{\leq 0}(\coprod_{i\in I}V_i[-1])\in\mcU[1]$.
\item  $\mcH$ is an AB5 abelian category with an injective cogenerator and the cohomological functor $\H\dd\mcD\la\mcH$ preserves coproducts (cf.~Theorem~\ref{thm.classify homological functors}). 
\item $\mcH$ is a Grothendieck category and the cohomological functor $\H\dd\mcD\la\mcH$ preserves coproducts. 
\end{enumerate}

The implications $(1)\Longrightarrow (2)\Longrightarrow (3)\Longleftrightarrow (4)\Longleftarrow (5)$ hold true. When  $\mathcal{D}$ satisfies Brown representability theorem, the implication $(3)\Longrightarrow (1)$ also holds. When $\mathcal{D}$ is standard well generated, all assertions are equivalent. 
\end{thm}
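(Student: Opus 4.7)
The plan is to verify the six implications modularly, relying on $t$-cogenerating subcategories (Section~\ref{sec:t-generating}) and the pure-injectivity criterion for AB5 (Section~\ref{sec.pure injectives}).

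I begin with the easier pieces. The implication $(5)\Rightarrow(4)$ is trivial: every Grothendieck category has an injective cogenerator (the injective envelope of a generator). The equivalence $(3)\Leftrightarrow(4)$ is a direct application of Lemma~\ref{lem.H preserves coproducts}. For $(1)\Rightarrow(2)$ I take $\hat Q:=Q$ and, for each $V\in\mcV$, form the canonical map $V\to Q^J$ with $J=\Hom_\mcD(V,Q)$. Its target lies in $\Prod(Q)\subseteq\mcV$ since $\mcV$ is closed under products; completing to a triangle $V\to Q^J\to V'\laplus$, the long exact cohomology sequence at once gives $H_\mathbf{t}^n(V')=0$ for $n<-1$, while for $n=-1$ Proposition~\ref{prop.near-partialsilting-object}(1) identifies $\H(V)\to\H(Q)^J$ with the canonical diagonal indexed by $\Hom_\mcH(\H(V),\H(Q))$, which is monic because conditions~(1)(a,b) make $\H(Q)$ an injective cogenerator of $\mcH$. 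Hence $V'\in\mcV$.

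The substantive step is $(2)\Rightarrow(3)$. First I verify that $\mcQ:=\Prod(\hat Q)$ is a $t$-cogenerating subcategory of $\mcV$: the canonical map $V\to\hat Q^{\Hom_\mcD(V,\hat Q)}$ is a $\mcQ$-preenvelope (any map $V\to\hat Q^I$ factors through it via its components), while hypothesis~(2) itself supplies the required cogenerating triangles. The dual of Theorem~\ref{thm.t-generating left Kan extensions} produces a Serre quotient $F^*\dd\widecheck{\mcQ}\to\mcH$ with a fully faithful left adjoint. By the dual of Lemma~\ref{lem.Cont(P)}, $\widecheck{\mcQ}$ is AB3* with injective cogenerator $P=\Hom_\mcQ(\hat Q,-)$; by Proposition~\ref{prop.Positsetski-Stovicek} it only remains to show that $P$ is pure-injective in $\widecheck{\mcQ}$. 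Since products in $\widecheck{\mcQ}$ correspond to coproducts in $\Cont(\mcQ,\Ab)$ and Lemma~\ref{lem.Cont(P)} identifies $\coprod_I P\cong\Hom_\mcQ(\hat Q^I,-)$, a Yoneda argument translates pure-injectivity of $P$ into the existence of $g\dd\hat Q^I\to\hat Q$ in $\mcQ$ splitting all canonical sections---exactly pure-injectivity of $\hat Q$ in $\mcQ$, which holds by Lemma~\ref{lem.pi and products}. Proposition~\ref{prop.left adjoint-implies-right adjoint} now upgrades $F^*$ to a Gabriel localization and makes $\mcH$ AB5 with an injective cogenerator. For the coproduct condition I observe that $\H_{|\mcV}=F^*\circ\y^*_\mcQ$ sends $\mcV$-coproducts to $\mcH$-coproducts: $F^*$ preserves coproducts as a Gabriel localization, and $\y^*_\mcQ$ does too via $\Hom_\mcD(\coprod V_i,X)\cong\prod\Hom_\mcD(V_i,X)$ (a product in $\Cont(\mcQ,\Ab)$, hence a coproduct in $\widecheck{\mcQ}$). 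Combined with $\H=\H\circ\coa{0}$ and the coproduct-preservation of the left adjoint $\coa{0}$, this extends coproduct preservation to all of $\mcD$, and Lemma~\ref{lem.H preserves coproducts} yields~(3).

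For the conditional implications, in $(3)\Rightarrow(1)$ under Brown representability I fix an injective cogenerator $E$ of $\mcH$; the cohomological functor $\Hom_\mcH(\H(-),E)\dd\mcD^{\op}\to\Ab$ converts coproducts to products (because $\H$ preserves coproducts), so Brown produces $Q\in\mcD$ representing it. Then $Q\in\mcV$ because $\H(\mcU[1])=0$ (as $H_\mathbf{t}^1|_\mcU=0$), condition~(1)(a) follows from $\H(\mcV[-1])\subseteq\mcU\cap\mcV[-1]=\mcU\cap\mcU^\perp=0$, condition~(1)(b) is immediate from $E$ being a cogenerator, and condition~(1)(c) is Corollary~\ref{cor.classify homological functors} applied to $\H$, whose target $\mcH$ is AB5. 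Finally, $(4)\Rightarrow(5)$ when $\mcD$ is standard well-generated is precisely Corollary~\ref{cor.AB5 implies Grothendieck in well-gen. triangcats}. The main technical obstacle will be the pure-injectivity verification in $(2)\Rightarrow(3)$, where one must carefully navigate between $\widecheck{\mcQ}$, its opposite $\Cont(\mcQ,\Ab)$, and the explicit description of coproducts of representables there.
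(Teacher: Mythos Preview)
Your argument is essentially correct and follows the paper's strategy closely: the same reduction to the dual of Theorem~\ref{thm.t-generating left Kan extensions}, the same appeal to Proposition~\ref{prop.Positsetski-Stovicek} for the AB5 property of $\widecheck{\mcQ}$, and the same Brown representability construction of $Q$. Two points deserve tightening.

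In $(1)\Rightarrow(2)$, your phrasing suggests that you conclude $V'\in\mcV$ from the vanishing of $H^n_{\mathbf{t}}(V')$ for all $n<0$. That inference fails for degenerate $t$-structures. What actually works is the dual of Lemma~\ref{lem:t-generating tria}: since $V,Q^J\in\mcV$ and you have shown that $\H(u)\dd\H(V)\to\H(Q)^J$ is a monomorphism, the cone $V'$ lies in $\mcV$. The computation of $H^n_{\mathbf{t}}(V')$ for $n<-1$ is superfluous.

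In $(2)\Rightarrow(3)$, your coproduct argument is correct but the symbol $\coprod V_i$ is ambiguous: the restricted Yoneda functor $\y^*_\mcQ$ is defined on $\mcV$, so the coproduct must be the $\mcV$-coproduct $\coa{0}\big(\coprod_{\mcD}V_i\big)$. The identification $\Hom_\mcD(\coprod V_i,X)\cong\prod\Hom_\mcD(V_i,X)$ for $X\in\mcQ$ then holds because $\mcQ\subseteq\mcV$, whence $\Hom_\mcD\big(\coa{0}(\coprod_\mcD V_i),X\big)\cong\Hom_\mcD(\coprod_\mcD V_i,X)$. Alternatively (and this is closer to the paper's route), you may simply observe that $\y^*_\mcQ$ extends to a coproduct-preserving homological functor $H_{\hat Q}\dd\mcD\to\widecheck{\mcQ}$, that $F^*\circ H_{\hat Q}$ and $\H$ are homological functors agreeing on $\mcV$ and both vanishing on $\mcU[1]$, and hence agree on all of $\mcD$ via truncation triangles.

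Your use of Corollary~\ref{cor.classify homological functors} to obtain pure-injectivity of $Q$ in $(3)\Rightarrow(1)$ is a legitimate shortcut; the paper instead argues directly that $\H(Q)\cong E$ and transfers pure-injectivity via Proposition~\ref{prop.near-partialsilting-object}(3).
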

\begin{proof}
Note that, by \cite[Proposition 3.2]{PS1}, the category $\mcH$ is complete.

$(3)\Longleftrightarrow (4)$ is a straightforward consequence of Lemma \ref{lem.H preserves coproducts}.

$(5)\Longrightarrow (4)$ is clear.

$(1)\Longrightarrow (2)$ We put $\hat{Q}=Q$ and will prove that, for each $V\in\mcV$, the canonical morphism $u\dd V\la Q^{\Hom_\mcD(V,Q)}$ has its cone in $\mcV$. 

Thanks to the natural isomorphism $\Hom_\mcH(\H(?),\H(Q))\cong\Hom_\mcD(?,Q)$ and the fact that $\H\dd\mcV\longrightarrow\mcH$ preserves products (see \cite[Lemma 3.1]{PS1}) it immediately follows that the map $\H(u)$ gets identified with the canonical morphism $\H(V)\longrightarrow\H(Q)^{\Hom_\mcH(\H(V),\H(Q))}$, where the product in the codomain is taken in $\mcH$. This last morphism is a monomorphism since $\H(Q)$ is a cogenerator of $\mcH$  by Proposition~\ref{prop.near-partialsilting-object} and the conclusion follows from Lemma~\ref{lem:t-generating tria} applied to the $t$-structure $\mathbf{t}^\op=(\mcV^\op,\mcU^\op)$ in $\mcD^\op$.

$(2)\Longrightarrow (4)$ Since $\mcQ := \Prod_\mcD(\hat{Q})$ is a preenveloping subcategory, it follows from assertion~(2) that $\Prod(\hat{Q})$ is $t$-cogenerating in $\mcV$ (see Definition \ref{def.t-generating}).
Hence we can apply Theorem~\ref{thm.t-generating left Kan extensions} to the $t$-structure $\mathbf{t}^\op=(\mcV^\op,\mcU^\op)$ in $\mcD^\op$ and the $t$-generating subcategory $\mcQ^\op$. Upon taking the opposite categories again and in view of Lemma~\ref{lem.Cont(P)}, we obtain the following diagram, which commutes up to natural equivalence both for the left and for the right adjoints, $F$ is a Serre quotient functor and $G$ its fully faithful left adjoint:
\[
\xymatrix@R=3em{
\mcV \ar[d]_-{H_{\hat{Q}} := (\y_{\mcQ^\op})^\op} \ar@/_/[rr]_-{\H} \ar@{}[rr]|\perp && \mcH \ar@{=}[d] \ar@/_/[ll]_-{\operatorname{inc}} \\
\Cont(\mcQ,\Ab)^\op \ar@/_/[rr]_-{F} \ar@{}[rr]|\perp && \mcH. \ar@/_/[ll]_-{G}
}
\]

Here, $H_{\hat{Q}}$ is the restricted Yoneda functor taking $V\rightsquigarrow\Hom_\mcD(V,?)_{|\mcQ}$ and, in particular, it coincides with the coproduct-preserving initial functor constructed in (the proof of) Theorem~\ref{thm.classify homological functors} for the product equivalence class of $\hat{Q}$. Moreover, $\Cont(\mcQ,\Ab)^\op$ is an AB3* abelian category with enough injectives and its subcategory of injective objects is equivalent to $\Prod(\hat{Q})$ (see Lemma \ref{lem.Cont(P)}). Since $\hat{Q}$ is pure-injective, we know by Proposition~\ref{prop.Positsetski-Stovicek} that it is also AB5.

Finally, we conclude by Proposition~\ref{prop.left adjoint-implies-right adjoint}. It follows that $F$ is a Gabriel localization functor and $\mcH$ is also AB5 with an injective cogenerator. Furthermore, $F$ is a left adjoint, so that $\H\cong F\circ H_{\hat Q}$ preserves coproducts.

$(4)\Longrightarrow (1)$ (when $\mcD$ satisfies Brown representability theorem) Let $E$ be an injective cogenerator of $\mcH$ that, by Proposition~\ref{prop.Positsetski-Stovicek}, is pure-injective in $\mcH$. The contravariant functor $\Hom_\mcD(\H(?),E)\dd\mcD\la\Ab$ takes coproducts to products and, hence, it is representable. Let $Q\in\mcD$ be the object that represents this functor. Since $\H$ vanishes on $\mcV[-1]$ and we have $\H(M)\cong M$, for all $M\in\mcH$, it immediately follows that $Q$ satisfies conditions (1)(a) and (1)(b). On the other hand, $\H$ also vanishes on $\mcU[1]$, which then implies that $\Hom_\mcD(?,Q)$ vanishes in $\mcU[1]$. This is equivalent to say that $Q\in\mcU^\perp[1]=\mcV$. Moreover, we have natural isomorphisms of functors $\mcD^\op\la\Ab$,
\[ \Hom_\mcD(\H(?),E) \cong \Hom_\mcD(?,Q) \cong \Hom_\mcH(\H(?),\H(Q)) \]
by the definition of $Q$ and Proposition~\ref{prop.near-partialsilting-object}. By Yoneda's lemma, we conclude that $\H(Q)\cong E$, and $Q$ is pure-injective again by Proposition~\ref{prop.near-partialsilting-object}.

$(4)\Longrightarrow (5)$ (assuming that $\mcD$ is standard well generated) is a direct consequence of Corollary \ref{cor.AB5 implies Grothendieck in well-gen. triangcats}.
\end{proof}

We conclude with a slightly more general version of Corollary~\ref{cor.AB5 implies Grothendieck in well-gen. triangcats}.

\begin{cor} \label{cor.t-str initial}
Suppose that $\mcD$ is triangulated with coproducts and satisfies Brown representability theorem.
Then the functor $\H\dd\mcD\la\mcH$ in the situation of Theorem~\ref{thm.AB5 heart}(4) is initial in its connected component of $\HFun(\mcD)$ in the sense of Theorem~\ref{thm.classify homological functors}.
\end{cor}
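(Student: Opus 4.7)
The plan is to invoke the characterization of initial objects of the connected components of $\HFun(\mcD)$ spelled out in the \emph{moreover} part of Theorem~\ref{thm.classify homological functors}: a coproduct-preserving homological functor $H\dd\mcD\la\mcA$ is initial in its connected component exactly when it restricts to an equivalence $H_{|\Prod_\mcD(Q)}\dd\Prod_\mcD(Q)\xrightarrow{\simeq}\Inj{\mcA}$, where $Q\in\mcD$ is the object representing the product-equivalence class assigned to $H$ under the bijection of that theorem. So it suffices, for $H=\H$, to exhibit such a $Q$ and check that $\H$ induces the advertised equivalence on $\Prod$-subcategories.

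First I would produce $Q$ by recycling the argument of the implication $(4)\Rightarrow(1)$ in Theorem~\ref{thm.AB5 heart}. Fix an injective cogenerator $E$ of $\mcH$ (which is automatically pure-injective by Proposition~\ref{prop.Positsetski-Stovicek}). The contravariant functor $\Hom_\mcH(\H(?),E)\dd\mcD\la\Ab$ is cohomological, because $\H$ is homological and $E$ is injective, and it sends coproducts to products because $\H$ preserves coproducts. Hence Brown representability provides an object $Q\in\mcD$ representing it, and, exactly as in the cited proof, one deduces that $Q\in\mcV$, that $\Hom_\mcD(?,Q)$ vanishes on $\mcV[-1]$, and, via Proposition~\ref{prop.near-partialsilting-object} together with Yoneda's lemma, that $\H(Q)\cong E$. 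Unpacking the construction of the bijection in Theorem~\ref{thm.classify homological functors} (the object attached to $H$ is defined to be any one representing $\Hom_\mcA(H(?),E)$), this $Q$ is precisely the object that indexes the connected component of $\H$.

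The second step is to verify that $\H_{|\Prod_\mcD(Q)}$ is an equivalence onto $\Inj{\mcH}$. Proposition~\ref{prop.near-partialsilting-object}(3) already gives an equivalence $\H\dd\Prod_\mcD(Q)\xrightarrow{\simeq}\Prod_\mcH(\H(Q))=\Prod_\mcH(E)$. To identify $\Prod_\mcH(E)$ with $\Inj{\mcH}$, recall that $\mcH$ is AB3*; then any injective $I\in\mcH$ embeds into $E^{\Hom_\mcH(I,E)}$ by the cogenerator property, and this embedding splits by injectivity of $I$, so $\Inj{\mcH}=\Prod_\mcH(E)$. Composing the two identifications yields the equivalence $\H_{|\Prod_\mcD(Q)}\dd\Prod_\mcD(Q)\xrightarrow{\simeq}\Inj{\mcH}$, and applying Theorem~\ref{thm.classify homological functors} concludes the argument.

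I do not anticipate any genuine obstacle, since every ingredient is either already proved in the paper or is a standard cogenerator argument. The only point that warrants a brief check is that the $Q$ produced by Brown representability really is the object indexing the connected component of $\H$ in Theorem~\ref{thm.classify homological functors}; but this is forced by the construction of the bijection there, together with the essential uniqueness of the representing object.
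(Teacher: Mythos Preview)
Your proposal is correct and follows essentially the same route as the paper, which simply cites Theorem~\ref{thm.classify homological functors} together with Proposition~\ref{prop.near-partialsilting-object}(1); you have merely unpacked what that one-line citation means. The only cosmetic difference is that you invoke part~(3) of Proposition~\ref{prop.near-partialsilting-object} for the equivalence $\Prod_\mcD(Q)\simeq\Prod_\mcH(\H(Q))$, whereas the paper cites part~(1), but the equivalence in~(3) is derived in the proof of that proposition directly from the natural isomorphism in~(1).
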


\begin{proof}
This immediately follows from Theorem~\ref{thm.classify homological functors} and Proposition~\ref{prop.near-partialsilting-object}(1).
\end{proof}

\subsection{\texorpdfstring{$t$-structures}{t-structures} with definable co-aisle}
\label{subsec:definable co-aisle}
Now we focus on $t$-structures in compactly generated triangulated categories whose co-aisles are definable. Recall from \cite{K-cohfun} that, when $\mcD$ is such a triangulated category, a functor $F\dd\mcD\la\Ab$ is called a \emph{coherent functor} on $\mcD$ when there is a morphism $\alpha\dd C\la C'$ in $\mcD^c$ and an exact sequence $\Hom_\mcD(C',?)\stackrel{\Hom_\mcD(\alpha ,?)}{\la}\Hom_\mcD(C,?)\la F\la 0$  in $\modf\mcD^\op$. The contents of the following result can be gathered from the papers \cite{K,K-cohfun,Lak}. At the request of the referee  we include a short proof:

\begin{prop} \label{prop.definable subcategory}
Let $\mcD$ be a compactly generated triangulated category.
For a subcategory $\mcY$ of $\mcD$ the following conditions are equivalent:

\begin{enumerate}
\item There is a set  $\mcS\subseteq\Mor(\mcD^c)$ such that $\mcY$ consists of the objects $Y\in\mcD$ such that the map $f^*:=\Hom_\mcD(f,Y)$ is surjective, for all $f\in\mcS$.
\item There is a set $\Sigma\subseteq\Mor(\mcD^c)$ such that  $\mcY$ consists of the objects $Y\in\mcD$ such that  $g^*:=\Hom_\mcD(g,Y)$ is the zero map, for all $g\in\Sigma$
\item There is a set $\mcF$ of coherent functors on  $\mcD$ such that $\mcY$ consists of the objects $Y$ such that $F(Y)=0$, for all $F\in\mcF$.
\end{enumerate}
A subcategory satisfying these equivalent conditions is closed under pure monomorphisms, products, pure epimorphisms and pure-injective envelopes. 
\end{prop}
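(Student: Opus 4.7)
The plan is to handle $(1)\Leftrightarrow(2)$ by an elementary triangle completion in $\mcD^c$, dispatch the three elementary closure properties by a diagram chase applied to a pure triangle, and transfer the remaining pure-injective envelope closure to the functor category $\Mod\mcD^c$ via the restricted Yoneda embedding.

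For $(1)\Leftrightarrow(2)$, I would complete each $f\dd C_1\la C_0$ from $\mcS$ to a triangle $C_1\stackrel{f}{\la}C_0\stackrel{g}{\la}C_2\la C_1[1]$ in $\mcD$; since $\mcD^c$ is thick, both $C_2$ and the rotation $g[-1]$ lie in $\Mor(\mcD^c)$. Applying $\Hom_\mcD(-,Y)$ yields a long exact sequence, from which $f^*$ is surjective precisely when $(g[-1])^*$ vanishes, so $\Sigma=\{g[-1]\mid f\in\mcS\}$ verifies~(2); the symmetric construction---completing each $g\in\Sigma$ to a triangle and reading off its predecessor---gives the converse.

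Closure under arbitrary products is immediate since each $\Hom_\mcD(C,-)$ preserves products. Working with formulation~(2) one morphism $g\dd C_1\la C_2$ in $\Sigma$ at a time, consider a pure triangle $X\stackrel{u}{\la}Y\stackrel{v}{\la}Z\laplus$ with $Y\in\mcY$. Compactness of $C_1,C_2$ together with purity yields short exact sequences
\[ 0\la\Hom_\mcD(C_i,X)\la\Hom_\mcD(C_i,Y)\la\Hom_\mcD(C_i,Z)\la 0,\quad i=1,2, \]
arranged as the rows of a commutative ladder with vertical arrows $g^*$. The vanishing of the middle column combined with injectivity of the horizontal arrows forces the left column to vanish, while surjectivity of the horizontal arrows (lifting any $\alpha\in\Hom_\mcD(C_2,Z)$ to some $\tilde\alpha\in\Hom_\mcD(C_2,Y)$, so that $\alpha g = v\tilde\alpha g = 0$) forces the right column to vanish; hence $X,Z\in\mcY$, yielding closure under pure monomorphisms and pure epimorphisms simultaneously.

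The main obstacle is closure under pure-injective envelopes. I would invoke the restricted Yoneda functor $\y\dd\mcD\la\Mod\mcD^c$, $D\rightsquigarrow\Hom_\mcD(-,D)_{|\mcD^c}$, which by \cite[Theorem 1.8]{K} (and recalled in Lemma~\ref{lem.pi for universal homological functor} and Corollary~\ref{cor.pure-injective envelopes in standard well-gen. triangcats}) restricts to an equivalence $\PInj{\mcD}\simeq\Inj{\Mod\mcD^c}$ and converts the pure-injective envelope $u\dd Y\la Q_Y$ into an injective envelope $\y(u)\dd\y(Y)\hookrightarrow\y(Q_Y)$ in the locally coherent Grothendieck category $\Mod\mcD^c$. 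The condition $Y\in\mcY$ translates identically to membership in the full subcategory
\[
\mcY' := \{M\in\Mod\mcD^c \mid M(g) = 0 \text{ for all } g\in\Sigma\},
\]
which one checks is closed under subobjects, quotients, products, and filtered colimits (the last because each $\y(C_i)$ is finitely presented in $\Mod\mcD^c$), hence is a definable subcategory in the classical sense. The required statement---that any such $\mcY'$ is closed under injective envelopes in a locally coherent Grothendieck category---is a standard fact of the Prest/Herzog/Krause theory of definable subcategories, and delivers $\y(Q_Y)\in\mcY'$, that is, $Q_Y\in\mcY$, finishing the proof.
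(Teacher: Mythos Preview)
Your argument for $(1)\Leftrightarrow(2)$ and for closure under products, pure monomorphisms and pure epimorphisms is correct, and in fact more explicit than the paper, which simply attributes the whole proposition to \cite{K,K-cohfun,Lak}. One small slip: in the equivalence $(1)\Leftrightarrow(2)$, if $C_1\stackrel{f}{\la}C_0\stackrel{g}{\la}C_2\stackrel{h}{\la}C_1[1]$ is the completed triangle, then $f^*$ is surjective if and only if $(h[-1])^*$ vanishes, not $(g[-1])^*$; and dually, going from~(2) to~(1), one must take the \emph{successor} of $g$ in its triangle, not the predecessor.

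There is, however, a genuine gap in your treatment of closure under pure-injective envelopes. The assertion that a definable subcategory of a locally coherent Grothendieck category is closed under \emph{injective} envelopes is false: in $\Mod\mathbb{Z}$ the class $\{M\mid 2M=0\}$ is of your form $\mcY'$ (take $g$ to be multiplication by $2$ on $\mathbb{Z}$), yet the injective envelope of $\mathbb{Z}/2$ is the Pr\"ufer group $\mathbb{Z}/2^\infty$, which is not annihilated by $2$. What the Prest--Herzog--Krause theory actually gives is closure under \emph{pure}-injective envelopes. Your argument is rescued by an extra ingredient you did not invoke: $\y(Y)$ is fp-injective in $\Mod\mcD^c$ by \cite[Lemma~1.6]{K}, so the monomorphism $\y(Y)\hookrightarrow\y(Q_Y)$ is automatically pure, whence $\y(Q_Y)$ is the pure-injective envelope of $\y(Y)$ and the standard closure property applies. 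Alternatively, one can observe that for fp-injective $M$ the condition $M(g)=0$ is equivalent to $\Hom_{\Mod\mcD^c}(\Img\y(g),M)=0$ (because every map $\Img\y(g)\to M$ extends over $\y(C_2)$); the right-hand condition cuts out a hereditary torsion-free class, and such classes \emph{are} closed under injective envelopes.
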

\begin{proof}
To ease the notation, we denote by $\y:=\y_{\mcD^\op}:\mcD^\op\la\modf\mcD^\op$, $D\rightsquigarrow\mathbf{y}(D)=\Hom_\mcD(D,?)$ the Yoneda embedding. All through the proof we use that $\Hom_\mcD(?,Y)\dd\mcD^\op\la\Ab$ is a cohomological functor for each $Y\in\mcD$.

$(1)\Longleftrightarrow (3)$ Given $\mcS$ as in condition (1), take $\mcF=\{\Coker(\y(f))\mid f\in\mcS\}$. Given $\mcF$ as in condition (3), choose for each $F\in\mcF$ a morphism $g_F\dd C_F\la C'_F$ in $\mcD^c$ such that $F\cong\Coker (\mathbf{y}(g_F))$. Then take $\mcS=\{g_F\mid F\in\mcF\}$.

$(1)\Longleftrightarrow (2)$  Given $\mcS$ as in condition (1), complete each $f\dd C\la C'$ in $\mcS$ to a triangle $C''\stackrel{\alpha_f}{\la}C\stackrel{f}{\la}C'\stackrel{+}{\la}$. Then take $\Sigma=\{\alpha_f\mid f\in\mcS\}$. Given $\Sigma$ as in condition (2), complete each $\sigma\dd C''\la C$ in $\Sigma$ to a triangle $C''\stackrel{\sigma}{\la}C\stackrel{f_\sigma}{\la}C'\stackrel{+}{\la}$. Then take $\mcS=\{f_\sigma\mid\sigma\in\Sigma\}$.

For the final statements, suppose that $X\la Y\la Z\stackrel{+}{\la}$ is a pure triangle in $\mcD$.   Then we have the following commutative diagram of abelian groups with exact rows, for each  morphism $\sigma:C\la C'$ in $\mcD^c$,

\[
\xymatrix{0 \ar[r] & \y(C')(X) \ar[r] \ar[d]^{{\y(\sigma)}_X} & \y(C')(Y) \ar[r] \ar[d]^{{\y(\sigma)}_Y} & \y(C')(Z) \ar[r] \ar[d]^{{\y(\sigma)}_Z}  & 0\\ 0 \ar[r] & \y(C)(X) \ar[r] & \y(C)(Y)\ar[r] & \y(C)(Z) \ar[r] & 0 }
\]

If now $\Sigma\subseteq\Mor(\mcD^c)$ is any set and $\mcY$ is defined as in condition (2), it is clear that if $Y\in\mcY$, i.e. the central vertical arrow of the last diagram is zero for all $\sigma\in\Sigma$,  then the same is true for $X$ and $Z$. That is, the class $\mcY$ is closed under pure monomorphisms and pure epimorphisms. It is clearly closed under products. Finally, by the Fundamental Correspondence Theorem of  [Kra02b], we know that, for $\mcY$ satisfying condition (3), one has that the skeletally small class $\text{PInj}(\mcY)$ of pure-injective objects in $\mcY$ is a closed subset of the Ziegler spectrum of $\mcD$, and $\mcY$ consists of the objects $Y\in\mcD$ such that there is a pure monomorphism $\alpha\dd Y\la\prod_{i\in I}Q_i$, for some family $(Q_i)_{i\in I}$ in $\PInj\mcY$. Suppose now that $Y\in\mcY$, fix such a pure monomorphism $\alpha$ and let $u\dd Y\la Q_Y$ be the pure-injective envelope. If we consider, as in Section~\ref{sect.universal copr-pres}, the generalized Yoneda embedding $h_\pure\dd\mcD\la\Mod\mcD^c$, $D\rightsquigarrow\Hom_\mcD(?,D)_{| \mcD^c}$, we have that $h_\pure(\alpha )$ is a monomorphism into an injective object of $\Mod\mcD^c$ and $h_\pure(u)$ is the injective envelope in $\Mod\mcD^c$. This together with \cite[Theorem 1.8]{K} gives a section $s\dd Q_Y\la\prod_{i\in I}Q_i$ such that $s\circ u=\alpha$. Since $\mcY$ is clearly closed under direct summands, we conclude that $Q_Y\in\mcY$.
\end{proof}

\begin{opr} \label{def.definable subcategory}
A subcategory $\mcY$ of a compactly generated triangulated category is said to be \emph{definable} when it satisfies any of  conditions (1)--(3) of last proposition. 
\end{opr}

\begin{rem} \label{rem:definability and closure properties}
If a compactly generated triangulated category $\mcD$ has an enhancement which allows for a good calculus of homotopy colimits (in the form of a stable derivator as in~\cite{Lak} or a stable $\infty$-category), then some of the closure properties from Proposition~\ref{prop.definable subcategory} in fact characterize definable classes. The following statements are equivalent for $\mcY\subseteq\mcD$ in that case:
\begin{enumerate}
\item $\mcY$ is definable,
\item $\mcY$ is closed under products, pure monomorphisms and  directed homotopy colimits,
\item $\mcY$ is closed under products, pure monomorphism and pure epimorphism.
\end{enumerate}
The equivalence between~(1) and~(2) is a part of \cite[Theorem 3.11]{Lak}. The implication $(1)\Rightarrow(3)$ is an elementary consequence of the definition of a definable subcategory and does not require any enhancement. For $(3)\Rightarrow(2)$, one can follow the strategy from~\cite[Theorem 4.7]{LV} and prove that for any coherent directed diagram $X$, the colimit morphism $\coprod_{i\in I}X_i\la \hocolim X$ is a pure epimorphism in $\mcD$. This follows essentially from~\cite[Definition~5.1 and Proposition 5.4]{SSV}.
\end{rem}

\begin{rem} \label{rem:definable coaisles}
The previous remark immediately implies that if $\mcD$ is compactly generated triangulated and has a suitable enhancement (a stable derivator or a stable $\infty$-category) and if $\mathbf{t}=(\mcU,\mcV)$ is a $t$-structure, then the following are equivalent:
\begin{enumerate}
\item $\mcV$ is closed under pure epimorphisms,
\item $\mcV$ is closed under directed homotopy colimits and pure monomorphisms,
\item $\mcV$ is definable.
\end{enumerate}
The main point is that if $\mcV$ is closed under pure epimorphisms, it is also closed under pure monomorphisms since $\mcV[-1]\subseteq\mcV$ and $\mcV$ is closed under extensions.

Under certain additional assumptions, the conditions above are also known to be equivalent to the apparently weaker condition
\begin{enumerate}
\item[(2')] $\mcV$ is closed under directed homotopy colimits.
\end{enumerate}
This holds by \cite[Theorem 4.6]{Lak} if $\textbf{t}$ is left non-degenerate. It also holds whenever $\mcD$ is an algebraic compactly generated triangulated category. We will only sketch the argument in this case and discuss the details elsewhere later. The point is that $\mcD$ is the derived category $\mcD(\mcA)$ of a small dg category $\mcA$ and $\mcV$ turns out to be a class of dg-$\mcA$-modules which is the right hand side class of a cotorsion pair and closed under direct limits (see e.g.\ \cite[\S2.2 and Lemma 3.4]{StPo}). Such a class, however, must be definable in the category of dg-$\mcA$-modules by~\cite[Theorem 6.1]{Saroch-tools}, and one routinely checks that it is then also definable in $\mcD=\mcD(\mcA)$.

Unfortunately, we do not know whether (2) is equivalent to (2') in general.
\end{rem}

An important feature of $t$-structures with definable co-aisle in compactly generated triangulated categories is that the class of pure-injective objects in the co-aisle is $t$-cogenerating (recall Definition~\ref{def.t-generating}). In fact, we can state that fact more generally. If $\mcV$ is a class of objects in a triangulated category with products, we denote by $\PInj\mcV\subseteq\mcV$ the class of all pure-injective objects in $\mcV$.

\begin{lem} \label{lem.def coaisles are t-cogen}
Let $\mcD$ be a standard well generated triangulated category and $\mathbf{t}=(\mcU,\mcV)$ be a $t$-structure whose co-aisle $\mcV$ is closed under pure epimorphisms and pure-injective envelopes (see Definition \ref{def.pure-triangle in standard well-gen. categs} and Corollary \ref{cor.pure-injective envelopes in standard well-gen. triangcats}). Then:
\begin{enumerate}
\item[(a)] $\PInj\mcV$ is $t$-cogenerating in $\mcV$,
\item[(b)] there exists an object $\hat{Q}\in\mcV$ such that $\PInj\mcV=\Prod_\mcD(\hat{Q})$.
\end{enumerate}
\end{lem}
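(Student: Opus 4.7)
For part~(a), the plan is to use the pure-injective envelopes provided by Corollary~\ref{cor.pure-injective envelopes in standard well-gen. triangcats}. Given $V\in\mcV$, the envelope $u\dd V\la Q_V$ is a pure monomorphism whose target is pure-injective in $\mcD$; closure of $\mcV$ under pure-injective envelopes places $Q_V$ in $\PInj\mcV$. Completing $u$ to a triangle $V\la Q_V\la V'\laplus$ yields a pure triangle in the sense of Definition~\ref{def.pure-triangle in standard well-gen. categs}, so the second morphism is a pure epimorphism and $V'\in\mcV$ by closure of $\mcV$ under pure epimorphisms. Any map $V\la P'$ with $P'\in\PInj\mcV$ extends over $u$ by pure-injectivity of $P'$, so $u$ is a $\PInj\mcV$-preenvelope.

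For part~(b), the plan is to realise $\PInj\mcV$, under the equivalence $h_\pure\dd\PInj\mcD\stackrel{\simeq}\la\Inj\mcA$ of Lemma~\ref{lem.pi for universal homological functor}(3) (where $\mcA:=\mcA_\pure(\mcD)$), as the injective class of a suitable Grothendieck quotient of $\mcA$. Concretely, I would let $\mcT'\subseteq\mcA$ be the smallest hereditary torsion class containing $h_\pure(\mcU[1])$, and form the Gabriel localisation $\tilde q'\dd\mcA\la\mcG':=\mcA/\mcT'$ with section functor $\iota'\dd\mcG'\la\mcA$. Since $\mcG'$ is again a Grothendieck category, it has an injective cogenerator $E'$, and the argument used in the proof of Proposition~\ref{prop.Positsetski-Stovicek} gives $\Inj\mcG'=\Prod_{\mcG'}(E')$. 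Taking $\hat Q\in\PInj\mcV$ to be the unique pure-injective with $h_\pure(\hat Q)\cong\iota'(E')$, the desired equality $\PInj\mcV=\Prod_\mcD(\hat Q)$ would follow from identifying $\iota'(\Inj\mcG')$ with $h_\pure(\PInj\mcV)$, using that both $\iota'$ and the equivalence $h_\pure|_{\PInj\mcD}$ preserve products.

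The key identification $\iota'(\Inj\mcG')=h_\pure(\PInj\mcV)$ is the main technical step. For any pure-injective $P\in\mcD$ and any $U\in\mcU$, Lemma~\ref{lem.pi for universal homological functor}(2) supplies a natural isomorphism $\Hom_\mcA(h_\pure(U[1]),h_\pure(P))\cong\Hom_\mcD(U[1],P)$, so that $h_\pure(P)$ is Hom-orthogonal to the generating set $h_\pure(\mcU[1])$ precisely when $P\in\mcU[1]^\perp=\mcV$. The main obstacle will be to upgrade this pointwise vanishing to vanishing against the entire torsion class $\mcT'$; the point is that for the injective target $h_\pure(P)$, the class of objects of $\mcA$ with zero Hom into it is automatically closed under subobjects, quotients, extensions and coproducts, hence forms a hereditary torsion class. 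Consequently, Hom-vanishing on the generators $h_\pure(\mcU[1])$ propagates to vanishing on all of $\mcT'$, so $h_\pure(P)$ lies in the Giraud subcategory associated with $\mcT'$ if and only if $P\in\mcV$, yielding the desired matching and completing the argument for~(b).
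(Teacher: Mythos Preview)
Your proof is correct. Part~(a) is essentially identical to the paper's argument, and you are in fact more explicit than the paper in noting why the pure-injective envelope $u$ is a $\PInj\mcV$-preenvelope (via Lemma~\ref{lem.pi for universal homological functor}(2),(3), any map to a pure-injective extends along the pure monomorphism $u$).

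For part~(b) your route differs from the paper's. You \emph{generate} the hereditary torsion class from below, as the smallest one containing $h_\pure(\mcU[1])$, and then must verify the key identification $\Inj{\mcA_\pure(\mcD)}\cap(\mcT')^\perp=h_\pure(\PInj\mcV)$ via Lemma~\ref{lem.pi for universal homological functor}(2) and the observation that $^\perp I$ is a hereditary torsion class when $I$ is injective. The paper instead \emph{cogenerates} the torsion class from above, taking $\mcT={^\perp h_\pure(\PInj\mcV)}$ directly; then the injective object $E$ cogenerating $\mcF=\mcT^\perp$ is automatically a summand of a product of objects in $h_\pure(\PInj\mcV)$, and lifting this section along $h_\pure$ produces $\hat Q\in\PInj\mcV$ with $\Prod(\hat Q)=\PInj\mcV$. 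The two torsion classes coincide (your identification shows $\Inj{\mcA}\cap(\mcT')^\perp=h_\pure(\PInj\mcV)$, whence $\mcT'={^\perp(\Inj{\mcA}\cap(\mcT')^\perp)}=\mcT$), so the constructions agree. The paper's choice is a touch more economical for this lemma since the desired class of injectives is built in from the start; your approach has the mild advantage of describing the torsion class directly in terms of the aisle.
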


\begin{proof}
If $V\in\mcV$ and $u\dd V\la Q_V$ is a pure injective envelope, then $Q_V\in\PInj\mcV$ and the third term in a triangle $V \stackrel{u}\la Q_V\stackrel{p}\la V' \laplus$ is again in $\mcV$ since $p$ is a pure epimorphism. This proves assertion~(a).

We consider $\mcT={^\perp h_\pure(\PInj\mcV)} \subseteq \mcA_\pure(\mcD)$. Then $\mcT$ is a hereditary torsion class in the Grothendieck category $\mcA_\pure(\mcD)$ and, as a consequence, we have an injective object $E$ in this latter category such that the associated torsionfree class $\mcF=\mcT^\perp$ consists of the subobjects of objects in $\Prod(E)$. But then $E =h_\pure(\hat{Q})$, for some pure-injective object $\hat{Q}$ which is necessarily in $\mcV$.
Indeed we have a section  $E=h_\pure (\hat{Q})\rightarrowtail \prod_{i\in I}h_\pure (Q_i)$ in $\mcA_\pure (\mcD)$, for some family $(Q_i)_{i\in I}$ in $\PInj\mcV$, that is the image under $h_\pure\dd\mcD\la\mcA_\pure (\mcD)$ of a section $s\dd\hat{Q}\rightarrowtail\prod_{i\in I}Q_i$, by Lemma~\ref{lem.pi for universal homological functor}. This proves assertion~(b).
\end{proof}

\begin{rem} \label{def.t-cogenerated coaisle}
Suppose again that we are in the situation of Remark~\ref{rem:definable coaisles}. The last lemma says that $\PInj\mcV$ is $t$-cogenerating if condition~(2) holds. The subtle point is that the conclusion of Lemma~\ref{lem.def coaisles are t-cogen} can be derived already from the a priori weaker condition (2') (which, however, can be stated only using an enhancement of $\mcD$) and, thus, holds for any homotopically smashing $t$-structure in the language of~\cite{SSV,Lak}.

To see this, one can use essentially the same argument as in~\cite[Proposition 3.7]{Lak}. If $V\in\mcV$, then there is a set $I$ and an ultrafilter $\mcF$ on $I$ such that the coherent ultrapower $V^S/\mcF$ is pure-injective and the diagonal morphism $V\la V^S/\mcF$ is a pure monomorphism. In fact, the triangle $V \la V^S/\mcF \la V' \laplus$ is by definition of the coherent ultrapower~\cite[\S2.2]{Lak} a directed homotopy colimit of split triangles
\[ V \overset{d_J}\la V^J \la V'_J \laplus, \]
where $J\subseteq I$ runs over the elements of $\mcF$ and $d_J$ are the diagonal embeddings. If $\mcV$ is closed under directed homotopy colimits, then $V^S/\mcF\in\PInj\mcV$ and $V'\in\mcV$.
\end{rem}

The main result of this subsection is now an easy consequence of the results in previous (sub)sections.

\begin{thm} \label{thm.Groth.heart-for-definable-coaisle}
Let $\mcD$ be a standard well generated triangulated category and $\mathbf{t}=(\mcU,\mcV)$ be a $t$-structure such that the class $\PInj\mcV$ of all pure-injective objects in $\mcV$ is $t$-cogenerating in $\mcV$---these assumptions are satisfied e.g.\ if $\mcD$ is compactly generated and either
\begin{enumerate}
\item $\mcV$ is definable or
\item $\mcD$ has a suitable enhancement (see Remarks~\ref{rem:definable coaisles} and~\ref{def.t-cogenerated coaisle}) and $\mcV$ is closed under taking directed homotopy colimits.
\end{enumerate}
Then the heart $\mcH=\mcU\cap\mcV$ is a Grothendieck category and $\H\dd\mcD\la\mcH$ preserves coproducts.

Moreover, if we fix a Verdier quotient functor $q\dd\mcC\la\mcD$ such that $\mcC$ is a compactly generated triangulated category and $\Ker(q)$ is the localizing subcategory of $\mcC$ generated by a set of objects, then $\H(q(\mcC^c))$ is a skeletally small class of generators of $\mcH$.
\end{thm}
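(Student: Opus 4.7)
The plan is to verify condition~(2) of Theorem~\ref{thm.AB5 heart} under the standing hypothesis and then invoke the implication $(2)\Rightarrow(5)$, which is available because $\mcD$ is standard well-generated. The $t$-cogenerating hypothesis on $\PInj\mcV$ immediately gives, for each $V\in\mcV$, a triangle $V\la Q_V\la V'\laplus$ with $Q_V\in\PInj\mcV$ and $V'\in\mcV$, so the only real obstacle is to produce a \emph{single} pure-injective $\hat Q\in\mcV$ with $\PInj\mcV\subseteq\Prod_\mcD(\hat Q)$; then each $Q_V$ may be taken in $\Prod_\mcD(\hat Q)$ at once.

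The construction of $\hat Q$ follows the strategy from the proof of Lemma~\ref{lem.def coaisles are t-cogen}(b), whose argument in fact only uses the conclusion of~(a). Transfer to the Grothendieck category $\mcA_\pure(\mcD)$ via the universal functor $h_\pure$ of Proposition~\ref{prop.universal AB5 homology}, recalling that by Lemma~\ref{lem.pi for universal homological functor}(3) its restriction $\PInj\mcD\stackrel{\simeq}{\la}\Inj\mcA_\pure(\mcD)$ is an equivalence, and hence preserves set-indexed products. Set $\mcT={}^\perp h_\pure(\PInj\mcV)\subseteq\mcA_\pure(\mcD)$: injectivity of the $h_\pure(Q)$ forces $\mcT$ to be closed under subobjects (any morphism from a subobject of a torsion object into $h_\pure(Q)$ extends to the whole torsion object and hence vanishes), and together with the evident closure under quotients, extensions and coproducts this makes $\mcT$ a hereditary torsion class. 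Its torsion-free class $\mcF=\mcT^\perp$ coincides with the class of subobjects of products of elements of $h_\pure(\PInj\mcV)$: indeed, given an extension $0\la A\la B\la C\la 0$ with $A\hookrightarrow\prod h_\pure(Q_i)$ and $C\hookrightarrow\prod h_\pure(Q_j)$, the first monomorphism extends along $A\hookrightarrow B$ by injectivity of $\prod h_\pure(Q_i)$, and assembling with $B\la C\hookrightarrow\prod h_\pure(Q_j)$ yields a monomorphism $B\hookrightarrow\prod h_\pure(Q_i)\times\prod h_\pure(Q_j)$. Thus an injective cogenerator $E$ of $\mcF$ embeds and, by injectivity, splits off some $\prod_{i\in I}h_\pure(Q_i)\cong h_\pure\bigl(\prod_i Q_i\bigr)$ with $Q_i\in\PInj\mcV$; via the equivalence this yields $E\cong h_\pure(\hat Q)$ for a pure-injective $\hat Q$ that is a direct summand of $\prod_i Q_i\in\mcV$, and since $\mcV$ is closed under products and summands, $\hat Q\in\PInj\mcV$. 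Conversely, any $Q\in\PInj\mcV$ has $h_\pure(Q)\in\mcF$, so $h_\pure(Q)$ embeds (and splits) into some $E^J\cong h_\pure(\hat Q^J)$, placing $Q$ in $\Prod_\mcD(\hat Q)$. Hence $\Prod_\mcD(\hat Q)=\PInj\mcV$, condition~(2) of Theorem~\ref{thm.AB5 heart} holds, and its conclusion~(5) for standard well-generated $\mcD$ delivers the Grothendieck property of $\mcH$ together with coproduct-preservation of~$\H$.

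For the moreover part, fix the Verdier quotient $q\dd\mcC\la\mcD$ as in the statement. Combining Proposition~\ref{prop.universal AB5 homology} with Corollary~\ref{cor.AB5 implies Grothendieck in well-gen. triangcats} yields factorizations $\H=\tilde q\circ h_\pure$ and $h_\pure\circ q=q'\circ h^{\mcC}_\pure$, where $h^{\mcC}_\pure\dd\mcC\la\Mod\mcC^c$ is the classical restricted Yoneda functor and $q'\dd\Mod\mcC^c\la\mcA_\pure(\mcD)$, $\tilde q\dd\mcA_\pure(\mcD)\la\mcH$ are Gabriel localization functors. Consequently $\H\circ q=\tilde q\circ q'\circ h^{\mcC}_\pure$, and the composition $\tilde q\circ q'\dd\Mod\mcC^c\la\mcH$ is itself a Gabriel localization functor (the composition of the two fully faithful section functors is fully faithful). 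The representables $\{h^{\mcC}_\pure(C):C\in\mcC^c\}$ form a set of projective generators of $\Mod\mcC^c$, and any Gabriel localization sends generators to generators: if $0\neq M\in\mcH$ and $\iota$ denotes the fully faithful section of $\tilde q\circ q'$, then $\iota(M)\neq 0$, so some $h^{\mcC}_\pure(C)$ admits a nonzero map to $\iota(M)$ and by adjunction $\H(q(C))\cong(\tilde q\circ q')(h^{\mcC}_\pure(C))$ admits a nonzero map to $M$. Therefore $\H(q(\mcC^c))$ is a class of generators of $\mcH$, and it is skeletally small because $\mcC^c$ is.
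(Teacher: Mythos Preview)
Your proof is correct and follows essentially the same route as the paper's: you reduce to Theorem~\ref{thm.AB5 heart}(2) by reproducing the argument of Lemma~\ref{lem.def coaisles are t-cogen}(b) to obtain a single pure-injective $\hat Q\in\mcV$ with $\PInj\mcV=\Prod_\mcD(\hat Q)$, and for the generators you compose the Gabriel localizations $q'$ and $\tilde q$ coming from Proposition~\ref{prop.universal AB5 homology} and Corollary~\ref{cor.AB5 implies Grothendieck in well-gen. triangcats}. Your version is in fact more explicit than the paper's in two places---you spell out why the class of subobjects of products of $h_\pure(\PInj\mcV)$ is closed under extensions (hence equals the torsion-free class $\mcF$), and you give the adjunction argument showing that a Gabriel localization preserves generating sets---but the underlying strategy is identical.
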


\begin{proof}
By the same argument as for Lemma~\ref{lem.def coaisles are t-cogen}(b), we find a pure-injective object $\hat{Q}\in\mcV$ such that $\PInj\mcV=\Prod_D(\hat{Q})$. Then such an object $\hat{Q}$ satisfies the assumption of Theorem~\ref{thm.AB5 heart}(2) and $\mcH$ is a Grothendieck category by Theorem~\ref{thm.AB5 heart}(5).

As for the class of generators, note that, by Proposition \ref{prop.universal AB5 homology} and its proof, we know that $\mcA_\pure(\mcD)$ is a Gabriel quotient of $\Mod\mcC^c$ and if $q'\dd\Mod\mcC^c\la\mcA_\pure(\mcD)$ is the corresponding Gabriel localization functor, then $q'\circ\y_\mcC\cong h_\pure\circ q$, where $\y_\mcC\dd\mcC\la\Mod\mcC^c$ is the Yoneda functor. Since $\y(\mcC^c)$ is a skeletally small class of generators of $\Mod\mcC^c$, we conclude that $h_\pure (q(\mcC^c))$ is a skeletally small class of generators of $\mcA_\pure (\mcD)$. Applying now Corollary \ref{cor.AB5 implies Grothendieck in well-gen. triangcats} we conclude that if $\tilde{q}\dd\mcA_\pure(\mcD)\la\mcH$ is the (uniquely determined up to natural isomorphism) Gabriel localization functor such that $\tilde{q}\circ h_\pure\cong\H$, then $\tilde{q}(h_\pure (q(\mcC^c)))=\H(q(\mcC^c))$ is a skeletally small class of generators of $\mcH$. 
\end{proof}

As immediate consequences, we get:
  
\begin{cor} \label{cor.Grothendieck hearts for definable co-aisle}
Let $\mcD$ be a compactly generated triangulated category and $\mathbf{t}=(\mcU,\mcV)$ be a $t$-structure in $\mcD$ with definable co-aisle. The heart $\mcH=\mcU\cap\mcV$ is a Grothendieck category for which $\H(\mcD^c)$ is a skeletally small class of generators. 
\end{cor}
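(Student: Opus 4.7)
The plan is to invoke Theorem \ref{thm.Groth.heart-for-definable-coaisle} directly, since this corollary is essentially the specialization of that theorem to the case where the ambient triangulated category is itself compactly generated. Every compactly generated $\mcD$ is standard well-generated in the sense of Definition \ref{def.std well generated}: one takes $\mcC = \mcD$ and $\mcS = \emptyset$, so that $\Loc_\mcC(\mcS) = 0$ and the Verdier quotient $q\dd\mcC\la\mcD$ is simply the identity functor. The assumption that $\mcV$ be definable matches verbatim the sufficient condition (1) listed in the statement of Theorem \ref{thm.Groth.heart-for-definable-coaisle}, so its hypothesis that $\PInj\mcV$ be $t$-cogenerating in $\mcV$ is satisfied. (If one wishes to see this unpacked: definability implies closure of $\mcV$ under pure epimorphisms and pure-injective envelopes by Proposition \ref{prop.definable subcategory}, and then Lemma \ref{lem.def coaisles are t-cogen} applies.) This immediately yields that $\mcH$ is a Grothendieck category and that $\H\dd\mcD\la\mcH$ preserves coproducts.

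To obtain the class of generators, one then invokes the ``moreover'' clause of the theorem with the very same choice of Verdier quotient $q\dd\mcD\la\mcD$. Since $q$ is the identity, $q(\mcC^c) = \mcD^c$, and the theorem yields that $\H(\mcD^c)$ is a skeletally small class of generators of $\mcH$. The skeletal smallness also follows a posteriori from the fact that $\mcD^c$ is itself skeletally small whenever $\mcD$ is compactly generated, a standard observation recalled in Subsection \ref{subsec:triangulated}.

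I do not anticipate any real obstacle here: every piece that is needed---the standard well-generated status of $\mcD$, the applicability of condition (1) of the theorem, and the identification of $q(\mcC^c)$ with $\mcD^c$---is immediate from the hypotheses. The only conceivable subtlety would be insisting on an internal argument that avoided appealing to the ``moreover'' part of Theorem \ref{thm.Groth.heart-for-definable-coaisle}; but since the theorem is stated in the generality that covers this case on the nose, such a detour would be wasted effort.
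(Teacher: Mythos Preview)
Your proposal is correct and matches the paper's approach exactly: the corollary is stated as an immediate consequence of Theorem~\ref{thm.Groth.heart-for-definable-coaisle}, obtained by taking $\mcC=\mcD$ and $q$ the identity so that $q(\mcC^c)=\mcD^c$.
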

  
\begin{cor} \label{cor.definable-coaisles-Grothcats}
Let $\mcG$ be a Grothendieck category with generator $X$, and let $\mathbf{t}=(\mcU,\mcV)$ be a $t$-structure in the derived category $\mcD(\mcG)$ such that $\mcV$ is closed under taking pure epimorphisms and pure-injective envelopes.  The heart $\mcH=\mcU\cap\mcV$ is a Grothendieck category on which $\H (\thick_{\mcD(\mcG)}(X))$ is a skeletally small class of generators. Here $\thick_{\mcD(\mcG)}(X)$ is the smallest thick subcategory of $\mcD(\mcG)$ containing $X$.
\end{cor}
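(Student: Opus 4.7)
The plan is to derive this corollary from Theorem~\ref{thm.Groth.heart-for-definable-coaisle} applied to $\mcD=\mcD(\mcG)$. We need to verify that $\mcD(\mcG)$ is standard well generated via a natural Verdier quotient from a compactly generated triangulated category, that $\PInj\mcV$ is $t$-cogenerating in $\mcV$, and to relate the resulting generating class of $\mcH$ to $\H(\thick_{\mcD(\mcG)}(X))$.

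Let $A=\mathrm{End}_\mcG(X)$. By the Gabriel-Popescu Theorem (Proposition~\ref{prop.Gabriel-Popescu}), the functor $\Hom_\mcG(X,-)\dd\mcG\la\Mod A$ is fully faithful with an exact left adjoint $-\otimes_A X$, exhibiting $\mcG$ as a Gabriel quotient of $\Mod A$ modulo a hereditary torsion class $\mcT\subseteq\Mod A$. By standard results on derived categories of Grothendieck categories, this lifts to a Verdier quotient functor $q\dd\mcD(A)\la\mcD(\mcG)$ whose kernel is the localizing subcategory generated by a set of generators of $\mcT$ (viewed as degree-zero complexes), and such that $q(A)\cong X$ in $\mcD(\mcG)$. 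Since $\mcD(A)$ is compactly generated by $A$, this shows that $\mcD(\mcG)$ is standard well generated.

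The hypothesis that $\mcV$ is closed under pure epimorphisms and pure-injective envelopes together with Lemma~\ref{lem.def coaisles are t-cogen} implies that $\PInj\mcV$ is $t$-cogenerating in $\mcV$. Theorem~\ref{thm.Groth.heart-for-definable-coaisle} therefore yields that $\mcH$ is a Grothendieck category, that $\H\dd\mcD(\mcG)\la\mcH$ preserves coproducts, and that $\H(q(\mcD(A)^c))$ is a skeletally small class of generators of $\mcH$. Since $\mcD(A)^c=\thick_{\mcD(A)}(A)$ and $q$ is triangulated with $q(A)\cong X$, we have
\[
q(\mcD(A)^c) \subseteq \thick_{\mcD(\mcG)}(X).
\]
Consequently $\H(\thick_{\mcD(\mcG)}(X))\supseteq\H(q(\mcD(A)^c))$ is also a class of generators of $\mcH$, and it is skeletally small since $\thick_{\mcD(\mcG)}(X)$ is the thick closure of a single object.

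The principal technical obstacle is the first step, namely the extension of a Gabriel localization of module categories to a Verdier localization of their unbounded derived categories with kernel generated by a set and with $q(A)\cong X$. This is a standard but non-trivial input from the literature on derived categories of Grothendieck categories.
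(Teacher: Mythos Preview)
Your proof is correct and follows essentially the same route as the paper: both use Gabriel--Popescu to present $\mcG$ as a localization of $\Mod A$ with $A=\mathrm{End}_\mcG(X)$, pass to the induced Verdier quotient $q\dd\mcD(A)\la\mcD(\mcG)$, and then invoke Theorem~\ref{thm.Groth.heart-for-definable-coaisle} together with the inclusion $q(\mcD(A)^c)=q(\thick_{\mcD(A)}(A))\subseteq\thick_{\mcD(\mcG)}(X)$. You are slightly more explicit than the paper in citing Lemma~\ref{lem.def coaisles are t-cogen} to verify the $t$-cogenerating hypothesis and in flagging that the derived lift of the Gabriel localization has set-generated kernel, but these are exactly the ingredients the paper uses tacitly.
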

\begin{proof}
By the usual Gabriel-Popescu's theorem, if $R=\text{End}_\mcG(X)$ we have a Gabriel localization functor $q\dd\Mod R\la\mcG$ that takes $R$ to $X$. The induced triangulated functor $q\dd\mcD(\Mod R)\la\mcD(\mcG)$ satisfies that
\[
q\big(\mcD(\Mod R)^c\big)=q\big(\thick_{\mcD(\Mod R)}(R)\big)\subseteq\thick_{\mcD(\mcG)}(X).
\]
The result then follows since, by Theorem \ref{thm.Groth.heart-for-definable-coaisle}, $\H\big(q(\mcD(\Mod R)^c)\big)$ is a skeletally small class of generators of $\mcH$. 
\end{proof}

\subsection{Suspended ideals of the category of compact objects}

In this subsection, we assume that $\mcD$ is a compactly generated triangulated category we denote by $\y_\mcD\dd\mcD\la\Mod\mcD^c$ the Yoneda functor. In~\cite[Corollary 12.5]{K-cohq}, Krause classified smashing localizations of $\mcD$ in terms of so-called exact ideals of $\mcD^c$. Here we establish an analogous classification of $t$-structures with definable co-aisle in $\mcD$.

\begin{opr} \label{def.suspended ideal}
Let $\mcD_0$ be a triangulated category and $\mcI$ a two-sided ideal of $\mcD_0$. The ideal is called \emph{saturated} (see \cite[Definition 8.3]{K-cohq}) if whenever we have a triangle $X_2\stackrel{u}\la X_1\stackrel{v}\la X_0\la X_2[1]$ and a morphism $f\dd X_1 \la Y$ in $\mcD_0$, then
\[ f\circ u,\; v\in\mcI \implies f\in\mcI. \]

\noindent
The ideal $\mcI$ is called \emph{suspended} if it satisfies the following three conditions:
\begin{enumerate}
\item $\mcI$ is idempotent, i.e.\ $\mcI^2=\mcI$,
\item $\mcI$ is saturated, and
\item $\mcI[1]\subseteq\mcI$.
\end{enumerate}
\end{opr}

\begin{thm} \label{thm.classification definable}
Let $\mcD$ be a compactly generated triangulated subcategory. Then there is a bijective correspondence between
\begin{enumerate}
\item the $t$-structures $\mathbf{t}=(\mcU,\mcV)$ in $\mcD$ with $\mcV$ definable, and
\item suspended ideals $\mcI\subseteq\mcD^c$,
\end{enumerate}
which is given by the assignments
\begin{align*}
\mcV &\rightsquigarrow \mcI = \{ f\dd C\la D \mathrm{~in~} \mcD^c \mid \Hom_\mcD(f,\mcV) = 0 \}
\quad \textrm{and} \\
\mcI &\rightsquigarrow \mcV = \{ V\in\mcD \mid \Hom_\mcD(\mcI,V) = 0 \}.
\end{align*}
\end{thm}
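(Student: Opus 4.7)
My plan is to reduce the classification to a correspondence in the abelian category $\Mod\mcD^c$ via the restricted Yoneda functor $\y_\mcD\dd\mcD\la\Mod\mcD^c$. A morphism $f\dd C\la D$ in $\mcD^c$ belongs to $\mcI_\mcV$ precisely when $\y_\mcD(f)$ is annihilated by $\Hom_{\Mod\mcD^c}(-,\y_\mcD V)$ for every $V\in\mcV$, so conditions on $\mcI$ translate into conditions on the finitely presented cokernel modules $M_f=\Coker\y_\mcD(f)\in\modf\mcD^c$. Under the definability hypothesis on $\mcV$, Lemma~\ref{lem.def coaisles are t-cogen} together with Proposition~\ref{prop.universal AB5 homology} identify $\y_\mcD(\mcV)$ with (a class sitting inside) the torsion-free class of a hereditary torsion theory of finite type on $\Mod\mcD^c$, and the theorem should then follow from the standard Gabriel-type correspondence between Serre subcategories of $\modf\mcD^c$ closed under the shift $F\rightsquigarrow F\circ[-1]$ and hereditary torsion classes of finite type in $\Mod\mcD^c$.

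For the direction $\mcV\rightsquigarrow\mcI_\mcV$, the ideal property and $\mcI_\mcV[1]\subseteq\mcI_\mcV$ (from $\mcV[-1]\subseteq\mcV$) are immediate. Saturatedness is a long-exact-sequence chase: given a triangle $X_2\stackrel{u}\la X_1\stackrel{v}\la X_0\laplus$ with $fu,v\in\mcI_\mcV$ and any $g\dd Y\la V$ with $V\in\mcV$, applying $\Hom_\mcD(-,V)$ gives $(gf)u=g(fu)=0$, so by exactness $gf=hv$ for some $h\dd X_0\la V$, and then $hv=0$ since $v\in\mcI_\mcV$, yielding $gf=0$ and hence $f\in\mcI_\mcV$. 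The substantive point is idempotency $\mcI_\mcV=\mcI_\mcV^2$; I would translate this into extension-closure of $\{M_f:f\in\mcI_\mcV\}\subseteq\modf\mcD^c$, which holds because under the identification above this collection is the intersection of a hereditary torsion class with the finitely presented modules. In the opposite direction $\mcI\rightsquigarrow\mcV_\mcI$, definability of $\mcV_\mcI$ is automatic, and to produce the $t$-structure I would form the hereditary torsion class $\mcT_\mcI\subseteq\Mod\mcD^c$ generated by $\{M_f:f\in\mcI\}$, use suspendedness to show it is shift-stable and of finite type, and invoke Proposition~\ref{prop.t-structure from pi} together with Corollary~\ref{cor.Grothendieck hearts for definable co-aisle} to exhibit the required $t$-structure with co-aisle $\mcV_\mcI$.

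Mutual inversion then reduces to four containments. The inclusions $\mcV\subseteq\mcV_{\mcI_\mcV}$ and $\mcI\subseteq\mcI_{\mcV_\mcI}$ are tautological, and $\mcV_{\mcI_\mcV}\subseteq\mcV$ follows from definability: any defining set $\Sigma\subseteq\Mor(\mcD^c)$ for $\mcV$ lies in $\mcI_\mcV$, whence $\mcV_{\mcI_\mcV}\subseteq\mcV_\Sigma=\mcV$. The crucial containment $\mcI_{\mcV_\mcI}\subseteq\mcI$ is where all three suspendedness axioms assert themselves: through the cokernel translation, this amounts to showing that the smallest Serre-and-shift-closed subcollection of $\modf\mcD^c$ containing $\{M_f:f\in\mcI\}$ recovers $\{M_f:f\in\mcI\}$ itself, which is exactly the content of saturatedness, shift-stability and idempotency. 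The main obstacle throughout is a faithful translation of idempotency, since matching extension-closure of a Serre subcategory of $\modf\mcD^c$ with morphism-level factorization of elements of $\mcI$ requires a careful analysis of how triangles in $\mcD^c$ give rise to short exact sequences of cokernel modules, and it is here that the abelian machinery developed earlier in the paper does the heaviest lifting.
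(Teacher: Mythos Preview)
Your overall strategy of passing through $\Mod\mcD^c$ is close in spirit to the paper's, but there are two genuine gaps.

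\textbf{Idempotency of $\mcI_\mcV$.} Your proposed translation ``$\mcI_\mcV=\mcI_\mcV^2$ is equivalent to extension-closure of $\{M_f:f\in\mcI_\mcV\}$ in $\modf\mcD^c$'' is not correct. By Proposition~\ref{prop.definable via ideals}, saturated ideals already correspond bijectively to Serre subcategories of $\modf\mcD^c$, and Serre subcategories are extension-closed by definition. So if your equivalence held, every saturated ideal would be idempotent, which is false. Idempotency genuinely records something beyond the Serre-subcategory data: it is precisely what encodes that $\mcV$ is a co-aisle rather than merely definable. The paper proves $\mcI_\mcV=\mcI_\mcV^2$ by a different mechanism that uses the $t$-structure directly: given $g\dd C\to D$ in $\mcI_\mcV$, one first observes that $g$ factors through $\tau_\mathbf{t}^{\le -1}D$ (because $\Hom_\mcD(g,\tau_\mathbf{t}^{\ge 0}D)=0$), and then uses the characterization $\mcU[1]=\y_\mcD^{-1}(\mcT)$ from Proposition~\ref{prop.tor-pair vs t-structure} and its Corollary~\ref{cor.description of U for definable} to factor further through a morphism of $\mcI_\mcV$ between compacts. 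The truncation functor is essential here; your abelian translation does not supply a substitute for it.

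\textbf{Constructing the $t$-structure from a suspended ideal.} You propose to invoke Proposition~\ref{prop.t-structure from pi} and Corollary~\ref{cor.Grothendieck hearts for definable co-aisle}, but neither is fit for purpose. Proposition~\ref{prop.t-structure from pi} requires $\mcD$ to be \emph{algebraic} well generated (an extra hypothesis not in the theorem) and produces a $t$-structure of the specific form $({}^{\perp_{<0}}Q,({}^{\perp_{\le 0}}Q)^\perp)$; you give no argument that $\mcV_\mcI$ is of this shape. Corollary~\ref{cor.Grothendieck hearts for definable co-aisle} presupposes a $t$-structure with definable co-aisle and deduces that its heart is Grothendieck; it does not construct a $t$-structure. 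The paper's route (Corollary~\ref{cor.definable t-str from ideal}) is instead: show directly, using $\mcI=\mcI^2$, that $\mcV_\mcI$ is closed under extensions (if $V'\to V\to V''\stackrel{+}{\to}$ has outer terms in $\mcV_\mcI$ and $g=g_1g_2\in\mcI$, then any map $D\to V$ composed with $g_1$ lifts to $V'$ and then vanishes after composing with $g_2$); combine this with $\mcV_\mcI[-1]\subseteq\mcV_\mcI$ and with the fact that definable classes are preenveloping (Proposition~\ref{prop.definable is preenveloping}) to conclude via the dual of \cite[Proposition~3.11]{SaSt} or \cite[Proposition~4.5]{AMV}. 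This is where idempotency earns its keep in the converse direction, and it is exactly the step your plan does not supply.
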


At the first step, we combine existing results from the literature in order to relate saturated ideals to definable subcategories and other already discussed notions.

\begin{prop} \label{prop.definable via ideals}
There are bijective correspondences between
\begin{enumerate}
\item definable subcategories $\mcV\subseteq\mcD$,
\item saturated ideals $\mcI\subseteq\mcD^c$,
\item Serre subcategories $\mcS\subseteq\modf\mcD^c$, and
\item hereditary torsion pairs $(\mcT,\mcF)$ of finite type in $\Mod\mcD^c$.
\end{enumerate}
The bijection between (1) and (2) is given by the same rule as in Theorem~\ref{thm.classification definable}, the map from (2) to (3) is given by
\[ \mcI \rightsquigarrow \mcS = \{ \Img\y_\mcD(f) \mid f\in\mcI \}, \]
and the correspodence between (3) and (4) is given by

\[ \mcS \rightsquigarrow (\varinjlim\mcS,\mcS^\perp) \qquad \textrm{and} \qquad (\mcT,\mcF) \rightsquigarrow \mcT\cap\modf\mcD^c, \]
where $\varinjlim\mcS$ is the class of direct limits of objects from $\mcS$.
\end{prop}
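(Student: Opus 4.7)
My plan is to establish the three bijections separately and then verify that they are compatible, drawing on Krause's purity framework and classical facts about locally coherent Grothendieck categories. As a preliminary, since $\mcD^c$ is a triangulated subcategory, every morphism in $\mcD^c$ admits a weak kernel by completing to a triangle, so $\modf\mcD^c=\fp(\Mod\mcD^c)$ is abelian by Lemma~\ref{lem.coherent functors are abelian} and $\Mod\mcD^c$ is locally coherent. Moreover, by Krause~\cite{K}, $\y_\mcD(V)$ is injective in $\Mod\mcD^c$ for every $V\in\mcD$, and $\y_\mcD$ restricts to an equivalence $\PInj\mcD\simeq\Inj(\Mod\mcD^c)$. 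I will also use repeatedly that every object of $\modf\mcD^c$ has the form $\Img\y_\mcD(f)$ for some $f\in\Mor(\mcD^c)$: given a presentation $C_1\stackrel{g}{\la}C_0\la M\la 0$, completing $g$ to a triangle $C_1\stackrel{g}{\la}C_0\stackrel{f}{\la}C_2\laplus$ in $\mcD^c$ and using that $\y_\mcD$ is cohomological identifies $M\cong\Img\y_\mcD(f)$.

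The correspondence (3)$\leftrightarrow$(4) is classical in any locally coherent Grothendieck category, and I would simply cite it (see for instance \cite{K-cohq}), applied to $\mcG=\Mod\mcD^c$. For (2)$\leftrightarrow$(3), the assignment $\mcI\mapsto\Phi(\mcI):=\{\Img\y_\mcD(f)\mid f\in\mcI\}$ would have inverse $\mcS\mapsto\Psi(\mcS):=\{f\in\Mor(\mcD^c)\mid\Img\y_\mcD(f)\in\mcS\}$. That $\Psi(\mcS)$ is a two-sided ideal is immediate, since composing $f$ on either side produces an image that is a subobject or a quotient of $\Img\y_\mcD(f)$. Conversely, the three Serre-subcategory axioms for $\Phi(\mcI)$ match the three defining conditions of a saturated ideal in Definition~\ref{def.suspended ideal}: closure under subobjects corresponds to the ideal property via the observation that any subobject $N\subseteq\Img\y_\mcD(f)$ in $\modf\mcD^c$ factors as $N=\Img\y_\mcD(f\alpha)$ for some $\alpha$ with $f\alpha\in\mcI$; closure under quotients and extensions emerges from a careful analysis using the octahedral axiom together with the idempotency $\mcI^2=\mcI$.

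For (1)$\leftrightarrow$(2), the pivotal identity is that, for $f\in\Mor(\mcD^c)$ and $V\in\mcD$, Yoneda's lemma together with the injectivity of $\y_\mcD(V)$ yield
\[
\Hom_\mcD(f,V)=0\iff\Hom_{\Mod\mcD^c}(\Img\y_\mcD(f),\y_\mcD(V))=0.
\]
For a definable class $\mcV$ in the sense of Proposition~\ref{prop.definable subcategory}, the class $\mcI(\mcV):=\{f\in\Mor(\mcD^c)\mid\Hom_\mcD(f,\mcV)=0\}$ is automatically a two-sided ideal; saturation would follow by a direct triangle-chase. Conversely, for a saturated $\mcI$ the class $\mcV(\mcI):=\{V\in\mcD\mid\Hom_\mcD(\mcI,V)=0\}$ is definable by construction. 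That these assignments are mutually inverse falls out of the compatibility of the three correspondences via the pivotal identity: a definable $\mcV$ corresponds to the hereditary torsion pair of finite type $(\mcT,\mcF)$ whose torsion-free class $\mcF$ is characterized by $V\in\mcV\iff\y_\mcD(V)\in\mcF$.

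The main obstacle I expect is the (2)$\leftrightarrow$(3) step, specifically matching saturation with closure of $\Phi(\mcI)$ under extensions. The key input is the short exact sequence $0\la\Img\y_\mcD(u)\la\y_\mcD(X_1)\la\Img\y_\mcD(v)\la 0$ in $\Mod\mcD^c$ that arises from any triangle $X_2\stackrel{u}{\la}X_1\stackrel{v}{\la}X_0\laplus$; applying $\y_\mcD(f)$ for a morphism $f\dd X_1\la Y$ decomposes $\Img\y_\mcD(f)$ as an extension of a quotient of $\Img\y_\mcD(v)$ by the image $\Img\y_\mcD(f\circ u)$, and this is exactly the setup in which the saturation condition of Definition~\ref{def.suspended ideal} becomes visible. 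Once this correspondence is nailed down, the rest of the proposition follows by routine unwinding.
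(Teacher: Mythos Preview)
The paper's own proof is essentially a two-sentence citation: the bijections among (1), (2) and (3) are the Fundamental Correspondence of \cite{K-cohfun} together with \cite[Lemma 8.4]{K-cohq} (which identifies cohomological ideals with saturated ones), and (3)$\leftrightarrow$(4) is \cite[Lemma 2.3]{K-loccoh} or \cite[Theorem 2.8]{Herzog}. Your proposal instead unpacks these bijections directly, which is reasonable, but it contains two errors.

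First, a misstatement that happens not to damage the argument: $\y_\mcD(V)$ is only \emph{fp-injective} in $\Mod\mcD^c$ for arbitrary $V\in\mcD$ (\cite[Lemma 1.6]{K}); genuine injectivity holds precisely when $V$ is pure-injective (\cite[Theorem 1.8]{K}). Your uses of this fact---the pivotal identity and the lifting arguments for subobjects and quotients---only require extending along monomorphisms with finitely presented cokernel, so fp-injectivity suffices.

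Second, and this is a genuine gap: you conflate \emph{saturated} with \emph{suspended}. In Definition~\ref{def.suspended ideal}, saturation is a single closure condition on a two-sided ideal; idempotency $\mcI^2=\mcI$ and $\mcI[1]\subseteq\mcI$ are the \emph{additional} requirements that make a saturated ideal suspended. Proposition~\ref{prop.definable via ideals} is about saturated ideals, so idempotency is simply not available, and your plan to obtain closure of $\Phi(\mcI)$ under quotients and extensions ``using the octahedral axiom together with the idempotency $\mcI^2=\mcI$'' has no footing. Closure under quotients in fact follows from the two-sided ideal property alone (via the same fp-injectivity lifting you use for subobjects). For extensions it is precisely the \emph{saturation} axiom---not idempotency---that does the work, and this is the nontrivial content of \cite[Lemma 8.4]{K-cohq}. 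Your final paragraph correctly identifies the decomposition of $\Img\y_\mcD(f)$ as an extension of a quotient of $\Img\y_\mcD(v)$ by $\Img\y_\mcD(fu)$, but that argument runs in the direction ``$\mcS$ Serre $\Rightarrow$ $\Psi(\mcS)$ saturated''; the converse direction ``$\mcI$ saturated $\Rightarrow$ $\Phi(\mcI)$ closed under extensions'' requires producing, from an arbitrary extension $0\to L\to M\to N\to 0$ in $\modf\mcD^c$, a triangle and a morphism $f$ realising $M\cong\Img\y_\mcD(f)$ so that the saturation hypothesis applies. You have not supplied this, and invoking idempotency does not substitute for it.
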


\begin{proof}
The bijections between (1), (2) and (3) are essentially the contents of the Fundamental Correspondence in \cite{K-cohfun}. The only thing we need to add is a reference to~\cite[Lemma 8.4]{K-cohq}, where one proves that the cohomological ideals as in the Fundamental Correspondence are precisely the saturated ideals.
Finally, the correspondence between (3) and (4) has been obtained in \cite[Lemma 2.3]{K-loccoh} or \cite[Theorem 2.8]{Herzog}.
\end{proof}

Next we recall the following result, which was first proved in \cite[Proposition 3.11]{K} and also appears in~\cite[Proposition 4.5]{AMV} with exactly the same terminology as we use.
A closely related result can be also found in~\cite[Theorem 4.3]{AI}.
For the special case of algebraic triangulated categories, a considerably stronger version was recently obtained as a part of the main theorem of~\cite{LV}.

\begin{prop} \label{prop.definable is preenveloping}
A definable subcategory $\mcV$ of a compactly generated triangulated category $\mcD$ is preenveloping.
\end{prop}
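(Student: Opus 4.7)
The plan is to construct a $\mcV$-preenvelope of any $D \in \mcD$ via a sequential small-object-style construction that kills the obstructions to membership in $\mcV$ stage by stage, and then to take a Milnor colimit. The starting point is the description of $\mcV$ provided by Proposition~\ref{prop.definable via ideals}: there exists a saturated ideal $\mcI \subseteq \mcD^c$ (which is essentially a set, since $\mcD^c$ is skeletally small) such that $V \in \mcV$ if and only if $\Hom_\mcD(f,V)=0$ for every $f \in \mcI$.

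Starting from $D_0 := D$, I will build a sequence $D_0 \overset{\delta_0}\la D_1 \overset{\delta_1}\la D_2 \la \cdots$ by indexing at stage $n$ over the set $I_n := \{(f,g) : f\dd A\to B \text{ in } \mcI,\, g\dd A \to D_n\}$ and defining $D_{n+1}$ as the third term of the triangle
\[ \coprod_{(f,g) \in I_n} A_{(f,g)} \overset{\Phi_n}\la \Big(\coprod_{(f,g) \in I_n} B_{(f,g)}\Big) \oplus D_n \la D_{n+1} \laplus, \]
where $\Phi_n$ has components $(f,-g)$ and $\delta_n\dd D_n \to D_{n+1}$ is the induced canonical map. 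Applying $\Hom_\mcD(-,V)$ for $V \in \mcV$ and using that each $f \in \mcI$ is annihilated by $\Hom_\mcD(-,V)$, one checks that $\delta_n$ has the property that every morphism $\alpha\dd D_n \to V$ lifts (not necessarily uniquely) to some $\alpha'\dd D_{n+1} \to V$, since the required equations $\bar\beta_{(f,g)} \circ f = \alpha \circ g$ can be solved componentwise in $V$.

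The proposed $\mcV$-preenvelope is $u\dd D \to V_D := \Mcolim(D_n)$. Two things require verification. First, $V_D$ lies in $\mcV$: given $f\dd A \to B$ in $\mcI$ and any $g\dd A \to V_D$, compactness of $A$ lets us factor $g$ through some $D_n \to V_D$, after which the very construction of $D_{n+1}$ forces the composition $A \to D_n \to V_D$ to factor through $f$. Second, given any $\alpha\dd D \to V$ with $V \in \mcV$, recursive application of the lifting property from the previous paragraph produces compatible maps $\alpha_n\dd D_n \to V$ with $\alpha_{n+1} \circ \delta_n = \alpha_n$ and $\alpha_0 = \alpha$, which then assemble, via the weak universal property of the Milnor colimit recalled in Subsection~\ref{subsec:Mcolim}, to a morphism $V_D \to V$ factoring $\alpha$ through~$u$.

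I expect the delicate point to be the verification that $V_D$ actually lies in $\mcV$: this is where both the compactness of the sources in $\mcI$ and the exactness of the Milnor colimit triangle against compact test objects are used essentially, in order to reduce the problem from $V_D$ to a single stage $D_n$. The remaining steps are routine diagram chases, and the non-uniqueness of the lifts $\alpha_n$ is harmless since only the preenveloping, rather than the enveloping, property is being claimed.
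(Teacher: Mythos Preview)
The paper does not give its own proof of this proposition: the text preceding it says that the result ``was first proved in \cite[Proposition 3.11]{K} and also appears in~\cite[Proposition 4.5]{AMV}'', and no proof environment follows. Your sequential small-object construction followed by a Milnor colimit is essentially the classical argument from those references, so in spirit you have reconstructed the cited proof.

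There is, however, a genuine mix-up in the proposal. You invoke the description of $\mcV$ via the saturated ideal $\mcI$, i.e.\ $V\in\mcV$ iff $\Hom_\mcD(f,V)=0$ for all $f\in\mcI$, but your construction and both verification steps are tailored to the \emph{surjectivity} characterization, condition~(1) of Proposition~\ref{prop.definable subcategory}: $V\in\mcV$ iff $\Hom_\mcD(f,V)$ is surjective for all $f$ in a set $\mcS\subseteq\Mor(\mcD^c)$. Concretely, with the vanishing condition the lifting step fails: the equation $\bar\beta_{(f,g)}\circ f=\alpha\circ g$ asks that $\alpha\circ g$ lie in the image of $\Hom_\mcD(f,V)$, but $\Hom_\mcD(f,V)=0$ then forces $\alpha\circ g=0$, which has no reason to hold for an arbitrary $g\dd A\to D_n$. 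Likewise, your membership argument shows that each $g\dd A\to V_D$ factors through $f$, which is exactly surjectivity of $\Hom_\mcD(f,V_D)$, not its vanishing.

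The fix is immediate and the rest of your argument is sound: replace $\mcI$ by a set $\mcS$ as in Proposition~\ref{prop.definable subcategory}(1) and run the construction verbatim. Then the lifting of $\alpha\dd D_n\to V$ to $D_{n+1}$ goes through because surjectivity of $\Hom_\mcD(f,V)$ lets you solve $\bar\beta\circ f=\alpha\circ g$, and the membership verification shows precisely the surjectivity needed. The compactness argument for factoring through some $D_n$ and the use of the weak colimit property of $\Mcolim(D_n)$ are correct as stated (both are recalled in \S\ref{ssect.purity}).
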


\begin{cor} \label{cor.definable t-str from ideal}
Let $\mcI$ be a suspended ideal of $\mcD^c$. Then $\mcV = \{ V\in\mcD \mid \Hom_\mcD(\mcI,V) = 0 \}$ is a definable co-aisle of a $t$-structure in $\mcD$.
\end{cor}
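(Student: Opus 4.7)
The plan is to verify that $\mcV$ is definable and satisfies the three axioms of a co-aisle: that it is cosuspended, closed under extensions, and admits truncation triangles. Definability is immediate, since $\mcV$ is by construction the subcategory of objects on which every morphism in $\mcI\subseteq\Mor(\mcD^c)$ acts as zero, matching condition~(2) of Proposition~\ref{prop.definable subcategory} with $\Sigma=\mcI$. From Proposition~\ref{prop.definable subcategory} we then also obtain that $\mcV$ is closed under products, pure monomorphisms, pure epimorphisms and pure-injective envelopes, and by Proposition~\ref{prop.definable is preenveloping} it is a preenveloping subcategory of~$\mcD$.

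Each of the extra conditions on the ideal accounts for one further closure property. The suspended condition $\mcI[1]\subseteq\mcI$ gives $\mcV[-1]\subseteq\mcV$: for $V\in\mcV$ and $f\in\mcI$, the shift $f[1]$ again lies in $\mcI$, so $\Hom_\mcD(f,V[-1])\cong\Hom_\mcD(f[1],V)=0$. The idempotency $\mcI=\mcI^2$ forces closure under extensions. Given a triangle $V_1\la V\la V_2\laplus$ with $V_1,V_2\in\mcV$ and a morphism $f\dd C\la D$ in $\mcI$, I would factor $f=h\circ g$ with $g\dd C\la E$ and $h\dd E\la D$ both in $\mcI$. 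For any $\phi\dd D\la V$, the composite $(V\la V_2)\circ\phi\circ h$ vanishes because $V_2\in\mcV$ and $h\in\mcI$; hence $\phi\circ h$ lifts along $V_1\la V$ to some $\tilde\psi\dd E\la V_1$. Then $\phi\circ f=\phi\circ h\circ g=(V_1\la V)\circ(\tilde\psi\circ g)$, and $\tilde\psi\circ g$ vanishes because $V_1\in\mcV$ and $g\in\mcI$, so $\phi\circ f=0$ and $V\in\mcV$.

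The main obstacle is the construction of truncation triangles. For each $X\in\mcD$, a $\mcV$-preenvelope $\eta\dd X[1]\la V$ exists; shifting and completing yields a candidate triangle $U\la X\la V[-1]\laplus$. Applying $\Hom_\mcD(-,V'[-1])$ for $V'\in\mcV$, the preenvelope property forces the induced map $\Hom_\mcD(X[1],V')\la\Hom_\mcD(U,V'[-1])$ to be zero, so that $\Hom_\mcD(U,V'[-1])$ injects via the connecting morphism into $\Hom_\mcD(V,V'[1])$. To kill this residual obstruction I would iterate, forming a tower $(X_n\la X)_{n\ge 0}$ of partial approximations whose successive cones lie in $\mcV[-1]$; the idempotency of $\mcI$ is used at each stage to arrange that the new obstruction factors through the previous one, so that along the tower the obstructions stably compose to zero. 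Closure of $\mcV$ under pure epimorphisms and products then lets one pass to the Milnor colimit, yielding a triangle $U_\infty\la X\la V_\infty\laplus$ with $V_\infty\in\mcV[-1]$ and $U_\infty\in{^\perp(\mcV[-1])}$. Verifying this convergence is the hardest part of the proof.
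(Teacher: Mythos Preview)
Your first two paragraphs match the paper's proof verbatim: definability is immediate from the description via $\Sigma=\mcI$, the inclusion $\mcI[1]\subseteq\mcI$ gives $\mcV[-1]\subseteq\mcV$, and idempotency gives closure under extensions by exactly the factor-and-lift argument you wrote.

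The paper then does \emph{not} construct truncation triangles by hand. Once $\mcV$ is known to be preenveloping, cosuspended and closed under direct summands (definable classes are closed under summands), it simply invokes the dual of \cite[Proposition~3.11]{SaSt} or \cite[Proposition~4.5]{AMV}: in a triangulated category with products, any such subcategory is automatically a co-aisle. So neither the saturation condition nor any further use of idempotency is needed at this point; saturation enters only later, in the bijection of Theorem~\ref{thm.classification definable}.

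Your third paragraph is an attempt to reprove that cited result directly. The outline is correct, but two points are off. First, the claim that ``idempotency of $\mcI$ is used at each stage'' is spurious: the tower argument uses only that $\mcV$ is preenveloping and cosuspended, and idempotency has already done its job in securing extension-closure. Second, the standard construction is dual to what you wrote: one builds an \emph{inverse} tower $\cdots\to U_1\to U_0\to X$ whose successive cones lie in $\mcV[-1]$ and takes the homotopy \emph{limit} (using that $\mcD$ has products). The transition maps $U_{n+1}\to U_n$ kill every morphism into $\mcV[-1]$ by the preenvelope property, so a $\lim/\lim^1$ computation on the defining triangle of $\mathrm{holim}\,U_n$ yields $\mathrm{holim}\,U_n\in{^\perp(\mcV[-1])}$; closure of $\mcV$ under products and extensions then forces the cone of $\mathrm{holim}\,U_n\to X$ into $\mcV[-1]$. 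Your Milnor-colimit variant on the $W_n$ side is not unreasonable (definable classes are closed under coproducts and pure epimorphisms), but you still owe the verification that $U_\infty\in{^\perp(\mcV[-1])}$, and ``the new obstruction factors through the previous one'' does not supply it.
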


\begin{proof}
We already know that $\mcV$ is definable, preenveloping and $\mcV[-1]\subseteq\mcV$. Once we prove that $\mcV$ is closed under extensions (hence cosuspended and closed under summands), the conclusion will follow from the dual of~\cite[Proposition 3.11]{SaSt} or~\cite[Proposition 4.5]{AMV}. To that end, suppose that we have a triangle $V'\stackrel{u}\la V\stackrel{v}\la V''\laplus$ with $V',V''\in\mcV$ and suppose that $g\in\mcI$. Since $\mcI=\mcI^2$, we can express $g\dd C\la D$ as $g_1g_2$ with $g_1,g_2\in\mcI$ (a~priori, the elements of $\mcI^2$ are finite sums $\sum_{i=1}^n g_{1,i}g_{2,i}$ with all $g_{1,i}\dd E_i\la D$ and $g_{2,i}\dd C\la E_i$ belonging to $\mcI$, but as $\mcD^c$ is additive, we combine these to $g_1\dd \bigoplus_{i=1}^nE_i\la D$ and $g_2\dd C\la\bigoplus_{i=1}^nE_i$).
Then, given any morphism $h\dd D\la V$, we have that $vhg_1=0$ as $V''\in\mcV$, so that $hg_1=uh'$ for some morphism $h'$. Thus, $hg=uh'g_2=0$ since $V'\in\mcV$.
\end{proof}

To finish the proof of Theorem~\ref{thm.classification definable} we also need the following useful result.

\begin{prop} \label{prop.tor-pair vs t-structure}
Let $\mathbf{t}=(\mcU,\mcV)$ be a $t$-structure in $\mcD$ with $\mcV$ definable, and suppose that $(\mcT,\mcF)$ is the hereditary torsion pair of finite type in $\Mod\mcD^c$ which corresponds to $\mcV$ in the sense of Proposition~\ref{prop.definable via ideals}. Then
\[ \mcV=\y_\mcD^{-1}(\mcF) \qquad\textrm{and}\qquad \mcU[1]=\y_\mcD^{-1}(\mcT). \]
\end{prop}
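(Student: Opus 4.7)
The plan is to prove the two equalities separately. The main subtlety is that the restricted Yoneda functor $\y_\mcD$ is not faithful on morphisms (phantom maps exist), preventing a direct transfer of $\Hom$-orthogonality between $\mcD$ and $\Mod\mcD^c$. This obstacle is circumvented by the Krause identification $\Hom_{\Mod\mcD^c}(\y_\mcD(X),\y_\mcD(Q))\cong\Hom_\mcD(X,Q)$ for pure-injective targets $Q$ (Lemma~\ref{lem.pi for universal homological functor}), together with the weaker but sufficient observation that $\y_\mcD$ is conservative on objects in compactly generated $\mcD$: since $\mcD^c$ is closed under shifts, $\y_\mcD(X)=0$ forces $X\in(\mcD^c)^{\perp_\mathbb{Z}}=0$.

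For $\mcV=\y_\mcD^{-1}(\mcF)$: fix $f\dd C\to D$ in $\mcD^c$ and complete to a triangle $K\xrightarrow{k}C\xrightarrow{f}D\laplus$ with $K\in\mcD^c$, using that $\mcD^c$ is triangulated and hence has weak kernels. Applying $\Hom_\mcD(-,V)$ yields the exact sequence $\Hom_\mcD(D,V)\xrightarrow{f^*}\Hom_\mcD(C,V)\xrightarrow{k^*}\Hom_\mcD(K,V)$, and via Yoneda's lemma one identifies $\Hom_{\Mod\mcD^c}(\Img\y_\mcD(f),\y_\mcD(V))$ with $\Ker k^*=\Img f^*$. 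This vanishes iff $\Hom_\mcD(f,V)=0$, so $V\in\mcV$ iff $\y_\mcD(V)\in\mcS^\perp=\mcF$.

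For $\mcU[1]\subseteq\y_\mcD^{-1}(\mcT)$: given $W\in\mcU[1]$, consider the torsion decomposition $0\to t\y_\mcD(W)\to\y_\mcD(W)\to F\to 0$ in $\Mod\mcD^c$ and aim to show $F=0$. Since $(\mcT,\mcF)$ is hereditary, $F$ embeds into an injective of $\Mod\mcD^c$ that still lies in $\mcF$; by the characterization of injectives in $\Mod\mcD^c$ as $\y_\mcD$-images of pure-injective objects, this injective has the form $\y_\mcD(Q)$ for some pure-injective $Q$, and the first equality forces $Q\in\mcV$. The composition $\y_\mcD(W)\twoheadrightarrow F\hookrightarrow\y_\mcD(Q)$ corresponds via the Krause identification to some $a\dd W\to Q$, which vanishes because $W\in\mcU[1]={}^\perp\mcV$ and $Q\in\mcV$. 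Hence $F=0$.

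For $\y_\mcD^{-1}(\mcT)\subseteq\mcU[1]$: given $W$ with $\y_\mcD(W)\in\mcT$, use the truncation triangle $\ai{-1}W\to W\to\coa{0}W\to\ai{-1}W[1]$, with $\ai{-1}W\in\mcU[1]$ and $\coa{0}W\in\mcV$. Applying the homological $\y_\mcD$ produces a long exact sequence in $\Mod\mcD^c$. By the previous inclusion, both $\y_\mcD(\ai{-1}W)$ and $\y_\mcD(\ai{-1}W[1])$ lie in $\mcT$, while by the first equality $\y_\mcD(\coa{0}W)\in\mcF$. A short diagram chase using $\Hom(\mcT,\mcF)=0$ and the closure of $\mcT$ under subobjects and quotients shows that $\y_\mcD(\coa{0}W)$ lies in $\mcT\cap\mcF=0$. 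By the conservativity observed above, $\coa{0}W=0$, so $W\cong\ai{-1}W\in\mcU[1]$.
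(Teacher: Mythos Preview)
Your proof is correct and follows essentially the same route as the paper's: the first equality is obtained by computing $\Hom_{\Mod\mcD^c}(\Img\y_\mcD(f),\y_\mcD(V))$ and identifying it with the vanishing of $\Hom_\mcD(f,V)$; the inclusion $\mcU[1]\subseteq\y_\mcD^{-1}(\mcT)$ uses that torsion-free injectives in $\Mod\mcD^c$ arise from pure-injectives in $\mcV$ together with the Krause identification; and the reverse inclusion is deduced from the truncation triangle and closure of the hereditary torsion class $\mcT$ under subobjects and extensions. The only cosmetic difference is that for the first equality the paper invokes fp-injectivity of $\y_\mcD(V)$ directly, whereas you compute via the triangle on $f$, which is an equivalent and arguably more self-contained presentation.
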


\begin{proof}
The equality $\mcV=\y_\mcD^{-1}(\mcF)$ is in fact true for any definable class in $\mcD$. Indeed, given any morphism $g$ in $\mcD^c$ and $V\in\mcD$, we use the Yoneda lemma and the fp-injectivity of $\y_\mcD(V)$ (\cite[Lemma 1.6]{K}) to see that $\Hom_\mcD(g,V)=0$ if and only if $\Hom_{\Mod\mcD^c}(\y_\mcD(g),\y_\mcD(V))=0$ if and only if $\Hom_{\Mod\mcD^c}(\Img\y_\mcD(g),\y_\mcD(V))=0$. 
Since $\mcF=\{\Img\y_\mcD(g)\mid g\in\mcI\}^\perp$ in $\Mod\mcD^c$ by Proposition~\ref{prop.definable via ideals}, we infer that $V\in\mcV$ if and only if $\y_\mcD(V)\in\mcF$.

Regarding the other equality, we first prove the inclusion $\mcU[1]\subseteq\y_\mcD^{-1}(\mcT)$. So suppose that $U\in\mcU$.
Then Lemma~\ref{lem.pi for universal homological functor} yields that the injective $\mcD^c$-modules in $\mcF$ are up to isomorphism of the form $\y_\mcD(V)$ for $V\in\PInj\mcV$, and that
\[ \Hom_{\Mod\mcD^c}\big(\y_\mcD(U[1]),\y_\mcD(V)\big) \cong \Hom_\mcD(U[1],V) = 0 \]
for each $V\in\PInj\mcV$. Since $\mcF$ is closed in $\Mod\mcD^c$ under injective envelopes by~\cite[Proposition VI.3.2]{St}, we have $\y_\mcD(U[1])\in\mcT$, as desired.

To prove the remaining inclusion, suppose that $X\in\mcD$ is such that $\y_\mcD(X[1])\in\mcT$ and consider a triangle $U[1]\la X[1]\la V\la U[2]$ with $U\in\mcU$ and $V\in\mcV$. Then we get an exact sequence
\[ \y_\mcD(X[1]) \la \y_\mcD(V) \la \y_\mcD(U[2]). \]
Since $U[2]\in\mcU[1]$, the outer terms are in the hereditary torsion class $\mcT$ by the above and we have $\y_\mcD(V)\in\mcT\cap\mcF=0$. Thus, $V=0$ since $\y_\mcD$ reflects isomorphisms and, consequently, $X\in\mcU$.
\end{proof}

We complement the proposition with some consequences, which will be used either here or later in~\S\ref{subsec:comp gen}.

\begin{cor}\label{cor.U closed under pure epi/mono}
Let $\mathbf{t}=(\mcU,\mcV)$ be a $t$-structure in $\mcD$ with $\mcV$ definable. Then $\mcU$ is closed under pure monomorphisms and pure epimorphisms.
\end{cor}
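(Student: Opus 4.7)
The plan is to reduce the assertion to closure properties of the associated hereditary torsion class in $\Mod\mcD^c$, using the identification given by Proposition~\ref{prop.tor-pair vs t-structure}.

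Let $(\mcT,\mcF)$ be the hereditary torsion pair of finite type in $\Mod\mcD^c$ corresponding to $\mcV$ as in Proposition~\ref{prop.definable via ideals}, so that by Proposition~\ref{prop.tor-pair vs t-structure} we have $\mcU[1]=\y_\mcD^{-1}(\mcT)$. In particular, an object $X\in\mcD$ lies in $\mcU$ if and only if $\y_\mcD(X[1])\in\mcT$. Recall from \S\ref{ssect.purity} that the functor $\y_\mcD\dd\mcD\la\Mod\mcD^c$ converts any pure triangle $X\stackrel{u}{\la}Y\stackrel{v}{\la}Z\laplus$ into a short exact sequence
\[ 0\la \y_\mcD(X)\la \y_\mcD(Y)\la \y_\mcD(Z)\la 0 \]
in $\Mod\mcD^c$, since then the induced map $\Hom_\mcD(C,u)$ is injective and $\Hom_\mcD(C,v)$ is surjective for each $C\in\mcD^c$.

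Suppose first that $u\dd X\la Y$ is a pure monomorphism with $Y\in\mcU$ and complete it to a pure triangle $X\stackrel{u}{\la}Y\stackrel{v}{\la}Z\laplus$. Shifting once and applying $\y_\mcD$ yields a short exact sequence
\[ 0\la \y_\mcD(X[1])\la \y_\mcD(Y[1])\la \y_\mcD(Z[1])\la 0 \]
in $\Mod\mcD^c$ with $\y_\mcD(Y[1])\in\mcT$. Since $\mcT$ is hereditary, it is closed under subobjects, hence $\y_\mcD(X[1])\in\mcT$ and $X\in\mcU$. Dually, if $v\dd Y\la Z$ is a pure epimorphism with $Y\in\mcU$, we complete it to a pure triangle as above, shift, and apply $\y_\mcD$ to obtain the same short exact sequence; now $\y_\mcD(Z[1])$ is a quotient of $\y_\mcD(Y[1])\in\mcT$ and again $\y_\mcD(Z[1])\in\mcT$, whence $Z\in\mcU$.

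Thus no real obstacle arises: everything is a routine transfer of the hereditary torsion class closure properties along the Yoneda functor $\y_\mcD$, once the key identification $\mcU[1]=\y_\mcD^{-1}(\mcT)$ from Proposition~\ref{prop.tor-pair vs t-structure} and the compatibility of $\y_\mcD$ with pure triangles are in hand.
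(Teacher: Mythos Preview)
Your proof is correct and follows essentially the same approach as the paper: both reduce the statement to the closure of the hereditary torsion class $\mcT$ under subobjects and quotients, via the identification $\mcU[1]=\y_\mcD^{-1}(\mcT)$ from Proposition~\ref{prop.tor-pair vs t-structure} and the fact that $\y_\mcD$ turns (the shift of) a pure triangle into a short exact sequence. The paper's argument is slightly more compressed, treating the pure monomorphism and pure epimorphism cases simultaneously.
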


\begin{proof}
If $X\la U\la Y\laplus$ is a pure triangle with $U\in\mcU$, then
\[ 0\la\y_\mcD(X[1])\la\y_\mcD(U[1])\la\y_\mcD(Y[1])\la 0 \]
is a short exact sequence with the middle term in $\mcT$. Since $\mcT$ is a hereditary torsion class, we have that $\y_\mcD(X[1]),\y_\mcD(Y[1])\in\mcT$, and conclude by Proposition~\ref{prop.tor-pair vs t-structure}.
\end{proof}

\begin{cor} \label{cor.H sends pure epis in U to epis}
Let $\mathbf{t}=(\mcU,\mcV)$ be a $t$-structure in $\mcD$ with heart $\mcH$ and suppose that $\mcV$ is definable. If $p\dd U\la U'$ is a pure epimorphism in $\mcD$ with $U$, $U'\in\mcU$, then $\H(p)\dd\H(U)\la\H(U')$ is an epimorphism in $\mcH$.
\end{cor}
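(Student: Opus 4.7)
The plan is to complete the pure epimorphism $p$ to a triangle $X \la U \stackrel{p}{\la} U' \laplus$ in $\mcD$ and then to reduce the statement to showing that the cocone $X$ lies in $\mcU$. Indeed, once $X\in\mcU$ is established, we have $X[1]\in\mcU$ as well; since $\H$ vanishes on $\mcU[1]\subseteq\mcU$, the long exact sequence obtained by applying the cohomological functor $\H$ to the triangle ends with
\[ \H(U) \stackrel{\H(p)}{\la} \H(U') \la \H(X[1]) = 0, \]
which makes $\H(p)$ an epimorphism in $\mcH$.

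To verify that $X\in\mcU$, I would exploit Proposition \ref{prop.tor-pair vs t-structure}, which identifies $\mcU[1]$ with $\y_\mcD^{-1}(\mcT)$ for the hereditary torsion class $\mcT\subseteq\Mod\mcD^c$ corresponding to the definable co-aisle $\mcV$. Shifting the pure triangle by one yields a pure triangle $X[1]\la U[1]\la U'[1]\laplus$, and since the restricted Yoneda functor $\y_\mcD\dd\mcD\la\Mod\mcD^c$ converts pure triangles into short exact sequences, we obtain an exact sequence
\[ 0 \la \y_\mcD(X[1]) \la \y_\mcD(U[1]) \la \y_\mcD(U'[1]) \la 0 \]
in $\Mod\mcD^c$. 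Since $U,U'\in\mcU$, the objects $U[1],U'[1]$ lie in $\mcU[1]$, so $\y_\mcD(U[1])$ and $\y_\mcD(U'[1])$ belong to $\mcT$. As $\mcT$ is hereditary and hence closed under subobjects, we deduce $\y_\mcD(X[1])\in\mcT$, and invoking Proposition \ref{prop.tor-pair vs t-structure} once more gives $X[1]\in\mcU[1]$, i.e., $X\in\mcU$, as required.

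There is no real obstacle here: the argument is a straightforward combination of the long exact sequence for $\H$ with the torsion-theoretic translation provided by Proposition \ref{prop.tor-pair vs t-structure} and the exactness of $\y_\mcD$ on pure triangles. The only point that requires a moment's care is the bookkeeping of the suspension, since the identification of the aisle with the preimage of the torsion class is shifted by one.
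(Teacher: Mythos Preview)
Your proof is correct and follows essentially the same route as the paper's: the paper simply cites Corollary~\ref{cor.U closed under pure epi/mono} (whose proof is exactly your torsion-theoretic argument via Proposition~\ref{prop.tor-pair vs t-structure}) together with Lemma~\ref{lem:t-generating tria} (whose relevant direction is your long-exact-sequence argument), whereas you have inlined both. One small wording quibble: you want $X[1]\in\mcU[1]$ (not merely $\mcU$) to invoke the vanishing of $\H$, but this is immediate from $X\in\mcU$.
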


\begin{proof}
This is a direct consequence of Corollary~\ref{cor.U closed under pure epi/mono} and Lemma~\ref{lem:t-generating tria}.
\end{proof}

\begin{cor}\label{cor.description of U for definable}
Let $\mathbf{t}=(\mcU,\mcV)$ be a $t$-structure in $\mcD$ with $\mcV$ definable and let $X\in\mcD$. Then $X\in\mcU$ if and only if each morphism $f\dd C\la X[1]$ with $C$ compact factors as $f=f'\circ g$ with $g\in\mcI$.
\end{cor}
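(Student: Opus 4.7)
The plan is to translate the condition $X\in\mcU$ via Proposition~\ref{prop.tor-pair vs t-structure} into the module-theoretic statement $\y_\mcD(X[1])\in\mcT$, where $\mcT\subseteq\Mod\mcD^c$ is the hereditary torsion class of finite type whose associated Serre subcategory of $\modf\mcD^c$ is $\mcS = \{\Img\y_\mcD(g)\mid g\in\mcI\}$. Three standing facts will be used throughout: the category $\Mod\mcD^c$ is locally coherent (since $\mcD^c$ has weak kernels, so $\modf\mcD^c$ is abelian); the representable $\y_\mcD(X[1])$ is fp-injective by \cite[Lemma 1.6]{K}; and, by the bijection in Proposition~\ref{prop.definable via ideals}, $g\in\mcI$ if and only if $\Img\y_\mcD(g)\in\mcS$.

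For the $(\Leftarrow)$ direction, given a factorization $f=f'\circ g$ with $g\in\mcI$, the image $\Img\y_\mcD(f)$ is a quotient of $\Img\y_\mcD(g)\in\mcS$, hence in $\mcS\subseteq\mcT$ by Serreness. Every finitely generated submodule of $\y_\mcD(X[1])$ arises in this form (collect finitely many generators into a single morphism out of a direct sum of compact objects), so $\y_\mcD(X[1])$ is a directed union of subobjects in $\mcT$. As $\mcT$ is of finite type, this forces $\y_\mcD(X[1])\in\mcT$.

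For the $(\Rightarrow)$ direction, assume $\y_\mcD(X[1])\in\mcT$ and fix $f\dd C\la X[1]$. Since $\mcT$ is of finite type in the locally coherent category $\Mod\mcD^c$, every object of $\mcT$ is a filtered colimit of objects of $\mcS$; write $\y_\mcD(X[1])=\varinjlim_i S_i$ accordingly. As $\y_\mcD(C)$ is finitely presented, $\y_\mcD(f)$ factors as $\y_\mcD(C)\stackrel{\alpha}{\la}S_{i_0}\stackrel{\beta}{\la}\y_\mcD(X[1])$ for some $S_{i_0}\in\mcS$. Pick $g_0\dd C_0\la D_0$ in $\mcI$ with $S_{i_0}\cong\Img\y_\mcD(g_0)$, and factor $\y_\mcD(g_0)=\iota\circ p$ with $p\dd\y_\mcD(C_0)\twoheadrightarrow S_{i_0}$ and $\iota\dd S_{i_0}\hookrightarrow\y_\mcD(D_0)$. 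Since $\y_\mcD(C)$ is projective, $\alpha$ lifts through $p$ to $\y_\mcD(a)$ for some $a\dd C\la C_0$ in $\mcD^c$. Setting $g:=g_0\circ a\dd C\la D_0$, one has $\Img\y_\mcD(g)\subseteq S_{i_0}\in\mcS$, so $\Img\y_\mcD(g)\in\mcS$ by Serreness, and hence $g\in\mcI$.

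Finally, fp-injectivity of $\y_\mcD(X[1])$ (noting that the cokernel of $\iota$, being a quotient of the finitely presented $\y_\mcD(D_0)$ by the finitely presented $S_{i_0}$, is again finitely presented) extends $\beta$ along $\iota$ to a morphism $\y_\mcD(D_0)\la\y_\mcD(X[1])$, which by Yoneda equals $\y_\mcD(f')$ for a unique $f'\dd D_0\la X[1]$; a direct calculation gives $\y_\mcD(f'\circ g)=\y_\mcD(f')\circ\iota\circ p\circ\y_\mcD(a)=\beta\circ\alpha=\y_\mcD(f)$, whence $f=f'\circ g$. The main obstacle is the filtered-colimit presentation $\y_\mcD(X[1])=\varinjlim S_i$ with $S_i\in\mcS$, which relies on the finite-type hypothesis on $\mcT$ in the locally coherent setting; the rest of the argument is a clean combination of projectivity of representables, Serreness of $\mcS$, and fp-injectivity of $\y_\mcD(X[1])$.
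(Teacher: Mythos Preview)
Your proof is correct and follows essentially the same route as the paper's. Both arguments translate $X\in\mcU$ into $\y_\mcD(X[1])\in\mcT$ via Proposition~\ref{prop.tor-pair vs t-structure}, then use projectivity of $\y_\mcD(C)$ and fp-injectivity of $\y_\mcD(X[1])$ to pass between factorizations through $\mcS$ and factorizations through $\mcI$; you simply spell out more explicitly the filtered-colimit description $\mcT=\varinjlim\mcS$ that the paper uses tacitly in its first sentence. One minor simplification: when you set $g=g_0\circ a$, you can conclude $g\in\mcI$ immediately because $\mcI$ is a two-sided ideal and $g_0\in\mcI$; the detour through $\Img\y_\mcD(g)\in\mcS$ is not needed.
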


\begin{proof}
We have proved that $X\in\mcU$ if and only if $\y_\mcD(X[1])\in\mcT$ if and only if for each $f\dd C\la X[1]$ with $C\in\mcD^c$, the map $\y_\mcD(f)\dd\y_\mcD(C)\la\y_\mcD(X[1])$ factors through $\mcS=\mcT\cap\modf\mcD^c$.

By Proposition~\ref{prop.definable via ideals}, objects of $\mcS$ are of the form $\Img\y_\mcD(g)$ for some $g\dd C'\la D'$ from $\mcI$. Thus, if $\y_\mcD(f)$ factors through $\Img\y_\mcD(g)$, we have the solid part of the following diagram
\[
\xymatrix{
\y_\mcD(C) \ar[r] \ar@{.>}[d]_-{h} & \Img\y_\mcD(f) \ar[r] \ar@{=}[d] & \y_\mcD(X[1]) \\
\y_\mcD(C') \ar@{->>}[r] & \Img\y_\mcD(g)\; \ar@{>->}[r] & \y_\mcD(D'), \ar@{.>}[u] \\
}
\]
and dotted arrows exist since $\y_\mcD(C)$ is projective and $\y_\mcD(X[1])$ is fp-injective in $\Mod\mcD^c$ (\cite[Lemma 1.6]{K}). Hence, $f$ factors through $gh\in\mcI$. If, conversely, $f$ factors through $g\in\mcI$, then clearly $\y_\mcD(f)$ factors through $\Img\y_\mcD(g)\in\mcS$.
\end{proof}

Now we can finish the proof of the theorem.

\begin{proof}[Proof of Theorem~\ref{thm.classification definable}]
Suppose that $\mathbf{t}=(\mcU,\mcV)$ is a $t$-structure in a compactly generated triangulated category $\mcD$ such that $\mcV$ is definable, and put
\[ \mcI = \{ f\dd C\la D \mathrm{~in~} \mcD^c \mid \Hom_\mcD(f,\mcV) = 0 \}. \]
Then clearly $\mcI[1]\subseteq\mcI$ is a saturated ideal of $\mcD^c$. Suppose now that $g\dd C\la D$ belongs to $\mcI$; in particular $\Hom_\mcD(g,\coa{0}D)=0$. Then $g$ factors through $g'\dd C\la\ai{-1}D$, which in turn factors through $(g''\dd C\la D')\in\mcI$ thanks to Corollary~\ref{cor.description of U for definable}. Since both $g''$ and the composition $D'\la\ai{-1}D\la D$ belong to $\mcI$, we have $g\in\mcI^2$. It follows that $\mcI$ is a suspended ideal.

Conversely, given a suspended ideal, the class
$\mcV = \{ V\in\mcD \mid \Hom_\mcD(\mcI,V) = 0 \}$
is a definable co-aisle by Corollary~\ref{cor.definable t-str from ideal}.

The bijective correspondence then clearly comes up as the corresponding restriction of the bijective correspondence between (1) and (2) in Proposition~\ref{prop.definable via ideals}.
\end{proof}

\subsection{\texorpdfstring{$t$-structures}{t-structures} with right  adjacent \texorpdfstring{co-$t$-structure}{co-t-structure}}
\label{subsec:co-t-structures}

A \emph{co-$t$-structure} (also called a \emph{weight structure}; see~\cite{Bo10,Pauk08}) in a triangulated category $\mcD$ is a pair $\mathbf{c}=(\mcV,\mcW)$ of subcategories which are closed under direct summands and satisfy the following conditions:

\begin{enumerate}
\item[(i)] $\Hom_\mcD(V,W[1])=0$, for all
$V\in\mcV$ and $W\in\mcW$;
\item[(ii)] $\mcV[-1]\subseteq\mcV$ (or $\mcW[1]\subseteq\mcW$);
\item[(iii)] For each $X\in\Ob(\mcD)$, there is a triangle $V\la X\la
Y\laplus$ in $\mcD$, where
$V\in\mcV$ and $Y\in\mcW[1]$.
\end{enumerate}
 
Then one has that $\mcV^\perp=\mcW[1]$ and $\mcV={^{\perp}\mcW[1]}$. The intersection $\mcC=\mcV\cap\mcW$ is called the \emph{co-heart} of $\mathbf{c}$. Given a $t$-structure $\mathbf{t}=(\mcU,\mcV)$, we say that $\mathbf{t}$ has a \emph{right adjacent co-$t$-structure} when the pair $(\mcV,\mcW)=(\mcV,\mcV^\perp[-1])$ is a co-$t$-structure in $\mcD$. Note that the intersection $\mcV\cap\mcV^\perp[-1]$ makes sense even if $(\mcV,\mcV^\perp[-1])$ is not a co-$t$-structure. It is sometimes called the \emph{right co-heart} of $\mathbf{t}$. 
  
  In this subsection we will give a criterion for the heart of a $t$-structure with right adjacent co-$t$-structure to be a Grothendieck category. We will show also that $t$-structures with definable co-aisle in a compactly generated triangulated category always have a right adjacent co-$t$-structure. We will show also that the $t$-structure cogenerated by a pure-injective object in a standard well generated triangulated category (see Proposition~\ref{prop.t-structure from pi}) has a right adjacent co-t-structure. We need the following elementary result of Category Theory whose proof is left to the reader.
  
\begin{lem} \label{lem.inj-cogenerator}
Let $\mcA$ be an AB3* abelian category with a cogenerator and enough injectives. Then it has an injective cogenerator. 
\end{lem}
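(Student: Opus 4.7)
My plan is to take any cogenerator $C\in\mcA$, use the assumption that $\mcA$ has enough injectives to embed $C$ into an injective object $E$, and then check directly that $E$ itself is already an injective cogenerator. The AB3* hypothesis plays essentially no role beyond being part of the standing setup; what matters is that ``cogenerator'' is being used in its morphism-detecting sense.

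More concretely, let me fix a cogenerator $C\in\mcA$ and choose a monomorphism $u\dd C\rightarrowtail E$ with $E$ injective, which exists by the hypothesis that $\mcA$ has enough injectives. The claim then to verify is that the functor $\Hom_\mcA(?,E)\dd\mcA^\op\la\Ab$ is faithful. Given any nonzero morphism $f\dd X\la Y$ in $\mcA$, the fact that $C$ is a cogenerator yields some $g\dd Y\la C$ with $g\circ f\neq 0$; composing with $u$, the map $u\circ g\dd Y\la E$ satisfies $(u\circ g)\circ f=u\circ (g\circ f)\neq 0$, since $u$ is a monomorphism and $g\circ f\neq 0$. Thus $E$ detects the non-vanishing of $f$, which shows $E$ is a cogenerator, and by construction it is injective.

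There is essentially no obstacle here; the argument is a one-line consequence of the definitions once the embedding $C\hookrightarrow E$ is in place. The only mildly delicate point is to remember which characterization of ``cogenerator'' is being used, but in an AB3* abelian category the morphism-detecting version and the ``every object embeds into a product of copies'' version are equivalent, so either can be invoked---using the first is marginally cleaner because it avoids having to pass through products.
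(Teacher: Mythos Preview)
Your proof is correct and is precisely the elementary argument the paper has in mind; indeed, the paper explicitly leaves this proof to the reader, and your embedding-and-compose approach is the standard one-line justification.
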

  
We can now give the first main result of this subsection.
   
   %

\begin{prop} \label{prop.Groth-heart-adjacent-cotstr}
Let $\mcD$ be a triangulated category with products, let $\mathbf{t}=(\mcU,\mcV)$ be a $t$-structure with right adjacent co-$t$-structure $\mathbf{c}=(\mcV,\mcW)$. Denote by $\mcH=\mcU\cap\mcV$ and $\mcC=\mcV\cap\mcW$, respectively, the heart of $\mathbf{t}$ and the co-heart of $\mathbf{c}$. The following assertions hold:
  \begin{enumerate}
    \item $\mcH$ is an AB3* abelian category with enough injectives and the functor $\H\dd\mcD\la\mcH$ restricts to a category equivalence $\mcC\stackrel{\simeq}{\la}\Inj\mcH$.
		\item $\mcH$ has an injective cogenerator (and is AB5, resp.\ is a Grothendieck category) if, and only if, there is an object $Q\in\mcC$ (that is pure-injective, resp.\ accessible pure-injective) such that $\mcC=\Prod(Q)$. 
   \item Suppose that there is a pure-injective object $E$ such that $\PInj\mcD=\Prod_\mcD(E)$. Then $\mcH$ is AB5 with an injective cogenerator (resp. a Grothendieck category) if, and only if, $\mcC$ consists of pure-injective (resp. accessible pure-injective) objects.
   \item When $\mcD$ is standard well generated, $\mcH$ is a Grothendieck category if, and only if, $\mcC$ consists of pure-injective objects. 
  \end{enumerate}
\end{prop}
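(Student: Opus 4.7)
The proof rests on the observation that every $Q\in\mcC=\mcV\cap\mcW$ automatically satisfies the hypotheses of Proposition~\ref{prop.near-partialsilting-object}: the condition $Q\in\mcW=\mcV^\perp[-1]$ is exactly $\Hom_\mcD(\mcV[-1],Q)=0$. This instantly yields $\H(Q)\in\Inj\mcH$ and a natural isomorphism $\Hom_\mcD(X,Q)\cong\Hom_\mcH(\H(X),\H(Q))$ for all $X\in\mcD$; applying this with $X\in\mcC$ shows $\H|_\mcC\dd\mcC\la\Inj\mcH$ is fully faithful. For part~(1), the AB3* structure on $\mcH$ comes from~\cite[Proposition~3.2]{PS1}. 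To produce enough injectives and essential surjectivity I would apply the $\mathbf{c}$-truncation to $M[1]$ for $M\in\mcH$: this gives $V\la M[1]\la Y\laplus$ with $V\in\mcV$ and $Y\in\mcW[1]$; two rotations yield a triangle $M\la Q\la V'\laplus$ with $Q:=Y[-1]\in\mcW$ and $V'\in\mcV$. Since $\mcV$ is extension-closed and both $M$ and $V'[-1]\in\mcV[-1]\subseteq\mcV$ lie in $\mcV$, we get $Q\in\mcV$, hence $Q\in\mcC$. The long exact sequence of $\H$, together with $H^{-1}_\mathbf{t}(V')=0$ (since $V'\in\mcV$) and $H^k_\mathbf{t}(M)=0$ for $k\neq0$, gives a monomorphism $M\rightarrowtail\H(Q)$. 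For $M=E\in\Inj\mcH$ this splits; lifting the retraction through fully faithfulness of $\H|_\mcC$ and splitting the resulting idempotent in $\mcD$ (possible because $\mcD$ has products) produces a summand $Q'\in\mcC$ with $\H(Q')\cong E$.

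Part~(2) is then formal. Since $\H|_\mcV$ preserves products (by \cite[Lemma~3.1]{PS1}), the equivalence $\mcC\stackrel{\simeq}\la\Inj\mcH$ converts `$\mcC=\Prod_\mcD(Q)$' into `$\Inj\mcH=\Prod_\mcH(\H(Q))$'. Combined with Proposition~\ref{prop.near-partialsilting-object}(2) (cogeneration criterion) and Proposition~\ref{prop.near-partialsilting-object}(3) (pure-injectivity and accessibility transfer between $Q$ and $\H(Q)$), Proposition~\ref{prop.Positsetski-Stovicek} delivers the AB5 and Grothendieck equivalences.

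For part~(3), the direction $(\Rightarrow)$ is immediate from Proposition~\ref{prop.Positsetski-Stovicek}, the equivalence $\mcC\simeq\Inj\mcH$ and Proposition~\ref{prop.near-partialsilting-object}(3). For $(\Leftarrow)$, I would first rerun the $\mathbf{c}$-truncation construction of part~(1) with $M$ replaced by an arbitrary $V\in\mcV$: this yields a triangle $V\la Q_V\la V'\laplus$ with $Q_V\in\mcC$ and $V'\in\mcV$, so under our hypothesis $\mcC\subseteq\PInj\mcD$, the class $\PInj\mcV$ is $t$-cogenerating in $\mcV$. To promote this into a single cogenerator, I would exploit $\PInj\mcD=\Prod(E)$ to realise $\PInj\mcD$ as $\Inj{\mcA_E}$ for the complete AB5 category $\mcA_E=\Cont(\Prod(E),\Ab)^\op$ (Lemma~\ref{lem.Cont(P)} together with Proposition~\ref{prop.Positsetski-Stovicek}), identify $\PInj\mcV$ with the intersection of $\Inj{\mcA_E}$ with a suitable torsion-free class, and extract a pro-generator $\hat Q\in\mcV$ by mimicking Lemma~\ref{lem.def coaisles are t-cogen}(b). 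Projecting $\hat Q$ into $\mcC$ via a further $\mathbf{c}$-truncation and invoking part~(2) then supplies the injective cogenerator of $\mcH$; the accessible version gives Grothendieckness.

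Part~(4) is a specialisation: by Lemma~\ref{lem.pi in std well gen} the hypothesis of~(3) holds in any standard well-generated $\mcD$ and every pure-injective is automatically accessible, so the Grothendieck criterion of~(3) collapses to plain pure-injectivity of $\mcC$. The main obstacle I anticipate is the second step of (3)$(\Leftarrow)$: upgrading `$\mcC\subseteq\Prod(E)$' to `$\mcC=\Prod_\mcD(\hat Q)$' for a single $\hat Q$ without Brown representability. This is where the Grothendieck structure of $\mcA_E$ and the Gabriel-localisation machinery of~\S\ref{subsec.Gabriel-Popescu} seem indispensable, and it is the one place where I expect the route to be more delicate than in parts (1), (2) and~(4).
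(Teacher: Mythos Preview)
Your arguments for parts~(1), (2) and~(4) are essentially the paper's (modulo a harmless slip in~(1): in the triangle $M\to Q\to V'\laplus$ you should cite that $Q$ is an extension of $V'\in\mcV$ by $M\in\mcV$, not of $V'[-1]$).

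For part~(3)$(\Leftarrow)$ your route is unnecessarily complicated, and the step you flag as delicate is exactly where the paper's argument is much simpler. You try to produce a single $\hat Q\in\mcC$ with $\mcC=\Prod_\mcD(\hat Q)$ and then invoke~(2); this forces you into the functor category $\mcA_E$ and torsion-theoretic machinery, and it is not clear this works without $\mcA_E$ being Grothendieck (which would need $E$ accessible, not just pure-injective). The paper avoids all of this by inserting one extra elementary lemma (Lemma~\ref{lem.inj-cogenerator}): an AB3* abelian category with enough injectives and \emph{some} cogenerator already has an injective cogenerator. Since~(1) already gives $\mcH$ enough injectives, it suffices to produce any cogenerator of $\mcH$, and this is done in one stroke: apply the co-$t$-structure decomposition to $E$ itself to get a triangle $V_E\la E\la W_E[1]\laplus$ with $V_E\in\mcV$, $W_E\in\mcW$; for each $Q\in\mcC$ the section $s_Q\dd Q\rightarrowtail E^{I_Q}$ factors through $V_E^{I_Q}$ (because $\Hom_\mcD(Q,W_E^{I_Q}[1])=0$), and since $\H_{|\mcV}$ preserves products this gives a section $\H(Q)\rightarrowtail\H(V_E)^{I_Q}$ in $\mcH$. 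Thus $\H(V_E)$ is a cogenerator of $\mcH$ and Lemma~\ref{lem.inj-cogenerator} finishes. No torsion theory, no $\mcA_E$, no need to show $\mcC=\Prod_\mcD(\hat Q)$ directly.
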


\begin{proof}
(1) The heart of any $t$-structure in $\mcD$ is AB3* (see \cite[Proposition 3.2]{PS1}). That $\H$ restricts to a fully faithful functor $\mcC\la\Inj\mcH$ follows from Proposition \ref{prop.near-partialsilting-object}(1)
(see also \cite{Bo16} and the dual of \cite[Lemma 3.2]{NSZ}).

By definition of co-$t$-structure, we have for each $X\in\mcV$ a triangle $V[-1]\la X\stackrel{\alpha}{\la} W\la V$, with $V\in\mcV$ and $W\in\mcW$. Consequently $W\in\mcV\cap\mcW=\mcC$ since $X,V\in\mcV$, and hence $\mcC$ is $t$-cogenerating in $\mcV$ (Definition~\ref{def.t-generating}). Hence, whenever $X\in\mcH$, the morphism $\H(\alpha)\dd X\la \H(W)$ is a monomorphism in $\mcH$ by the dual of Lemma~\ref{lem:t-generating tria}. This shows that $\H(\mcC)\subseteq\Inj\mcH$ is a cogenerating class and that $\mcH$ has enough injectives.

It remains to show that $\H\dd\mcC\la\Inj\mcH$ is essentially surjective. To that end, let $E\in\Inj\mcH$. By the above paragraph, we have a split embedding of $E$ into $\H(W)$ for some $W\in\mcC$. The corresponding idempotent morphism $e\dd\H(W)\la\H(W)$ whose image is $E$ uniquely lifts to an idempotent endomorphism $e'\dd W\la W$ since $\H\dd\mcC\la\Inj\mcH$ is fully faithul. Since $\mcD$ has split idempotents by~\S\ref{subsec:triangulated}, $W$ has a direct summand $W'\in\mcC$ whose image under $\H$ is clearly $E$.
	
(2) Note that both $\mcV=\mcU[1]^\perp$ and $\mcW=\mcV[-1]^\perp$ are closed under products in $\mcD$, and consequently so is $\mcC$.
It is clear now that $\mcH$ has an injective cogenerator (which is pure-injective, resp. accessible pure-injective) if, and only if, there is $E\in\Inj\mcH$ (which is pure-injective, resp. accessible pure-injective) such that $\Inj\mcH=\Prod_\mcH(E)$.
This is by~(1) further equivalent to the existence of a $Q$ (that is pure-injective, resp. accessible pure-injective) such that $\mcC=\Prod_\mcD(Q)$.

(3) Bearing in mind Proposition \ref{prop.Positsetski-Stovicek} and the equivalence of categories $\H\dd\mcC\stackrel{\simeq}{\la}\Inj\mcH$, the 'only if' part is clear.
	
As for the 'if' part we then have a class $\Inj\mcH=\H(\mcC)$ of pure-injective (resp.\ accessible pure-injective) injective cogenerators of $\mcH$ and, using Lemma~\ref{lem.inj-cogenerator}, the task is then reduced to prove that $\mcH$ has a cogenerator.
Let us do it. By hypothesis, for any $Q\in\mcC$ we have a section $s_Q\dd Q\la E^{I_Q}$, for some set $I_Q$.
Moreover, from the definition of co-$t$-structure, we get a triangle $V_E\stackrel{\lambda}{\la} E\stackrel{\rho}{\la} W_E[1]\stackrel{+}{\la}$, which in turn yields another one $V_E^{I_Q}\stackrel{\lambda^{I_Q}}{\la} E^{I_Q}\stackrel{\rho^{I_Q}}{\la} W_E^{I_Q}[1]\stackrel{+}{\la}$, where $V_E\in\mcV$ and $W_E\in\mcW$. We then have that $\rho^{I_Q}\circ s_Q=0$ since $Q\in\mcV$ and $W_Q^{I_Q}[1]\in\mcW[1]=\mcV^\perp$. So there is a morphism, necessarily a section, $u_Q\dd Q\la V_E^{I_Q}$ such that $\lambda^{I_Q}\circ u_Q=s_Q$.  But $\H\dd \mcV\la\mcH$ preserves products (see \cite[Lemma 3.1]{PS1}), and so we get a section  $\H(u_Q)\dd \H(Q)\la \H(V_E)^{I_Q}$, where the product in the codomain is taken in $\mcH$. It immediately follows that $\H(V_E)$ is a cogenerator of $\mcH$. 	

(4) It follows from Lemma~\ref{lem.pi for universal homological functor} and the proof of Proposition~\ref{prop.universal AB5 homology} that there is an $E\in\PInj \mcD$ such that $\PInj\mcD=\Prod (E)$.   The result is then a direct consequence of assertion (2) and Lemma~\ref{lem.pi in std well gen}.
\end{proof}


The final result of this subsection shows that $t$-structures with a definable co-aisle have a right adjacent co-$t$-structure. 

\begin{thm} \label{thm.definable-have-adjacent-cotstr}
Let $\mcD$ be a compactly generated triangulated category and $\mathbf{t}=(\mcU,\mcV)$ be a $t$-structure. Consider the following assertions:

\begin{enumerate}
\item The co-aisle $\mcV$ is definable.
\item $\mathbf{t}$ has a right adjacent co-$t$-structure $\mathbf{c}=(\mcV,\mcW)$ such that $\mcV ={^{\perp}\PInj\mcW[1]}$.
\end{enumerate}

The implication $(1)\Longrightarrow (2)$ holds true. Moreover, when $\mcD$ is the base of a stable derivator or the homotopy category of a stable $\infty$-category, both assertions are equivalent and they are also equivalent to
\begin{enumerate}
\setcounter{enumi}{2}
\item The co-aisle $\mcV$ is closed under pure epimorphisms
(see Remark \ref{rem:definable coaisles}).
\end{enumerate}
\end{thm}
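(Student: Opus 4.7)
The plan is to extract the co-$t$-structure from the pure-injective cogenerator provided by the AB5 heart, then verify the axioms and the orthogonality condition. First, definability of $\mcV$ forces closure under arbitrary coproducts (the test morphisms in Proposition~\ref{prop.definable subcategory}(2) lie in $\mcD^c$, whose Hom-functors commute with coproducts), so $\mathbf{t}$ is smashing and, by Corollary~\ref{cor.Grothendieck hearts for definable co-aisle}, $\mcH$ is a Grothendieck category. The equivalence $(1)\Leftrightarrow(3)$ of Theorem~\ref{thm.AB5 heart} produces a pure-injective $Q\in\mcV$ with $\Hom_\mcD(?,Q)$ vanishing on $\mcV[-1]$ and $\H(Q)$ an injective cogenerator of $\mcH$. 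Setting $\mcW:=\mcV^\perp[-1]$, the vanishing condition is precisely $Q\in\mcV\cap\mcW$, and closure of $\mcW$ under $[1]$, products, and direct summands follows from the corresponding properties of $\mcV^\perp$.

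Next, I would establish the orthogonality $\mcV={}^\perp\PInj\mcW[1]$ featured in~(2). One inclusion is tautological. For the reverse, the family $\{Q[n]\}_{n\geq 1}$ consists of pure-injective objects of $\mcW[1]=\mcV^\perp$ (pure-injectivity is preserved by suspension, and $\mcV^\perp$ is $[1]$-stable since $\mcV[-1]\subseteq\mcV$). Proposition~\ref{prop.near-partialsilting-object}(1) yields the natural identification $\Hom_\mcD(X,Q[n])\cong\Hom_\mcH(H^{-n}_{\mathbf{t}}(X),\H(Q))$, and since $\H(Q)$ cogenerates $\mcH$, vanishing of these groups for all $n\geq 1$ forces $H^i_{\mathbf{t}}(X)=0$ for every $i<0$. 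A reduction to the left non-degenerate case via Proposition~\ref{lem.localization of t-structure} then gives $X\in\mcV$.

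The main obstacle is the construction of the truncation triangles: for each $X\in\mcD$ one needs a triangle $V\to X\to Y\to V[1]$ with $V\in\mcV$ and $Y\in\mcW[1]=\mcV^\perp$. My approach is a transfinite construction driven by the pure-injective test set $\{Q[n]\}_{n\geq 1}\subseteq\mcV^\perp$: starting with $X_0:=X$, at each successor ordinal form a triangle $X_{\alpha+1}\to X_\alpha\to P_\alpha\to X_{\alpha+1}[1]$ where $P_\alpha:=\prod_{n\geq 1}Q[n]^{\Hom_\mcD(X_\alpha,Q[n])}$ (which lies in $\mcV^\perp$ by closure under products), and at limit ordinals assemble the $X_\beta$'s via a product-based cocone construction. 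Cardinal bounds coming from the compact generation of $\mcD$ force termination at some ordinal $\alpha$, at which stage $\Hom_\mcD(X_\alpha,Q[n])=0$ for every $n\geq 1$, so $X_\alpha\in\mcV$ by the orthogonality criterion above; splicing the successively-formed triangles then produces the required triangle $V\to X\to Y$ with $V=X_\alpha$ and $Y$ built from the $P_\beta$'s, hence in $\mcV^\perp$.

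For the remaining equivalences under the enhancement hypothesis, the implication $(2)\Rightarrow(3)$ in fact holds without any enhancement: a pure epimorphism $Y\to Z$ with $Y\in\mcV$ induces, for each $Q\in\PInj\mcW[1]$, an injection $\Hom_\mcD(Z,Q)\hookrightarrow\Hom_\mcD(Y,Q)=0$ by pure-injectivity of $Q$, so $Z\in{}^\perp\PInj\mcW[1]=\mcV$. The missing implication $(3)\Rightarrow(1)$ is exactly the enhanced-setting content of Remark~\ref{rem:definable coaisles}.
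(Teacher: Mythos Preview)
Your approach diverges substantially from the paper's, and both of the two core steps have genuine gaps.

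\textbf{The orthogonality step.} You take $Q$ to be the object supplied by Theorem~\ref{thm.AB5 heart}(1), i.e.\ a pure-injective object in the co-heart with $\H(Q)$ an injective cogenerator of $\mcH$. From $\Hom_\mcD(X,Q[n])\cong\Hom_\mcH(H^{-n}_{\mathbf{t}}(X),\H(Q))$ you correctly deduce $H^i_{\mathbf{t}}(X)=0$ for $i<0$, but then you invoke Proposition~\ref{lem.localization of t-structure} to conclude $X\in\mcV$. That proposition only lets you pass to a Verdier quotient by a piece of the degeneracy class; it does not let you pull the conclusion back. In general, definable co-aisles are not left non-degenerate: any smashing semiorthogonal decomposition $(\mcU,\mcV)$ generated by a set of compact objects has $\mcV$ definable, $\mcH=0$, hence your $Q=0$, and your criterion collapses to ${}^\perp\{0\}=\mcD$. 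So ${}^\perp\{Q[n]\mid n\ge1\}$ can be strictly larger than $\mcV$.

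\textbf{The truncation step.} The transfinite tower is not well-defined in a bare triangulated category. ``Assemble the $X_\beta$'s via a product-based cocone construction'' has no evident meaning at uncountable limit ordinals (Milnor limits handle only sequences), the termination claim is asserted without any decreasing invariant, and ``splicing the successively-formed triangles'' over a transfinite chain is not a valid operation without an enhancement. Even granting all that, the stopping condition $\Hom_\mcD(X_\alpha,Q[n])=0$ for $n\ge1$ only gives $X_\alpha\in\mcV$ via the orthogonality criterion, which we have just seen fails.

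\textbf{What the paper does instead.} The paper does not use the co-heart object from Theorem~\ref{thm.AB5 heart}. It goes back to the suspended ideal $\mcI\subseteq\mcD^c$ of Theorem~\ref{thm.classification definable} and considers the associated TTF triple $(\mcC_\mcI,\mcT_\mcI,\mcF_\mcI)$ in $\Mod\mcD^c$, where $\mcT_\mcI$ is the essential image of $\Mod(\mcD^c/\mcI)$. One checks $\mcV=\y^{-1}(\mcT_\mcI)$, and since $(\mcT_\mcI,\mcF_\mcI)$ is a hereditary torsion pair there is a pure-injective $Q\in\mcD$ with $\y Q$ an injective cogenerator of $\mcF_\mcI$; this gives $\mcV={}^\perp Q$ on the nose, with no non-degeneracy issue. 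The existence of the co-$t$-structure $({}^\perp Q,({}^\perp Q[-1])^\perp)$ then follows from Bondarko's criterion \cite[Theorem~2.3.3]{Bo}, once one verifies that $Q$ is perfect in $\mcD^\op$. Here the idempotence of $\mcI$ is decisive: it makes $\mcT_\mcI$ a torsion\emph{free} class as well, hence closed under products, and $\Hom_\mcD(f,Q)=0\Leftrightarrow\Img\y(f)\in\mcT_\mcI$ then shows the class of morphisms killed by $\Hom_\mcD(-,Q)$ is closed under all products. Your argument for $(2)\Rightarrow(3)$ and the appeal to Remark~\ref{rem:definable coaisles} for $(3)\Rightarrow(1)$ are fine and match the paper.
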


\begin{proof}
$(2)\Longrightarrow (3)\Longleftrightarrow (1)$ when $\mcD$ has the mentioned enhancement:
The equality $\mcV ={^\perp\PInj\mcW}$ guarantees that $\mcV$ is closed under pure epimorphisms. Then use Remark~\ref{rem:definable coaisles}.

$(1)\Longrightarrow (2)$ Consider the suspended ideal $\mcI$ corresponding to $\mcV$ by the bijection of Theorem \ref{thm.classification definable}. We then consider the associated TTF triple $(\mcC_\mcI,\mcT_\mcI,\mcF_\mcI)$ in $\Mod\mcD^c$ (see \cite[Subsection 4.2]{PSV}). Recall from \emph{op.\,cit.} that $\mcT_I$ consists of the functors $T\dd(\mcD^c)^\op\la\Ab$ such that $T(s)=0$, for all morphisms $s\in\mcI$. In other words, $\mcT_\mcI$ is the essential image of the forgetful functor $\Mod\frac{\mcD^c}{\mcI}\rightarrowtail\Mod\mcD^c$.

If $\y:=\y_\mcD\dd\mcD\la\Mod\mcD^c$ is the Yoneda functor, it follows from the Yoneda lemma that $\y V\in\mcT_\mcI$ if and only if $\Hom_\mcD(\mcI,V)=0$. That is, we have an equality $\mcV=\y^{-1}(\mcT_\mcI)$ (here we warn the reader that the class $\mcT_\mcI$ may differ from the class $\mcF$ from Proposition~\ref{prop.tor-pair vs t-structure}, despite the fact that $\y^{-1}(\mcT_\mcI)=\mcV=\y^{-1}(\mcF)$).

On the other hand $(\mcT_\mcI,\mcF_\mcI)$ is a hereditary torsion pair in $\Mod\mcD^c$, which implies that $\mcT_\mcI={^{\perp}\big(\Inj{\Mod\mcD^c}\cap\mcF_\mcI\big)}={^{\perp}Y}$, for some object $Y$ such that $\Prod(Y)=\Inj{\Mod\mcD^c}\cap\mcF_\mcI$. If we take now $Q\in\PInj\mcD$ such that $\y Q\cong Y$ then,  by \cite[Theorem 1.8]{K}, we get that  $\mcV={^{\perp}Q}=\bigcap_{i\ge 0}{^{\perp}Q[i]}$. 
The task is then reduced to prove that the pair $\big({^{\perp}Q},({^{\perp}Q[-1]})^\perp\big)$ is a co-$t$-structure in $\mcD$, for then if we put $\mcW:=({^{\perp}Q[-1]})^\perp$ and observe that $Q\in\mcW[1]$, the equality $\mcV={^{\perp}\PInj\mcW[1]}$ becomes obvious.

The key result here (somewhat alike Proposition~\ref{prop.t-structure from pi}) is \cite[Theorem 2.3.4]{Bo} which says that $\big({^{\perp}Q},({^{\perp}Q[-1]})^\perp\big)$ is a co-$t$-structure in $\mcD$ provided that $Q$ is a perfect object in $\mcD^\op$ (see Subsection~\ref{subsec:triangulated}). This is what we are going to verify now. If $f\dd X\la Z$ is a morphism in $\mcD$, we have the following equivalences:

\begin{align*}
\Hom_\mcD(f,Q)=0
&\Longleftrightarrow \Hom_{\Mod\mcD^c}\big(\y(f),Y\big)=0 \\
&\Longleftrightarrow \Hom_{\Mod\mcD^c}\big(\Img(\y(f)), Y\big)=0 \\
&\Longleftrightarrow \Img\big(\y(f)\big)\in\mcT_I.
\end{align*}
The first equivalence is again due to \cite[Theorem 1.8]{K}, the second holds since $Y$ is injective and the last one by the choice of $Y$. Hence, since $\y$ respects products and $\mcT_\mcI$ is closed under products (it is a torsion-free class in $\Mod\mcD^c$), it follows that the class of morphisms in $\mcD$ satisfying $\Hom_\mcD(f,Q)=0$ is closed under  all  products in $\mcD$, or equivalently coproducts in $\mcD^\op$, as required.
\end{proof}

\subsection{Compactly generated \texorpdfstring{$t$-structures}{t-structures} have a locally fp heart}
\label{subsec:comp gen}

Except for the final main result, where we shall work in a more general context, we assume all through this subsection that $\mcD$ is a compactly generated triangulated category and that $\mathbf{t}=(\mcU,\mcV)$ is a compactly generated $t$-structure in it. 

The following is the crucial result. Since $\mcV$ is definable, we also indirectly obtain that the suspended ideal $\mcI$ corresponding to $\mathbf{t}$ via Theorem~\ref{thm.classification definable} consists precisely of the maps in $\mcD^c$ factoring through $\mcU[1]\cap\mcD^c$. In fact, the compactly generated $t$-structures in $\mcD$ are known to bijectively correspond to suspended subcategories of $\mcD^c$ which are closed under  direct summands; see~\cite[Theorem 4.2.1(3)]{Bo16} or, under the existence of a derivator enhancement, also \cite[Theorem 4.5]{StPo}.

\begin{prop} \label{prop.description-compgener-aisle}
Suppose that $\mcD$ and $\mathbf{t}$ are as above and denote $\mcU_0=\mcU\cap\mcD^c$. The following assertions hold:

\begin{enumerate}
\item $\mcU$ is the smallest subcategory of $\mcD$ that contains $\mcU_0$ and is closed under coproducts, extensions and Milnor colimits.

\item For an object $U\in\mcD$, the following assertions are equivalent:

\begin{enumerate}
\item $U\in\mcU$;
\item Any morphism $f\dd C\la U$, with $C$ compact,  factors through some object in $\mcU_0$;
\item There exists a pure epimorphism $\coprod_{i\in I}U_i\la U$, for some family $(U_i)_{i\in I}$ of objects in $\mcU_0$.
\end{enumerate}
\item $\mcU$ is closed under pure monomorphisms in $\mcD$.
\end{enumerate}
In particular $\Add(\mcU_0)$ is a $t$-generating subcategory of $\mcU$ (see Definition \ref{def.t-generating}). 
\end{prop}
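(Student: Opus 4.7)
The proposition has three main assertions plus the concluding ``$t$-generating'' statement; I plan to establish them in the order (3), (1), (2), and then the consequence. The main tools are the definability of the co-aisle (for (3) and subsequent use), a small-object-argument construction (for (1)), and an intermediate generation result for the heart $\mcH$ by $\H(\mcU_0)$ (for (2) and the consequence). The main obstacle will be the implication (a)$\Rightarrow$(b): while (b)$\Leftrightarrow$(c) is essentially tautological (both encode that the canonical map $\coprod_{f\dd S\to U,\,S\in\mcU_0}S_f\to U$ is a pure epimorphism) and (c)$\Rightarrow$(a) follows from (3), showing that every compact map into $U$ factors through $\mcU_0$ requires a delicate induction on the filtration produced in (1) in which lifting obstructions must be absorbed into enlarged objects of $\mcU_0$.

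For (3): since $\mathbf{t}$ is compactly generated by a set $\mcS\subseteq\mcD^c$, the co-aisle $\mcV=\mcS^{\perp_{<0}}$ is a definable subcategory of $\mcD$ in the sense of Definition~\ref{def.definable subcategory}, so by Corollary~\ref{cor.U closed under pure epi/mono} the aisle $\mcU$ is closed under both pure monomorphisms and pure epimorphisms. For (1): let $\mcU'$ denote the smallest subcategory of $\mcD$ containing $\mcU_0$ and closed under coproducts, extensions, and Milnor colimits; the inclusion $\mcU'\subseteq\mcU$ is immediate. For the reverse, given $U\in\mcU$, I construct a sequence $X_0\to X_1\to\cdots$ in $\mcU'$ together with compatible maps to $U$ whose Milnor colimit is $U$. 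Start with $X_0=\coprod_{f\dd S\to U,\,S\in\mcU_0}S_f$ and its canonical map $p_0\dd X_0\to U$ with cofiber triangle $X_0\to U\to K_0\laplus$. Inductively, set $V_n=\coprod_{g\dd S\to K_n,\,S\in\mcU_0}S_g$ and apply the octahedral axiom to the composition $V_n[-1]\to K_n[-1]\to X_n$, where the first arrow is the $(-1)$-shift of the composition $V_n\to K_n\to X_n[1]$ (the latter being the connecting map). This simultaneously yields a triangle $X_n\to X_{n+1}\to V_n\to X_n[1]$, ensuring $X_{n+1}\in\mcU'$ by induction since $V_n\in\Add(\mcU_0)$, and an updated triangle $X_{n+1}\to U\to K_{n+1}\laplus$ with $K_{n+1}=\cone(V_n\to K_n)$. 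By construction $\Hom(\mcU_0,V_n)\to\Hom(\mcU_0,K_n)$ is surjective, so the transition $\Hom(\mcU_0,K_n)\to\Hom(\mcU_0,K_{n+1})$ vanishes. Compactness of objects in $\mcU_0$ together with its closure under positive shifts then give $\Hom(\mcU_0,\Mcolim K_n[k])=0$ for all $k\le 0$, placing $K_\infty:=\Mcolim K_n$ in $\mcU_0^{\perp_{\le 0}}=\mcV[-1]$. The resulting triangle $\Mcolim X_n\to U\to K_\infty\laplus$ is therefore the truncation triangle of $U$, and since $U\in\mcU$, uniqueness forces $K_\infty=0$ and $U\cong\Mcolim X_n\in\mcU'$.

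For (2) and the ``$t$-generating'' consequence: the implications (c)$\Rightarrow$(b), (b)$\Rightarrow$(c), and (c)$\Rightarrow$(a) are routine---the first two express that (b) and (c) both amount to the pure-epi-ness of the canonical map $p\dd\coprod_{f\dd S\to U,\,S\in\mcU_0}S_f\to U$ (pure-epi allows lifting of compact maps, which by compactness of $C$ factor through a finite sub-coproduct lying in $\add(\mcU_0)=\mcU_0$), and the third holds by (3) since $\mcU$ is closed under pure epimorphisms. For the remaining direction I first prove that $\H(\mcU_0)$ is a set of generators of $\mcH$ (which is Grothendieck by Corollary~\ref{cor.Grothendieck hearts for definable co-aisle}): for any nonzero $X\in\mcH$, applying the construction of (1) to $X$ would force $X_n=0$ for every $n$ if $\Hom_\mcD(\mcU_0,X)=0$, contradicting $X\cong\Mcolim X_n\ne 0$. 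This generation property immediately makes the canonical map $p$ induce an epimorphism $\H(p)$ in $\mcH$, so by Lemma~\ref{lem:t-generating tria}, $p$ fits in a triangle $U'\to\coprod S_f\to U\laplus$ with $U'\in\mcU$---this is the ``in particular'' $t$-generating statement. To derive (b) itself, given a compact $h\dd C\to U$, the purity of the Milnor-colimit triangle $\coprod X_n\to\coprod X_n\to U\laplus$ lifts $h$ along $\coprod X_n\to U$, and compactness produces a finite factorization $C\to X_N\to U$; the task then reduces to showing that each compact map to every $X_n$ (and its shifts) factors through $\mcU_0$, which I handle by simultaneous induction on $n$ and on the shift using the triangle $X_{n-1}\to X_n\to V_{n-1}\laplus$: project $C\to X_n$ to $V_{n-1}\in\Add(\mcU_0)$ to obtain a first factorization $C\to S\to V_{n-1}$, then absorb the lifting obstruction $S\to X_{n-1}[1]$ (which lies in $\mcU$ and, by the inductive hypothesis applied to $X_{n-1}[1]$, factors through some $S'\in\mcU_0$) by enlarging the chosen object to $S\oplus S'\in\mcU_0$ and replacing the original extension by its pullback along the factoring---this pullback-absorption step, ensuring that the induction closes despite successive obstructions, is the main technical hurdle.
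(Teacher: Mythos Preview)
Your proof of (3) matches the paper's. Your proof of (1) is a correct and genuinely different alternative: the paper simply cites \cite[Theorem~12.1]{KN} to get $\mcU=\Susp_\mcD(\mcU_0)$ and then observes that your class $\mcU'$ is suspended (because $\mcU_0[1]\subseteq\mcU_0$) and closed under coproducts, hence contains $\Susp_\mcD(\mcU_0)$. Your small-object-argument construction is essentially an inlining of that cited result; it is more self-contained, though you should be explicit that obtaining the triangle $\Mcolim X_n\to U\to\Mcolim K_n\laplus$ from the compatible family of triangles $X_n\to U\to K_n\laplus$ requires a $3\times 3$ argument (as in the proof of \cite[Theorem~12.1]{KN}).

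For $(2)$, your treatment of $(b)\!\Leftrightarrow\!(c)$ and $(c)\!\Rightarrow\!(a)$ agrees with the paper. For $(a)\!\Rightarrow\!(b)$ the paper takes a cleaner route: it shows directly that the class $\tilde{\mcU}$ of objects satisfying (b) is closed under coproducts, Milnor colimits, and extensions, and then applies (1). The extension-closure step is the key homotopy-pushout argument: given a triangle $\tilde U_1\to X\to\tilde U_2\laplus$ in $\tilde{\mcU}$ and $f\dd C\to X$ compact, one factors $C\to\tilde U_2$ through some $U_2\in\mcU_0$, completes $C\to U_2$ to a triangle $C'\to C\to U_2\laplus$, factors the induced $C'\to\tilde U_1$ through some $U_1\in\mcU_0$, and then the homotopy pushout of $C'\to C$ and $C'\to U_1$ is an \emph{extension} $U_1\to U\to U_2\laplus$ with $U\in\mcU_0$, through which $f$ factors. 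Your inductive argument along the tower $X_0\to X_1\to\cdots$ is really this same extension argument specialized to the triangles $X_{n-1}\to X_n\to V_{n-1}\laplus$, but your description of the factoring object as ``$S\oplus S'$'' is not right: the object one needs is a homotopy pushout (an extension in $\mcU_0$), not a direct sum. As written, the ``absorption'' step does not close---replacing $S$ by $S\oplus S'$ does not kill the obstruction $wb$. The fix is exactly the paper's pushout construction, and once you make that change your inductive approach works (and is then seen to be a repackaging of the paper's abstract closure argument).

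Finally, your detour through ``$\H(\mcU_0)$ generates $\mcH$'' to obtain the $t$-generating statement is correct but unnecessary: once (2) is established, condition (2)(c) together with closure of $\mcU$ under pure epimorphisms (which you already have from (3)) immediately gives a triangle $U'\to P\to U\laplus$ with $P\in\Add(\mcU_0)$ and $U'\in\mcU$.
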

\begin{proof}
(1) Let $\overline{\mcU}$ denote the smallest subcategory of $\mcD$ that contains $\mcU_0$ and is closed under coproducts,  extensions and Milnor colimits.   It is clear that $\overline{\mcU}\subseteq\mcU$ since $\mcU$ satisfies all those closure properties and contains $\mcU_0$. On the other hand, by \cite[Theorem 12.1]{KN}, we know that 
$\mcU=\Susp_\mcD(\mcU_0)$, where $\Susp_\mcD(\mcS)$ denotes the smallest suspended subcategory of $\mcD$ that contains $\mcS$ and is closed under coproducts, for each subcategory $\mcS$. Since we have $\mcU_0[1]\subseteq\mcU_0$ we immediately get that $\overline{\mcU}[1]\subseteq\overline{\mcU}$, which implies that $\overline{\mcU}$ is a suspended subcategory closed under taking coproducts. This gives the inclusion $\Susp_\mcD(\mcU_0)=\mcU\subseteq\overline{\mcU}$, which is then an equality. 

(2) $(c)\Longrightarrow (b)$ Fix a pure epimorphism $p\dd\coprod_{i\in I}U_i\la U$ as described in the statement of (2)(c). Then any morphism $f\dd C\la U$, with $C$ compact, admits a factorization $f=p\circ g$, where $g\dd C\la\coprod_{i\in I}U_i$ is some morphism. Due to the compactness of $C$, $g$ factors through some finite subcoproduct of the $U_i$ and such a subcoproduct is in $\mcU_0$.

$(b)\Longrightarrow (c)$ We can always construct a pure epimorphism $q\dd\coprod_{i\in I}C_i\la U$, for some family $(C_i)_{i\in I}$ of compact objects in $\mcD$.
For example, one can take a set $\mathcal{C}$ of representatives of the isoclasses of objects in $\mcD^c$ and take the canonical morphism $\coprod_{C\in\mathcal{C}}C^{(\Hom_{\mcD}(C,U))}\la U$.  If we denote by $\iota_j\dd C_j\la\coprod_{i\in I}C_i$ the canonical $j$-th map to the coproduct, then $q_j:=q\circ\iota_j\dd C_j\la U$ is a morphism from a compact object. By hypothesis, we have a factorization $q_j\dd C_j\stackrel{u_j}{\la}U_j\stackrel{q'_j}{\la}U$, for some $U_j\in\mcU_0$. If now $q'\dd\coprod_{i\in I}U_i\la U$ is the morphism with the $q'_j$ as components, then we have a decomposition $q=q'\circ (\coprod u_i)$, which implies that $q'$ is also a pure epimorphism. 

$(a)\Longleftrightarrow (b)\,\&\,(c)$ Let us denote by $\tilde{\mcU}$ the full subcategory of $\mcD$ consisting of the objects $\tilde{U}$ which satisfy the equivalent conditions (2)(b) and (2)(c).
Since $\tilde{\mcU}\subseteq\mcU$ by Corollary~\ref{cor.U closed under pure epi/mono}, we just need to prove that $\mcU\subseteq\tilde{\mcU}$. 

Using (2)(b), we clearly see that $\tilde{\mcU}$ is closed under taking coproducts. On the other hand, if $\tilde{U}_1\la\tilde{U}_2\la\cdots\la\tilde{U}_n\la\cdots$ is a sequence in $\tilde{\mcU}$, then the canonical morphism $q\dd\coprod_{n\in\mathbb{N}}\tilde{U}_n\la\Mcolim\tilde{U}_n$ is a pure epimorphism. If, for each $n\in\mathbb{N}$, using (2)(c), we fix a pure epimorphism $p_n\dd\coprod_{i\in I_n}U_i\la\tilde{U}_n$, where  $I_n$ is some set and all $U_i$ are in $\mcU_0$, then $\coprod p_n\dd\coprod_{n\in\mathbb{N}}(\coprod_{i\in I_n}U_i)\la\coprod_{n\in\mathbb{N}}\tilde{U}_n$ is also a pure epimorphism. It follows that $q\circ (\coprod p_n)$ is a pure epimorphism, which, by (2)(c), implies that $\Mcolim\tilde{U}_n\in\tilde{\mcU}$. Therefore $\tilde{\mcU}$ is also closed under taking Milnor colimits. 

We next prove that $\tilde{\mcU}$ is closed under extensions, and then assertion (1) will give the desired inclusion $\mcU\subseteq\tilde{\mcU}$.  Let $\tilde{U}_1\stackrel{u}{\la}X\stackrel{v}{\la}\tilde{U}_2\la\tilde{U}_1[1]$ be a triangle in $\mcD$ with  $\tilde{U}_k\in\tilde{\mcU}$, for $k=1,2$,  and let $f\dd C\la X$ be any morphism, where $C\in\mcD^c$. Then $v\circ f$ factors as a composition $C\stackrel{v'}{\la}U_2\stackrel{h}{\la}\tilde{U}_2$, where $U_2\in\mcU_0$. Completing to a triangle we obtain a triangle in $C'\stackrel{u'}{\la}C\stackrel{v'}{\la}U_2\la C'[1]$ in  $\mcD^c$ together with a morphism of triangles

\[
\xymatrix{
  C'  \ar[r]^-{u'} \ar@{.>}[d]_{g} &
  C   \ar[r]^-{v'} \ar[d]_f &
  U_2 \ar[r] \ar[d]^h &
  C'[1] \ar@{.>}[d]^{g[1]}
  \\
  \tilde{U}_1 \ar[r]_-{u} &
  X \ar[r]_{v} &
  \tilde {U}_2 \ar[r] &
  \tilde{U}_1[1].
}
\]

But the morphism $g\dd C'\la\tilde{U}_1$ factors as a composition $C'\stackrel{g_1}{\la}U_1\stackrel{g_2}{\la}\tilde{U}_1$, where $U_1\in\mcU_0$. By taking now the homotopy pushout of $u'$ and $g_1$, we obtain a triangle $U_1\la U\la U_2\la U_1[1]$ in $\mcD$ (and in $\mcD^c$). Since $\mcU_0$ is closed under taking extensions in $\mcD$, we get that $U\in\mcU_0$.  Moreover, we have that $f\circ u'=u\circ g=u\circ g_2\circ g_1$ and, by properties of homotopy pushouts (see \cite[Section 1.4]{N}), we immediately get that $f$ (and $u\circ g_2$) factor through $U$. It follows that $X\in\tilde{\mcU}$. 


(3) This is a direct consequence of Corollary~\ref{cor.U closed under pure epi/mono}.
%
%
\end{proof}

The following are immediate consequences.

\begin{prop} \label{prop.heart as quotient of modules over aisle}
Let $\mcD$ be a compactly generated triangulated category, $\mathbf{t}=(\mcU,\mcV)$ be a compactly generated $t$-structure with heart $\mcH$, let us put  $\mcU_0=\mcD^c\cap\mcU$ and let
\[ \y=\y_{|\mcU}\dd\mcU\la\Mod\mcU_0, \qquad U\rightsquigarrow\y U=\Hom_\mcU(?,U)_{\mcU_0}, \]
be the generalized Yoneda functor. Then the functor $G:=\y_{|\mcH}\dd\mcH\la\Mod\mcU_0$ is fully faithful and has a left adjoint $F\dd\Mod\mcU_0\la\mcH$ that is a Gabriel localization functor. Moreover, there is a natural isomorphism $F\circ\y\cong (\H)_{| \mcU}$. 
\end{prop}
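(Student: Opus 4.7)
My plan is to derive this proposition as a direct corollary of Theorem~\ref{thm.t-generating left Kan extensions}, applied with the $t$-generating subcategory $\mcP:=\Add(\mcU_0)$, together with a standard identification $\widehat{\mcP}\simeq\Mod\mcU_0$.

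The first step is to note that Proposition~\ref{prop.description-compgener-aisle} already provides everything needed about $\mcP=\Add(\mcU_0)$. Indeed, part~(2) of that proposition says that for every $U\in\mcU$ there is a pure epimorphism $\coprod_{i\in I}U_i\twoheadrightarrow U$ with $U_i\in\mcU_0$; this map is a $\mcP$-precover (any morphism from an object of $\mcP$ into $U$ factors through it by compactness of the summands), so $\mcP$ is precovering in $\mcU$, and the final sentence of that proposition asserts that $\mcP$ is $t$-generating.

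The second step is to identify $\widehat{\mcP}$ with $\Mod\mcU_0$. Since $\mcU_0\subseteq\mcD^c$, each $U_0\in\mcU_0$ is compact in $\mcD$, and because coproducts in $\mcP=\Add(\mcU_0)$ coincide with those in $\mcD$, the functor $\Hom_\mcP(U_0,?)$ preserves coproducts. Thus $\mcU_0$ is a set of small objects of $\mcP$ with $\mcP=\Add_\mcP(\mcU_0)$, so by the well-known equivalence recalled just before Definition~\ref{def.pure-injective} (restriction along the inclusion $\mcU_0\hookrightarrow\mcP$), we have
\[
\widehat{\mcP}=\modf\mcP\stackrel{\simeq}{\la}\Mod\mcU_0,\qquad M\rightsquigarrow M_{|\mcU_0^\op}.
\]
Under this equivalence the restricted Yoneda functor $\y_\mcP\dd\mcU\la\widehat{\mcP}$ of Theorem~\ref{thm.t-generating left Kan extensions} is transported to the functor $\y\dd\mcU\la\Mod\mcU_0$ defined in the statement.

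The third step is to invoke Theorem~\ref{thm.t-generating left Kan extensions}: since $\mcP$ is $t$-generating, $\H_{|\mcU}$ factors as $\H_{|\mcU}\cong F\circ\y_\mcP$ with $F$ a Serre quotient functor, and $G:=(\y_\mcP)_{|\mcH}\dd\mcH\la\widehat{\mcP}$ is a fully faithful right adjoint of $F$. Transporting this across the equivalence $\widehat{\mcP}\simeq\Mod\mcU_0$ gives a Serre quotient functor $F\dd\Mod\mcU_0\la\mcH$ with fully faithful right adjoint $G=\y_{|\mcH}$, i.e.\ a Gabriel localization functor in the sense of~\S\ref{subsec:localization-func}, together with the natural isomorphism $F\circ\y\cong\H_{|\mcU}$.

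There is no real obstacle here; the only mild point of care is confirming the identification $\widehat{\Add(\mcU_0)}\simeq\Mod\mcU_0$ and checking that it intertwines $\y_\mcP$ with the $\y$ of the statement, but both are formal from the definition of finitely presented functor and Yoneda's lemma.
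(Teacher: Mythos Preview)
Your proof is correct and follows essentially the same approach as the paper: apply Theorem~\ref{thm.t-generating left Kan extensions} with $\mcP=\Add(\mcU_0)$ (which is $t$-generating by Proposition~\ref{prop.description-compgener-aisle}), and use the standard equivalence $\widehat{\mcP}\simeq\Mod\mcU_0$ given by restriction along $\mcU_0\hookrightarrow\mcP$. The paper's proof is a one-line reference to exactly these two ingredients; you have simply unpacked the details.
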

\begin{proof}
Just apply Theorem \ref{thm.t-generating left Kan extensions},  with the $t$-generating subcategory $\mcP=\Add(\mcU_0)$, taking into account that we have a clear equivalence of categories $\widehat{\mcP}=\modf\mcP\stackrel{\cong}{\la}\Mod\mcU_0$ that takes $M\rightsquigarrow M_{| \mcU_0}$.
\end{proof}

\begin{cor} \label{cor.heart has a generator}
Let $\mcD$ be a compactly generated triangulated category  and let $\mathbf{t}=(\mcU,\mcV)$ be a compactly generated $t$-structure in $\mcD$ with heart $\mcH$. Then  $\H(\mcU_0)$ is a skeletally small class of generators of $\mcH$.
\end{cor}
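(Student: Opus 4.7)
The plan is to invoke Proposition~\ref{prop.heart as quotient of modules over aisle}, which provides a Gabriel localization functor $F\dd \Mod\mcU_0 \la \mcH$ satisfying $F\circ\y\cong \H_{|\mcU}$. Since $\mcD$ is compactly generated, $\mcD^c$ is skeletally small (see \S\ref{subsec:triangulated}), and hence so is its full subcategory $\mcU_0=\mcU\cap\mcD^c$. Consequently $\H(\mcU_0)$ is automatically a skeletally small class in $\mcH$, and only the generating property needs to be established.

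For the generating property, recall from \S\ref{subsec.Gabriel-Popescu} that the representables $\{\y U_0 \mid U_0\in\mcU_0\}$ form a set of projective generators of the module category $\Mod\mcU_0$; in particular, every object $M\in\Mod\mcU_0$ admits an epimorphism $\coprod_{i\in I}\y U_i \twoheadrightarrow M$ from a coproduct of such representables. Being a Gabriel localization functor, $F$ is essentially surjective, exact, and coproduct-preserving, so given any $X\in\mcH$ one may choose $M\in\Mod\mcU_0$ with $F(M)\cong X$ and apply $F$ to such a presentation to produce an epimorphism
\[ \coprod_{i\in I} \H(U_i) \cong \coprod_{i\in I} F(\y U_i) \cong F\Bigl(\coprod_{i\in I} \y U_i\Bigr) \twoheadrightarrow X \]
in $\mcH$, where the leftmost identification uses the natural isomorphism $F\circ\y\cong \H_{|\mcU}$. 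This exhibits every object of $\mcH$ as a quotient of a coproduct of objects from $\H(\mcU_0)$, giving the required generating property.

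I do not foresee any serious obstacle: the argument is simply a transport of the canonical generating set of a module category along the Gabriel localization $F$, using no ingredients beyond Proposition~\ref{prop.heart as quotient of modules over aisle} and elementary facts about module categories over small pre-additive categories.
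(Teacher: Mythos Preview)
Your argument is correct and follows essentially the same route as the paper: both use that the Gabriel localization $F\dd\Mod\mcU_0\la\mcH$ from Proposition~\ref{prop.heart as quotient of modules over aisle} sends the generating set of representables to a generating set of $\mcH$, and identify these images with $\H(\mcU_0)$ via $F\circ\y\cong(\H)_{|\mcU}$. The paper phrases this through $\widehat{\mcP}$ with $\mcP=\Add(\mcU_0)$ and then notes $\H(\mcP)\subseteq\Add(\H(\mcU_0))$, but this is just the other side of the equivalence $\widehat{\mcP}\simeq\Mod\mcU_0$ already recorded in Proposition~\ref{prop.heart as quotient of modules over aisle}.
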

\begin{proof}
Using the notation from the last proof, $\{F(\y P)=\H(P)\mid P\in\mcP\}$ is a class of generators of $\mcH$ since so is $\{\y P\mid P\in\mcP\}$ for $\widehat{\mcP}$. We end the proof by noting that $\H(\mcP)\subseteq \Add(\H(\mcU_0))$.
\end{proof}

For any skeletally small pre-additive category $\mcA$,  we will denote by $\mathbf{FP}_2(\Mod\mcA)$ the subcategory  of $\Mod\mcA$ consisting of the $\mcA$-modules $M$ which admit a projective presentation $P^{-2}\la P^{-1}\la P^0\la M\rightarrow 0$, where the $P^{-k}$ are finitely generated projective $\mcA$-modules. The following lemma is crucial for our purposes. 

\begin{lem} \label{lem.FP2-generation}
Let $\mcT=\Ker(F)$, for the functor $F$ as in Proposition~\ref{prop.heart as quotient of modules over aisle}. The hereditary torsion pair $(\mcT,\mcT^\perp)$ in $\Mod\mcU_0$ is generated by modules in   $\mathbf{FP}_2(\Mod\mcU_0)$. That is, there is a (necessarily skeletally small) class $\mcS$ of modules in  $\mathbf{FP}_2(\Mod\mcA)\cap\mcT$ such that $\Gen(\mcS)=\mcT$.
\end{lem}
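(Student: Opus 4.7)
The plan is to take $\mcS$ to be a set of representatives (up to isomorphism in $\Mod\mcU_0$) of all modules of the form $M_\sigma := \Coker\y(g)$ arising from triangles $\sigma\colon U'\stackrel{f}\la U\stackrel{g}\la U''\stackrel{h}\la U'[1]$ in $\mcD$ whose three vertices $U',U,U''$ all lie in $\mcU_0$. Since $\mcD^c$ (and hence $\mcU_0$) is skeletally small, so is $\mcS$. Every such $M_\sigma$ belongs to $\mcT$ by Lemma~\ref{lem.identification-of-Serre-subcategory}; it belongs to $\mathbf{FP}_2(\Mod\mcU_0)$ because, for each $P\in\mcU_0$, the functor $\Hom_\mcD(P,-)$ is cohomological, so that applying $\y$ to the triangle yields an exact sequence $\y(U')\la\y(U)\la\y(U'')\la M_\sigma\la 0$ whose first three terms are finitely generated projective $\mcU_0$-modules.

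The main content is to show $\mcT\subseteq\Gen(\mcS)$. Fix $M\in\mcT$. By Lemma~\ref{lem.identification-of-Serre-subcategory} we may write $M=\Coker\y(g)$ for some triangle $\sigma\colon U'\stackrel{f}\la U\stackrel{g}\la U''\stackrel{h}\la U'[1]$ with $U',U,U''\in\mcU$. Because $\y(U'')$ surjects onto $M$, the classes $[x]\in M(P)$, as $P$ ranges over $\mcU_0$ and $x$ over $\Hom_\mcD(P,U'')$, generate $M$. The strategy is to produce, for every such pair $(P,x)$, a triangle $\sigma_x\in\mcS$ together with a morphism of triangles $\sigma_x\la\sigma$ whose induced map $M_{\sigma_x}\la M$ carries $[\id_P]$ to $[x]$. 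Taking the coproduct over all pairs then yields the required epimorphism $\coprod_{(P,x)}M_{\sigma_x}\twoheadrightarrow M$.

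To construct $\sigma_x$, consider $hx\colon P\la U'[1]$. The key trick is to shift down first: the morphism $hx[-1]\colon P[-1]\la U'$ goes from a compact object into an object of $\mcU$, so by Proposition~\ref{prop.description-compgener-aisle}(2)(b) it factors as $hx[-1]=\rho'\alpha'$ through some $R'\in\mcU_0$. Shifting back produces $hx=\rho\circ\psi$ with $\psi:=\alpha'[1]\colon P\la R'[1]$ and $\rho:=\rho'[1]\colon R'[1]\la U'[1]$. Complete $\psi$ to a triangle $\sigma_x\colon R'\stackrel{f_x}\la U_x\stackrel{g_x}\la P\stackrel{\psi}\la R'[1]$. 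Since $R',P\in\mcU_0\subseteq\mcU\cap\mcD^c$ and both $\mcU$ and $\mcD^c$ are closed under extensions, we get $U_x\in\mcU_0$, so $\sigma_x$ is a triangle in $\mcU_0$ and $M_{\sigma_x}\in\mcS$. The identity $\rho\circ\psi=hx=h\circ x$ means that the rightmost square of the proposed morphism of triangles commutes, so by the triangulated category axioms there exists $U_x\la U$ completing a morphism $\sigma_x\la\sigma$. Applying $\y$ and passing to cokernels gives the map $M_{\sigma_x}\la M$, and unraveling definitions shows it sends $[\id_P]\in M_{\sigma_x}(P)$ to $[x\circ\id_P]=[x]\in M(P)$, as required.

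The main obstacle is ensuring that the first vertex of $\sigma_x$ lies in $\mcU_0$ and not merely in $\mcD^c$; a naive application of Proposition~\ref{prop.description-compgener-aisle}(2)(b) directly to $hx$ would factor it through an arbitrary object of $\mcU_0$ whose desuspension may leave $\mcU$. The shift-then-factor maneuver (working with $hx[-1]$ instead of $hx$) circumvents exactly this issue, exploiting that $\mcU_0[1]\subseteq\mcU_0$.
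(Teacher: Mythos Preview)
Your proof is correct. Both arguments rest on the same key maneuver: given a morphism from a compact object into an object of $\mcU$, factor it through $\mcU_0$ via Proposition~\ref{prop.description-compgener-aisle}(2)(b), shifting down first so that the triangle one builds has all three vertices in $\mcU_0$.

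The paper organizes this differently. It invokes the associated Grothendieck topology to reduce to the cyclic case---showing that every quotient $\y(U_0)/N$ in $\mcT$ is dominated by one with $N'\subseteq N$ finitely presented---and then factors the single map $U_0[-1]\to U'$ through $\mcU_0$ to produce one triangle in $\mcU_0$ whose associated cokernel surjects onto all of $\y(U_0)/N$ at once. Your version is element-wise: for each $P\in\mcU_0$ and each class $[x]\in M(P)$ you build a separate triangle $\sigma_x$ in $\mcU_0$ by factoring $(hx)[-1]\colon P[-1]\to U'$, and then take a large coproduct. The paper's route is slightly more economical (one $\mathbf{FP}_2$ module per cyclic torsion quotient rather than one per element) and connects the result to standard localization machinery; yours is more self-contained, avoiding the reference to Gabriel topologies and handling an arbitrary $M\in\mcT$ directly without the preliminary reduction to cyclic modules.
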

\begin{proof}
By using the associated Grothendieck topology in $\mcU_0$ (see \cite[Section 3.2]{PSV} or \cite{Low2}), it is enough to prove that if $U_0\in\mcU_0$ and $N$ is a submodule of the representable $\mcU_0$-module $\y(U_0)$ such that $\y(U_0)/N\in\mcT$,
then there is an epimorphism $M'\twoheadrightarrow\y(U_0)/N$ for some $M'\in\mathbf{FP}_2(\Mod\mcA)\cap\mcT$.

Let then $U_0$ and  $N$ be as in last paragraph, and put $M:=\y(U_0)/N$, which is an object of $\mcT$. Consider an epimorphism $p\dd\coprod_{i\in I}\y(U_i)\twoheadrightarrow N$, where the $U_i$ are in $\mcU_0$. Since $\y$ preserves coproducts, $\coprod_{i\in I}\y(U_i)\cong\y(\coprod_{i\in I}U_i)$. By the Yoneda lemma, the composition 
$\y(\coprod_{i\in I}U_i)\cong \coprod_{i\in I}\y(U_i)\stackrel{p}{\la} N\stackrel{\operatorname{inc}}{\rightarrowtail}\y(U_0)$
is necessarily of the form $\y(g)$, for some morphism $g\dd\coprod_{i\in I}U_i\la U_0$ in $\mcU$, and it follows that $\Coker(\y(g))\cong M$.
As in the proof of Lemma~\ref{lem.identification-of-Serre-subcategory}, we observe that $F(\y(g))\cong\H(g)$ is an epimorphism since $F$ is exact and $F(M)=0$. Upon completing $g$ to a triangle
\begin{equation} \label{eq:pure-tria2}
U_0[-1] \stackrel{u}{\la} U'\stackrel{f}{\la}\coprod_{i\in I}U_i\stackrel{g}{\la}U_0,
\end{equation}
we, thus, have $U'\in\mcU$ by Lemma~\ref{lem:t-generating tria}.
Since $U_0[-1]$ is compact, the morphism $u\dd U_0[-1]\la U'$ factors as $U_0[-1]\stackrel{u'}\la U'_0\stackrel{\alpha}\la U'$ for some $U'_0\in\mcU_0$, by Proposition~\ref{prop.description-compgener-aisle}(2), and we obtain a commutative diagram with triangles in rows, where $U''_0\in\mcU_0$ is the cone of $u'$ and the morphism $\beta$ comes from the axioms of triangulated categories:
\[
\xymatrix{
U_0[-1] \ar[r]^-{u'} \ar@{=}[d] &
U'_0 \ar[r]^-{f'} \ar[d]_-{\alpha} &
U''_0 \ar[r]^-{g'} \ar@{.>}[d]^-{\beta} &
U_0. \ar@{=}[d]
\\
U_0[-1] \ar[r]_-{u} &
U' \ar[r]_-{f} &
\coprod_{i\in I} U_i \ar[r]_-{g} &
U_0.
}
\]

If we apply the restricted Yoneda functor to the last diagram and denote $M'=\Coker(\y(g'))$, we obtain a commutative diagram in $\Mod\mcU_0$ with exact rows and an epimorphism in the rightmost column:
\[
\xymatrix{
\y(U'_0) \ar[r]^-{\y(f')} \ar[d]_-{\y(\alpha)} &
\y(U''_0) \ar[r]^-{\y(g')} \ar[d]^-{\y(\beta)} &
\y(U_0) \ar[r] \ar@{=}[d] &
M' \ar[r] \ar@{.>>}[d] &
0
\\
\y(U') \ar[r]^-{\y(f)} &
\y(\coprod_{i\in I} U_i) \ar[r]_-{\y(g)} &
\y(U_0) \ar[r] &
M \ar[r] &
0
}
\]
Now it follows from Lemma~\ref{lem.identification-of-Serre-subcategory} that $M'\in \mathbf{FP}_2(\Mod\mcA)\cap\mcT$, as required.
\end{proof}

We are now ready for the main result of the section.

\begin{thm} \label{thm.locally-fp-heart}
Let $\mcD$ a triangulated category with coproducts, let $\mathbf{t}=(\mcU,\mcV)$ be a compactly generated $t$-structure in $\mcD$, with heart $\mcH$,  and put $\mcU_0=\mcU\cap\mcD^c$. Then  $\mcH$ is a locally finitely presented Grothendieck category and its subcategory of finitely presented objects is
$\fp(\mcH)=\H(\mcU_0)$. 

When in addition $\mathbf{t}$ restricts to the subcategory $\mcD^c$ of compact objects, the heart $\mcH$ is also locally coherent.
\end{thm}

\begin{proof}
Replacing $\mcD$ by the compactly generated triangulated subcategory $\mcL:=\Loc_{\mcD}(\mcU_0)$ if necessary, we can and shall assume in the sequel that $\mcD$ is compactly generated. This is because the restricted $t$-structure $\mathbf{t}':=(\mcU,\mcL\cap\mcV)$ has the same heart as $\mathbf{t}$.
Note that the composition $\mcL=\Loc_\mcD(\mcU_0)\stackrel{\iota}{\rightarrowtail}\mcD\stackrel{\H}{\la}\mcH$ is the cohomological functor associated to the restricted $t$-structure $\mathbf{t}'=(\mcU,\mcV\cap\mcL)$. Therefore the reduction to the case when $\mcD$ is compactly generated is also valid when proving the last statement of the theorem. 

Let now $\mcG$ be the Giraud subcategory of $\Mod\mcU_0$ associated to the torsion pair $(\mcT,\mcT^\perp )$, where $\mcT=\Ker(F)$ for $F$ from Proposition~\ref{prop.heart as quotient of modules over aisle}. By Lemma \ref{lem.FP2-generation}, we can fix a set $\mcS\subseteq \mathbf{FP}_2(\Mod\mcU_0)$ such that $\mcT=\text{Gen}(\mcS)$. It then follows that $\mcG$ consists of the $\mcU_0$-modules $Y$ such that $\Hom_{\Mod\mcU_0}(S,Y)=0=\Ext_{\Mod\mcU_0}^1(S,Y)$, for all $S\in\mcS$. This implies that $\mcG$ is closed under taking direct limits in $\Mod\mcU_0$. By Proposition \ref{prop.locally-fp-quotient categories}, we get that $\mcH\simeq (\Mod\mcU_0)/\mcT$ is locally finitely presented and that $\fp(\mcH)=\add(F(\modf\mcU_0))$. 

Let us assume that $X=F(M)$, where $M$ is a finitely presented $\mcU_0$-module. There is then a morphism $f\dd U_1\la U_0$ in $\mcU_0$ such that the sequence
\[
\y(U_1)\stackrel{\y(f)}{\la}\y(U_0)\stackrel{p}{\la}M\la 0
\]
is exact, for some epimorphism $p$. Thanks to the natural isomorphism $F\circ\y\cong (\H)_{| \mathcal{U}}$, if we apply $F$ to the last sequence, we get an exact sequence 
\[\H(U_1)\stackrel{H(f)}{\la}\H(U_0)\la X\rightarrow 0.\]
However, we also have a triangle $U_1\stackrel{f}{\la}U_0\stackrel{g}{\la}U'\la U_1[1]$, with its terms in $\mcU_0$, which induces an exact sequence
\[ \H(U_1)\stackrel{\H(f)}{\la}\H(U_0)\stackrel{\H(g)}{\la}\H(U')\la\H(U_1[1])=0. \]
We then get that $X\cong\H(U')$ and so $F(\modf\mcU_0)\subseteq\H(\mcU_0)$.  On the other hand, we have that $\H(U_0)\cong F(\y U_0)\in F(\modf\mcU_0)$, for all $U_0\in\mcU_0$. So $\fp(\mcH)=\add(\H(\mcU_0))$.

We must still prove that every summand $Y$ of an object $X\in\H(\mcU_0)$ lies already in $\H(\mcU_0)$. To that end, let $U_0\in\mcU_0$ be such that $X\cong\H(U_0)$ and denote by $g$ the composition $U_0\stackrel{\operatorname{can}}\la\coa0(U_0)=\H(U_0)\twoheadrightarrow Y$, where the last arrow stands for a split epimorphism. When completing $g$ to a triangle
\[ Y[-1] \la W\stackrel{f}{\la} U_0 \stackrel{g}{\la} Y, \]
we obtain a split exact sequence $0\la\H(W)\stackrel{\H(f)}{\la}\H(U_0)\stackrel{\H(g)}{\la} Y\la0$ in $\mcH$. In particular, $Z:=\H(W)\in\fp(\mcH)$. As we clearly have that $W\in\mcU$ and $\Add(\mcU_0)$ is $t$-generating by Proposition~\ref{prop.description-compgener-aisle}, there is a morphism $q\dd \coprod_{i\in I}U_i \la W$ with $U_i\in\mcU_0$ for all $i\in I$ which induces an epimorphism $\H(q)\dd \H(\coprod_{i\in I}U_i)\cong\coprod_{i\in I}\H(U_i)\la Z$ in $\mcH$. Thus $Z=\sum_{i\in I}q(\H(U_i))$ and, $Z$ being finitely presented in $\mcH$, there is a finite subset $J\subseteq I$ such that $Z=\sum_{i\in J}q(\H(U_i))$. All in all, if we put $h:=f\circ q_{|\coprod_{i\in J}U_i}$, we observe that $\Coker\H(h)=\Coker\H(f)\cong Y$. If we complete $h$ to a triangle
\[ \coprod\nolimits_{i\in J}U_i \stackrel{h}\la U_0 \la U' \la \coprod\nolimits_{i\in J}U_i[1], \]
we obtain an object $U'\in\mcU_0$ with $\H(U')\cong\Coker\H(h)\cong Y$, as desired.

For the final statement, note that when $\mathbf{t}$ restricts to $\mcD^c$, the category $\mcU_0$ has weak kernels. Indeed if $f\dd U\la U'$ is a morphism in $\mcU_0$ and we complete it to a triangle $X\stackrel{g}{\la}U\stackrel{f}{\la}U'\laplus$ in $\mcD^c$, then the composition $\ai 0 X\stackrel{\operatorname{can}}{\la}X\stackrel{g}{\la}U$ is a weak kernel of $f$ in $\mcU_0$. Hence $\Mod\mcU_0$ is a locally coherent Grothendieck category (see \cite[Corollary 1.11]{PSV}). The local coherence of $\mcH$ then follows by \cite[Proposition A.5]{K-spectrum}.
\end{proof}

\begin{rem} \label{rem.Bondarko}
In \cite{Bo} the author has proved, by using different methods, that any compactly generated $t$-structure has a Grothendieck heart with $\H(\mcD^c)$ as skeletally small class of generators. Note that Bondarko's result is a particular case of Theorem \ref{thm.Groth.heart-for-definable-coaisle}. 
\end{rem}

\begin{rem}
If $\mathbf{t}=(\mcU,\mcV)$ is a compactly generated $t$-structure in the homotopy category $\mcD$ of a compactly generated stable $\infty$-category, then the equality $\fp(\mcH)=\H(\mcU_0)$ from Theorem~\ref{thm.locally-fp-heart} also follows from \cite[Corollary 5.5.7.4(5)]{Lur-HTT} (see also the introduction to~\cite[Appendix C.6]{Lur-SAG}).
\end{rem}


\section{Cosilting objects and \texorpdfstring{$t$-structures}{t-structures} with AB5 hearts}
\label{sec.cosilting}

\subsection{Partial cosilting objects}
\label{subsec.partial cosilting objects}

Now we relate the objects $Q$ from Theorem~\ref{thm.AB5 heart} to concepts which appeared in the literature.

\begin{opr} \label{def.partial-cosilting-set}
Suppose that $\mcD$ is a triangulated category with products and $Q$ is an object of $\mcD$. We shall say that

\begin{enumerate}
\item $Q$ is \emph{AMV partial cosilting} (for Angeleri-Marks-Vitoria) when $_{}^{\perp_{>0}}Q$ is a co-aisle of $\mcD$ that contains $Q$. The induced $t$-structure will be said to be an \emph{AMV partial cosilting $t$-structure}.
\item $Q$ is \emph{NSZ partial cosilting} (for Nicol\'as-Saor\'in-Zvonareva) when $(\mcU_Q,\mcV_Q):=(_{}^{\perp_{<0}}Q, (_{}^{\perp_{\leq 0}}Q)^\perp)$ is a $t$-structure in $\mcD$, called in the sequel the \emph{NSZ partial cosilting $t$-structure} associated with $Q$, and $\Hom_\mcD(?,Q)$ vanishes on $\mcV_Q[-1]$. 
\end{enumerate}
 
 The object $Q$ is called \emph{cosilting} when it is (AMV or NSZ) partial cosilting and cogenerates $\mcD$. The associated $t$-structure, which is $(_{}^
{\perp_{<0}}Q,_{}^{\perp_{>0}}Q)$, is the \emph{cosilting $t$-structure} associated to $Q$.
\end{opr}

The NSZ partial cosilting objects are rather generally related to right non-degenerate $t$-structures with Grothendieck hearts.
 
\begin{prop} \label{prop.NSZ}
Let $\mcD$  be a triangulated category with products and coproducts and let $\mathbf{t}=(\mcU,\mcV)$ be a $t$-structure with heart $\mcH$. Consider the following assertions:

\begin{enumerate}
\item $\mathbf{t}$ is the $t$-structure associated with a pure-injective NSZ partial cosilting object $Q$.
\item $\mathbf{t}$ is right non-degenerate, the heart $\mcH$ is  AB5 with an injective cogenerator and $\H\dd\mcD\la\mcH$ preserves coproducts. 
\item $\mathbf{t}$ is right non-degenerate, the heart $\mcH$ is a Grothendieck category and the functor $\H\dd\mcD\la\mcH$ preserves coproducts.  
 \end{enumerate}
 The implications $(1)\Longrightarrow (2)\Longleftarrow (3)$ hold true. When $\mcD$ satisfies Brown representability theorem, the implication $(2)\Longrightarrow (1)$ also holds. When $\mcD$ is standard well generated, all assertions are equivalent.  
\end{prop}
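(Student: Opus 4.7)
The plan is to break the proposition into four implications: the trivial $(3)\Rightarrow(2)$ (every Grothendieck category admits an injective cogenerator), the easy $(2)\Rightarrow(3)$ under the standard well-generated hypothesis which simply cites Corollary~\ref{cor.AB5 implies Grothendieck in well-gen. triangcats}, and the substantive equivalence $(1)\Leftrightarrow(2)$ (with $(2)\Rightarrow(1)$ requiring Brown representability). All AB5 content will be routed through Theorem~\ref{thm.AB5 heart}.

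For $(1)\Rightarrow(2)$ I would verify the three conditions of Theorem~\ref{thm.AB5 heart}(1) for the given pure-injective NSZ partial cosilting $Q$. Since $Q\in\mcV_Q$ is immediate, and pure-injectivity plus the vanishing of $\Hom(?,Q)$ on $\mcV_Q[-1]$ are hypotheses, only the cogeneration condition $\Hom(M,Q)\neq 0$ for $0\neq M\in\mcH$ needs work. Iterating the NSZ vanishing via $\mcV_Q[-1]\subseteq\mcV_Q$ yields $\mcV_Q\subseteq{}^{\perp_{\geq 1}}Q$; so if $M\in\mcH=\mcU_Q\cap\mcV_Q$ satisfies $\Hom(M,Q)=0$ then $M\in{}^{\perp_{\mathbb{Z}}}Q$, in particular $M\in{}^{\perp_{\leq 0}}Q=\mcU_Q[1]$, placing $M$ in $\mcU_Q[1]\cap\mcV_Q=(\mcU_Q\cap\mcU_Q^\perp)[1]=0$. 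Then Theorem~\ref{thm.AB5 heart}(1)$\Rightarrow$(4) supplies AB5, an injective cogenerator, and coproduct preservation of $\H$. Right non-degeneracy I would handle separately: any $X\in\bigcap_k\mcV_Q[k]$ satisfies $X[m]\in\mcV_Q[-1]$ for every $m$, hence $X\in{}^{\perp_{\mathbb{Z}}}Q\cap\mcH$, and the same argument forces $X=0$.

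For $(2)\Rightarrow(1)$ under Brown representability I would invoke Theorem~\ref{thm.AB5 heart}(4)$\Rightarrow$(1) to obtain a pure-injective $Q\in\mcV$ satisfying those three conditions, and then show $(\mcU_Q,\mcV_Q)=(\mcU,\mcV)$; the NSZ vanishing $\Hom(?,Q)|_{\mcV_Q[-1]}=0$ will then be inherited from the known vanishing on $\mcV[-1]$. The key bridge is Proposition~\ref{prop.near-partialsilting-object}: as $E:=\H(Q)$ is an injective cogenerator of $\mcH$ and $\Hom(Y,Q[k])\cong\Hom_\mcH(H_\mathbf{t}^k(Y),E)$, membership $Y\in{}^{\perp_{\leq 0}}Q$ is equivalent to $H_\mathbf{t}^m(Y)=0$ for all $m\geq 0$. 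To prove $\mcV\subseteq\mcV_Q$, take $Y\in{}^{\perp_{\leq 0}}Q$; the truncation triangle $\tau_\mathbf{t}^{\leq -1}Y\to Y\to\tau_\mathbf{t}^{\geq 0}Y\laplus$ shows $\tau_\mathbf{t}^{\geq 0}Y$ has all cohomology zero, so by right non-degeneracy $\tau_\mathbf{t}^{\geq 0}Y=0$ and $Y\in\mcU[1]\subseteq\mcU$, whence $\Hom(Y,X)\cong\Hom(Y[-1],X[-1])=0$ for $X\in\mcV$ by the $t$-structure axiom. For $\mcV_Q\subseteq\mcV$, given $X\in\mcV_Q$ I would set $Y:=\tau_\mathbf{t}^{\leq -1}X\in\mcU[1]$; then $H_\mathbf{t}^m(Y)=0$ for $m\geq 0$ places $Y$ in ${}^{\perp_{\leq 0}}Q$, so $\Hom(Y,X)=0$. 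Applying $\Hom(Y,-)$ to $Y\to X\to\tau_\mathbf{t}^{\geq 0}X\laplus$ and using $\Hom(\mcU[1],\mcV[-1])\subseteq\Hom(\mcU,\mcV[-2])=0$ yields an injection $\operatorname{End}(Y)\hookrightarrow\Hom(Y,X)=0$, forcing $Y=0$ and therefore $X=\tau_\mathbf{t}^{\geq 0}X\in\mcV$.

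The hard part will be this last step: the careful translation of the abstract perpendicularity condition $Y\in{}^{\perp_{\leq 0}}Q$ into cohomological vanishing via the cogenerator $\H(Q)$, combined with the truncation arguments that wield right non-degeneracy to match $\mcV_Q$ with $\mcV$ in both directions.
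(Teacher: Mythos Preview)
Your argument is correct and, in fact, more self-contained than the paper's own proof. The paper simply declares that both implications $(1)\Longleftrightarrow(2)$ are contained in the proof of \cite[Corollary~4.1]{NSZ}, with the additional remark that pure-injectivity of the representing object $Q$ corresponds to the AB5 property of $\mcH$ via Proposition~\ref{prop.near-partialsilting-object}; you instead unwind everything explicitly through Theorem~\ref{thm.AB5 heart} and Proposition~\ref{prop.near-partialsilting-object}. Both approaches ultimately rest on the same identification $\Hom_\mcD(?,Q)\cong\Hom_\mcH(\H(?),\H(Q))$, so the difference is one of packaging rather than substance.

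One small omission in your sketch of $(2)\Rightarrow(1)$: you carefully establish $\mcV_Q=({}^{\perp_{\leq 0}}Q)^\perp=\mcV$, but for $Q$ to be NSZ partial cosilting you also need the \emph{defined} aisle $\mcU_Q:={}^{\perp_{<0}}Q$ to coincide with $\mcU$; equality of the co-aisles alone does not a priori force this, since you have not yet shown $(\mcU_Q,\mcV_Q)$ is a $t$-structure. The fix is immediate with the tools you already set up: your cohomological bridge gives $Y\in{}^{\perp_{<0}}Q$ if and only if $H_\mathbf{t}^m(Y)=0$ for all $m>0$, and then right non-degeneracy applied to $\tau_\mathbf{t}^{>0}Y$ (which lies in $\mcV[-1]$ with all cohomology zero, hence in $\bigcap_k\mcV[k]=0$) yields $Y\in\mcU$. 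The reverse inclusion $\mcU\subseteq{}^{\perp_{<0}}Q$ is automatic from $Q\in\mcV$. Also, in your right non-degeneracy step for $(1)\Rightarrow(2)$, the phrase ``$X\in{}^{\perp_\mathbb{Z}}Q\cap\mcH$'' is slightly misleading: what you actually use (and what your ``same argument'' refers to) is $X\in\mcU_Q[1]\cap\mcV_Q=0$, not membership in $\mcH$.
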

\begin{proof}
$(3)\Longrightarrow (2)$ is clear and both implications $(1)\Longrightarrow (2)$ and $(2)\Longrightarrow (1)$ are included  in the proof of  \cite[Corollary 4.1]{NSZ}, bearing in mind that $\mcH$ is as in~(2) exactly when the NSZ partial cosilting object $Q$ representing the functor $\Hom_\mcH(\H(?),E)$, for the injective cogenerator $E$ of $\mcH$, is pure-injective (see Proposition \ref{prop.near-partialsilting-object}). 

$(1)=(2)\Longrightarrow (3)$ (when $\mcD$ is standard well generated) follows by the truth of implication $(1)\Longrightarrow (4)$ in Theorem  \ref{thm.AB5 heart} in this case. 
\end{proof}

If the category $\mcD$ is standard well generated, we can say much more.

\begin{prop} \label{prop.NSZ extended}
Let $\mcD$ be an standard well generated triangulated category and $\mathbf{t}=(\mcU,\mcV)$ be a $t$-structure such that the heart $\mcH$ is a Grothendieck category and the functor $\H\dd\mcD\la\mcH$ preserves coproducts. Then the object $Q\in\mcD$ from Theorem~\ref{thm.AB5 heart}(1) is pure-injective NSZ partial cosilting.
Moreover, if $\mcH_Q$ is the heart of the NSZ partial cosilting $t$-structure $\mathbf{t}_Q=(\mcU_Q,\mcV_Q)$ and $(\H)'\dd\mcD\la\mcH_Q$ is the associated cohomological functor, then there is an equivalence of categories $\Psi\dd\mcH\stackrel{\cong}{\la}\mcH_Q$ such that $\Psi\circ\H\cong (\H)'$.
\end{prop}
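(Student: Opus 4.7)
The plan is first to verify that $Q$ is pure-injective NSZ partial cosilting, then to identify $\H$ and $(\H)'$ as initial objects in the same connected component of $\HFun(\mcD)$, whence the equivalence $\Psi$ will follow from uniqueness of initial objects.

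Pure-injectivity is condition~(c) of Theorem~\ref{thm.AB5 heart}(1). Since $\mcD$ is algebraic well generated, Proposition~\ref{prop.t-structure from pi} yields the $t$-structure $\mathbf{t}_Q=(\mcU_Q,\mcV_Q)$, and the remaining NSZ condition is that $\Hom_\mcD(?,Q)$ vanishes on $\mcV_Q[-1]$. I plan to deduce this from the inclusion $\mcV_Q\subseteq\mcV$ via Theorem~\ref{thm.AB5 heart}(1)(a). A key preliminary is $\mcU\subseteq\mcU_Q$: combining Proposition~\ref{prop.near-partialsilting-object}(1), which gives $\Hom_\mcD(?,Q)\cong\Hom_\mcH(\H(?),\H(Q))$, with Proposition~\ref{prop.near-partialsilting-object}(2) applied to condition~(b) (which gives that $\H(Q)$ is a cogenerator of $\mcH$) yields the crucial equivalence $\Hom_\mcD(X,Q)=0\Leftrightarrow\H(X)=0$; then for $U\in\mcU$ the standard vanishing $H_\mathbf{t}^j(U)=0$ for $j>0$ gives $U\in{^{\perp_{<0}}Q}=\mcU_Q$. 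For $V\in\mcV_Q$, I would then truncate $V[-1]$ with respect to $\mathbf{t}$ to obtain a triangle $\tau_\mathbf{t}^{\leq 0}(V[-1])\stackrel{\alpha}{\la}V[-1]\stackrel{\beta}{\la}\tau_\mathbf{t}^{>0}(V[-1])\laplus$. Since both $\tau_\mathbf{t}^{\leq 0}(V[-1])$ and its shift by $[1]$ lie in $\mcU\subseteq\mcU_Q$ while $V[-1]\in\mcV_Q[-1]$, the orthogonality $\Hom_\mcD(\mcU_Q,\mcV_Q[-1])=0$ from the $\mathbf{t}_Q$-axioms forces $\beta^*\dd\Hom_\mcD(\tau_\mathbf{t}^{>0}(V[-1]),V[-1])\la\Hom_\mcD(V[-1],V[-1])$ to be surjective; lifting $\operatorname{id}_{V[-1]}$ makes $\beta$ a section, so $V[-1]$ is a retract of $\tau_\mathbf{t}^{>0}(V[-1])\in\mcV[-1]$. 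Since $\mcV[-1]$ is closed under summands, $V\in\mcV$, and condition~(a) finishes the argument.

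Since algebraic well generated implies standard well generated (cf.~\S\ref{subsec:triangulated}), Proposition~\ref{prop.NSZ} (the implication (1)$\Rightarrow$(3)) applied to the pure-injective NSZ partial cosilting object $Q$ yields that $\mcH_Q$ is a Grothendieck category and $(\H)'$ preserves coproducts. Corollary~\ref{cor.t-str initial} then places $\H$ and $(\H)'$ as initial objects in their connected components of $\HFun(\mcD)$; by Theorem~\ref{thm.classify homological functors} these components correspond bijectively to product-equivalence classes of objects of $\mcD$. The component of $\H$ manifestly corresponds to the class of $Q$, since $\H(Q)$ is an injective cogenerator of $\mcH$ representing $\Hom_\mcD(?,Q)$ via the natural isomorphism above. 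For $(\H)'$, Proposition~\ref{prop.near-partialsilting-object}(1) applied to $\mathbf{t}_Q$ (valid thanks to the first paragraph) makes $(\H)'(Q)$ injective in $\mcH_Q$ and representing $\Hom_\mcD(?,Q)$; the main technical hurdle is to show $(\H)'(Q)$ is a cogenerator, i.e., that $\Hom_\mcD(M,Q)\neq 0$ for every $0\neq M\in\mcH_Q$. I would argue by contradiction: assume $M\neq 0$ and $\Hom_\mcD(M,Q)=0$, so $\H(M)=0$. Then $M\in\mcU_Q$ forces $H_\mathbf{t}^j(M)=0$ for $j>0$, while $M\in\mcV_Q\subseteq\mcV$ combined with the inclusion $M[-j]\in\mcV[-1]$ (valid for $j\geq 1$ since $\mcV[-j]\subseteq\mcV[-1]$) and condition~(a) yields $H_\mathbf{t}^j(M)=0$ for $j<0$; hence $M\in\mcN_\mathbf{t}$. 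Lemma~\ref{lem.so decomposition of the degeneracy} places $M$ in $\mcV\cap\mcN_\mathbf{t}=\bigcap_{n\in\mathbb{Z}}\mcV[n]$, from which $\Hom_\mcD(M,Q[k])=0$ for all $k\in\mathbb{Z}$ (each $M[-k]$ lies in $\mcV[-1]$); in particular $M\in{^{\perp_{\leq 0}}Q}$, so the defining property of $\mcV_Q=({^{\perp_{\leq 0}}Q})^\perp$ applied to $M\in\mcV_Q$ yields $\Hom_\mcD(M,M)=0$, i.e., $M=0$, a contradiction.

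Finally, with both $\H$ and $(\H)'$ identified as initial objects in the same connected component of $\HFun(\mcD)$, the uniqueness of initial objects (Theorem~\ref{thm.classify homological functors}, moreover part) produces mutually inverse morphisms between them in this 2-category. The underlying functor of one such morphism is a faithful exact left adjoint $\Psi\dd\mcH\stackrel{\simeq}{\la}\mcH_Q$ with $\Psi\circ\H\cong(\H)'$, and mutual invertibility makes $\Psi$ an equivalence of categories, as required. The hardest step above is the cogenerator verification for $(\H)'(Q)$, which hinges on the interplay between $\mathbf{t}$ and $\mathbf{t}_Q$ secured by $\mcU\subseteq\mcU_Q$ and $\mcV_Q\subseteq\mcV$, and on the analysis of the degeneracy class $\mcN_\mathbf{t}$ via Lemma~\ref{lem.so decomposition of the degeneracy}.
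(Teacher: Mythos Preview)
Your proof is correct, but it takes a somewhat different route from the paper's and contains two detours worth noting.

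First, once you establish $\mcU\subseteq\mcU_Q$, the inclusion $\mcV_Q\subseteq\mcV$ is immediate from $\mcV_Q=\mcU_Q^\perp[1]\subseteq\mcU^\perp[1]=\mcV$; the truncation-and-splitting argument you give is correct but unnecessary. (The paper simply writes ``clearly $\mcV_Q\subseteq\mcV$''; the one-line reason is that $Q\in\mcV$ forces $Q[k]\in\mcV[-1]$ for every $k<0$, whence $\mcU\subseteq{^{\perp_{<0}}Q}=\mcU_Q$.) Second, the cogenerator verification for $(\H)'(Q)$, which you call the hardest step, is actually a one-liner and does not involve $\mathbf{t}$ at all: if $M\in\mcH_Q$ satisfies $\Hom_\mcD(M,Q)=0$, then $M\in\mcU_Q={^{\perp_{<0}}Q}$ combined with $\Hom_\mcD(M,Q)=0$ gives $M\in{^{\perp_{\leq 0}}Q}$, and then $M\in\mcV_Q=({^{\perp_{\leq 0}}Q})^\perp$ forces $\Hom_\mcD(M,M)=0$. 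Your route via $\mcN_\mathbf{t}$ and Lemma~\ref{lem.so decomposition of the degeneracy} reaches the same conclusion but is considerably longer.

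For the equivalence $\Psi$, the paper proceeds differently: it uses Proposition~\ref{prop.near-partialsilting-object} to obtain equivalences $\Inj\mcH\simeq\Prod_\mcD(Q)\simeq\Inj{\mcH_Q}$ directly, then invokes the dual of Corollary~\ref{cor.Cont(P)} to reconstruct $\mcH$ and $\mcH_Q$ from their injective objects, and finally deduces $\Psi\circ\H\cong(\H)'$ from the chain of natural isomorphisms $\Hom_\mcD(?,Q)\cong\Hom_\mcH(\H(?),\H(Q))\cong\Hom_{\mcH_Q}((\H)'(?),(\H)'(Q))$ together with Yoneda. Your approach via initial objects in $\HFun(\mcD)$ is more conceptual and makes transparent \emph{why} the two cohomological functors must agree---they are both initial in the connected component indexed by $\Prod_\mcD(Q)$---at the cost of having to verify explicitly that $(\H)'$ lands in that component. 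The paper's route is more hands-on and sidesteps this by comparing the categories of injectives directly.
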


\begin{proof}
Suppose that $Q$ is obtained from $\mathbf{t}$ via Theorem~\ref{thm.AB5 heart}. Then $\mathbf{t}_Q:=(\mcU_Q,\mcV_Q):=(_{}^{\perp_{<0}}Q, (_{}^{\perp_{\leq 0}}Q)^\perp)$ is a $t$-structure in $\mcD$ thanks to Proposition \ref{prop.t-structure from pi}. Since clearly $\mcV_Q\subseteq\mcV$ and $\Hom_\mcD(?,Q)$ vanishes on $\mcV[-1]$, it vanishes on $\mcV_Q[-1]$. It follows that $Q$ is NSZ partial cosilting. On the other hand,  by the proof of Theorem \ref{thm.classify homological functors} or that of Proposition \ref{prop.near-partialsilting-object}, we have equivalences of categories
\[ \Inj \mcH=\Prod_\mcH\big(\H(Q)\big)\cong\Prod_\mcD(Q)\cong\Prod_{\mcH_Q}\big((\H)'(Q)\big)=\Inj{\mcH_Q}, \]
where $(\H)'\dd\mcD\la\mcH_Q$ is the cohomological functor associated to $\mathbf{t}_Q$. Then, by the dual of Corollary \ref{cor.Cont(P)}, we conclude that $\mcH$ and $\mcH_Q$ are equivalent via an equivalence $\Psi\dd\mcH\stackrel{\cong}{\la}\mcH_Q$ that takes $\H(Q)$ to $(\H)'(Q)$. But then we have functorial isomorphisms
\begin{align*}
\Hom_{\mcH_Q}\big((\Psi\circ\H)(?),(\H)'(Q)\big) &\cong
\Hom_{\mcH_Q}\big((\Psi\circ\H)(?),(\Psi\circ\H)(Q)\big) \\&\cong
\Hom_{\mcH}\big(\H(?),\H(Q)\big) \\&\cong
\Hom_\mcD(?,Q) \\&\cong
\Hom_{\mcH_Q}\big((\H)'(?),(\H)'(Q)\big).
\end{align*}
By Yoneda's lemma and the fact that $(\H)'(Q)$ is an injective cogenerator of $\mcH_Q$, we get a natural isomorphism $\Psi\circ\H\cong (\H)'$ (recall Lemma~\ref{lem.Cont(P)}). 
\end{proof}

\begin{rem} \label{rem.NSZ shift}
A word of warning is due in connection with the last proposition. A NSZ partial cosilting $t$-structure is always right non-degenerate. Therefore if the $t$-structure $\mathbf{t}$ of last proposition is right degenerate, then $\mathbf{t}\neq\mathbf{t}_Q$. 
\end{rem}

The final result of this subsection shows that, up to suitable localization, any smashing $t$-structure with an AB5 heart is given by a pure-injective NSZ partial cosilting object. If the $t$-structure is already left non-degenerate, the resulting $t$-structure will be non-degenerate and hence given by a pure-injective cosilting object.

\begin{prop} \label{prop.smashing-AB5heart}
Let $\mcD$ be a triangulated category with coproducts and $\mathbf{t}=(\mcU,\mcV)$ be a smashing $t$-structure of $\mcD$ with heart $\mcH$. The following assertions hold:
\begin{enumerate}
\item $\mcL:=\Loc_\mcD(\mcU)$ is a smashing localizing subcategory of $\mcD$, i.e. the inclusion functor $\mcL\rightarrowtail\mcD$ has a right adjoint which preserves coproducts. 
\item If $\mcD$ satisfies Brown representability theorem, then so does $\mcL$.
\item If $\mcD$ is standard well generated, then so is $\mcL$.
\item Suppose  that $\mcD$  satisfies Brown representability theorem (resp. $\mcD$ is standard well generated). The heart $\mcH$ is a complete AB5 abelian category with an injective cogenerator (resp. $\mcH$ is a Grothendieck category) if, and only if,  there exists a pure-injective NSZ partial cosilting object $Q$ of $\mcL$ such that $\mathbf{t}'=(\mcU,\mcV\cap\mcL)$ is the $t$-structure in $\mcL$ associated to $Q$. 
\end{enumerate}
\end{prop}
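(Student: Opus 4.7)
The plan is to dispatch parts (1)--(3) as setup and reduce part (4) to Proposition~\ref{prop.NSZ} applied inside $\mcL$.

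For part (1), since $\mathbf{t}$ is smashing, $\mcV$ is closed under coproducts, hence so is each $\mcV[k]$ and the localizing subcategory $\mcV' := \bigcap_{k \in \mathbb{Z}} \mcV[k] = \mcL^\perp$. For each $X \in \mcD$, I would construct the approximation triangle as follows: let $L_X := \Mcolim_n \ai{n} X$, where $(\ai{n} X)_{n \geq 0}$ is the canonical tower with transition maps induced by the $t$-structure adjunctions. Since each $\ai{n} X \in \mcU[-n] \subseteq \mcL$, we have $L_X \in \mcL$. Applying the $3\times 3$ lemma to the map of Milnor colimit triangles induced by $\ai{n} X \to X$ (whose cones are $\coa{n+1} X$) yields a triangle $L_X \to X \to R_X \laplus$ with $R_X \cong \Mcolim_n \coa{n+1} X$. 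Fix $k$; the tail subsystem $(\coa{n+1} X)_{n \geq k}$ lies in $\mcV[-k-1] \subseteq \mcV[-k]$, so its Milnor colimit $M_k$ lies in $\mcV[-k]$. A further $3\times 3$ argument comparing the two Milnor colimits shows the cone of $M_k \to R_X$ is the Milnor colimit of the eventually-zero sequence $\coa{1} X \to \cdots \to \coa{k} X \to 0 \to 0 \to \cdots$; this vanishes because the restriction of $1 - \sigma$ to the finitely-supported piece is lower-triangular with identities on the diagonal. Hence $R_X \cong M_k \in \mcV[-k]$ for every $k$, so $R_X \in \mcL^\perp$. This produces the semiorthogonal decomposition $(\mcL, \mcL^\perp)$ and the right adjoint inherits coproduct-preservation from the closure of $\mcL^\perp$ under coproducts.

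Parts (2) and (3) follow from standard principles about smashing localizations. For (2), a product-preserving cohomological functor $\mcL^{\mathrm{op}} \to \Ab$ extends via the coreflection $\mcD \to \mcL$ to one on $\mcD^{\mathrm{op}}$; its representing object lies in $\mcL$ because the $\mcL^\perp$-component factors out by the universal property of the reflection onto $\mcL$. For (3), if $\mcD = \mcC/\Loc_\mcC(\mcS)$ with $\mcC$ compactly generated and $\mcS$ a set, one argues that a smashing localizing subcategory of a standard well-generated category is again standard well-generated, by combining Corollary~\ref{cor.localization at pi} with the coproduct-closure of $\mcL^\perp$ and noting that the Bousfield reflection $\mcD \to \mcL^\perp$ (with kernel $\mcL$) exhibits $\mcL^\perp$ as well as $\mcL$ as the two complementary pieces of a semiorthogonal decomposition of a standard well-generated category.

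Part (4) is the substantive content. The observation is that $\mathbf{t}' := (\mcU, \mcV \cap \mcL)$ is a $t$-structure in $\mcL$: for $X \in \mcL$, the truncation triangle $\ai{0} X \to X \to \coa{1} X \laplus$ taken in $\mcD$ has $\ai{0} X \in \mcU \subseteq \mcL$ and $X \in \mcL$, so $\coa{1} X \in \mcL \cap \mcV[-1] = (\mcV \cap \mcL)[-1]$. Its heart equals $\mcU \cap (\mcV \cap \mcL) = \mcU \cap \mcV = \mcH$ since $\mcU \subseteq \mcL$, and its cohomological functor is $\H|_\mcL$. It is smashing, since $\mcV \cap \mcL$ is closed under coproducts (which in $\mcL$ coincide with those in $\mcD$ by part (1)), so $\H|_\mcL$ preserves coproducts. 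Crucially, $\mathbf{t}'$ is right non-degenerate, because
\[ \bigcap_{k \in \mathbb{Z}} (\mcV \cap \mcL)[k] = \mcL \cap \bigcap_{k} \mcV[k] = \mcL \cap \mcL^\perp = 0. \]
Applying Proposition~\ref{prop.NSZ} in $\mcL$ (which satisfies Brown representability or is standard well-generated by parts (2)/(3)) yields the equivalence: $\mcH$ is complete AB5 with an injective cogenerator (resp.\ Grothendieck) if and only if $\mathbf{t}'$ arises from a pure-injective NSZ partial cosilting object $Q \in \mcL$.

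The main obstacle is the delicate double $3 \times 3$ argument in part (1), in particular identifying the cone of $M_k \to R_X$ with the Milnor colimit of an eventually-zero sequence and verifying its vanishing; once this is in place, everything else is a systematic reduction of the AB5/Grothendieck question from $\mcD$ to the restricted $t$-structure $\mathbf{t}'$ inside $\mcL$, where Proposition~\ref{prop.NSZ} applies directly.
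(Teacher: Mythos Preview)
Your arguments for (1), (2) and (4) are correct and essentially match the paper's. For (1), the paper avoids your second $3\times 3$ comparison by directly citing \cite[Lemma 1.7.1]{N}, which says that the Milnor colimit of a sequence coincides with that of any cofinal tail; your argument simply reproves this lemma, so there is no genuine difference. For (2) the paper writes out the representing object as $\Phi(D_H)$ where $\Phi\dd\mcD\to\mcL$ is the right adjoint to inclusion, which is exactly your ``$\mcL^\perp$-component factors out'' remark made explicit. Part (4) is identical.

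Part (3), however, has a real gap. You invoke Corollary~\ref{cor.localization at pi}, but that result takes as input a pure-injective object $Q$ and concludes that ${}^{\perp_\mathbb{Z}}Q$ is generated by a set; here you have no such $Q$ at hand, only the coproduct-closed class $\mcL^\perp=\bigcap_k\mcV[k]$. Nothing in your sketch produces a \emph{set} of generators for $\mcL^\perp$ (or for $\mcL$), which is exactly what ``standard well-generated'' demands. The paper's route is different and more substantial: since $\mcD$ is well generated it has a set $\mcX$ of perfect generators, and by \cite{NS-recollements} the smashing semiorthogonal decomposition $(\mcL,\mcL^\perp)$ extends to a TTF triple $(\mcL,\mcL^\perp,\mcL^{\perp\perp})$. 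One then has $\mcL\simeq\mcD/\mcL^\perp$ and $\mcL^\perp=\Loc_\mcD(\Psi(\mcX))$ for the left adjoint $\Psi\dd\mcD\to\mcL^\perp$; lifting $\Psi(\mcX)$ along the quotient $q\dd\mcC\to\mcD$ exhibits $\mcL$ as $\mcC/\Loc_\mcC(\mcS\cup\mcX')$ for a set $\mcX'$. You should replace your appeal to Corollary~\ref{cor.localization at pi} with this TTF argument (or an equivalent one producing an explicit set of generators).
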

\begin{proof}
(1) We start by proving that the inclusion $\mcL\rightarrowtail\mcD$ has a right adjoint, for which we will check that any object 
 $D\in\mcD$ fits into a triangle $L\la D\la Y\la L[1]$, with $L\in\mcL$ and $Y\in\mcL^\perp$. For each integer $n\geq 0$, we have a triangle
\[ 
\Delta_n\dd \quad \tau_\mathbf{t}^{\leq n}D\la D\la\tau_\mathbf{t}^{>n}D\la\tau_\mathbf{t}^{\leq n}[1]
\] 
with respect to the $t$-structure $(\mcU[-n],\mcV[-n])$. We have an obvious functorial morphism of triangles from $\Delta_n$ to $\Delta_{n+1}$ with identity map on $D$. Using an argument similar to that of  \cite[Theorem 12.1]{KN}, we get a commutative diagram
\[
\xymatrix{
D \ar[r]^-{1} \ar[d] &
D \ar[r]^-{1} \ar[d] &
D \ar[r]^-{1} \ar[d] &
\cdots \ar[r]^-{1} &
D \ar[r]^-{1} \ar[d] &
\cdots
\\
\tau_\mathbf{t}^{>0}D \ar[r]_-{f_1} &
\tau_\mathbf{t}^{>1}D \ar[r]_-{f_2} &
\tau_\mathbf{t}^{>2}D \ar[r]_-{f_3} &
\cdots  \ar[r]^-{f_n} &
\tau_\mathbf{t}^{>n}D \ar[r]_-{f_{n+1}} &
\cdots
}
\]

\noindent
Bearing in mind that $D$ is isomorphic to the Milnor colimit of the upper sequence, using Verdier's $3\times 3$ Lemma as in the argument in [op.cit], we get a triangle
\[
L\la D\la\Mcolim\tau_\mathbf{t}^{>n}D\laplus,
\]
where $L$ fits into a triangle
\[
\coprod_{n\geq 0}\tau_\mathbf{t}^{\leq n}D\la\coprod_{n\geq 0}\tau_\mathbf{t}^{\leq n}D\la L\la\coprod_{n\geq 0}\tau_\mathbf{t}^{\leq n}D[1].
\]
We then clearly have  that $L\in\mcL$ and the task is reduced to check that $\Mcolim\tau_\mathbf{t}^{>n}D$ is in $\mcL^\perp =\mcU^{\perp_{\mathbb{Z}}}$. For that, note that for each $r\ge 0$ we have a triangle
\[
\coprod_{n\geq r}\tau_\mathbf{t}^{>n}D\stackrel{1-f}{\la}\coprod_{n\geq r}\tau_\mathbf{t}^{>n}D\la\Mcolim\tau_\mathbf{t}^{>n}D\la\coprod_{n\geq r}(\tau_\mathbf{t}^{>n}D)[1]
\]
by~\cite[Lemma 1.7.1]{N}. Consequently, we have $\Mcolim\tau_\mathbf{t}^{>n}D \in \mcU[-r]^\perp$ for any $r\ge0$ since all the coproducts in the above triangle belong to $\mcU[-r]^\perp$. Finally, as $\mcU[-r]\subseteq\mcU$ for $r\le0$, we have $\Mcolim\tau_\mathbf{t}^{>n}D \in \mcU[-r]^\perp$ also in this case, as we wished to prove.

In order to see that $\mcL$ is smashing, it suffices to prove that $\mcL^\perp$ is closed under coproducts. To that end, notice that $\mcL^\perp = \mcU^{\perp_\mathbb{Z}} = \bigcap_{n\in\mathbb{Z}}\mcV[n]$ and the conclusion follows since $\mcV$ is closed under coproducts.

(2) Let $\Phi\dd\mcD\la\mcL$ be right adjoint to the inclusion functor $\iota\dd\mcL\rightarrowtail\mcD$, so that the unit $\eta\dd1_\mcL\la\Phi\circ\iota$ is a natural isomorphism. Let $H\dd\mcL\la\Ab$ be a contravariant cohomological functor which sends coproducts to products. Then $H\circ\Phi\dd\mcD\la\Ab$ has the same property. By the Brown representabilty property of $\mcD$, we get an object $D_H\in\mcD$ such that $H\circ\Phi$ and $\Hom_\mcD(?,D_H)$ are naturally isomorphic. We then have natural isomorphisms $$H\stackrel{H(\eta)}{\la}H\circ\Phi\circ\iota\cong\Hom_\mcD(\iota (?),D_H)\cong\Hom_\mcL(?,\Phi (D_H)),$$ the last one of them due to the adjunction $(\iota ,\Phi )$. Therefore $H$ is representable and so $\mcL$ satisfies Brown representability theorem. 

(3) By~\cite{K-Brown-rep}, well generated triangulated categories have a set of perfect generators. Then, by \cite[Corollary 2.4]{NS-recollements}, we have an induced TTF triple $(\mcL,\mcL^\perp,\mcL^{\perp\perp})$ in $\mcD$. But then, by properties of TTF triples, we have  triangulated equivalences $\mcL\cong\mcL^{\perp\perp}\cong\mcD/\mcL^\perp$. If $\Psi\dd\mcD\la\mcL^\perp$ is the left adjoint to the inclusion functor $\mcL^\perp\la\mcD$, then $\mcL^\perp=\Loc_\mcD(\Psi (\mcX))$, where $\mcX$ is any set of perfect generators of $\mcD$ (see \cite[Lemmas 2.2 and 2.3]{NS-recollements}). It is now routine to check that if $\mcD=\mcC/\Loc_\mcC(\mcS)$, for some compactly generated triangulated category $\mcC$ and some set $\mcS\subseteq\mcC$, and  we choose any set $\mcX'$ of objects of $\mcC$ that is mapped onto $\Psi (\mcX)$ by the quotient functor $q\dd\mcC\la\mcD$, then $\mcD/\Loc_\mcD(\Psi(\mcX))$ is equivalent to $\mcC/\Loc(\mcS\cup\mcX')$. Therefore $\mcL\simeq\mcC/\Loc(\mcS\cup\mcX')$ is standard well generated. 

(4) When $\mcD$ satisfies Brown representability theorem (resp. is standard well generated), the restricted $t$-structure $\mathbf{t}'=(\mcU,\mcV\cap\mcL)$ in $\mcL$ is smashing, clearly right non-degenerate and its heart is again $\mcH$. By using Proposition \ref{prop.NSZ} and assertions~(2) and~(3), we conclude that $\mcH$ is complete AB5 with an injective cogenerator (resp. a Grothendieck category)  if, and only if, $\mathbf{t}'$ is the $t$-structure associated to a pure-injective NSZ partial cosilting object of $\mcL$. 
\end{proof}


\subsection{Left non-degenerate \texorpdfstring{$t$-structures}{t-structures}}

As long as we are interested only in the cohomological functor of a $t$-structure, NSZ partial cosilting objects are very convenient, as was shown in \S\ref{subsec.partial cosilting objects}. This is in particular illustrated by Proposition~\ref{prop.NSZ extended} which says that for a given $t$-structure whose associated cohomological functor is nice enough, there exists a (possibly different) NSZ partial cosilting $t$-structure with the same cohomological functor.

If we are concerned in how precisely the heart sits in $\mcD$, however, we need more refined tools. Here we assume that the $t$-structure in question is left non-degenerate, which can be achieved in various situations (see the end of Section \ref{sec.t-structures and localization}). We stress again that in that case, the cohomological functor of a $t$-structure preserves coproducts if and only if the $t$-structure is smashing (Remark~\ref{rem.H preserves coproducts}).

The final result of the paper explains the role of AMV partial cosilting objects. The following result may be seen as a derivator-free generalization of the equivalence $(1)\Longleftrightarrow (4)$ of \cite[Theorem 4.6]{Lak}

\begin{prop} \label{prop.AMV}
Let $\mcD$ have coproducts and products and let $\mathbf{t}$ be a left non-degenerate $t$-structure with heart $\mcH$. Consider the following assertions:

\begin{enumerate}
\item $\mathbf{t}$ is the $t$-structure associated with a pure-injective AMV partial cosilting object.
\item $\mathbf{t}$ is smashing and $\mcH$ is an AB5 abelian category with an injective cogenerator. 
\item $\mathbf{t}$ is smashing and $\mcH$ is a Grothendieck category.
\end{enumerate}
The implications $(1)\Longrightarrow (2)\Longleftarrow (3)$ hold true. When $\mcD$ satisfies Brown representability theorem,  the implication $(2)\Longrightarrow (1)$ also holds. When $\mcD$ is standard well generated all assertions are equivalent.
\end{prop}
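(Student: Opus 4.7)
The implication $(3)\Longrightarrow (2)$ is immediate since every Grothendieck category is AB5 and admits an injective cogenerator. For $(2)\Longrightarrow (3)$ under standard well-generatedness, I would observe that smashness combined with left non-degeneracy forces $\H$ to preserve coproducts (Lemma~\ref{lem.H preserves coproducts} together with Remark~\ref{rem.H preserves coproducts}), so that Corollary~\ref{cor.AB5 implies Grothendieck in well-gen. triangcats} applies and upgrades AB5 to Grothendieck.

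For $(1)\Longrightarrow (2)$, let $Q\in\mcV$ be pure-injective AMV partial cosilting, so that $\mcV={}^{\perp_{>0}}Q$. This class is closed under coproducts, whence $\mathbf{t}$ is smashing. The plan is to verify assertion~(2) of Theorem~\ref{thm.AB5 heart} with $\hat{Q}=Q$. Given $V\in\mcV$, I would form the canonical map $\eta\dd V\la Q^{\Hom_\mcD(V,Q)}$ and complete it to a triangle $V\stackrel{\eta}{\la}Q^I\la V'\laplus$. Applying $\Hom_\mcD(?,Q[j])$ and using that $Q^I\in\mcV$ together with the surjectivity of $\eta^*\dd\Hom_\mcD(Q^I,Q)\la\Hom_\mcD(V,Q)$ built into the construction, the long exact sequence forces $\Hom_\mcD(V',Q[j])=0$ for every $j\geq 1$ (for $j\geq 2$ both adjacent terms already vanish; for $j=1$ one uses surjectivity of $\eta^*$ to kill the connecting map). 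Thus $V'\in\mcV$ and $\Prod(Q)$ is $t$-cogenerating in $\mcV$; the implication $(2)\Longrightarrow(4)$ of Theorem~\ref{thm.AB5 heart} then delivers the AB5 heart with an injective cogenerator.

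The crux is $(2)\Longrightarrow (1)$ under Brown representability. As before, smashness plus left non-degeneracy gives that $\H$ preserves coproducts, so Theorem~\ref{thm.AB5 heart}~$(4)\Longrightarrow (1)$ produces a pure-injective object $Q\in\mcV$ such that $\Hom_\mcD(?,Q)$ vanishes on $\mcV[-1]$ and $E:=\H(Q)$ is an injective cogenerator of $\mcH$. The inclusion $\mcV\subseteq{}^{\perp_{>0}}Q$ follows at once from this vanishing together with $\mcV[-j]\subseteq\mcV$ for all $j\geq 0$. The main obstacle is the reverse inclusion, and this is precisely where left non-degeneracy enters essentially. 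The plan is to transport the vanishing condition to cohomology of $X$ via Proposition~\ref{prop.near-partialsilting-object}(1): for $X\in{}^{\perp_{>0}}Q$ and $j>0$,
\[ 0=\Hom_\mcD(X,Q[j])=\Hom_\mcD(X[-j],Q)\cong\Hom_\mcH(H_\mathbf{t}^{-j}(X),E), \]
so $H_\mathbf{t}^{-j}(X)=0$ for all $j>0$ because $E$ is an injective cogenerator. Considering the truncation triangle $\ai{-1}X\la X\la\coa{0}X\laplus$, the third term lies in $\mcV$ while $\ai{-1}X\in\mcU[1]$ has all cohomologies zero, placing it in $\mcU\cap\mcN_\mathbf{t}=\bigcap_{n\in\mathbb{Z}}\mcU[n]$ by Lemma~\ref{lem.so decomposition of the degeneracy}. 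Left non-degeneracy forces $\ai{-1}X=0$, so $X\cong\coa{0}X\in\mcV$, yielding $\mcV={}^{\perp_{>0}}Q$ and the AMV partial cosilting property of $Q$.
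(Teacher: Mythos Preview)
Your proof is correct. The differences from the paper are worth noting.

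For $(1)\Longrightarrow(2)$, the paper verifies condition~(1) of Theorem~\ref{thm.AB5 heart}: conditions~(1)(a) and~(1)(c) are automatic, and for~(1)(b) one argues that if $M\in\mcH$ satisfies $\Hom_\mcD(M,Q)=0$, then combined with $M\in\mcV={}^{\perp_{>0}}Q$ one gets $M[1]\in{}^{\perp_{>0}}Q=\mcV$, forcing $M\in\mcU\cap\mcV[-1]=0$. You instead verify condition~(2) of Theorem~\ref{thm.AB5 heart} directly, exploiting the explicit description $\mcV={}^{\perp_{>0}}Q$ to check $\Hom_\mcD(V',Q[j])=0$ by a long exact sequence. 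Your route is slightly more elementary in that it bypasses Proposition~\ref{prop.near-partialsilting-object} and Lemma~\ref{lem:t-generating tria}.

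For $(2)\Longrightarrow(1)$, the paper obtains $Q$ from Theorem~\ref{thm.AB5 heart} and notes $\mcV\subseteq{}^{\perp_{>0}}Q$, but for the reverse inclusion it simply invokes the argument of \cite[Lemma~4.4]{Lak}, based on \cite[Theorem~3.6]{AMV}. Your argument is self-contained within the present paper: using the isomorphism $\Hom_\mcD(X,Q[j])\cong\Hom_\mcH(H_\mathbf{t}^{-j}(X),E)$ from Proposition~\ref{prop.near-partialsilting-object}(1) and the cogenerator property of $E$ to kill the negative cohomologies of $X$, and then invoking left non-degeneracy via Lemma~\ref{lem.so decomposition of the degeneracy} to conclude $\tau_\mathbf{t}^{\leq -1}X=0$. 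This is a genuine improvement in exposition, as it avoids sending the reader to external sources and makes transparent exactly where left non-degeneracy is used.
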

\begin{proof}
$(3)\Longrightarrow (2)$ is clear and, assuming that $\mcD$ is standard well generated, the implication 
$(2)\Longrightarrow (3)$ is a direct consequence of the implication $(4)\Longrightarrow (5)$ in Theorem \ref{thm.AB5 heart}.

$(1)\Longrightarrow (2)$ The equality $\mcV={^{\perp_{>0}}Q}$ implies that 
\[ \Hom_\mcD(V[-1],Q)=\Hom_\mcD(V,Q[1])=0, \]
for all $V\in\mcV$. Then $Q$ satisfies conditions (1)(a) and (1)(c) of Theorem \ref{thm.AB5 heart}. Moreover, if $M\in\mcH$ is an object such that $\Hom_\mcD(M,Q)=0$, then we have $\Hom_\mcD(M,Q[n])=0$, for all $n\geq 0$, since $\mcH\subseteq\mcV=_{}^{\perp_{>0}}Q$. It then follows that $\Hom_\mcD(M[1],Q[j])=0$, for all $j>0$, so that $M[1]\in\mcV$. It follows that $M\in\mcH\cap\mcV[-1]\subseteq\mcU\cap\mcV[-1]=0$. Hence also condition (1)(b) of the mentioned theorem holds. It follows that $\mcH$ is complete AB5 with an injective cogenerator. Finally, it is clear that $\mcV={^{\perp_{>0}}Q}$ is closed under taking coproducts, so that $\mathbf{t}$ is smashing. 

$(2)\Longrightarrow (1)$ (when $\mcD$ satisfies Brown representability theorem) Since $\H$ clearly preserves coproducts, Theorem~\ref{thm.AB5 heart} tells us that there exists a $Q\in\mcV$ satisfying conditions (1)(a)--(c) of that theorem. In particular we have that $\mcV\subseteq{^{\perp_{>0}}Q}$. It remains to check that the reverse inclusion also holds. For this, just apply the argument in the proof of \cite[Lemma 4.4]{Lak}, based on \cite[Theorem 3.6]{AMV}.
\end{proof}


\bibliographystyle{alpha}
\bibliography{Saorin-Stovicek-Locallyfp-heart}


\end{document}